\definecolor{amethyst}{rgb}{0.6, 0.4, 0.8}
\renewcommand*{\backref}[1]{}
\renewcommand*{\backrefalt}[4]{%
	\ifcase #1 %
	\relax
	\or
	$\uparrow$#2.%
	\else
	$\uparrow$#2.%
	\fi%
}
\newcommand{\rF}{{\mathrm F}}
\newcommand{\rH}{{\mathrm H}}
\newcommand{\rW}{{\mathrm W}}
\newcommand{\rd}{{\mathrm d}}
\newcommand{\bC}{{\mathbb C}}
\newcommand{\bD}{{\mathbb D}}
\newcommand{\bF}{{\mathbb F}}
\newcommand{\bG}{{\mathbb G}}
\newcommand{\bN}{{\mathbb N}}
\newcommand{\bP}{{\mathbb P}}
\newcommand{\bQ}{{\mathbb Q}}
\newcommand{\bR}{{\mathbb R}}
\newcommand{\bZ}{{\mathbb Z}}
\newcommand{\cA}{{\mathcal A}}
\newcommand{\cC}{{\mathcal C}}
\newcommand{\cE}{{\mathcal E}}
\newcommand{\cJ}{{\mathcal J}}
\newcommand{\cO}{{\mathcal O}}
\newcommand{\cP}{{\mathcal P}}
\newcommand{\cT}{{\mathcal T}}
\newcommand{\cU}{{\mathcal U}}
\newcommand{\cV}{{\mathcal V}}
\newcommand{\cX}{{\mathcal X}}
\newcommand{\cY}{{\mathcal Y}}
\newcommand{\cZ}{{\mathcal Z}}
\newcommand{\ff}{{\mathfrak f}}
\newcommand{\fg}{{\mathfrak g}}
\newcommand{\fr}{{\mathfrak r}}
\newcommand{\sB}{{\mathsf B}}
\newcommand{\sD}{{\mathsf D}}
\DeclareSymbolFont{cyrletters}{OT2}{wncyr}{m}{n}
\DeclareMathSymbol{\Sha}{\mathalpha}{cyrletters}{"58}
\newcommand{\fSET}{\mathbf{fSet}}
\newcommand{\FEt}{\mathbf{F\acute Et}}
\DeclareMathOperator{\Hom}{Hom}
\DeclareMathOperator{\Map}{Map}
\DeclareMathOperator{\Ext}{Ext}
\DeclareMathOperator{\im}{im}
\DeclareMathOperator{\coker}{coker}
\newcommand{\id}{\mathrm{id}}
\newcommand{\pr}{\mathrm{pr}}
\DeclareMathOperator{\gr}{gr}
\DeclareMathOperator{\rank}{rk}
\newcommand{\llbrack}{[\![}
\newcommand{\rrbrack}{]\!]}
\newcommand{\llparen}{(\!(}
\newcommand{\rrparen}{)\!)}
\newcommand{\Fbar}{{\overline{F}}}
\newcommand{\Qbar}{{\overline{\bQ}}}
\newcommand{\Kbar}{{\overline{K}}}
\newcommand{\kbar}{{\overline{k}}}
\newcommand{\Zhat}{{\hat{\bZ}}}
\renewcommand{\Im}{\mathrm{Im}}
\DeclareMathOperator{\Lie}{Lie}
\renewcommand{\div}{\operatorname{div}}
\DeclareMathOperator{\Div}{Div}
\DeclareMathOperator{\Pic}{Pic}
\DeclareMathOperator{\bPic}{\mathbf{Pic}}
\DeclareMathOperator{\Jac}{Jac}
\DeclareMathOperator{\NS}{NS}
\DeclareMathOperator{\Res}{\mathrm{Res}}
\DeclareMathOperator{\Spec}{Spec}
\DeclareMathOperator{\Gal}{Gal}
\newcommand{\alg}{\mathrm{alg}}
\newcommand{\an}{\mathrm{an}}
\newcommand{\et}{\mathrm{\acute{e}t}}
\newcommand{\cris}{\mathrm{cris}}
\newcommand{\st}{{\mathrm{st}}}
\newcommand{\sm}{\mathrm{sm}}
\newcommand{\deR}{\mathrm{dR}}
\newcommand{\Sel}{\mathrm{Sel}}
\DeclareMathOperator{\loc}{loc}
\mathchardef\mhyphen="2D 
\newcommand{\tors}{\mathrm{tors}}
\newcommand{\ab}{\mathrm{ab}}
\theoremstyle{theorem}
\newtheorem{theorem}{Theorem}
\newtheorem*{theorem*}{Theorem}
\newtheorem{theoremABC}{Theorem} 
\newenvironment{theoremcustom}[1]
{\innertheoremcustom}
{\endinnertheoremcustom} 
\newtheorem{lemma}[theorem]{Lemma}
\newtheorem{proposition}[theorem]{Proposition}
\newtheorem{corollary}[theorem]{Corollary}
\newtheorem{conjecture}[theorem]{Conjecture}
\theoremstyle{remark}
\newtheorem{remark}[theorem]{Remark}
\newtheorem{example}[theorem]{Example}
\theoremstyle{definition}
\newtheorem{definition}[theorem]{Definition}
\numberwithin{theorem}{section}
\numberwithin{equation}{section}
\definecolor{shadecolor}{RGB}{186,238,186}
\definecolor{softlimegreen}{RGB}{186,238,186}
\definecolor{limegreen}{RGB}{208,243,208}
\definecolor{questioncolor}{RGB}{135, 173, 241}
\definecolor{warningcolor}{RGB}{240,120,134}
\definecolor{verypaleyellow}{RGB}{255,255,194}
\tikzset{
	symbol/.style={
		draw=none,
		every to/.append style={
			edge node={node [sloped, allow upside down, auto=false]{$#1$}}}
	}
}
\newcommand{\T}{\mathrm{T}}
\newcommand{\AT}{\mathrm{AT}}
\newcommand{\Alb}{\mathrm{Alb}}
\newcommand{\alb}{\mathrm{alb}}
\newcommand{\qalb}{\mathrm{qalb}}
\newcommand{\prin}{\mathrm{prin}}
\newcommand{\GQC}{\mathrm{GQC}}
\newcommand{\BK}{\mathrm{BK}}
\newcommand{\EL}{\mathrm{EL}}
\newcommand{\spl}{\mathrm{spl}}
\newcommand{\I}{\mathrm{I}}
\newcommand{\II}{\mathrm{II}}
\newcommand{\III}{\mathrm{III}}
\newcommand{\IV}{\mathrm{IV}}
\newcommand{\LMFDBec}[1]{\href{https://www.lmfdb.org/EllipticCurve/Q/#1/}{\texttt{#1}}}
\newcommand{\printorarxiv}[2]{#2} 
\title{On a non-abelian analogue of a Conjecture of Michael Stoll}
\author{L.~Alexander Betts}
\begin{document}
\maketitle

\begin{abstract}
	We formulate a non-abelian generalisation of a conjecture of Stoll, which conjecturally describes the structure of the loci cut out by Kim's method of non-abelian Chabauty. We prove the rank~$0$ quadratic case of this conjecture, which in particular determines the structure of the quadratic Chabauty locus for once-punctured elliptic curves of rank~$0$. The proof involves using a variant of the geometric quadratic Chabauty method of Edixhoven and Lido to reduce to an unlikely intersections problem, and ultimately to known results about the relative Manin--Mumford Conjecture.
\end{abstract}

\setcounter{tocdepth}{1}
\tableofcontents

\section{Kim's Conjecture}

Let~$X$ be a smooth projective curve\footnote{For us, a \emph{curve} is a geometrically connected one-dimensional reduced scheme over a field. We will only ever consider smooth curves.} of genus~$g\geq2$ over the rationals with a rational base point~$b\in X(\bQ)$, and let~$p$ be a prime of good reduction for~$X$. To any finite-dimensional Galois-equivariant quotient~$U^p$ of the $\bQ_p$-pro-unipotent \'etale fundamental group~$\pi_1^{\bQ_p}(X_\Qbar;b)$, Minhyong Kim's method of \emph{non-abelian Chabauty} associates an obstruction locus
\[
X(\bQ_p)_{U^p} \subseteq X(\bQ_p)
\]
containing the rational points~$X(\bQ)$ \cite{kim:siegel,kim:selmer,balakrishnan-dan-cohen-kim-wewers:non-abelian_conjecture}. In this way, the locus~$X(\bQ_p)_{U^p}$ constrains where the rational points of~$X$ can lie inside the $\bQ_p$-points. It is a conjecture of Kim that this obstruction should exactly cut out the set of rational points, i.e.~$X(\bQ_p)_{U^p}=X(\bQ)$, when the quotient~$U^p$ is taken large enough \cite[Conjecture~3.1]{balakrishnan-dan-cohen-kim-wewers:non-abelian_conjecture}. In this article, we will discuss a number of ideas arising from the question:\smallskip

\noindent For which quotients~$U^p$ should we expect the equality $X(\bQ_p)_{U^p}=X(\bQ)$ to hold?

There is a natural heuristic that suggests when this might occur. The non-abelian Chabauty locus~$X(\bQ_p)_{U^p}$ is, by definition, the common zero locus of some finite number of Coleman-analytic functions $X(\bQ_p)\to\bQ_p$. Let~$c(U^p)$ be the size of a maximal algebraically independent subset of these defining functions\footnote{Equivalently and more precisely, $c(U^p)$ is the codimension of the image of the localisation map $\loc_p\colon\Sel_{U^p}(X/\bQ)\to\rH^1_f(\bQ_p,U^p)$. We will explain what this means in \S\ref{ss:non-ab_Chab}.}. When~$c(U^p)\geq2$, one might expect that $X(\bQ_p)_{U^p}=X(\bQ)$: after all, two general Coleman-analytic functions have no common zeroes at all for dimension reasons. However, this expectation turns out to be rather too na\"ive: in the case that~$X$ is a genus~$3$ hyperelliptic curve and~$U^p=U^p_1$ is the abelianisation of the fundamental group, examples were found in \cite{balakrishnan-bianchi-cantoral-farfan-ciperiani-etropolski:chabauty-coleman_experiments} where~$c(U^p_1)=2$ but $X(\bQ_p)_{U_1}$ contains non-rational points. Similar examples for~$X$ a genus~$3$ Picard curve were found in~\cite{hashimoto-morrison:chabauty-coleman_computations}, and in the realm of non-abelian quotients~$U^p$, analogous examples were found when~$X$ is replaced by a once-punctured elliptic curve and~$U^p=U^p_2$ is the maximal two-step unipotent quotient of the fundamental group in \cite{bianchi:bielliptic}.

What unites all of these examples is that, in all cases, all of the non-rational points appearing in the locus $X(\bQ_p)_{U^p}$ are \emph{algebraic} over~$\bQ$, something which seems highly surprising given the $p$-adic nature of~$X(\bQ_p)_{U^p}$. That this should always occur when~$U^p=U_1^p$ is the subject of a conjecture of Stoll. In fact, Stoll's Conjecture posits something considerably stronger: not only should the loci~$X(\bQ_p)_{U_1^p}$ only contain algebraic points, but in fact they should be interpolated by a single finite subscheme of~$X$ independent of~$p$.

\begin{conjecture}[Stoll, {\cite[Conjecture~9.5]{stoll:finite_descent_old}\footnote{We are taking some liberties with the formulation given in \cite{stoll:finite_descent_old}. Namely, Stoll's result is stated more generally for morphisms from~$X$ to an abelian variety, and also it concerns the Chabauty locus for~$X$ rather than the Chabauty--Coleman locus. These loci can be different \cite{hashimoto-spelier:geometric_linear_chabauty}.}}]\label{conj:stoll}
	Suppose that~$X$ is a smooth projective curve of genus~$g\geq2$, whose Jacobian has Mordell--Weil rank at most~$g-2$. Then there is a finite subscheme $Z\subset X$, defined over~$\bQ$, such that
	\[
	X(\bQ_p)_{U_1^p} \subseteq Z(\bQ_p)
	\]
	for all primes~$p$ in a set of Dirichlet density~$1$, where~$U_1^p$ is the abelianisation of the fundamental group of~$X_\Qbar$. (Because~$Z$ is finite and defined over~$\bQ$, $Z(\bQ_p)$ consists only of algebraic points.)
\end{conjecture}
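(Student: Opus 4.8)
The plan is to adapt the strategy underlying this paper — geometric Chabauty, reducing to an unlikely-intersection problem and thence to relative Manin--Mumford — to the abelian quotient $U_1^p$. Fix the Abel--Jacobi embedding $\iota_b\colon X\hookrightarrow J$, $x\mapsto[x-b]$, and let $\Gamma\subseteq J(\Qbar)$ be the division hull of the Mordell--Weil group $J(\bQ)$; this is a subgroup of finite rank $r\le g-2$, since $\Gamma\otimes\bQ\cong J(\bQ)\otimes\bQ$ and $\Gamma$ contains all torsion. Because $g\ge2$, the image $\iota_b(X)$ is not a translate of an abelian subvariety, so by Faltings' theorem (Mordell--Lang for curves) the set $\iota_b^{-1}(\Gamma)\subseteq X(\Qbar)$ is finite; it is Galois-stable, hence defines a reduced finite $\bQ$-subscheme $Z\subset X$, which contains $X(\bQ)$. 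It then suffices to show that $X(\bQ_p)_{U_1^p}\subseteq Z(\bQ_p)$ for all $p$ in a set of density $1$.

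For the quotient $U_1^p$ the locus $X(\bQ_p)_{U_1^p}$ is the classical Chabauty--Coleman locus, consisting of those $x\in X(\bQ_p)$ with $\log_J(\iota_b x)$ lying in the $\bQ_p$-span of $\log_J\overline{J(\bQ)}$, a subspace of $\mathrm{Lie}(J_{\bQ_p})$ of dimension at most $r$. I would then follow the geometric linear Chabauty philosophy of Edixhoven--Lido and realise this locus, via the N\'eron model $\mathcal J/\bZ_p$, as the intersection inside the formal group $\widehat{\mathcal J}$ of the graph of $\iota_b$ with a formal-analytic subgroup parametrising the $p$-adic closure $\overline{J(\bQ)}$ by a power series $\bZ_p^{\,r}\to\widehat{\mathcal J}(\bZ_p)$ (up to harmless finite-index and torsion corrections). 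The point is that this is an \emph{unlikely} intersection: $\dim X+\dim\overline{J(\bQ)}\le 1+r\le g-1<\dim J$, so for a ``generic'' datum the intersection would be empty, and one expects it to be supported on $Z$.

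The remaining, and genuinely hard, step is to control this unlikely intersection uniformly in the prime $p$. The strategy is to spread the datum consisting of $X\hookrightarrow J$ together with generators of $J(\bQ)$ out into an algebraic family over a base $S$ — a suitable component of a Hilbert scheme of curves inside abelian varieties equipped with a rank-$r$ subgroup — and to argue that a point of $X(\bQ_p)_{U_1^p}\setminus Z(\bQ_p)$ persisting for a positive-density set of primes would force the existence of a point of a curve in the family that is ``torsion-like'' with respect to the rank-$r$ subgroup throughout a positive-dimensional subfamily. Passing to function fields converts ``finite rank'' into ``torsion'' (via the Lang--N\'eron theorem and Mordell--Lang for (semi)abelian varieties), so the inequality $1+r\le g-1$ becomes exactly the hypothesis under which the relative Manin--Mumford conjecture — in the cases proved by Masser--Zannier, Corvaja--Masser--Zannier and Gao--Habegger — forbids such a torsion locus from being Zariski dense; feeding this back bounds the ``extra'' primes by a density-$0$ set. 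The main obstacle is precisely this last step: making the dependence on $p$ algebraic (so that ``density $1$ in $p$'' becomes a Zariski-density statement over $S$), and controlling the error terms — the gap between $\overline{J(\bQ)}$ and the formal subgroup it generates, and the discrepancy between the Chabauty and Chabauty--Coleman loci noted in the footnote — so that they produce no new components. This family bookkeeping, rather than the single-prime geometric setup, is where the difficulty lies, and is why the paper itself only settles a rank-$0$ quadratic case.
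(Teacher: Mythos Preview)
The statement you are attempting to prove is \emph{Conjecture}~\ref{conj:stoll} in the paper --- Stoll's Conjecture itself --- which the paper states as an open conjecture and does \emph{not} prove. The paper establishes only special cases of its non-abelian generalisation (Theorem~\ref{thm:main_tate}, the rank-$0$ quadratic case); the abelian case in rank $\leq g-2$ remains open. So there is no ``paper's own proof'' against which to compare your proposal.

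Your candidate $Z=\iota_b^{-1}(\Gamma)$ is the natural one, and its finiteness via Mordell--Lang is correct. The genuine gap is in your final paragraph, where you try to convert ``$X(\bQ_p)_{U_1^p}\not\subseteq Z(\bQ_p)$ for a positive-density set of primes~$p$'' into a Zariski-density statement in an algebraic family over some base~$S$. There is no mechanism for this. The curve~$X$, the Jacobian~$J$, and the generators of~$J(\bQ)$ are all \emph{fixed}; the only thing varying is the prime~$p$, and primes do not form the closed points of an algebraic base over which one can spread out and invoke relative Manin--Mumford. The theorems of Masser--Zannier, Corvaja--Masser--Zannier, Gao--Habegger, and indeed Bertrand--Masser--Pillay--Zannier as used in \S\ref{s:unlikely_intersections}, all concern an algebraic family $A\to S$ over a \emph{variety}~$S$ with a section becoming torsion on a Zariski-dense subset; a collection of transcendental $p$-adic points on a single fixed Jacobian, one for each of infinitely many~$p$, is not an input to which those theorems apply. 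Nor does Lang--N\'eron help: it trades finite rank for torsion over a \emph{function field}, but here there is no function-field base in sight. Your closing sentence correctly locates the difficulty, but the preceding paragraph does not bridge it: ``making the dependence on~$p$ algebraic'' is not bookkeeping but the entire content of the conjecture, and is precisely why the paper restricts to rank~$0$, where $\overline{J(\bQ)}$ is finite and the whole spreading-out manoeuvre is unnecessary.
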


Inspired by Stoll's Conjecture, we propose a more general version, which simultaneously generalises it to the non-abelian Chabauty setting and allows the curve~$X$ to be replaced by a not-necessarily-projective curve~$Y$.

\begin{conjecture}[Non-abelian Stoll's Conjecture]\label{conj:non-abelian_stoll}
	Let~$Y/\bQ$ be a smooth hyperbolic curve with a regular model\footnote{For us, a regular model of~$Y$ means a regular flat separated $\bZ$-scheme of finite type with generic fibre~$Y$.}~$\cY/\bZ$, and let~$b$ be a $\bQ$-rational base point on~$Y$, possibly tangential. Let~$p$ be a prime of good reduction for~$(\cY,b)$, and let~$U^p$ be a finite dimensional $\Gal_\bQ$-equivariant quotient of the $\bQ_p$-pro-unipotent \'etale fundamental group of~$Y$ which is of motivic origin.
	
	Suppose that~$c(U^p)\geq2$. Then there exists a finite subscheme $Z\subset Y$, defined over~$\bQ$ and independent of~$p$, such that
	\[
	\cY(\bZ_q)_{U^q} \subseteq Z(\bQ_q)
	\]
	for all primes~$q$ in a set of density~$1$ (or perhaps all but finitely~$q$).
\end{conjecture}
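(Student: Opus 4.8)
We cannot prove Conjecture~\ref{conj:non-abelian_stoll} in general --- already for $U^p=U_1^p$ and $Y$ projective it specialises to Stoll's open Conjecture~\ref{conj:stoll} --- so the plan is to treat the \emph{rank~$0$ quadratic case}: $Y=E\setminus\{O\}$ a once-punctured elliptic curve with $E/\bQ$ of Mordell--Weil rank~$0$, and $U^p=U_2^p$ the depth-$2$ (``quadratic Chabauty'') quotient of the fundamental group. Here $U_2$ is a central extension $1\to\bQ_p(1)\to U_2\to V_pE\to1$, the copy of $\bQ_p(1)$ being generated by the loop around the puncture, so $\rH^1_f(\bQ_p,U_2)$ is a torsor under a $2$-dimensional $\bQ_p$-vector space while the non-abelian Chabauty (Kummer) map $j_p\colon\cY(\bZ_p)\to\rH^1_f(\bQ_p,U_2)$ is $p$-adic-analytic on a $1$-dimensional space. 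Since $\rank E(\bQ)=0$, both $\rH^1_f(\bQ,V_pE)$ (using finiteness of $\Sha(E)[p^\infty]$, say) and $\rH^1_f(\bQ,\bQ_p(1))=\bZ^\times\otimes\bQ_p$ vanish, so the Selmer scheme $\Sel_{U_2}(\cY/\bZ)$ is $0$-dimensional; hence $c(U_2)=2$, and $\cY(\bZ_p)_{U_2^p}$ is exactly the fibre of $j_p$ over the finite image of $\Sel_{U_2}(\cY/\bZ)$. One must show this fibre is the $\bZ_p$-locus of a finite subscheme $Z\subset Y$ defined over~$\bQ$, independent of~$p$.

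The main step is to replace the $p$-adic-Hodge-theoretic description of $j_p$ by an algebraic one, following the geometric quadratic Chabauty philosophy of Edixhoven--Lido. The plan is to build, from the Poincar\'e biextension of $\cE$ with $\cE^\vee$ over the N\'eron model together with the data of the puncture and base point, a $\bG_m$-torsor $\cT$ over the relevant open of the N\'eron model --- so $\cT$ is a semi-abelian scheme --- and a canonical lift $\cY(\bZ_p)\to\cT(\bZ_p)$. The content of the construction is that $j_p$ is recovered from this lift composed with a biextension pairing and the $p$-adic logarithm, and that the Selmer condition translates into: the lift lies in the $p$-adic closure $\overline{\Lambda}$ of the finitely generated subgroup of $\cT(\bZ)$ (or of the associated biextension group) generated by the images of $E(\bQ)$ and the global $\rH^1_f$-classes. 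Since $\rank E(\bQ)=0$, this subgroup has rank~$0$: $\overline{\Lambda}$ is \emph{finite}, a group of torsion points. Thus $\cY(\bZ_p)_{U_2^p}$ is identified with the set of $y\in\cY(\bZ_p)$ whose image in the semi-abelian scheme is a torsion point of its fibre, and in particular an algebraic point.

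It then remains to bound such points uniformly. The idea is to spread the construction into an algebraic family: the biextension of $\cE$ with $\cE^\vee$ is a semi-abelian scheme over a positive-dimensional base, inside which the puncture/base-point/Selmer data cut out an algebraic curve $\cZ$, defined over~$\bQ$ and equipped with a map to~$Y$; by the previous step, $\cY(\bZ_q)_{U_2^q}$ lands inside $\{z\in\cZ : z\text{ is torsion in its fibre}\}$ for every good~$q$. Provided $\cZ$ contains no translate of a positive-dimensional subgroup scheme --- which one checks directly, the only obstructions being ``anomalous'' coincidences that would force $E$ to have positive rank --- the relative Manin--Mumford conjecture, known in this semi-abelian setting by work of Masser--Zannier and its generalisations (and reducing in the simplest instances to the Manin--Mumford conjecture for semi-abelian varieties), implies that this set is finite and contained in $\cZ(\Qbar)$. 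Galois-equivariance of the whole construction forces these points to lie over $Y(\bQ)$, and they manifestly do not depend on~$q$; taking $Z\subset Y$ to be their schematic image yields the desired finite $\bQ$-subscheme, with $\cY(\bZ_q)_{U_2^q}\subseteq Z(\bQ_q)$ for all but finitely many~$q$.

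The main obstacle is the geometrisation: matching the analytically defined Chabauty locus --- built from crystalline conditions, the Bloch--Kato logarithm, and iterated Coleman integrals --- with the purely algebraic condition ``torsion on $\cZ$'', so that no transcendental $p$-adic point can occur. This is a delicate comparison between the locally analytic geometry of $\rH^1_f(\bQ_p,U_2)$ and the algebraic biextension, and is where the variant of the Edixhoven--Lido construction has to be developed; in particular one must show that, once the abelian ($U_1$) part of the Chabauty condition is imposed, the remaining quadratic condition is genuinely algebraic despite involving Coleman double integrals. Secondary difficulties are verifying the no-special-subvariety hypothesis required by relative Manin--Mumford, and controlling uniformity in~$q$ at primes of bad reduction (hence the hedge ``all but finitely many~$q$'').
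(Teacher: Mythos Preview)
The statement is a conjecture, so there is no proof in the paper to compare against directly; what the paper does prove is the special case you target (rank~$0$, once-punctured elliptic curve, $U^p=U^p_2$), as Theorem~\ref{thm:main_tate}/\ref{thm:main}. Your strategy---geometrise the Chabauty locus via a $\bG_m$-torsor over~$E$ in the spirit of Edixhoven--Lido, identify the locus in rank~$0$ with a torsion condition, and conclude by relative Manin--Mumford---is the same as the paper's. A few points where your sketch diverges from what actually has to be done:

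\textbf{The torsor is not semi-abelian.} You write that the $\bG_m$-torsor $\cT$ ``is a semi-abelian scheme''. For $Y=E\smallsetminus\{\infty\}$ the relevant torsor is the one attached to $\cO(\infty)$, a degree-$1$ line bundle, so it carries no group law. The paper introduces \emph{AT varieties} precisely to handle such objects. The reduction to a genuine semi-abelian scheme over a base, to which Bertrand--Masser--Pillay--Zannier applies, requires an extra construction: one passes to an auxiliary torsor on $E\times E$ whose restriction to vertical slices has degree~$0$ (see the proof of Theorem~\ref{thm:manin-mumford}). Your remark that the relative Manin--Mumford input ``reduces in the simplest instances to the Manin--Mumford conjecture for semi-abelian varieties'' has it backwards: the $\bG_m$-over-elliptic-curve case is the one that genuinely needs relative Manin--Mumford, including the Ribet-section phenomenon.

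\textbf{Na\"ive geometric quadratic Chabauty gives only an inclusion.} You correctly flag the comparison with $\rH^1_f$ as the main obstacle, but the paper shows (following Duque-Rosero--Hashimoto--Spelier) that the Edixhoven--Lido locus can be strictly smaller than $\cY(\bZ_p)_{U^p}$. To get equality one must pass to the filtered system of isogenous torsors $(P_n)_{n\ge1}$ and take a colimit; this is the content of Theorem~\ref{thm:comparison_intro}. Your description of the Selmer condition as ``the lift lies in the $p$-adic closure of a finitely generated subgroup'' is the Edixhoven--Lido picture and would only give the containment $\supseteq$ for~$Z$.

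\textbf{Two small points.} The appeal to finiteness of $\Sha(E)[p^\infty]$ is unnecessary: one uses the Balakrishnan--Dogra Selmer scheme (Definition~\ref{def:global_selmer_scheme}, condition~\ref{condn:albanese_image}) to bypass it. And ``Galois-equivariance\dots forces these points to lie over $Y(\bQ)$'' is misleading: the points of $Z$ are algebraic but typically irrational (see the worked examples in \S\ref{ss:elliptic_examples}); Galois-stability of the \emph{set} is what gives $Z$ as a $\bQ$-subscheme.
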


\begin{remark}
	In the statement of the conjecture, we leave the term ``of motivic origin'' deliberately undefined, though in practice the meaning should be clear. One property which one should expect of quotients~$U^p$ of motivic origin, which is already used in the statement of the conjecture, is that there should be corresponding quotients~$U^q$ of the $\bQ_q$-pro-unipotent \'etale fundamental group of~$Y_\Qbar$ for all primes~$q$. Moreover, if~$c(U^p)\geq2$ for one prime~$p$, then~$c(U^q)\geq2$ for all~$q$.
\end{remark}

In this paper, we are going to prove some cases of non-abelian Stoll's Conjecture, which include some cases where the quotient~$U^p$ is genuinely non-abelian. Let~$Y$, $b$ and~$\cY$ be as in Conjecture~\ref{conj:non-abelian_stoll}, write~$Y=X\smallsetminus D$ for a reduced divisor~$D$ on a smooth projective curve~$X$, and let~$J$ denote the Jacobian of~$X$. Let~$U^p_\T$ denote the largest quotient of~$\pi_1^{\bQ_p}(Y_\Qbar;b)$ which is an extension of $V_pJ$ by a representation of the form~$\bQ_p(1)^r$. The locus~$\cY(\bZ_p)_{U^p_\T}$ is commonly known as the \emph{quadratic Chabauty locus}\footnote{In fact, there are several different loci which go by the name ``quadratic Chabauty locus'' in the literature, namely the locus corresponding to either $U^p_\T$ (largest extension of~$V_pJ$ by~$\bQ_p(1)^r$) or $U^p_\AT$ (largest extension of~$V_pJ$ by an Artin--Tate representation of weight~$-2$) or~$U^p_2$ (largest two-step unipotent quotient). In the nomenclature \printorarxiv{of the article by Netan Dogra in this volume}{introduced by Netan Dogra}, these are the ``skimmed'', ``semi-skimmed'' and ``full fat'' quadratic Chabauty loci, respectively. We are discussing the skimmed version in this paper.}, and it has been used to great effect in several high-profile computations of rational points \cite{balakrishnan-dogra-mueller-tuitman-vonk:cursed_curve,balakrishnan-besser-bianchi-mueller:explicit_quadratic_chabauty_number_fields,balakrishnan-best-bianchi-lawrence-mueller-triantafillou-vonk:two_approaches,dogra-le_fourn:quadratic_chabauty_modular_forms,adzaga-chidambaram-keller-padurariu:atkin-lehner_quotients_coverings,adzaga-arul-beneish-chen-chidambaram-keller-wen:quadratic_chabauty_atkin-lehner_quotients,arul-mueller:X0+125,balakrishnan-dogra-mueller-tuitman-vonk:quadratic_chabauty_algorithms_examples,bianchi-padurariu:bielliptic_in_LMFDB}. We prove the rank zero case of Conjecture~\ref{conj:non-abelian_stoll} for this~$U^p_\T$.

\begin{theoremABC}\label{thm:main_tate}
	Let~$\rho_\bQ\coloneqq\rank(\NS(J)(\bQ))$ denote the rational Picard number of~$J$, and let~$n_0$ denote the number of $\Gal_\bQ$-orbits in~$D(\Qbar)$. Suppose that $\rank(J(\bQ))=0$ and
	\[
	g + \rho_\bQ + n_0 - 1 \geq 2 \,.
	\]
	Then there exists a finite subscheme $Z\subset Y$, defined over~$\bQ$ and independent of~$p$, such that
	\[
	\cY(\bZ_p)_{U^p_\T} = \cZ(\bZ_p)
	\]
	for all primes~$p$ of good reduction for~$(\cY,b)$, where~$\cZ\subset\cY$ is the closure of~$Z$ in~$\cY$. Also, we have $c(U^p_\T)\geq2$ for all such primes.
\end{theoremABC}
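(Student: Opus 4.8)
The plan is to treat the inequality $c(U^p_\T)\geq 2$ and the equality $\cY(\bZ_p)_{U^p_\T}=\cZ(\bZ_p)$ separately: the first is a cohomological dimension count, the second is obtained by running a variant of the geometric quadratic Chabauty method of Edixhoven--Lido and reducing to the relative Manin--Mumford conjecture.

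\emph{The dimension count.} First I would pin down the structure of $U^p_\T$. The abelianisation of $\pi_1^{\bQ_p}(Y_\Qbar;b)$ is an extension of $V_pJ$ by the residue representation $R=\ker\bigl(\bigoplus_{D(\Qbar)}\bQ_p(1)\to\bQ_p(1)\bigr)$; working in the free pro-unipotent Lie algebra on $\rH_1(Y_\Qbar,\bQ_p)$ --- or modulo the single Poincar\'e-duality relation when $Y=X$ --- one sees that $U^p_\T$ is a central extension of $V_pJ$ by a module $\bQ_p(1)^r$ which is the largest $\bQ_p(1)$-isotypic quotient of $R\oplus\wedge^2 V_pJ$, respectively of $\ker(\wedge^2 V_pJ\xrightarrow{\;\cup\;}\bQ_p(1))$ when $Y=X$. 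Since the permutation module $\bigoplus_{D(\Qbar)}\bQ_p$ carries an $n_0$-dimensional space of trivial subquotients the Tate part of $R$ has dimension $n_0-1$, and the cycle class map gives an injection $\NS(J)(\bQ)\otimes\bQ_p\hookrightarrow\bigl(\wedge^2\rH^1(X_\Qbar,\bQ_p)\bigr)(1)^{\Gal_\bQ}$; together these give $r\geq\rho_\bQ+n_0-1$ (with equality, by the Tate conjecture for divisors on $J$). Because $U^p_\T$ is pro-unipotent with graded pieces of strictly negative weight, $\rH^0(\bQ_p,U^p_\T)$ is trivial, so the Bloch--Kato formula yields $\dim\rH^1_f(\bQ_p,U^p_\T)=\dim\bigl(\fu_\T/\Fil^0\fu_\T\bigr)=g+r$, where $\fu_\T=\Lie U^p_\T$. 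On the global side, $\rank(J(\bQ))=0$ forces $\loc_p$ to vanish on $\Sel(V_pJ)$ --- its Mordell--Weil part is torsion and its $\Sha$-part is locally trivial at $p$ --- while the Kim Selmer group of $\bQ_p(1)^r$ vanishes since the residue class of an integral point is a unit, hence crystalline at $p$ and trivial away from $p$. Therefore $\loc_p(\Sel_{U^p_\T}(Y/\bQ))$ is a single point and $c(U^p_\T)=g+r\geq g+\rho_\bQ+n_0-1\geq 2$.

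\emph{Geometric quadratic Chabauty.} For the equality I would adapt the Edixhoven--Lido construction to the (possibly non-projective) curve $Y$, the new ingredient being the generalised Jacobian $J_D$ of $X$ with modulus $D$, which supplies the ``residue'' directions. From $J_D$ together with the $\bG_m$-torsors over $J$ obtained by pulling the Poincar\'e torsor back along the homomorphisms $J\to J^\vee$ attached to the classes in $\NS(J)(\bQ)$, one builds a $\bZ$-scheme $\cG$ of relative dimension $\rho_\bQ+n_0-1$ over (a smooth model of) $J$, iteratively an extension and a torsor, together with a morphism $\psi\colon\cY\to\cG$ lifting the Abel--Jacobi morphism $\cY\to J$ and normalised so that $\psi(b)$ is trivial (using the tangent vector at $b$ if $b$ is tangential). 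The technical heart is the comparison: for primes $p$ of good reduction, Kim's localisation map $j_p\colon\cY(\bZ_p)\to\rH^1_f(\bQ_p,U^p_\T)$ factors through $\psi$ and a $p$-adic period map out of $\cG(\bZ_p)$, under which the natural identification $\rH^1_f(\bQ_p,U^p_\T)\cong(\Lie J_{\bQ_p})\times\bQ_p^r$ carries the linear coordinate to $\log_p$ of the image of $y$ in $J(\bZ_p)$ and the remaining coordinates to the quadratic Chabauty functions, expressed through the fibrewise logarithm on $\cG$ corrected by the Hodge filtration exactly as in Edixhoven--Lido.

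\emph{Reduction to relative Manin--Mumford.} Granting the comparison, $\loc_p(\Sel_{U^p_\T}(Y/\bQ))$ is the single point $j_p(b)$, all of whose coordinates vanish. Since $\log_p$ kills exactly the torsion of $J(\bQ_p)$ and exactly the roots of unity among $p$-adic units, the Kim conditions at $y\in\cY(\bZ_p)$ become equivalent to ``$\psi(y)$ is a torsion point of $\cG$'': the image $t$ of $y$ in $J$ is torsion and $\psi(y)$ is torsion in the fibre $\cG_t$ over it. This is the shape of a relative Manin--Mumford problem for the family $\cG$ over $J$: the image $\psi(Y)$ is a curve not contained in any torsion translate of a proper sub-torsor of $\cG$ --- which one verifies using that $Y$ is hyperbolic and the Abel--Jacobi morphism is generically injective on $Y$ --- so the known cases of the relative Manin--Mumford conjecture show that $\psi^{-1}$ of the torsion locus of $\cG$ is a finite set of algebraic points. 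Let $Z\subset Y$ be its Zariski closure, a finite subscheme defined over $\bQ$ and manifestly independent of $p$. Then for every prime $p$ of good reduction for $(\cY,b)$ one has $\cY(\bZ_p)_{U^p_\T}=Z(\bQ_p)\cap\cY(\bZ_p)=\cZ(\bZ_p)$, the reverse inclusion holding because $\psi(y)$ being torsion forces all the Kim conditions at $y$.

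\emph{The main obstacle.} I expect the crux to be the comparison step: making the Edixhoven--Lido dictionary precise for a non-projective $Y$ with a possibly tangential base point, $\Gal_\bQ$-equivariantly and over $\bZ$, and in particular controlling the Hodge-filtration correction to the $p$-adic height sharply enough that the Kim condition becomes exactly ``$\psi(y)$ lies in the torsion locus of $\cG$''. It is precisely the relative, fibrewise nature of that condition --- the torsion being taken over the varying image of $y$ in $J$ --- that makes the relative Manin--Mumford conjecture, rather than Raynaud's classical theorem, the natural input.
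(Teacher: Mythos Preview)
Your overall strategy --- dimension count, geometric comparison, Manin--Mumford-type input --- matches the paper's, and your~$\cG$ is essentially the split quadratic Albanese variety~$Q_\spl$ built there, described concretely rather than via a universal property. There are, however, two genuine gaps.

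\textbf{The Selmer image is not a single point.} Your claim that $\loc_p(\Sel_{U^p_\T}(Y/\bQ))$ is the single point~$j_p(b)$ fails at primes~$\ell$ of bad reduction for~$\cY$: the local condition away from~$p$ is that the class lie in the image of~$\cY(\bZ_\ell)$ under the local Kummer map, and that image contains one point for each component of~$\cY_{\bF_\ell}$ with an~$\bF_\ell$-point, each encoding a different local height. The paper handles this by partitioning~$\cY$ into \emph{simple opens}~$\cU$ (one component of each special fibre), proving that each $\Sel_{U^p}(\cU/\bZ)$ is a single point, and assembling these into a finite set~$W\subset\varinjlim_nP_n(\bQ)$. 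The correct~$Z$ is~$f_\infty^{-1}(W)$, a finite union of what the paper calls \emph{quadratic torsion packets}, not the single packet through~$b$. With your~$Z$ the inclusion $\cZ(\bZ_p)\subseteq\cY(\bZ_p)_{U^p_\T}$ holds but equality can fail.

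\textbf{``Torsion in the fibre'' is not well-defined, and the unlikely-intersections step is not relative Manin--Mumford in your sense.} Your~$\cG\to J$ has fibres isomorphic to~$\bG_m^r$ but not canonically, since the torsor is nontrivial; so ``$\psi(y)$ is torsion in~$\cG_t$'' has no intrinsic meaning. The paper replaces this with \emph{quadratic torsion}: the fibres of a map~$\beta_\infty\colon P(\Qbar)\to\varinjlim_nP_n(\Qbar)$ built from explicit isogenies~$\beta_n\colon P\to P_n$. Finiteness of each quadratic torsion packet (Theorem~D) is then proved by case analysis: for~$\dim A\geq2$ it reduces to Raynaud; for~$\dim A=0$ to Manin--Mumford for tori; for~$\dim A=1$, $r\geq2$ one projects to a semiabelian quotient and applies Hindry. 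Only the residual case~$\dim A=1$, $r=1$ uses genuine relative Manin--Mumford (Bertrand--Masser--Pillay--Zannier), and there the relevant family is a \emph{semiabelian scheme over~$Y$} obtained from a biextension on~$E\times E$, not the torus bundle~$\cG\to J$. Framing the general problem as ``relative Manin--Mumford for a family of split tori over~$J$'' does not match any known theorem; the paper's case split, and the specific biextension construction in the~$(1,1)$ case, are what make the argument go through.
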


The primary case of interest in Theorem~\ref{thm:main_tate} is when~$Y=E\smallsetminus\{\infty\}$ for~$E$ an elliptic curve of Mordell--Weil rank~$0$, and~$\cY=\cE\smallsetminus\overline{\{\infty\}}$ with~$\cE$ the minimal regular model of~$E$. In this case, $U^p_\T=U^p_2$ is the full two-step unipotent fundamental group, and we have $g=\rho_\bQ=n_0=1$ so Theorem~\ref{thm:main_tate} applies. This setting was previously studied by Bianchi in \cite{bianchi:bielliptic}, in which she proved that every element of~$\cY(\bZ_p)_{U^p_2}$ is always a torsion point on~$E$ -- in particular it is algebraic. She also gave some sufficient criteria for a torsion point to lie in~$\cY(\bZ_p)_{U^p_2}$, and also a separate necessary criterion.

Our Theorem~\ref{thm:main_tate} extends Bianchi's work in two ways. Firstly, we establish the existence of the subscheme~$Z$ independent of~$p$, which for example implies that the size of~$\cY(\bZ_p)_{U^p_2}$ is bounded as~$p$ ranges over all good reduction primes. Secondly, we describe the subscheme~$Z$ in Theorem~\ref{thm:main_tate} explicitly enough as to obtain a criterion for a torsion point to lie in $\cY(\bZ_p)_{U^p_\T}$ which is simultaneously necessary and sufficient. See Theorem~\ref{thm:main_elliptic_curve} for the statement and \S\ref{ss:elliptic_examples} for some examples.

\subsection{Geometry and unlikely intersections}

Our proof of Theorem~\ref{thm:main_tate} will be ultimately geometric. To see the connection to geometry, let us say that a quotient~$U^p$ of~$\pi_1^{\bQ_p}(Y_\Qbar;b)$ is \emph{realised} by a morphism
\[
f\colon Y \to V
\]
to a smooth geometrically connected variety~$V/\bQ$ if the induced map on $\bQ_p$-pro-unipotent \'etale fundamental groups is surjective and factors through an isomorphism $U^p\xrightarrow\sim\pi_1^{\bQ_p}(V_\Qbar;f(b))$. We declare by \emph{fiat} that any quotient realised by a morphism should be motivic in the sense of Conjecture~\ref{conj:non-abelian_stoll}. Indeed, if a morphism $f\colon Y\to V$ induces a surjection on $\bQ_p$-pro-unipotent \'etale fundamental groups for some~$p$ then, by the usual comparison theorems, it also induces a surjection on $\bQ_q$-pro-unipotent \'etale fundamental groups (as well as $\bQ$-pro-unipotent Betti fundamental groups, de Rham fundamental groups, etc.), and in particular we have corresponding quotients~$U^q$ of~$\pi_1^{\bQ_q}(Y_\Qbar;b)$ for all other primes~$q$ and the statement of Conjecture~\ref{conj:non-abelian_stoll} makes sense.

\begin{example}
	If~$Y=X$ is projective, then the Abel--Jacobi embedding $X\hookrightarrow J$ induces a surjection on pro-unipotent fundamental groups, and realises the abelianisation~$U^p_1$ of~$\pi_1^{\bQ_p}(X_\Qbar;b)$. In this case, $c(U^p_1)\geq g-\rank(J(\bQ))$, and so Stoll's original Conjecture~\ref{conj:stoll} is recovered as a special case of Conjecture~\ref{conj:non-abelian_stoll}.
\end{example}

The quotient $U^p_\T$ appearing in Theorem~\ref{thm:main_tate} turns out to be realised by a morphism to a variety~$V$ of a particular shape. Let us say that a variety~$P$ is an \emph{AT variety}\footnote{AT stands for ``abelian-by-torus''. The fact that it also stands for ``Artin--Tate'' is, in light of Theorem~\ref{thm:quadratic_albanese_intro}, a fortuitous coincidence.} if there exists a torus~$G$, an abelian variety~$A$ and an $A$-torsor~$T$ such that~$P$ is isomorphic as a variety to a $G$-torsor over~$T$. (Neither the torus~$G$, abelian variety~$A$, nor torsor~$T$ are officially part of the data of an AT variety, but we will show that they can all be recovered canonically from~$P$.) We call an AT variety \emph{split} if the torus~$G$ is split. Semiabelian varieties are examples of AT varieties, but there exist AT varieties which are not semiabelian, e.g.~because they have non-abelian fundamental groups. It is a result, probably well-known to experts, that the quotient~$U^p_\T$ is always realised by a morphism to an AT variety, as is the larger quotient~$U^p_\AT$ which is the largest quotient which is an extension of~$V_pJ$ by an Artin--Tate representation of weight~$-2$. We give a careful proof in this paper.

\begin{theoremABC}\label{thm:quadratic_albanese_intro}
	Let~$Y$ be a smooth curve over a perfect field~$k$. Then there exists an AT variety~$Q$ over~$k$ and a morphism $\qalb\colon Y\to Q$ which is initial among all morphisms from~$Y$ to an AT variety. When~$k=K$ is a number field and~$Y$ has a $K$-rational base point~$b$ (possibly tangential), then~$\qalb$ realises the quotient~$U^p_\AT$.
\end{theoremABC}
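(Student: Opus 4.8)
\emph{Strategy.} The plan is to build~$Q$ in two stages mirroring the two layers of an AT variety, and then to identify its fundamental group. An AT variety~$P$ carries two intrinsic pieces of structure: the torsor $T=P/G$ under an abelian variety lying underneath, and the torus~$G$ acting along its fibres. Indeed $P\to T$ is initial among all morphisms from~$P$ to a torsor under an abelian variety---any such morphism contracts the affine fibres, as a torus has no nonconstant map to such a torsor---so~$T$, and then the abelian variety beneath it and the torus~$G$, are canonically recovered from~$P$; moreover, once base points are taken into account, any morphism of AT varieties is compatible with these structures. Accordingly I first produce the universal morphism from~$Y$ to a torsor under an abelian variety---the Albanese torsor---then, over it, the universal torus-torsor admitting a compatible lift of~$Y$, and finally read off $\pi_1^{\bQ_p}(Q_\Qbar)$ from the resulting fibration $G\to Q\to T_0$.

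\emph{Construction.} Write $Y=X\smallsetminus D$. As abelian varieties are proper, every morphism from~$Y$ to a torsor under one extends over~$D$ and hence factors through~$X$; the universal such morphism is the Albanese torsor $\alpha\colon Y\to T_0$ of~$X$, so that $T_0=\Jac X$ and~$\alpha$ is the Abel--Jacobi map in the pointed case. Given an arbitrary AT variety~$P$ with a morphism $f\colon Y\to P$, the composite $Y\to P\to T$ factors uniquely through some $T_0\to T$; pulling~$P$ back along this produces a $G$-torsor over~$T_0$ through which~$f$ factors, so it suffices to classify torus-torsors over~$T_0$ admitting a lift of~$\alpha$. These are governed by the group~$\widetilde N$ of isomorphism classes of pairs $(L,\tau)$ with~$L$ a line bundle on~$T_0$ and~$\tau$ a trivialisation of~$\alpha^*L$; it sits in an exact sequence
\[
0 \longrightarrow \mathcal{O}(Y)^\times/k^\times \longrightarrow \widetilde N \longrightarrow \ker\!\big(\alpha^*\colon\bPic(T_0)\to\bPic(Y)\big) \longrightarrow 0 ,
\]
and is a finitely generated $\Gal_k$-module: the unit group $\mathcal{O}(Y)^\times/k^\times$ is finitely generated, and so is $\ker(\alpha^*)$, whose intersection with $\bPic^0(T_0)\cong\bPic^0(X)$ is generated by the finitely many divisor classes $[D_i]-[D_j]$ and whose component group embeds in $\NS(T_0)\cong\NS(J)$. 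Let~$G$ be the torus whose character module is~$\widetilde N$ modulo torsion, with its natural $\Gal_k$-action, and let~$Q$ be the $G$-torsor over~$T_0$ cut out by the tautological class, with $\qalb\colon Y\to Q$ the tautological section. Then $(Q,\qalb)$ is initial: the universal property of~$T_0$ forces any two competing factorisations of a given~$f$ to lie over the same map to the base, whereupon the difference between them is a global unit on the torus layer of~$Q$ that restricts trivially along~$\qalb$---which one checks must be constant. (The same point shows that passing to the torsion-free quotient of the character module costs nothing.)

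\emph{Realising~$U^p_\AT$.} Now suppose $k=K$ is a number field and $b\in Y(K)$, so that~$T_0=J$ and~$Q$ are pointed. The fibration $G\to Q\to J$ yields a short exact sequence of $\bQ_p$-pro-unipotent fundamental groups
\[
1 \longrightarrow V_pG \longrightarrow \pi_1^{\bQ_p}(Q_\Qbar;\qalb(b)) \longrightarrow V_pJ \longrightarrow 1 ,
\]
in which $V_pG=X_*(G)\otimes\bQ_p(1)$ is an Artin--Tate representation of weight~$-2$; thus $\pi_1^{\bQ_p}(Q_\Qbar)$ has exactly the shape defining~$U^p_\AT$. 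It remains to see that~$\qalb$ induces a surjection onto it and that $\pi_1^{\bQ_p}(Q_\Qbar)$ is the \emph{largest} quotient of $\pi_1^{\bQ_p}(Y_\Qbar;b)$ of that shape. Surjectivity can be checked on the graded pieces of weight~$\geq -2$ of the lower central series: on~$\rH_1$ the map hits~$V_pJ$ because $X\hookrightarrow J$, and hits the remaining weight-$(-2)$ part because the line bundles underlying~$\widetilde N$ become trivial on~$Y$ (with the unit part detected by residues along~$D$); on the weight-$(-2)$ part of~$\gr^2$ the relevant cup product is controlled by the image of~$\NS(J)$, which~$\qalb$ detects by construction. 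For maximality, any quotient~$W$ of $\pi_1^{\bQ_p}(Y_\Qbar;b)$ which is an extension of~$V_pJ$ by an Artin--Tate weight-$(-2)$ representation is automatically at most $2$-step nilpotent for weight reasons, hence is determined by its abelianisation together with the commutator pairing $\wedge^2 V_pJ\to(\text{weight }{-2}\text{ part})$; this is precisely the data of a torus-torsor over~$J$---the abelianisation recording its $\bPic^0(J)$-part and the pairing its $\NS(J)$-part---and the hypothesis that~$W$ is a quotient of $\pi_1^{\bQ_p}(Y_\Qbar;b)$ translates, via the unipotent Kummer / path-torsor dictionary, into a morphism from~$Y$ to that torus-torsor; the universal property of~$(Q,\qalb)$ then exhibits~$W$ as a quotient of $\pi_1^{\bQ_p}(Q_\Qbar)$.

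\emph{Main obstacle.} I expect the substantive difficulty to be the maximality step: passing from the abstract fact that~$W$ is a Galois-stable quotient of the fundamental group to an honest geometric morphism $Y\to P_W$, which is what allows the universal property of~$Q$ to intervene. Somewhat more routine, though still requiring care, are the descent arguments needed to carry out the construction over an arbitrary perfect field with no rational point available, and the verification that~$\qalb$ is non-degenerate enough---equivalently, that pullback along~$\qalb$ is injective on global units of~$Q$---to force uniqueness of factorisations.
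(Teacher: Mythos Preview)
Your construction of~$Q$ is essentially the same as the paper's: both build the universal torus-torsor over the Albanese torsor by parametrising line bundles on~$J$ whose restriction to~$Y$ is trivial. The paper phrases this slightly differently, working with the kernel~$\Lambda$ of the map $\NS(J_{\kbar})\oplus\bZ\cdot D(\kbar)\to\bZ$ and constructing~$Q$ first over~$\kbar$ before descending, but the content is the same as your~$\widetilde N$.

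The real divergence is in the second part, and you have correctly identified the obstacle---but the paper simply goes around it rather than through it. Your plan is to show maximality by arguing that \emph{every} abstract Galois-stable quotient~$W$ of $\pi_1^{\bQ_p}(Y_\Qbar;b)$ with the required shape arises geometrically from some morphism $Y\to P_W$, and then to invoke the universal property of~$Q$. Making this step rigorous would itself require Faltings' theorem on the Tate conjecture for $\Hom$ of abelian varieties (to know that the commutator pairing $\wedge^2 V_pJ\to\bQ_p(1)$ underlying~$W$ actually comes from a class in~$\NS(J)$), plus additional work to realise the extension class as an honest line bundle with trivial restriction to~$Y$. The paper sidesteps all of this with a dimension count: one already knows that $\pi_1^{\bQ_p}(Q_\Kbar)$ is a quotient of~$U^p_\AT$ simply because it has the right shape, so it suffices to check that the two have the same dimension. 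The dimension of~$\pi_1^{\bQ_p}(Q_\Kbar)$ is $2g+\rank(\NS(J_\Kbar))+\deg(D)-1$ by construction, and the dimension of~$U^p_\AT$ is computed to be the same using the explicit description of $\gr^\rW_{-2}\pi_1^{\bQ_p}(Y_\Kbar;b)$ together with Faltings' theorem that the Artin--Tate part of $\bigwedge^2\rH_1^\et(J_\Kbar,\bQ_p)$ has dimension exactly $\rank(\NS(J_\Kbar))$. So both arguments ultimately rest on Faltings, but the paper's use of it is direct and avoids the delicate translation from Galois representations back to geometry.

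Your surjectivity sketch is also thinner than what the paper does. The paper proves surjectivity on \emph{topological} fundamental groups over~$\bC$ (Proposition~\ref{prop:surjection_on_pi1}): it reduces to pushouts along primitive characters $\chi\colon H\to\bG_m$, computes the commutator pairing for the resulting $\bG_m$-torsor explicitly via Appell--Humbert (Lemma~\ref{lem:fundamental_group_of_Gm-torsor}), and analyses the local monodromy around each puncture. Your version (``checks on graded pieces'') is the right outline but leaves the actual computation undone.
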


\begin{theoremcustom}{B'}\label{thm:split_quadratic_albanese_intro}
	Let~$Y$ be a smooth curve over a perfect field~$k$. Then there exists a split AT variety~$Q_\spl$ over~$k$ and a morphism $\qalb_\spl\colon Y\to Q_\spl$ which is initial among all morphisms from~$Y$ to a split AT variety. When~$k=K$ is a number field and~$Y$ has a $K$-rational base point~$b$ (possibly tangential), then~$\qalb_\spl$ realises the quotient~$U^p_\T$.
\end{theoremcustom}

The variety~$Q$ appearing in Theorem~\ref{thm:quadratic_albanese_intro} (resp.~$Q_\spl$ in Theorem~\ref{thm:split_quadratic_albanese_intro}) we call the (split) \emph{quadratic Albanese variety} of~$Y$, and the morphism $\qalb$ (resp.~$\qalb_\spl$) we call the (split) \emph{quadratic Albanese map} of~$Y$. Over~$\bC$, these are related to the second higher Albanese manifold of Hain--Zucker \cite{hain-zucker:unipotent_variations}, see Remark~\ref{rmk:higher_albanese_manifolds}.

The variety~$Q_\spl$ is a torsor under a split torus of dimension~$\rho_\bQ+n_0$ over~$\bPic^1(X)$. So Theorem~\ref{thm:main_tate} is a special case of the following more general theorem.

\begin{theoremcustom}{A'}\label{thm:main}
	Let~$Y/\bQ$ be a smooth curve with a regular model~$\cY/\bZ$, and let~$b$ be a $\bQ$-rational base point on~$Y$, possibly tangential. Let $f\colon Y\to P$ be a morphism from~$Y$ to a $\bG_m^r$-torsor over an abelian variety~$A$ which induces a surjection on pro-unipotent fundamental groups.
	
	Suppose that~$A$ has Mordell--Weil rank~$0$ and~$\dim(P)\geq2$. Then there exists a finite subscheme $Z\subset Y$, defined over~$\bQ$, such that
	\[
	\cY(\bZ_p)_{U^p} = \cZ(\bZ_p)
	\]
	for all primes of good reduction for~$(\cY,b)$, where~$U^p$ is the quotient realised by~$f$ and~$\cZ\subset\cY$ is the closure of~$Z$ in~$\cY$. Also, we have $c(U^p)\geq2$ for all such primes. In particular, Conjecture~\ref{conj:non-abelian_stoll} holds for~$U^p$.
\end{theoremcustom}

Our strategy for proving Theorem~\ref{thm:main} proceeds in two steps, each of their own independent interest. The first step is to give a purely geometric description of the non-abelian Chabauty locus~$\cY(\bZ_p)_{U^p}$ in terms of~$P$ and the morphism~$f$, removing all $p$-adic analysis. This is of course inspired by the geometric quadratic Chabauty method of Edixhoven and Lido \cite{edixhoven-lido:geometric_quadratic_chabauty,coupek-lilienfeldt-xiao-yao:geometric_quadratic_chabauty_number_fields}, which for smooth projective curves~$X$ uses the theory of the Poincar\'e biextension to describe an obstruction locus~$X(\bQ_p)^\GQC\subseteq X(\bQ_p)$ containing the rational points. However, the geometric quadratic Chabauty obstruction is not actually equivalent to quadratic Chabauty: when~$p$ is a prime of good reduction, we have
\[
X(\bQ_p)^\GQC \subseteq X(\bQ_p)_{U^p_\T}
\]
but the containment can sometimes be strict \cite{duque-rosero-hashimoto-spelier:geometric_quadratic_chabauty}. So what we need is a variant of Edixhoven--Lido's geometric quadratic Chabauty construction which recovers $\cY(\bZ_p)_{U^p}$ exactly, and not a subset thereof.

This we accomplish in \S\ref{s:isogeny_method} where we define, for any smooth curve~$Y/\bQ$ and any morphism $f\colon Y\to P$ with~$P$ a $\bG_m^r$-torsor over an abelian variety of Mordell--Weil rank~$0$, an \emph{isogeny geometric quadratic Chabauty locus}
\[
\cY(\bZ_p)_f \subseteq \cY(\bZ_p)
\]
containing~$\cY(\bZ)$, for any regular model~$\cY$ of~$Y$. The details will be given in~\S\ref{s:isogeny_method}; for now, suffice it to say that we will construct from~$P$ a filtered system of AT varieties~$(P_n)_{n\geq1}$ and a compatible system of morphisms $\beta_n\colon P\to P_n$, from which we obtain a compatible system of morphisms $f_n\colon Y\to P_n$ by precomposing with~$f$. We also define a finite subset~$W\subseteq\varinjlim_nP_n(\bQ)$, and the locus~$\cY(\bZ_p)_f$ is then defined to be the inverse image of~$W$ under the function
\[
f_\infty\colon \cY(\bZ_p) \to \varinjlim_nP_n(\bQ_p)
\]
induced by the~$f_n$. We will show that this construction exactly recovers the non-abelian Chabauty locus.

\begin{theoremABC}\label{thm:comparison_intro}
	Suppose in the above setting that~$f$ induces a surjection on pro-unipotent fundamental groups, that~$Y$ has a rational base point~$b$, possibly tangential, and that~$p$ is a prime of good reduction for~$(\cY,b)$. Then
	\[
	\cY(\bZ_p)_{U^p} = \cY(\bZ_p)_f
	\]
	where~$U^p$ is the quotient of~$\pi_1^{\bQ_p}(Y_\Qbar;b)$ realised by~$f$.
\end{theoremABC}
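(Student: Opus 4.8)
The statement is an identification of two a priori very different sets: the non-abelian Chabauty locus $\cY(\bZ_p)_{U^p}$, defined via Selmer varieties and $p$-adic Hodge theory, and the isogeny geometric Chabauty locus $\cY(\bZ_p)_f$, defined via the pro-system $(P_n)$ and the finite set $W\subseteq\varinjlim_n P_n(\bQ)$. My strategy is to realise both as the preimage of ``rational/integral'' Selmer-type classes under a localisation map, and then match these descriptions term by term through the theory of the quadratic Albanese variety (Theorems~\ref{thm:quadratic_albanese_intro} and~\ref{thm:split_quadratic_albanese_intro}).

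First I would unwind the definition of $\cY(\bZ_p)_{U^p}$: by the standard formalism (to be recalled in \S\ref{ss:non-ab_Chab}), a point $y\in\cY(\bZ_p)$ lies in this locus iff its associated mixed extension class $j_p(y)\in\rH^1_f(\bQ_p,U^p)$ lies in the image of the global Selmer scheme $\Sel_{U^p}(\cY/\bZ)$ under $\loc_p$. Because $f$ realises $U^p$, the Kummer/path-torsor cocycle for $Y\to P$ computes this class, so $j_p(y)$ is controlled by the image of $y$ in $P(\bQ_p)$ together with the biextension/torsor data; this is exactly the content that the geometric quadratic Chabauty philosophy of Edixhoven--Lido \cite{edixhoven-lido:geometric_quadratic_chabauty} was built to capture. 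Next I would do the same for $\cY(\bZ_p)_f$: by construction $y\in\cY(\bZ_p)_f$ iff $f_\infty(y)\in\varinjlim_n P_n(\bQ_p)$ lies in the finite subset $W$, and $W$ is itself defined as the set of classes that are ``locally everywhere integral and globally rational'' in the pro-system — i.e.\ $W$ is a Selmer-type set for the tower $(P_n)$. So both sides are preimages of a Selmer set under a localisation map, and the whole problem reduces to identifying the targets.

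The heart of the argument is therefore the comparison of Selmer sets: I must show that the Selmer scheme $\Sel_{U^p}(\cY/\bZ)$ of the unipotent group $U^p$ is computed, after taking $\bQ_p$-points and applying $\loc_p$, by the tower $(P_n(\bQ_p))_n$ with its integral structure, and that the finite set $W$ is precisely the image of the rational points. For this I would use that the pro-unipotent fundamental group of $Y$ maps compatibly into the $V_p$-adic realisations of the $P_n$ (each $P_n$ being an AT variety whose fundamental group is a suitable unipotent quotient, cf.\ the construction of $(P_n)$ in \S\ref{s:isogeny_method} and the initiality of the quadratic Albanese variety), so that the tower of torsors $P_n$ geometrically interpolates the Selmer variety of $U^p$; the Mordell--Weil rank~$0$ hypothesis on $A$ guarantees that the relevant $\rH^1_f(\bQ,\cdot)$ are finite-dimensional/pro-finite and that the ``rational'' locus $W$ is genuinely finite. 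Here one uses, crucially, that $p$ is a prime of good reduction so that the comparison isomorphism between the étale and de Rham/crystalline realisations holds and so that integrality at $p$ matches the $\rH^1_f$ local condition; at the bad primes one matches integrality in $\cY$ with the corresponding local Selmer condition by a model-theoretic argument (this is where the regular model $\cY$ enters).

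**The main obstacle.** The delicate point is that $\cY(\bZ_p)_f$ is defined using \emph{all} of the morphisms $f_n\colon Y\to P_n$ simultaneously — i.e.\ the full isogeny tower — whereas $U^p$ is a \emph{single} finite-dimensional quotient, so I need to know that passing to the tower $(P_n)$ does not cut out a strictly smaller locus than $U^p$ already does. Equivalently: I must show that the extra conditions imposed by the higher $P_n$ are already implied by the $U^p$-condition, which is exactly the phenomenon that distinguishes this construction from the original (non-isogeny) geometric quadratic Chabauty locus $X(\bQ_p)^\GQC$, where the containment into $X(\bQ_p)_{U^p_\T}$ can be strict \cite{duque-rosero-hashimoto-spelier:geometric_quadratic_chabauty}. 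I expect this to come down to a careful analysis of how the isogeny tower $(P_n)$ is built so that $\varinjlim_n$ of the $V_p$-realisations recovers precisely $\rH^1_f(\bQ_p,U^p)$ (no more, no less), together with the finiteness of $W$ forcing the global conditions to stabilise; getting this stabilisation uniformly in $p$ is the technical crux.
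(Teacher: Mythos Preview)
Your broad strategy---identify both loci as preimages of Selmer-type sets under a localisation map, then match the targets---is correct and is exactly what the paper does. However, your identification of the ``main obstacle'' reveals a gap in understanding that would block the proof.

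You worry that the tower $(P_n)$ might impose \emph{extra} conditions beyond those already imposed by $U^p$, and that one must show these stabilise. This is not the issue at all. The key observation you are missing is that each $\beta_n\colon P\to P_n$ is an \emph{isogeny} of AT varieties, and isogenies induce \emph{isomorphisms} on $\bQ_p$-pro-unipotent fundamental groups (a short five-lemma argument using that the underlying maps on torus and abelian variety are isogenies). Thus every $P_n$ realises the \emph{same} quotient $U^p$, and the Kummer maps $j_{P_n}$ all land in the same cohomology set $\rH^1(\bQ,U^p(\bQ_p))$, compatibly with the $\beta_n$. There are no ``extra conditions from higher $P_n$'' on the level of cohomology; the tower contributes nothing new there.

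The real role of the tower is different: it \emph{rationalises} the local points so that the Kummer map becomes a bijection
\[
\varinjlim_n\cP_n(\bZ_p)\xrightarrow{\sim}\rH^1_f(\bQ_p,U^p(\bQ_p)).
\]
This is the genuine technical heart, proved by using the Bloch--Kato logarithm to identify $\rH^1_f(\bQ_p,U^p)$ as a torsor under $\rH^1_f(\bQ_p,\bQ_p(1)^r)$ over $\rH^1_f(\bQ_p,V_pA)$, and checking that the colimit of $\bZ_p$-points has the same torsor structure (the colimit over multiplication maps computes $\bQ_p\otimes_{\hat\bZ}(-)$, which matches the logarithm). The rank~$0$ hypothesis is used not for ``finite-dimensionality'' as you say, but to show that the \emph{global} Selmer scheme $\Sel_{U^p}(\cP/\bZ)$ is a single point (since $\bQ_p\otimes A(\bQ)=0$ and $\rH^1_f(\bQ,\bQ_p(1)^r)=\bQ_p\otimes\bG_m^r(\bZ)=0$), so that its scheme-theoretic image under $\loc_p$ equals its set-theoretic image.

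Two further ingredients you do not mention: first, the argument proceeds by decomposing $\cY$ into simple opens $\cU$ (one for each choice of component of each bad fibre), and proving the equality for each $\cU$ separately; this is what handles the bad primes. Second, one must check that the local Kummer map $j_{P,\ell}\colon\cP(\bZ_\ell)\to\rH^1(\bQ_\ell,U^p(\bQ_p))$ has \emph{singleton} image for each $\ell\neq p$ (so that the local conditions for $\cU$ and $\cP$ agree); this uses local constancy of Kummer maps together with a clever use of the tower $(P_n)$ to bootstrap from ``constant on an open subgroup'' to ``constant everywhere''.
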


\begin{remark}
	The isogeny geometric quadratic Chabauty method which we set up is both more general and more restrictive than quadratic Chabauty in the original formulation. It is more restrictive in that the definition we give in this paper is valid only when~$P$ is a torsor over an abelian variety of Mordell--Weil rank~$0$, while original quadratic Chabauty is well-defined for all quotients realised by AT varieties. It is more general in that we do not require that the curve~$Y$ has a rational base point, nor do we place any restriction on the reduction type of~$\cY$ at~$p$. It seems likely that the definition of the non-abelian Chabauty locus~$\cY(\bZ_p)_{U^p}$ will eventually be extended to also remove these assumptions, in which case one should expect Theorem~\ref{thm:comparison_intro} to also hold in these cases.
\end{remark}

The second step in the proof of Theorem~\ref{thm:main} is to exploit our understanding of the structure of~$\cY(\bZ_p)_{U^p} = \cY(\bZ_p)_f$, namely that it is the union of finitely many fibres of the function $f_\infty\colon\cY(\bZ_p)\to\varinjlim_nP_n(\bQ_p)$. Ultimately, Theorem~\ref{thm:main} reduces to showing that the fibres of the corresponding function
\begin{equation}\label{eq:f_infty}
	f_\infty\colon Y(\Qbar) \to \varinjlim_nP_n(\Qbar)
\end{equation}
on~$\Qbar$-points are all finite. Let us call a fibre of~\eqref{eq:f_infty} a \emph{quadratic torsion packet} in~$Y_\Qbar$ relative to~$f$. This is by way of analogy with the torsion packets on a smooth projective curve~$X$, which are the fibres of the function $X(\Qbar)\to\bQ\otimes J(\Qbar)$ arising from the Abel--Jacobi map $X_\Qbar\to J_\Qbar$.

Quadratic torsion packets can be fruitfully studied from the perspective of unlikely intersections. That is, studying quadratic torsion packets is equivalent to studying the intersection of the locally closed subvariety $\im(f)\subseteq P_\Qbar$ with the fibres of the function
\begin{equation}\label{eq:beta_infty}
	\beta_\infty\colon P(\Qbar) \to \varinjlim_nP_n(\Qbar)
\end{equation}
induced by the morphisms~$\beta_n\colon P\to P_n$ (because~$f_\infty=\beta_\infty\circ f$). If we think of the points in a fixed fibre of~$\beta_\infty$ as being ``special points'' on~$P_\Qbar$, this problem lies squarely in the realm of unlikely intersections and is in a similar vein to Manin--Mumford for semiabelian varieties and Andr\'e--Oort for mixed Shimura varieties (though apparently is a special case of neither). Accordingly, one would expect that there is a class of ``weakly special'' subvarieties of~$P_\Qbar$ such that~$\im(f)$ only contains finitely many special points unless its closure is a one-dimensional weakly special subvariety. It turns out that there is such a class of weakly special subvarieties, and it is exactly the \emph{AT subvarieties}: closed subvarieties of~$P_\Qbar$ which are themselves AT varieties. The final step in the proof of Theorem~\ref{thm:main} is then a version of Manin--Mumford in the setting of AT varieties.

\begin{theoremABC}[Manin--Mumford for curves in AT varieties]\label{thm:manin-mumford}
	Let~$Y/\Qbar$ be a smooth curve, and let~$f\colon Y\to P$ be a morphism from~$Y$ to an AT variety over~$\Qbar$. Suppose that~$\dim(P)\geq2$ and that the image of~$f$ is not contained in any strict AT subvariety of~$P$. Then every quadratic torsion packet in~$Y$ relative to~$f$ is finite.
\end{theoremABC}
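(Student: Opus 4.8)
The plan is to reduce the statement, via classical Manin--Mumford and the structure theory of AT varieties established in Theorem~\ref{thm:quadratic_albanese_intro}, to a known instance of the relative Manin--Mumford conjecture for a semiabelian scheme over a curve.

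\emph{Step 1: reduction to an elliptic base.} Since the statement is geometric we work over~$\Qbar$, where we may trivialise the torsor structures and assume $T=A$ and that $P$ is a $\bG_m^r$-torsor over the abelian variety~$A$; we may also assume $A\neq 0$, the case of a torus being classical Manin--Mumford. Write $g\colon Y\to A$ for the composite of~$f$ with $P\to A$. The filtered system $(P_n,\beta_n)$ of~\S\ref{s:isogeny_method} is compatible with the projection $P\to A$, and for~$A$ itself the corresponding map is the natural $A(\Qbar)\to A(\Qbar)\otimes\bQ$; hence the image in~$A$ of a quadratic torsion packet lies in a single coset of~$A(\Qbar)_\tors$. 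If~$g$ is non-constant and~$g(Y)$ is not a torsion translate of an elliptic curve, then Raynaud's theorem shows that~$g(Y)$ meets any such coset in finitely many points, so every quadratic torsion packet is finite. In the remaining cases~$\im(f)$ lies over either a point or a translated elliptic curve $E\subseteq A$; restricting~$P$ over this locus exhibits~$\im(f)$ inside a $\bG_m^r$-torsor over a point or over~$E$, which is an AT subvariety of~$P$, forcing it to equal~$P$ by hypothesis. Thus we may assume $A=E$ is an elliptic curve, and $\dim(P)=r+1\geq 2$.

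\emph{Step 2: Poincar\'e realisation.} Over~$\Qbar$ we split $P\cong S\times_E P^{\mathrm{amp}}$ with~$S$ a semiabelian variety over~$E$ (from the degree-$0$ part of the classifying line bundles) and~$P^{\mathrm{amp}}$ a $\bG_m^s$-torsor coming from line bundles of positive degree. If $s=0$ then $P=S$ is semiabelian and a quadratic torsion packet is an ordinary torsion packet, finite by Manin--Mumford for semiabelian varieties (Hindry) unless~$\im(f)$ is a torsion translate of a sub-semiabelian variety, i.e.\ a strict AT subvariety. If $s\geq1$, then for each positive-degree factor, with polarisation isogeny $\phi\colon E\to\hat E$, the associated $\bG_m$-torsor is --- after the canonical Mumford-bundle twist, which is built into the $(P_n,\beta_n)$ construction --- the pullback along $(\id,\phi)$ of the Poincar\'e $\bG_m$-torsor $\mathcal P_E^\times$ on $E\times\hat E$. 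Since~$\phi$ is surjective this realises the factor inside~$\mathcal P_E^\times$, which is a semiabelian scheme over the curve~$\hat E$ with fibre over~$\xi$ the extension of~$E$ by~$\bG_m$ classified by~$\xi$. Combining all factors (and~$S$) yields a morphism from~$\im(f)$ into a semiabelian scheme $\pi\colon\cG\to\hat E$ over a one-dimensional base.

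\emph{Step 3: relative Manin--Mumford.} Tracing the definition of $(P_n,\beta_n)$ --- built from the multiplication-by-$n$ isogenies of~$E$ and the induced maps on theta groups --- one identifies the fibres of~$\beta_\infty$, under the realisation of Step~2, with the subsets of~$\cG$ consisting of points that are torsion in their fibre~$\cG_{\pi(x)}$. So a quadratic torsion packet relative to~$f$ is finite unless~$\im(f)$ contains infinitely many such fibrewise-torsion points. By the relative Manin--Mumford conjecture for a curve in a semiabelian scheme over a curve --- known through the work of Masser--Zannier, Bertrand, and Corvaja--Masser--Zannier --- this forces~$\im(f)$ to lie in a proper subgroup subscheme of~$\cG$ translated by a torsion section; pulling such a subscheme back along the realisation produces a strict AT subvariety of~$P$ containing~$\im(f)$, contradicting the hypothesis.

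I expect the main obstacle to be the identification in Step~3 of the fibres of~$\beta_\infty$ with fibrewise torsion in~$\cG$: this needs an explicit description of the isogeny geometric quadratic Chabauty construction, in particular that the~$P_n$ are the ``$n$-division'' AT varieties and that theta-group torsion is the correct notion of fibrewise torsion on the non-semiabelian part, together with a careful matching of the special subvarieties produced by relative Manin--Mumford for~$\cG\to\hat E$ with the AT subvarieties of~$P$. Steps~1 and~2 should be routine given the earlier structure theory.
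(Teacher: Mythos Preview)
Your overall strategy matches the paper's: reduce to the case where~$A=E$ is an elliptic curve, then invoke a relative Manin--Mumford theorem for a section of a semiabelian scheme over a curve. The Poincar\'e-bundle realisation in your Step~2 is essentially the paper's construction of the torsor $P' = m^*P\otimes\pr_1^*P^{\otimes-1}\otimes(-\pr_2)^*P^{\otimes-1}$ on~$E\times E$, viewed as a semiabelian scheme over~$E$ via the first projection; this Mumford bundle is canonically a pullback of the Poincar\'e bundle. The identification of quadratic torsion with fibrewise torsion that you flag as the main obstacle is indeed a key lemma (Lemma~\ref{lem:P'_quadratic_torsion} in the paper), but it is not the only one.

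There is a genuine gap in Step~3. You assert that relative Manin--Mumford ``forces~$\im(f)$ to lie in a proper subgroup subscheme of~$\cG$ translated by a torsion section''. This is not what the theorem says: the precise result of Bertrand--Masser--Pillay--Zannier (Theorem~\ref{thm:relative_manin-mumford}) has a third alternative, the \emph{Ribet section} case, in which the section meets fibrewise torsion infinitely often \emph{without} lying in any proper subgroup subscheme. Ruling out Ribet sections requires a further argument: in the paper's setup the classifying map of the semiabelian scheme is $q=\lambda_P\circ\hat f$ with~$\lambda_P$ a \emph{symmetric} element of $\Hom(E,E^\vee)$, while the Ribet condition reads $q=\alpha(p)$ modulo torsion with~$\alpha$ \emph{antisymmetric}; comparing these forces either~$\lambda_P=0$ (so~$P$ is already semiabelian) or~$\hat f$ torsion (so $\im(f)$ lies in a fibre of $P\to E$). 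Your proposal misses this step, and it is precisely the Ribet phenomenon that makes the elliptic case genuinely deeper than a naive dimension count suggests.

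A secondary point: for $\dim A=1$ and $r\geq2$ the paper takes a shorter route, projecting~$P$ onto a two-dimensional \emph{constant} semiabelian variety~$B$ (choose a degree-$0$ factor of the torus, which always exists after a change of basis once~$r\geq2$) and applying classical Manin--Mumford for semiabelian varieties over~$\Qbar$. Your approach of bundling everything into one large~$\cG$ and invoking relative Manin--Mumford is more uniform in appearance, but the Bertrand--Masser--Pillay--Zannier theorem as stated applies only to extensions of~$E_Y$ by~$\bG_{m,Y}$ (torus rank one), so reducing to~$r=1$ first is not merely cosmetic.
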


With one exception, Theorem~\ref{thm:manin-mumford} can be deduced from Manin--Mumford for semiabelian varieties. The exception is the case when~$P$ is a $\bG_m$-torsor over an elliptic curve, where we instead must use~$f$ to produce a semiabelian scheme over~$Y$ with a section, and apply a hard relative Manin--Mumford result of Bertrand--Masser--Pillay--Zannier \cite{bertrand-masser-pillay-zannier:relative_manin-mumford}. We give the full proof in \S\ref{s:unlikely_intersections}.

\begin{remark}
	The author's original interest in Stoll's Conjecture and its generalisation came from its applications to finite descent and the section conjecture. In \cite[Theorem~A]{betts-kumpitsch-luedtke} with Theresa Kumpitsch and Martin L\"udtke, the author showed that if~$Y/\bQ$ is a hyperbolic curve with a model~$\cY$ such that there exists a finite subscheme $Z\subset Y$ such that $\cY(\bZ_p)_\infty\subseteq Z(\bQ_p)$ for a density~$1$ set of primes~$p$, then the Selmer Section Conjecture holds for~$\cY$. Combining this with Theorem~\ref{thm:main_tate} gives a new proof that the Selmer Section Conjecture holds whenever $\rank(J(\bQ))=0$, $\Sha(J/\bQ)$ is finite\footnote{The relevance of this condition may not be apparent at first. It is because \cite{betts-kumpitsch-luedtke} uses a different definition of Selmer schemes and the Chabauty--Kim locus compared with this paper, cf.~Remark~\ref{rmk:balakrishnan-dogra_modification}.}, and $g+\rho_\bQ+n_0-1\geq2$. This result was already known to experts, and is true even without the assumption that $g+\rho_\bQ+n_0-1\geq2$ (when~$g\geq1$, \cite[Corollary~6.2(2)]{stoll:finite_descent} shows that the finite descent locus of~$\cY$ is contained in a finite subscheme of~$Y$, and one can conclude using \cite[Theorem~2.6]{stoll:finite_descent}; the case~$g=0$ was handled by Stix \cite[Corollary~6]{stix:local_birational}).
\end{remark}

\subsection*{Structure of the paper}

We begin in \S\ref{s:AT_varieties} by developing the basic theory of AT varieties, including the construction of the quadratic Albanese variety of a smooth curve. This allows us to set up the isogeny geometric quadratic method in \S\ref{s:isogeny_method}, a variant on Edixhoven--Lido's geometric quadratic Chabauty method. The technical heart of the paper comes in the following two sections. In \S\ref{s:comparison}, we show that our isogeny geometric quadratic Chabauty method is equivalent to original quadratic Chabauty by systematically describing the relevant Selmer schemes and Kummer maps in terms of AT varieties. In \S\ref{s:unlikely_intersections} which follows, we develop the basic theory of unlikely intersections inside AT varieties, proving some initial results on quadratic torsion packets and AT subvarieties to eventually prove our Manin--Mumford Theorem~\ref{thm:manin-mumford}. Once this is accomplished, the main Theorem~\ref{thm:main} is proved.

We also include a short Appendix~\ref{appx:non-realisability}, where we consider the question of which other quotients of the fundamental group are realisable by morphisms to smooth varieties. The answer turns out to be very restrictive: even the full depth~$2$ quotient~$U^p_2$ is not realisable for any smooth projective curve of genus~$\geq4$.

\subsection*{Acknowledgements}

I am very grateful to David Urbanik and Ziyang Gao for many helpful discussions about unlikely intersections, and especially to the latter for introducing me to the notion of Ribet sections in relative Manin--Mumford. I am also grateful to Guido Lido and Bas Edixhoven for taking the time many years ago to explain to me their geometric approach to quadratic Chabauty, which heavily inspired this paper. I am also indebted to Martin L\"udtke, who provided many helpful comments on an earlier version of this paper.

\printorarxiv{I reserve my deepest and sincerest thanks for Minhyong Kim, in whose honour this volume is dedicated. Minhyong has been a help and inspiration for me over the years, in so many ways both mathematical and in life more broadly, and I am profoundly grateful for his untiring support.}{I reserve my deepest and sincerest thanks for Minhyong Kim, who has been a help and inspiration for me over the years, in so many ways both mathematical and in life more broadly.} I hope that he gains some pleasure from seeing developed in this article several ideas which he initially introduced me to some eleven years ago.

\section{AT varieties and quadratic Albanese varieties}\label{s:AT_varieties}

In this section, we will develop the basic geometry of AT varieties over a field~$k$, describing their structure, and giving the construction of the quadratic Albanese variety. We will also describe the relationship with fundamental groups. We begin with a foundational result on morphisms between AT varieties.

\begin{lemma}\label{lem:AT_morphisms}
	Suppose that~$T$ and~$T'$ are torsors under abelian varieties~$A$ and~$A'$, and that~$P$ and~$P'$ are torsors under tori~$G$ and~$G'$ over~$T$ and~$T'$, respectively.
	
	Then for any morphism $\phi_P\colon P\to P'$ of $k$-varieties, there exists a unique homomorphism $\phi_A\colon A\to A'$, a unique morphism $\phi_T\colon T\to T'$, and a unique homomorphism $\phi_G\colon G\to G'$ such that the squares
	\begin{equation}\label{eq:morphism_over_under_diagrams}
		\begin{tikzcd}
			A\times T \arrow[r,"\phi_A\times \phi_T"]\arrow[d,"\tau"] & A'\times T' \arrow[d,"\tau'"] \\
			T \arrow[r,"\phi_T"] & T'
		\end{tikzcd}
		\quad
		\begin{tikzcd}
			P \arrow[r,"\phi_P"]\arrow[d,"\pi"] & P' \arrow[d,"\pi'"] \\
			T \arrow[r,"\phi_T"] & T'
		\end{tikzcd}
		\quad
		\begin{tikzcd}
			G\times P \arrow[r,"\phi_G\times \phi_P"]\arrow[d,"\rho"] & G'\times P' \arrow[d,"\rho'"] \\
			P \arrow[r,"\phi_P"] & P'
		\end{tikzcd}
	\end{equation}
	commute, where~$\pi$ and~$\pi'$ are the projection maps and~$\tau$, $\tau'$, $\rho$ and~$\rho'$ are the action maps.
\end{lemma}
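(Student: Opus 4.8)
The plan is to build the three maps $\phi_A$, $\phi_T$, $\phi_G$ from $\phi_P$ by peeling off the torus layer first and the abelian layer second, using the fact that the various structure morphisms are characterized intrinsically. First I would construct $\phi_T$: the composite $T$-torsor structure gives $P \to T$ with geometrically connected fibres isomorphic to $G$, and similarly $P' \to T'$; since a torus has no nonconstant morphisms to a torsor under an abelian variety (any such morphism would factor through the Albanese of the torus, which is trivial), the morphism $\pi' \circ \phi_P \colon P \to T'$ contracts each fibre of $\pi$ to a point, hence factors uniquely through $\pi$, yielding $\phi_T \colon T \to T'$ making the middle square commute. Uniqueness of $\phi_T$ is immediate from surjectivity of $\pi$. (One should check $T \to T'$ is a morphism of schemes, not merely on points; this follows because $\pi$ is faithfully flat, so descends the morphism.)

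Next I would produce $\phi_A$. A torsor $T$ under an abelian variety $A$ has the property that $A$ is canonically recovered as the automorphism group, or more usefully: after base change to $T$, the torsor $T \times_k T' \to \dots$ — actually the cleanest route is to use that $T$ is a torsor, pick (over $\bar k$, or after a finite extension, then descend by uniqueness) a point, trivializing $T \cong A$ and $T' \cong A'$, and then invoke the classical rigidity lemma for abelian varieties: any morphism of abelian varieties (or torsors) sending the chosen base point appropriately differs from a homomorphism by a translation, and the homomorphism part $\phi_A$ is independent of the choice of base point. This forces the left square to commute, since the action map $\tau$ is $(a,t) \mapsto a + t$ under the trivialization and $\phi_T(a+t) = \phi_A(a) + \phi_T(t)$ is exactly the translation-plus-homomorphism decomposition. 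Uniqueness of $\phi_A$ follows because $\tau$ together with a section of $T$ determines $A$ as a quotient.

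Finally $\phi_G$: with $\phi_T$ in hand, the square relating $P$, $P'$, $T$, $T'$ commutes, so $\phi_P$ is a morphism of "relative torsors" over the base map $\phi_T$. Pulling back $P'$ along $\phi_T$ gives a $G'$-torsor $\phi_T^* P'$ over $T$, and $\phi_P$ factors as a $T$-morphism $P \to \phi_T^* P'$. Now I am reduced to the following local statement: given a $G$-torsor $P$ and a $G'$-torsor $Q$ over the same base $T$ and a $T$-morphism $P \to Q$, there is a unique homomorphism $\phi_G \colon G \to G'$ equivariant for it. This can be checked after the faithfully flat base change $P \to T$ which trivializes $P$ (so $P \times_T P \cong G \times P$), reducing to: a morphism $G \times P \to Q_P := Q \times_T P$ of schemes over $P$, where $Q_P$ is a $G'$-torsor over $P$; restricting to the identity section $\{1\} \times P$ picks out a section of $Q_P$, trivializing it as $G' \times P$, and then the morphism becomes $G \times P \to G' \times P$ over $P$, i.e. a $P$-family of morphisms $G \to G'$ of varieties, each sending $1 \mapsto 1$ — but a morphism of tori (as varieties) sending identity to identity is a homomorphism (rigidity for tori / diagonalizable groups), and a connected family of homomorphisms $G \to G'$ is constant, so we get a single $\phi_G$. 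Equivariance and the commutativity of the right-hand square then follow by construction, and uniqueness follows by restricting the square to any point.

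The main obstacle I expect is the bookkeeping in the last step — correctly formulating the "relative torsor" reduction and making sure the trivializations are compatible enough that $\phi_G$ comes out as a genuine homomorphism of group schemes over $k$ (not just a morphism of varieties, and not just over $\bar k$). The descent arguments are routine but need care, and one must invoke the right rigidity statements: for abelian varieties (classical) and for tori, where the statement "a pointed morphism of tori is a homomorphism" and "$\Hom(G,G')$ is discrete" replace the abelian-variety rigidity lemma. Everything else — the non-existence of nonconstant maps from tori to abelian torsors, uniqueness via surjectivity of the structure maps — is short.
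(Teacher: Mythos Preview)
Your proposal is correct and takes essentially the same approach as the paper: both construct $\phi_T$ from the fact that tori have no nonconstant maps to abelian torsors, extract $\phi_A$ via rigidity for abelian varieties, and obtain $\phi_G$ by combining rigidity for tori (pointed morphisms are homomorphisms) with discreteness of $\Hom(G,G')$. The only cosmetic difference is that the paper reduces to an algebraically closed base field at the outset (by fpqc descent) and trivializes all torsors, whereas you work relatively and descend piecewise; the paper's version of your $\phi_G$ step is phrased as defining $h\colon G\times P\to G'$ by $\phi_P(gx)=h(g,x)\cdot\phi_P(x)$ and observing that $h$ is a pointed torus morphism in $g$ and constant in $x$.
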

\begin{proof}
	By fpqc descent for morphisms, it suffices to prove the result in the case that~$k=\kbar$ is algebraically closed. We may then suppose further that~$T=A$ and~$T'=A'$ are both trivial torsors. Since the only morphisms from tori to abelian varieties are constant, the composition $P\xrightarrow{\phi_P}P'\xrightarrow{\pi'}A'$ must be constant on the fibres of~$\pi\colon P\to A$, and so factors through a unique morphism~$\phi_T\colon A\to A'$ of $k$-varieties. The morphism~$\phi_T$ is automatically a translate of a unique homomorphism $\phi_A\colon A\to A'$. So we have shown commutativity of the left and middle square in~\eqref{eq:morphism_over_under_diagrams}. 
	
	To construct~$\phi_G$, consider the two compositions
	\[
	G \times P \to P \xrightarrow{\phi_P} P'
	\]
	where the first map is either the action~$\rho$ or the second projection. These two maps have the same composition with~$\pi'\colon P'\to A'$, so they differ by the action of a unique morphism $h\colon G\times P\to G'$. Explicitly, we have the identity
	\[
	\rho'(h(g,x),\phi_P(x)) = \phi_P(\rho(g,x))
	\]
	for all~$x\in P$ and~$g\in G$. For any fixed~$x$, the restriction of~$h$ to~$G\times\{x\}$ is a morphism between tori, so is a translate of a homomorphism $G\to G'$. In fact, since~$h(1,x)=1$, the restriction of~$\phi$ to~$G\times\{x\}$ is a homomorphism. Thus~$h$ induces a morphism from~$P$ to the Hom-scheme $\Hom(G,G')$. Since the Hom-scheme is discrete and~$P$ is connected, this factored map must be constant, hence~$h(g,x)=\phi_G(g)$ for some homomorphism $\phi_G\colon G\to G'$ independent of~$x$. This shows commutativity of the second square in~\eqref{eq:morphism_over_under_diagrams}.
\end{proof}

\begin{definition}\label{def:over_under}
	We say that the morphism~$\phi_P\colon P \to P'$ lies \emph{over} the morphism $\phi_T\colon T\to T'$ and \emph{under} the homomorphism~$\phi_G\colon G\to G'$. We say that~$\phi_T$ lies \emph{under} the homomorphism~$\phi_A\colon A\to A'$.
\end{definition}

One consequence of Lemma~\ref{lem:AT_morphisms} is that if~$P$ is a $G$-torsor over an $A$-torsor~$T$, as in the setup of the lemma, then~$G$, $A$ and~$T$ can be recovered up to unique isomorphism from the variety structure on~$P$ alone. We will often use this notation without comment: if~$P$ is an AT variety, then~$G$ will always denote the torus which acts on~$P$, $T$ will always denote the torsor over which~$P$ lies, and~$A$ will always denote the abelian variety which acts on~$T$. Similarly, the AT variety~$P'$ will have corresponding torus~$G'$, abelian variety~$A'$ and torsor~$T'$, and so on.

There are two fundamental constructions on AT varieties. Firstly, if~$P\to T$ is an AT variety and~$\phi_T\colon T'\to T$ is a morphism with~$T'$ a torsor under another abelian variety~$A'$, then the \emph{pullback} $\phi_T^*P\coloneqq P\times_TT'$ is naturally a $G$-torsor over~$T'$, coming with a morphism $\phi_T^*P\to P$ lying over~$\phi_T$ and under the identity~$\id_G$. The dual notion is that of the \emph{pushout}: if~$\phi_G\colon G\to G'$ is a homomorphism of tori, then the pushout~$\phi_{G*}P$ is the quotient of~$G'\times P$ by the action of~$G$ by~$g\cdot(g',\tilde x)=(g'\phi_G(g)^{-1},g\tilde x)$. The pushout is a $G'$-torsor over~$T$, and comes with a morphism $P\to \phi_{G*}P$ of torsors lying over the identity~$\id_T$ and under~$\phi_G$.

\begin{lemma}\label{lem:decomposition_of_morphisms}
	Let~$P$ and~$P'$ be AT varieties, with associated tori~$G$ and~$G'$, abelian varieties~$A$ and~$A'$, and torsors~$T$ and~$T'$, respectively. Let~$\phi_G\colon G\to G'$ be a homomorphism and~$\phi_T\colon T\to T'$ be a morphism. Then every morphism $\phi\colon P\to P'$ lying over~$\phi_T$ and under~$\phi_G$ factors uniquely as a composition
	\[
	P \to \phi_{G*}P \simeq \phi_T^*P' \to P' \,,
	\]
	where the middle isomorphism is an isomorphism of~$G'$-torsors over~$T$.
	
	In particular, there exists such an~$\phi$ if and only if~$\phi_{G*}P$ and~$\phi_T^*P'$ are isomorphic as torsors, and when they are, the set of all possible morphisms~$\phi$ is a torsor under~$\Map(T,G)=G(k)$.
\end{lemma}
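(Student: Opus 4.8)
The plan is to deduce everything from the two canonical morphisms already built into the pullback and pushout constructions — the canonical map $\kappa\colon P\to\phi_{G*}P$ (which lies over $\id_T$ and under $\phi_G$) and the projection $\pr\colon\phi_T^*P'\to P'$ (which lies over $\phi_T$ and under $\id_{G'}$), recalling that $\phi_T^*P'=P'\times_{T'}T$ is a $G'$-torsor over $T$ — together with the universal property of the pushout and the elementary fact that any morphism of $G'$-torsors over a fixed base is automatically an isomorphism.

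First I would produce the factorisation. Given $\phi\colon P\to P'$ lying over $\phi_T$ and under $\phi_G$, the relation $\pi'\circ\phi=\phi_T\circ\pi$ is exactly what is needed for $(\phi,\pi)$ to define a morphism $P\to P'\times_{T'}T=\phi_T^*P'$ over $T$; and since $\phi$ is under $\phi_G$ while $\pi$ is $G$-invariant, this morphism intertwines the $G$-action on $P$ with the $G'$-action on $\phi_T^*P'$ along $\phi_G$. By the universal property of the pushout — morphisms $\phi_{G*}P\to Q$ of $G'$-torsors over $T$ correspond bijectively to morphisms $P\to Q$ over $T$ that are equivariant along $\phi_G$ — there is then a unique morphism $\bar\phi\colon\phi_{G*}P\to\phi_T^*P'$ of $G'$-torsors over $T$ with $\bar\phi\circ\kappa=(\phi,\pi)$. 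Being a morphism of $G'$-torsors over $T$ it is an isomorphism, and $\pr\circ\bar\phi\circ\kappa$ is the $P'$-coordinate of $(\phi,\pi)$, namely $\phi$; this is the asserted factorisation. For uniqueness of the middle isomorphism I would note that any isomorphism $\iota\colon\phi_{G*}P\xrightarrow{\sim}\phi_T^*P'$ of $G'$-torsors over $T$ with $\pr\circ\iota\circ\kappa=\phi$ automatically satisfies $\iota\circ\kappa=(\phi,\pi)$ — its $P'$-coordinate is $\phi$ by hypothesis and its $T$-coordinate is $\pi$, since $\iota$ and $\kappa$ respect the structure maps to $T$ — so $\iota=\bar\phi$ by the uniqueness half of the pushout's universal property.

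For the ``in particular'' I would argue as follows. If a morphism $\phi$ as in the statement exists, then $\bar\phi$ exhibits an isomorphism $\phi_{G*}P\cong\phi_T^*P'$ of $G'$-torsors over $T$; conversely, given such an isomorphism $\iota$, the composite $\pr\circ\iota\circ\kappa$ lies over $\phi_T$ and under $\phi_G$, by composing the over/under data of $\kappa$ ($\id_T$, $\phi_G$), of $\iota$ ($\id_T$, $\id_{G'}$) and of $\pr$ ($\phi_T$, $\id_{G'}$). Hence such $\phi$ exist iff $\phi_{G*}P$ and $\phi_T^*P'$ are isomorphic as $G'$-torsors over $T$, and when they are, the existence and uniqueness above show that $\phi\mapsto\bar\phi$ is a bijection from the set of all such $\phi$ onto the set of isomorphisms $\phi_{G*}P\xrightarrow{\sim}\phi_T^*P'$ of $G'$-torsors over $T$ — a torsor under $\Aut_{G'\text{-tors}/T}(\phi_{G*}P)$. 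Finally, an automorphism of a $G'$-torsor over $T$ is fibrewise translation by a morphism $T\to G'$, and every such morphism is constant, since $T_{\kbar}$ is a torsor under an abelian variety (hence a complete connected variety) while $G'$ is affine; so $\Aut_{G'\text{-tors}/T}(\phi_{G*}P)=\Map(T,G')=G'(k)$, which gives the last assertion.

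Apart from this, the argument is pure diagram-chasing with the over/under dictionary of Lemma~\ref{lem:AT_morphisms} and Definition~\ref{def:over_under}. The one genuinely structural input is the universal property of the pushout used in the first step — that for a $G'$-torsor $Q$ over $T$, morphisms $\phi_{G*}P\to Q$ of $G'$-torsors over $T$ are the same as morphisms $P\to Q$ over $T$ equivariant along $\phi_G$ — which follows directly from the construction $\phi_{G*}P=(G'\times P)/G$ once one knows (via Hilbert~90, so that torsors under tori are Zariski-locally trivial) that this quotient really is a $G'$-torsor over $T$, as was recorded when the pushout was introduced. So I do not anticipate a serious obstacle beyond keeping the bookkeeping straight.
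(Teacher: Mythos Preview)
Your argument is correct and follows essentially the same route as the paper's proof: factor through the pullback via the universal property of fibre products, then through the pushout via its universal property, and observe that a morphism of $G'$-torsors over~$T$ is automatically an isomorphism. Your treatment of the ``in particular'' clause is more detailed than the paper's (which leaves it implicit), and you correctly arrive at $G'(k)$ rather than the $G(k)$ printed in the lemma statement --- the latter appears to be a typo, since the automorphism group of a $G'$-torsor over~$T$ is $\Map(T,G')=G'(k)$, exactly as you compute.
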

\begin{proof}
	By the universal property of fibre products, the morphism $\phi\colon P\to P'$ factors uniquely through a morphism $\phi'\colon P\to \phi_T^*P'$ lying over~$\phi_T$. The morphism~$\phi'$ lies under~$\phi_G$, so the map~$G'\times P\to \phi_T^*P'$ given by~$(g',\tilde x)\mapsto g'\cdot \phi'(\tilde x)$ is $G$-invariant, and so factors uniquely through a morphism $\phi_{G*}P\to \phi_T^*P'$ lying under~$\id_{G'}$ and over~$\id_T$. This final morphism is automatically an isomorphism, because both~$\phi_{G*}P$ and $\phi_T^*P'$ are torsors.
\end{proof}

Among all morphisms of AT varieties, there is a class which will play a special role in the theory. We call a morphism $\phi\colon P\to P'$ of AT varieties an \emph{isogeny} just when the homomorphisms $\phi_G\colon G\to G'$ and $\phi_A\colon A\to A'$ are isogenies. For later use, we record one property of isogenies.

\begin{lemma}\label{lem:isogenies_finite}
	Any isogeny $\phi\colon P\to P'$ of AT varieties is finite.
\end{lemma}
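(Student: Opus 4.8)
The plan is to reduce the finiteness of $\phi$ to the finiteness of its constituent maps on tori, abelian varieties, and torsors, using the factorisation of Lemma~\ref{lem:decomposition_of_morphisms}. By that lemma, $\phi$ factors as $P \to \phi_{G*}P \xrightarrow{\sim} \phi_T^*P' \to P'$, so it suffices to check finiteness of the first and last maps (the middle one being an isomorphism). The map $P \to \phi_{G*}P$ lies over $\id_T$ and under $\phi_G\colon G\to G'$, and is a morphism of $G$-torsors (with $G$ acting through $\phi_G$); étale-locally on $T$ it is just the isogeny $\phi_G$ of tori, which is finite since a surjective homomorphism of tori with finite kernel is finite (e.g.\ it is proper — being projective after compactifying — and quasi-finite). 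Finiteness may be checked fpqc-locally on the target, so $P\to\phi_{G*}P$ is finite. The map $\phi_T^*P' \to P'$ is the base change of $\phi_T\colon T\to T'$ along $\pi'\colon P'\to T'$, so it is finite as soon as $\phi_T$ is. And $\phi_T$ lies under the isogeny $\phi_A\colon A\to A'$; étale-locally on $T'$ (where $T$ and $T'$ become trivial torsors) it becomes a translate of $\phi_A$, which is finite. Hence $\phi_T$ is finite, hence so is $\phi_T^*P'\to P'$, and composing, $\phi$ is finite.

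The one point requiring a little care is that "finite" is not obviously fpqc-local on the source, only on the target, so I should be slightly careful about which base changes I invoke. For $\phi_T$: choose an fpqc (in fact étale, after a finite extension) cover $T'' \to T'$ trivialising $T'$, so $T'_{T''} \cong A'_{T''}$; pulling back $T$ along $T''\to T'$ also trivialises it since $T\to T'$ is itself an $A$-torsor map lying under $\phi_A$, and the pulled-back map is $(\phi_A)_{T''}$ up to a translation, which is finite; finiteness descends along the fpqc cover $T''\to T'$ of the target. The same pattern handles $P\to \phi_{G*}P$, trivialising the $G$-torsor structure over a cover of $\phi_{G*}P$ (equivalently over a cover of $T$), where the map becomes $(\phi_G)_{\text{base}}$, finite because $\phi_G$ is.

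The main obstacle — such as it is — is purely organisational: assembling the three "étale-local triviality + finiteness + fpqc descent of finiteness" arguments cleanly, and making sure the torsor-trivialising covers for $P$, $P'$, $T$, $T'$ are chosen compatibly so that the local models really are the isogenies $\phi_G$ and $\phi_A$ (up to translation) and nothing else. There is no genuine geometric difficulty: once one knows isogenies of tori and of abelian varieties are finite (both standard), and that finiteness is fpqc-local on the target and stable under composition and base change, the result follows formally from Lemma~\ref{lem:decomposition_of_morphisms}. I would write it in essentially the two-sentence form above, spelling out only the fpqc-descent-of-finiteness step for the torus part since that is the one where the torsor is acting through a non-injective $\phi_G$ and so the "local model" statement deserves a word.
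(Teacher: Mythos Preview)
Your proof is correct and follows essentially the same route as the paper: factor via Lemma~\ref{lem:decomposition_of_morphisms} and show that $P\to\phi_{G*}P$ and $\phi_T^*P'\to P'$ are each finite, the latter being a base change of $\phi_T$. The paper streamlines the torsor-trivialisation bookkeeping by first passing to $k=\kbar$ (so $T=A$, $T'=A'$, $G=\bG_m^r$) and then observing that $P\to\phi_{G*}P$ is \emph{Zariski}-locally $\phi_G\times\id_A$, while $\phi_A^*P'\to P'$ is the pullback of the finite map $\phi_A$; this avoids the separate fpqc-descent discussions you spell out for each factor, but the underlying argument is identical.
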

\begin{proof}
	By fpqc descent, it suffices to prove the result when~$k=\kbar$ is algebraically closed, so we may assume that~$T=A$, $T'=A'$, $G=\bG_m^r$ and $G'=\bG_m^{r'}$. We may also assume that~$\phi_T=\phi_A$. Then the morphism $\phi_A^*P'\to P'$ is finite (it is a pullback of the finite morphism $\phi_A\colon A\to A'$), and the morphism $P\to\phi_{G*}P$ is also finite (it is, Zariski-locally on~$A$, identified with the map $\phi_G\times\id_A\colon G\times A \to G'\times A$). So~$\phi$ itself is finite by Lemma~\ref{lem:decomposition_of_morphisms}.
\end{proof}

\begin{remark}
	It is not too hard to show that any \emph{separable} isogeny of AT varieties is finite \'etale. It is reasonable to conjecture that, conversely, any finite \'etale morphism $P'\to P$ with~$P$ an AT variety and~$P'$ geometrically connected should be a separable isogeny between AT varieties -- for abelian varieties this is a theorem of Lang--Serre \cite[Th\'eor\`eme~2]{lang-serre:revetements}\cite[Proposition~5.6.8]{szamuely:galois_groups_fundamental_groups}. Because we will not need to know this, we do not investigate this further here.
\end{remark}

\subsection{The quadratic Albanese variety}

For simplicity, we now suppose that the base field~$k$ is perfect, and consider a smooth curve~$Y$ over~$k$. We are going to show that there is a universal morphism from~$Y$ to an AT variety, which we will call the \emph{quadratic Albanese variety} of~$Y$. In some cases, the quadratic Albanese variety admits a particularly simple description, which should help give the general idea:
\begin{example}\leavevmode
	\begin{itemize}
		\item If~$Y=\bP^1\smallsetminus\{0,1,\infty\}$ is the thrice-punctured line, then its quadratic Albanese variety is~$\bG_m^2$.
		\item If~$Y=E\smallsetminus\{\infty\}$ is a once-punctured elliptic curve, then its quadratic Albanese variety is the $\bG_m$-torsor corresponding to the line bundle~$\cO(\infty)$ on~$E$.
		\item If~$Y=X$ is proper of genus~$\geq1$ and has a $k$-rational point, and the Galois action on~$\NS(J_\kbar)$ is trivial, then there are line bundles~$L_1,\dots,L_{\rho-1}$ on~$J$ whose restrictions to~$X$ are trivial, and whose images in~$\NS(J)$ form a basis of the kernel of the restriction map $\NS(J)\to\NS(X)=\bZ$. The fibre product of the $\bG_m$-torsors corresponding to~$L_1,\dots,L_{\rho-1}$ is a $\bG_m^{\rho-1}$-torsor over~$J$, and it is the quadratic Albanese variety of~$X$.
	\end{itemize}
\end{example}

For the general construction, let~$X$ be the smooth compactification of~$Y$, and let~$D=X\smallsetminus Y$ be the reduced boundary divisor. We have the following description of morphisms from~$Y$ to $\bG_m$-torsors over abelian varieties.

\begin{lemma}\label{lem:morphisms_to_Gm_torsor}
	Let~$P$ be a $\bG_m$-torsor over an abelian variety~$A$, corresponding to a line bundle~$L$ on~$A$. Then morphisms $f\colon Y\to P$ are in bijection with triples $(\hat f,E,\alpha)$ consisting of
	\begin{itemize}
		\item a morphism $\hat f\colon X\to A$;
		\item a divisor~$E\in\Div(X)$ supported on~$D$; and
		\item an isomorphism~$\alpha\colon \hat f^*L\cong\cO(E)$ of line bundles on~$X$.
	\end{itemize}
\end{lemma}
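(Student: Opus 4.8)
The plan is to establish the bijection by describing both sides in terms of the geometry of line bundles, using the fact that a $\bG_m$-torsor over $A$ corresponding to $L$ can be described explicitly as $P = \Spec_A(\bigoplus_{n\in\bZ} L^{\otimes n})$, i.e.\ the complement of the zero section in the total space of $L$ (or rather the $\bG_m$-bundle it defines). A morphism $f\colon Y\to P$ consists of a morphism $\hat f_Y\colon Y\to A$ together with a trivialisation of $\hat f_Y^*L$ over $Y$, since $P\to A$ is a $\bG_m$-torsor and so its $Y$-points over a given map to $A$ are a torsor under $\cO^\times(Y)$, but more structurally a section of $P$ over $Y$ lying above $\hat f_Y$ is \emph{the same thing} as a nowhere-vanishing section of $\hat f_Y^*L$, equivalently an isomorphism $\hat f_Y^*L\cong\cO_Y$. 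So the data of $f$ is a pair $(\hat f_Y,\alpha_Y)$ with $\hat f_Y\colon Y\to A$ and $\alpha_Y\colon \hat f_Y^*L\xrightarrow{\sim}\cO_Y$.

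The first step is then to extend $\hat f_Y$ to $X$: since $X$ is a smooth projective curve, $A$ is an abelian variety, and $Y=X\smallsetminus D$ is obtained by deleting finitely many points, any morphism $Y\to A$ extends uniquely to a morphism $\hat f\colon X\to A$ — this is the standard fact that rational maps from a smooth curve to an abelian variety are morphisms (no points of indeterminacy). This gives the $\hat f$ in the triple, and uniqueness of the extension is what makes the correspondence well-defined in this direction. The second step is to understand what happens to the trivialisation $\alpha_Y$ under this extension: $\hat f^*L$ is now a line bundle on all of $X$, and $\alpha_Y$ is an isomorphism $\hat f^*L\cong\cO_X$ over the open set $Y$, i.e.\ a rational section of $(\hat f^*L)^{\vee}$, equivalently a rational trivialisation of $\hat f^*L$. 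Such a thing is the same as an isomorphism $\hat f^*L\cong\cO_X(E)$ for a unique divisor $E$ on $X$ — namely $E$ is (minus) the divisor of zeros and poles of the rational section, which because $\alpha_Y$ is an \emph{isomorphism} over $Y$ is supported entirely on $D=X\smallsetminus Y$. This produces $(E,\alpha)$ and hence the triple.

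Conversely, given $(\hat f,E,\alpha)$ with $E$ supported on $D$, restricting $\alpha$ to $Y$ gives $\hat f|_Y^*L\cong\cO_X(E)|_Y=\cO_Y$ (since $E$ meets $Y$ trivially), hence a nowhere-vanishing section of $\hat f|_Y^*L$ over $Y$, hence a lift of $\hat f|_Y\colon Y\to A$ to a morphism $Y\to P$; one checks this is inverse to the previous construction. The main obstacle — really the only nontrivial input — is the extension property in the first step, that a morphism $Y\to A$ extends (uniquely) to $X\to A$; everything else is unwinding the definition of a $\bG_m$-torsor and the standard dictionary between rational trivialisations of a line bundle on a smooth curve and divisors. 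I would phrase the argument so that it manifestly produces mutually inverse maps, checking compatibility on the open set $Y$ where all the identifications are tautological, and noting that uniqueness of both the extension $\hat f$ and the divisor $E$ (given its support condition) makes the bijection canonical.
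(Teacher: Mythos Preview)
Your proof is correct and follows essentially the same route as the paper's: extend the composite $Y\to A$ to $\hat f\colon X\to A$ using that rational maps from smooth curves to abelian varieties are everywhere defined, then interpret the lift $Y\to P$ as a rational section of $\hat f^*L$ whose divisor is supported on $D$, yielding the isomorphism $\hat f^*L\cong\cO(E)$. The paper's version is slightly terser (it phrases things via the pullback torsor $\hat f^*P$ and leaves the converse as ``clearly reversible''), but the content is identical.
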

\begin{proof}
	Given a morphism $f\colon Y\to P$, the composition
	\[
	Y \xrightarrow{f} P \to A
	\]
	extends uniquely to a morphism $\hat f\colon X\to A$. The pullback~$\hat{f}^*P$ is a $\bG_m$-torsor over~$X$ equipped with a trivialisation over~$Y$ (coming from the induced map~$Y\to \hat f^*P$). This trivialisation gives a rational section~$s$ of~$\hat f^*L$ which is regular and non-vanishing on~$Y$, so the divisor~$E\coloneqq\div(s)$ is supported on~$D$. There is thus a unique isomorphism $\alpha\colon \hat f^*L\cong\cO(E)$ of line bundles on~$X$ taking the section~$s$ of~$\hat f^*L$ to the section of~$\cO(E)$ given by the rational function~$1$.
	
	This describes how to associate a triple~$(\hat f,E,\alpha)$ to a morphism~$f$. The construction is clearly reversible, and so yields a bijection.
\end{proof}

Using this lemma, we can now construct the quadratic Albanese variety of~$Y$. Let~$J=\bPic^0(X)$ denote the Jacobian variety of the projective curve~$X$, and let~$J^1=\bPic^1(X)$ denote the $J$-torsor of degree~$1$ line bundles on~$X$. Let~$\alb\colon X\to J^1$ denote the Abel--Jacobi map, and let~$\Lambda$ denote the kernel of the homomorphism
\begin{equation}\label{eq:map_from_neron-severi}
	\NS(J_{\kbar}) \oplus \bZ\cdot D(\kbar) \to \bZ
\end{equation}
given by\footnote{To make sense of this formula we choose an arbitrary trivialisation $J^1_\kbar\simeq J_\kbar$; the homomorphism~\eqref{eq:map_from_neron-severi} is independent of this choice.} $([L],E)\mapsto \alb^*([L])-\deg(E)$. Then~$\Lambda$ is a lattice endowed with a continuous action of the Galois group~$\Gal(\kbar/k)$; let~$H$ be the torus over~$k$ whose character lattice is~$\Lambda$.

\begin{theorem}\label{thm:quadratic_albanese}
	Let~$Y$ be a smooth curve over a perfect field~$k$. Then there is an AT variety~$Q$ and a morphism $\qalb\colon Y\to Q$ of $k$-varieties which is initial among all morphisms from~$Y$ to an AT variety. Explicitly, this means that for any AT variety~$P$ and any morphism $f\colon Y\to P$, there is a unique morphism $\phi\colon Q\to P$ such that~$f=\phi\circ\qalb$.
	
	Moreover, $Q$ is a torsor under~$H$ over~$J^1$, and $Q$ and the morphism~$f$ are compatible with extensions of the base field.
\end{theorem}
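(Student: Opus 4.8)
The plan is to write down $Q$ explicitly as an $H$-torsor over $J^1$, to build $\qalb$ from a canonical trivialisation, and then to verify the universal property by decomposing an arbitrary morphism $f\colon Y\to P$ into ``graded pieces'' governed by Lemmas~\ref{lem:AT_morphisms} and~\ref{lem:decomposition_of_morphisms}. The construction rests on one rigidity input: pullback along the Abel--Jacobi map, $\alb^*\colon\Pic^0(J^1_\kbar)\to\Pic^0(X_\kbar)$, is an isomorphism (up to sign, inverse to the canonical principal polarisation of $J$). Using it, I claim that for every $\lambda=([N],E)\in\Lambda$ there is, uniquely up to unique isomorphism, a line bundle $Q_\lambda$ on $J^1_\kbar$ with class $[N]$ in $\NS(J^1_\kbar)=\NS(J_\kbar)$, equipped with an isomorphism $a_\lambda\colon\alb^*Q_\lambda\xrightarrow{\sim}\cO_X(E)$. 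Indeed, existence of such a bundle uses precisely the relation $\alb^*[N]=\deg(E)$, valid since $\lambda\in\Lambda$; uniqueness follows because prescribing the N\'eron--Severi class pins $Q_\lambda$ down up to a twist by $\Pic^0(J^1_\kbar)$, while the pullback condition fixes that twist by injectivity of $\alb^*$ on $\Pic^0$; and the pair $(Q_\lambda,a_\lambda)$ has no nontrivial automorphisms because $\alb^*$ is the identity on the unit groups $\kbar^\times$. By this uniqueness, $\lambda\mapsto(Q_\lambda,a_\lambda)$ is additive and $\Gal(\kbar/k)$-equivariant, hence defines a Galois-equivariant homomorphism $\Lambda\to\Pic(J^1_\kbar)$; its rigidity makes the attendant descent datum effective, so that this is the datum of a genuine $H$-torsor $Q\to J^1$ over $k$, with $H$ the torus of character lattice $\Lambda$. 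Finally, each $\cO_X(E)$ carries the canonical rational section $1$, regular and nowhere vanishing on $Y$, which via the $a_\lambda$ trivialises $\alb^*Q$ over $Y$; the composite $Y\xrightarrow{\sim}(\alb^*Q)|_Y\to\alb^*Q\to Q$ then defines $\qalb\colon Y\to Q$, which lies over $\alb|_Y\colon Y\to J^1$.

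For the universal property, take $f\colon Y\to P$ with $P$ a $G$-torsor over an $A$-torsor $T$. Since $T$ is proper, $Y\to P\to T$ extends to $\hat f\colon X\to T$, and the universal property of the Abel--Jacobi map (applied over $\kbar$ after translating $T$, then descended via uniqueness, since $\alb(X)$ generates the torsor $J^1$) gives a unique factorisation $\hat f=\psi\circ\alb$ with $\psi\colon J^1\to T$. For each character $\chi\colon G\to\bG_m$, the pushout $N_\chi\coloneqq\chi_*(\psi^*P)$ is a line bundle on $J^1$ with $\alb^*N_\chi\cong\chi_*(\hat f^*P)$, which inherits a trivialisation over $Y$ from that of $\hat f^*P$; hence $\alb^*N_\chi\cong\cO_X(E_\chi)$ for a divisor $E_\chi$ supported on $D$ (this is the mechanism of Lemma~\ref{lem:morphisms_to_Gm_torsor}), so $\lambda_\chi\coloneqq([N_\chi],E_\chi)\in\Lambda$, and $\chi\mapsto\lambda_\chi$ is an additive, Galois-equivariant map on characters, i.e.\ the character-group description of a homomorphism of tori $\phi_G\colon H\to G$. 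By the rigidity statement, $N_\chi$ together with its trivialisation is canonically the pair $(Q_{\lambda_\chi},a_{\lambda_\chi})$; letting $\chi$ vary, this identifies the pushout $(\phi_G)_*Q$ with $\psi^*P$ as $G$-torsors over $J^1$, compatibly with the trivialisations over $Y$. Lemma~\ref{lem:decomposition_of_morphisms} converts this identification into a morphism $\phi\colon Q\to P$ lying over $\psi$ and under $\phi_G$, and its compatibility with the $Y$-trivialisations is precisely the identity $\phi\circ\qalb=f$.

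Uniqueness and base change are then short. If $\phi,\phi'\colon Q\to P$ both compose with $\qalb$ to give $f$, then both lie over the same $\psi$ (recovered from $\hat f$, hence from $f$) and under the same $\phi_G$ (recovered from the graded-piece recipe), so by Lemma~\ref{lem:decomposition_of_morphisms} they differ by the action of a single element $g\in G(k)$; since $G$ acts freely on $P$ and $g$ fixes $\phi(\qalb(y))$ for $y\in Y$, we conclude $g=1$, whence $\phi=\phi'$. That $Q$ is a torsor under $H$ over $J^1$ is built into the construction, and compatibility of $(Q,\qalb)$ with a perfect field extension $k\to k'$ follows because $\Lambda$, $H$, $J^1$, $\alb$ and the rigidified bundles $(Q_\lambda,a_\lambda)$ are formed by operations commuting with base change; alternatively, one checks directly that $(Q_{k'},\qalb_{k'})$ has the $k'$-universal property by rerunning the argument over $k'$.

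The main obstacle is the first step: manufacturing $Q$ as a genuine $H$-torsor \emph{defined over $k$} --- not merely a compatible family of line-bundle classes --- in the absence of a $k$-rational point on $J^1$. What makes it work, and what is reused throughout the universal-property argument, is the rigidity of line bundles on $J^1$ under $\alb^*$: it renders each pair $(Q_\lambda,a_\lambda)$ canonical, hence Galois-equivariant and free of descent obstructions, and in the second paragraph it promotes the graded pieces $N_\chi$ of an arbitrary $f$ to canonical identifications with the pieces of $Q$ --- which is exactly what forces $\phi\circ\qalb=f$ on the nose, rather than merely up to a twist by a unit.
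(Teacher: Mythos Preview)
Your proof is correct and rests on the same key input as the paper's --- the isomorphism $\alb^*\colon\Pic^0(J^1_\kbar)\xrightarrow{\sim}\Pic^0(X_\kbar)$ --- but the packaging is different. The paper first works over~$\kbar$, chooses a $\bZ$-basis $(L_1,E_1),\dots,(L_r,E_r)$ of~$\Lambda$ inside $\Pic(J)\oplus\bZ\cdot D(\kbar)$, gives a concrete combinatorial description of objects and morphisms in the category~$\cC_Y$ as tuples $(A,L_1,\dots;\hat f,E_1,\dots)$, and verifies initiality by expressing any object's data in the chosen basis; descent to general perfect~$k$ is then obtained at the end by observing that initial objects are unique up to unique isomorphism and hence carry canonical Galois descent data. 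Your approach is basis-free and folds descent into the construction: the rigidity of the pairs $(Q_\lambda,a_\lambda)$ makes every choice canonical, so Galois-equivariance and the coherence constraints needed to assemble an $H$-torsor over~$J^1$ are automatic, and the universal property is verified character-by-character via Lemma~\ref{lem:decomposition_of_morphisms} rather than coordinate-by-coordinate. Your route is more functorial and treats non-split~$H$ uniformly; the paper's route is more explicit and makes the combinatorics of morphisms in~$\cC_Y$ fully visible, which is reused later in the paper.
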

\begin{proof}
	Let~$\cC_Y$ denote the category whose objects are AT varieties~$P$ equipped with a morphism $f\colon Y\to P$, with the obvious definition of morphisms. We want to show that the category~$\cC_Y$ has an initial object.
	
	We first consider the case when~$k=\kbar$ is algebraically closed, so all tori over~$k$ are split and~$J^1\cong J$. In this case, objects of~$\cC_Y$ can be described as tuples
	\[
	(A,L_1,\dots,L_r;\hat f,E_1,\dots,E_r) \,,
	\]
	where~$A$ is an abelian variety, $L_1,\dots,L_r$ are elements of~$\Pic(A)$, $\hat f\colon X\to A$ is a morphism, and $E_1,\dots,E_r$ are divisors on~$X$, supported on~$D$, such that~$\hat f^*L_i\simeq\cO(E_i)$ for all~$i$. Indeed, the line bundles~$L_1,\dots,L_r$ correspond to a~$\bG_m^r$-torsor~$P$ over~$A$, and Lemma~\ref{lem:morphisms_to_Gm_torsor} ensures that the morphism~$\hat f$ and divisors~$E_1,\dots,E_r$ determine a morphism $f\colon Y\to P$. This morphism depends on a choice of isomorphisms $\alpha_i\colon \hat f^*L_i\xrightarrow\sim\cO(E_i)$, but the resulting object~$(P,f)$ of~$\cC_Y$ is independent of these choices up to isomorphism (they just scale $f$ by an element of~$\bG_m^r(k)$).
	
	Morphisms in~$\cC_Y$ can be described in a similarly combinatorial manner. If~$(P,f)$ and~$(P',f')$ are objects of~$\cC_Y$, corresponding to tuples $(A,L_1,\dots,L_r;\hat f,E_1,\dots,E_r)$ and $(A',L'_1,\dots,L'_{r'};\hat f',E'_1,\dots,E'_{r'})$, respectively, then morphisms $(P,f)\to(P',f')$ correspond to pairs~$(\phi_T,M)$ where~$\phi_T\colon A\to A'$ is a map of $k$-varieties (automatically a translate of a homomorphism) and~$M=(a_{ij})_{i,j}$ is an~$r'\times r$ integer matrix such that $\hat f'=\phi_T\circ\hat f$ and
	\begin{equation}\label{eq:morphism_over_Y}\tag{$\ast$}
		\phi_T^*L'_i\simeq\bigotimes_{j=1}^rL_j^{\otimes a_{ij}} \quad\text{and}\quad E'_i=\sum_{j=1}^ra_{ij}E_j
	\end{equation}
	for all~$1\leq i\leq r'$. Indeed, the matrix~$M$ corresponds to a homomorphism of tori~$\phi_G\colon\bG_m^r\to\bG_m^{r'}$ (so that~$M$ is the induced homomorphism of cocharacter lattices~$\bZ^{\oplus r}\to\bZ^{\oplus r'}$). The pushout~$\phi_{G*}P$, equipped with the induced map from~$Y$, then corresponds to the tuple
	\[
	\Bigl(A,\bigotimes_{j=1}^rL_j^{\otimes a_{1j}},\dots,\bigotimes_{j=1}^rL_j^{\otimes a_{r'j}};\hat f,\sum_{j=1}^ra_{1j}E_j,\dots,\sum_{j=1}^ra_{r'j}E_j\Bigr) \,,
	\]
	while the pullback~$\phi_T^*P'$, equipped with its induced map from~$Y$, corresponds to the tuple
	\[
	\Bigl(A,\phi_T^*L'_1,\dots,\phi_T^*L'_{r'};\hat f,E'_1,\dots,E'_{r'}\Bigr) \,.
	\]
	There exists a morphism $(P,f)\to(P',f')$ in~$\cC_Y$ over~$\phi_T$ and under~$\phi_G$ if and only if condition~\eqref{eq:morphism_over_Y} holds, in which case~$\phi$ is unique.
	
	Using this combinatorial description of~$\cC_Y$, we can construct an initial object. Fix a trivialisation $J^1\cong J$ and consider the commuting diagram with exact rows
	\begin{equation}\label{diag:picard_sequences}\tag{$\dagger$}
		\begin{tikzcd}[column sep = small]
			0 \arrow[r] & \Pic^0(J) \arrow[r]\arrow[d,"\alb^*","\wr"'] & \Pic(J) \oplus \bZ\cdot D(k) \arrow[r]\arrow[d] & \NS(J) \oplus \bZ\cdot D(k) \arrow[r]\arrow[d,"\eqref{eq:map_from_neron-severi}"] & 0 \\
			0 \arrow[r] & \Pic^0(X) \arrow[r] & \Pic(X) \arrow[r] & \NS(X) \arrow[r] & 0 \,,
		\end{tikzcd}
	\end{equation}
	in which the middle vertical arrow is given by $(L,E)\mapsto\alb^*L\otimes\cO(-E)$. The leftmost vertical arrow is pullback along~$\alb$, which is an isomorphism, and so the middle and rightmost vertical arrows have the same kernel~$\Lambda$. Let~$(L_1,E_1),\dots,(L_r,E_r)$ be a basis of~$\Lambda$ considered as a submodule of~$\bZ\cdot D(k)\oplus\Pic(J)$.
	
	Let~$(Q,f)$ be the object of~$\cC_Y$ corresponding to the tuple
	\[
	(J,L_1,\dots,L_r;\alb,E_1,\dots,E_r) \,.
	\]
	We claim that~$(Q,f)$ is initial. To see this, if~$(P',f')$ is another object, corresponding to a tuple~$(A,L'_1,\dots,L'_{r'};\hat f',E'_1,\dots,E'_{r'})$, then the morphism $\hat f'\colon X\to A$ factors uniquely as~$\phi_T\circ\alb$ for some morphism $\phi_T\colon J\to A$. Because this tuple defines an object of~$\cC_Y$, we have $\alb^*\phi_T^*L'_i=\hat f^{\prime*}L'_i\simeq\cO(E'_i)$ for all~$i$, and so each pair~$(\phi_T^*L'_i,E'_i)$ is an element of the kernel of the middle vertical arrow in~\eqref{diag:picard_sequences}. So we can express it in terms of our chosen basis~$(L_j,E_j)$ as
	\[
	(\phi_T^*L'_i,E'_i) = \sum_{j=1}^ra_{ij}(L_j,E_j)
	\]
	for some unique integers~$(a_{ij})_{i,j}$. This identity is exactly the same as~\eqref{eq:morphism_over_Y}, and so we have found the unique morphism $(Q,f)\to(P,f')$ in~$\cC_Y$.
	
	Finally, for general perfect~$k$, note that the absolute Galois group~$\Gal(\kbar/k)$ acts on the category~$\cC_{Y_\kbar}$. It necessarily preserves the initial object~$(Q_\kbar,f_\kbar)$: for each~$\sigma\in\Gal(\kbar/k)$ there is a unique isomorphism $\sigma^*(Q_\kbar,f_\kbar)\cong(Q_\kbar,f_\kbar)$. These isomorphisms define descent data on~$(Q_\kbar,f_\kbar)$, under which it descends to an object~$(Q,f)\in\cC_Y$. It is easy to check that the universal property of~$f_\kbar$ also descends, and so $(Q,f)$ is an initial object of~$\cC_Y$ as desired. It is clear that this construction is stable under change of base field.
\end{proof}

\begin{definition}\label{def:quadratic_albanese}
	We call the variety~$Q$ above the \emph{quadratic Albanese variety} of~$Y$, and call the morphism $\qalb\colon Y\to Q$ the \emph{quadratic Albanese map}.
\end{definition}

\begin{remark}\label{rmk:higher_albanese_manifolds}
	Hain and Zucker have developed a very general theory of the \emph{higher Albanese manifolds} $\Alb_n(Y)$ associated to a complex algebraic variety~$Y$ \cite{hain-zucker:unipotent_variations}. These are constructed from the Hodge structure on nilpotent quotients of the fundamental group of~$Y$, and have the property that there is a holomorphic map $Y^\an\to\Alb_n(Y)$ realising the maximal $n$-step unipotent quotient of the topological fundamental group of~$Y$. One can show that our quadratic Albanese variety of~$Y$ sits between the first and second higher Albanese manifolds: there are canonical holomorphic submersions
	\[
	\Alb_2(Y) \twoheadrightarrow Q^\an \twoheadrightarrow \Alb_1(Y) \,,
	\]
	neither of which is a biholomorphism in general. We further remark that although~$Q^\an$ and~$\Alb_1(Y)$ are always algebraic ($\Alb_1(Y)$ is the generalised Jacobian of~$Y$), $\Alb_2(Y)$ is not an algebraic variety in general. In fact, when~$Y=X$ is a smooth projective curve of genus~$\geq4$, then~$\Alb_2(X)$ is \emph{never} an algebraic variety (see Corollary~\ref{cor:U2_not_realisable}). Rogov has proved a similar result for $\Alb_n(X)$ whenever~$n\geq3$ and~$g\geq2$ \cite{rogov:o-minimal_higher_albanese}.
\end{remark}

\subsection{Fundamental groups of AT varieties}

Consider an AT variety~$P$ over a field~$k$ with a chosen $k$-rational base point~$\tilde0\in P(k)$. The image of~$\tilde0$ under the projection $P\to T$ induces a trivialisation $T\cong A$ of the torsor~$T$, so we are free to assume that~$T=A$, with~$\tilde0$ lying in the fibre over the identity~$0\in A(k)$. This fibre is then a trivial $G$-torsor over~$\Spec(k)$, so we may identify it with~$G$ in such a way that~$\tilde0\in P(k)$ corresponds to the identity~$1\in G(k)$.

When~$k=\bC$, the sequence $G^\an \hookrightarrow P^\an \twoheadrightarrow A^\an$ is a locally trivial fibre sequence, so the long exact sequence of homotopy groups provides a short exact sequence
\begin{equation}\label{eq:homotopy_sequence}
1 \to \pi_1(G^\an;1) \to \pi_1(P^\an;\tilde0) \to \pi_1(A^\an;0) \to 1 \,.
\end{equation}
The outer two groups in~\eqref{eq:homotopy_sequence} are both abelian, so can be identified with the homology groups~$\rH_1(G^\an,\bZ)$ and $\rH_1(A^\an,\bZ)$, respectively.

\begin{lemma}\label{lem:central_extension}
	\eqref{eq:homotopy_sequence} is a central extension, and the multiplication map~$\pi_1(G^\an;1)\times\pi_1(P^\an;\tilde0)\to\pi_1(P^\an;\tilde0)$ is the map induced by the action $\rho\colon G\times P\to P$.
\end{lemma}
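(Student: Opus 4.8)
The plan is to verify both assertions by working with explicit loops, after reducing to the complex-analytic picture already set up. First I would recall that, since $\tilde0$ sits over $0\in A(\bC)$ and inside the identified fibre $G\subseteq P$, the inclusion $G^\an\hookrightarrow P^\an$ is based at $1\in G$ and the projection $P^\an\to A^\an$ is based at $0$. The locally trivial fibre sequence $G^\an\hookrightarrow P^\an\twoheadrightarrow A^\an$ then yields the short exact sequence \eqref{eq:homotopy_sequence}. To see that it is central, I would use that for any topological group $\Gamma$ acting freely and properly on $P^\an$ with quotient $A^\an$ --- here $\Gamma=G^\an$ acting by $\rho$ --- the subgroup $\pi_1(G^\an;1)\hookrightarrow\pi_1(P^\an;\tilde0)$ coincides with the deck-transformation action of $\pi_1(G^\an;1)$ restricted to the universal-cover fibre, and more concretely: given $\gamma\in\pi_1(G^\an;1)$ and $\delta\in\pi_1(P^\an;\tilde0)$, I can represent $\gamma$ by a loop $g\colon[0,1]\to G^\an$ based at $1$, represent $\delta$ by a loop $x\colon[0,1]\to P^\an$ based at $\tilde0$, and then the homotopy $(s,t)\mapsto\rho(g(s\,?),x(t))$ --- more precisely the standard ``commuting square'' homotopy built from $\rho(g(s),x(t))$ --- exhibits $\gamma\delta\gamma^{-1}\delta^{-1}$ as null-homotopic. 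The point is that $(s,t)\mapsto\rho(g(s),x(t))$ is a map $[0,1]^2\to P^\an$ whose restriction to the four edges of the square traces out $g$, $x$, $g^{-1}$ (via $g(1-s)$ on the opposite edge, legitimate since $g(0)=g(1)=1$ so $\rho(g(s),x(1))$ and $\rho(1,x(t))$ glue correctly), and $x^{-1}$; filling in the square gives the homotopy. This simultaneously proves centrality and identifies the conjugation action as trivial.

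For the second assertion --- that the map $\pi_1(G^\an;1)\to\pi_1(P^\an;\tilde0)$ in \eqref{eq:homotopy_sequence} is induced by $\rho$ --- I would note that this is essentially a tautology given how the sequence \eqref{eq:homotopy_sequence} arises. The inclusion $\pi_1(G^\an;1)\hookrightarrow\pi_1(P^\an;\tilde0)$ in the homotopy long exact sequence of the fibration is by definition the map induced on $\pi_1$ by the fibre inclusion $\iota\colon G^\an\hookrightarrow P^\an$; but under our identification of the fibre over $0$ with $G$ sending $1$ to $\tilde0$, this inclusion $\iota$ is precisely $g\mapsto\rho(g,\tilde0)$, i.e.\ the restriction of $\rho$ to $G\times\{\tilde0\}$. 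So the map on fundamental groups is exactly the one induced by $\rho$ through this slice, which is the claim. I would spell this out only to the extent of pinning down the identifications, since everything else is formal.

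The main obstacle is purely expository rather than mathematical: one must be careful that the identification of the fibre of $P\to A$ over $0$ with the torus $G$ (so that $\tilde0\leftrightarrow 1$) is the same identification that makes $\rho$ restrict to group multiplication on that fibre, and that the trivialisation $T\cong A$ induced by $\tilde0$ is compatible with all of this. Once these identifications are fixed consistently --- which is exactly what the paragraph preceding the lemma arranges --- both statements follow from the explicit homotopy above and from unwinding the definition of the boundary maps in the long exact sequence of a fibration. I do not expect any genuine difficulty; the proof is a short diagram-chase at the level of loops.
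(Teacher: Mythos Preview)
Your centrality argument via the square homotopy $(s,t)\mapsto\rho(g(s),x(t))$ is correct and is essentially the hands-on version of what the paper does: the paper invokes the Eckmann--Hilton argument abstractly, observing that $\rho_*\colon\pi_1(G^\an)\times\pi_1(P^\an)\to\pi_1(P^\an)$ is a group homomorphism whose restrictions to the two factors are $i_*$ and the identity, which forces $\rho_*(\gamma,\delta)=i_*(\gamma)\cdot\delta$ and hence centrality. Your filled square is exactly the homotopy underlying this formal manipulation.

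However, you have misread the second assertion. The claim is \emph{not} that the map $\pi_1(G^\an;1)\to\pi_1(P^\an;\tilde0)$ in the exact sequence is induced by the fibre inclusion $g\mapsto\rho(g,\tilde0)$ --- that is indeed tautological, as you say. Rather, the claim is that the \emph{multiplication map}
\[
\pi_1(G^\an;1)\times\pi_1(P^\an;\tilde0)\to\pi_1(P^\an;\tilde0),\qquad (\gamma,\delta)\mapsto i_*(\gamma)\cdot\delta,
\]
coincides with the map $\rho_*$ induced by the full action $\rho\colon G\times P\to P$ (via the identification $\pi_1(G^\an\times P^\an)\cong\pi_1(G^\an)\times\pi_1(P^\an)$). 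This is not a tautology: $\rho_*(\gamma,\delta)$ is represented by the diagonal loop $t\mapsto\rho(g(t),x(t))$, and one must show this is homotopic to the concatenation $i_*(\gamma)\cdot\delta$. The good news is that your square homotopy already proves exactly this --- the diagonal of the square is homotopic rel endpoints to the concatenation of two adjacent edges --- so the fix is simply to extract the correct statement from the argument you have already given.
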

\begin{proof}
	We use a version of the Eckmann--Hilton argument. Because the fundamental group functor preserves products, the action does indeed induce a homomorphism
	\[
	\rho_*\colon \pi_1(G^\an;1)\times\pi_1(P^\an;\tilde0)\to\pi_1(P^\an;\tilde0) \,.
	\]
	The restriction of~$\rho_*$ to~$1\times\pi_1(P^\an;\tilde0)$ is the identity (because the same is true of~$\rho$), and the restriction to~$\pi_1(G^\an;1)\times1$ is the map~$i_*$ induced by the inclusion~$G\hookrightarrow P$ of the fibre above~$0\in A(\bQ)$, i.e.~the left-hand arrow in~\eqref{eq:homotopy_sequence}. Because~$\rho_*$ is a homomorphism, we have
	\[
	\rho_*(\gamma,\delta) = \rho_*((\gamma,1)\cdot(1,\delta))=i_*(\gamma)\cdot\delta \,,
	\]
	i.e.~$\rho_*$ is the multiplication map. The fact that the multiplication map is a homomorphism of groups forces that the image of~$i_*$ is a central subgroup.
\end{proof}

Because the extension~\eqref{eq:homotopy_sequence} is a central extension, we can form the commutator pairing $\bigwedge^2\rH_1(A^\an,\bZ)\to\rH_1(G^\an,\bZ)$. We have the following criterion for surjectivity of the commutator pairing.

\begin{lemma}\label{lem:fundamental_group_of_Gm-torsor}
	Let~$A$ be an abelian variety over~$\bC$, and let~$P$ be the pointed $\bG_m$-torsor over~$A$ corresponding to a line bundle~$L\in\Pic(A)$. If the image of~$L$ in~$\NS(A)$ is primitive\footnote{not a non-trivial multiple of any element of~$\NS(A)$}, then the commutator pairing
	\[
	\bigwedge\nolimits^{\!\!2}\rH_1(A^\an,\bZ) \to \rH_1(\bC^\times,\bZ) = \bZ(1)
	\]
	coming from the extension~\eqref{eq:homotopy_sequence} is surjective.
\end{lemma}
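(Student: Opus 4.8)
The plan is to identify the commutator pairing of~\eqref{eq:homotopy_sequence} with (the homomorphism underlying) the first Chern class $c_1(L)\in\rH^2(A^\an,\bZ(1))$, and then to deduce its surjectivity from the fact that $\NS(A)$ sits inside $\rH^2(A^\an,\bZ(1))$ as a \emph{saturated} sublattice. The main obstacle is not the final primitivity argument but the bookkeeping needed to match the geometric $\bG_m$-torsor $P\to A$ with the topological central extension~\eqref{eq:homotopy_sequence} and to check that its class is exactly $c_1(L)$, not a proper multiple of it; once the exponential-sequence picture is in place this is routine.

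\textbf{Step 1: commutator pairing $=$ extension class.} Since $\rH_1(A^\an,\bZ)$ is finitely generated free abelian, $\rH_2(A^\an,\bZ)=\bigwedge^2\rH_1(A^\an,\bZ)$ and the $\Ext$-term in universal coefficients vanishes, giving a canonical isomorphism
\[
\rH^2\bigl(\pi_1(A^\an;0);\bZ(1)\bigr)\;\xrightarrow{\ \sim\ }\;\Hom\bigl(\,{\textstyle\bigwedge^2}\rH_1(A^\an,\bZ),\bZ(1)\bigr)
\]
from group cohomology. Under this isomorphism, the class of a central extension of $\pi_1(A^\an;0)$ by $\bZ(1)$ corresponds, up to sign, to its commutator pairing: this is the classical identification of the antisymmetrisation of a group $2$-cocycle with the commutator map, valid because $\pi_1(A^\an;0)$ is free abelian, so that there are no nontrivial symmetric (i.e.\ abelian) extensions. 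In view of Lemma~\ref{lem:central_extension}, it therefore suffices to show that the class of~\eqref{eq:homotopy_sequence} is a \emph{primitive} element of $\rH^2(A^\an,\bZ(1))\cong\Hom(\bigwedge^2\rH_1(A^\an,\bZ),\bZ(1))$, since a homomorphism to $\bZ(1)\cong\bZ$ is surjective exactly when it is primitive in the dual lattice.

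\textbf{Step 2: extension class $= c_1(L)$.} Because $A^\an$ is aspherical, the natural map $\rH^2(\pi_1(A^\an;0);\bZ(1))\to\rH^2(A^\an,\bZ(1))$ is an isomorphism, and it carries the class of~\eqref{eq:homotopy_sequence} to the Euler class of the principal $\bC^\times$-bundle $P^\an\to A^\an$. This Euler class is $c_1(L)$: the exponential sequence $0\to\bZ(1)\to\cO_{A^\an}\to\cO_{A^\an}^\times\to 0$ identifies the connecting map $\Pic(A)=\rH^1(A^\an,\cO^\times)\to\rH^2(A^\an,\bZ(1))$ with $M\mapsto c_1(M)$, and $P$ is by construction the torsor classified by $L$. (One may also argue entirely by hand: writing $A=V/\Lambda$ and presenting $L$ by an Appell--Humbert datum $(H,\chi)$, a loop $\gamma$ around the $\bC^\times$-fibre generates the kernel of~\eqref{eq:homotopy_sequence}, and for lifts $\tilde\lambda,\tilde\mu\in\pi_1(P^\an;\tilde0)$ of $\lambda,\mu\in\Lambda$ one computes $[\tilde\lambda,\tilde\mu]=\gamma^{\pm E(\lambda,\mu)}$ with $E=\Im(H)$ the alternating form on $\Lambda$ representing $[L]$; this yields the same conclusion.)

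\textbf{Step 3: primitivity.} By the Lefschetz $(1,1)$-theorem, $c_1$ is injective on $\NS(A)$ and identifies it with $\rH^{1,1}(A)\cap\rH^2(A^\an,\bZ(1))$, the intersection inside $\rH^2(A^\an,\bC)$ of a $\bC$-subspace with a lattice; any such intersection is a saturated subgroup (if $nx$ lies in it for $x$ in the lattice and $n\geq1$, then $x$ lies in the subspace, hence in the intersection). Therefore, if the image of $L$ in $\NS(A)$ is primitive, then $c_1(L)$ is primitive in $\rH^2(A^\an,\bZ(1))$, and combining this with Steps~1 and~2 shows that the commutator pairing is surjective.
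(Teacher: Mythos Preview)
Your proof is correct and takes a genuinely different route from the paper's. The paper argues by hand via the Appell--Humbert Theorem: it writes $A^\an=V/\Lambda$, represents $L$ by a pair $(H,\alpha)$, explicitly constructs the universal cover of $P^\an$ as $\bC\times V$ with an explicit group $\Pi$ of deck transformations, and reads off directly that the commutator pairing in $\Pi$ is $2\pi i\,\Im(H)$. Primitivity of $[L]\in\NS(A)$ then translates immediately to surjectivity of $\Im(H)\colon\bigwedge^2\Lambda\to\bZ$.

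Your argument is more structural: you identify the commutator pairing with the extension class via universal coefficients, recognise that class as the Euler class $c_1(L)$ of the $\bC^\times$-bundle (using that $A^\an$ is aspherical), and then invoke Lefschetz $(1,1)$ to see that $\NS(A)$ is saturated in $\rH^2(A^\an,\bZ(1))$, so primitivity of $[L]$ forces primitivity of $c_1(L)$ and hence surjectivity. This avoids any explicit computation with cocycles or covering spaces, at the cost of importing several standard but nontrivial facts (Euler class $=$ extension class for aspherical base; Lefschetz $(1,1)$). The paper's approach is self-contained and yields the exact formula for the pairing (useful for the sign remark that follows), whereas yours is cleaner and makes the role of the Hodge structure on $\rH^2$ more transparent. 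Both are perfectly valid; your parenthetical in Step~2 is essentially the paper's whole argument compressed into two lines.
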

\begin{proof}
	By GAGA, we may work in the complex-analytic category. Write~$A^\an=V/\Lambda$ for a complex vector space~$V$ and full rank sublattice~$\Lambda\subset V$. By the Appell--Humbert Theorem \cite[p.~20]{mumford:abelian_varieties}, $\Pic(A)$ is isomorphic to the group of pairs~$(H,\alpha)$, where~$H$ is a Hermitian form on~$V$ whose imaginary part~$E\coloneqq\Im H$ is integer-valued on~$\Lambda$, and~$\alpha$ is a function from~$\Lambda$ to the unit circle~$U(1)$ satisfying the identity
	\begin{equation}\label{eq:appell-humbert_alpha_identity}
		\alpha(u+v) = e^{\pi iE(u,v)}\alpha(u)\alpha(v)
	\end{equation}
	for all~$u,v\in\Lambda$. The subgroup~$\Pic^0(A)$ corresponds to the subgroup of pairs where~$H=0$, and so~$\NS(A)$ is the group of Hermitian forms~$H$ such that~$E$ is integer-valued on~$\Lambda$.
	
	Let~$(H,\alpha)$ be the pair corresponding to~$L$. Our assumption that~$[L]$ is primitive in~$\NS(A)$ ensures that
	\[
	E\colon \bigwedge\nolimits^{\!\!2}\Lambda \to \bZ
	\]
	is surjective; we are going to show that the commutator pairing from~\eqref{eq:homotopy_sequence} is equal to~$2\pi iE$, which completes the proof. For this, the Appell--Humbert Theorem tells us that the total space of~$L$ is the quotient of~$\bC\times V$ by the action of $\Lambda$ given by
	\[
	u\colon (\lambda,z) \mapsto (\alpha(u)e^{\pi H(z,u)+\frac12\pi H(u,u)}\cdot\lambda,z+u) \,.
	\]
	So~$P^\an$ is the quotient of~$\bC^\times\times V$ by the same action. We are free to assume that the base point on~$P^\an$ is the image of~$(1,0)\in\bC^\times\times V$.
	
	Now let~$\Pi$ denote the group whose elements are pairs~$(s,u)$ where~$u\in\Lambda$ and~$s\in i\bR$ is an element such that~$e^s=\alpha(u)$. The group law on~$\Pi$ is given by
	\[
	(s,u)\cdot(t,v) \coloneqq (s+t+\pi iE(u,v),u+v) \,.
	\]
	The group~$\Pi$ is a central extension of~$\Lambda$ by~$\bZ(1)=2\pi i\bZ$, and it acts on~$\bC\times V$ from the right by
	\[
	(w,z)\cdot(s,u) \coloneqq (w+s+\pi H(z,u)+\frac12\pi H(u,u),z+u) \,.
	\]
	This action is properly discontinuous, and the quotient $(\bC\times V)/\bZ(1)$ is~$\bC^\times\times V$, with the same induced action of~$\Lambda$ as before. Thus we have $(\bC\times V)/\Pi=P^\an$, i.e.~$\bC\times V$ is the universal covering of~$P^\an$, with group of right deck transformations~$\Pi$. In particular, we have $\pi_1(P^\an;\tilde0)=\Pi$, and the commutator pairing in~$\Pi$ is equal to~$2\pi iE(u,v)$ so we are done.
\end{proof}

\begin{remark}
	We are following the functional conventions for path-composition in fundamental groups, i.e.~$\gamma_1\gamma_2$ is the path ``$\gamma_2$ followed by $\gamma_1$''. This is opposite to the usual convention in topology. Under topologists' conventions, the commutator pairing in~\eqref{eq:homotopy_sequence} is $-2\pi iE$ rather than~$2\pi iE$.
\end{remark}

Now we study the fundamental group of quadratic Albanese varieties.

\begin{proposition}\label{prop:surjection_on_pi1}
	Let~$Y$ be a smooth curve over the complex numbers, and let~$\qalb\colon Y\to Q$ be the quadratic Albanese map. Then the induced map
	\begin{equation}\label{eq:map_on_topological_pi1}\tag{$\ast$}
		\qalb_*\colon \pi_1(Y^\an) \to \pi_1(Q^\an)
	\end{equation}
	on topological fundamental groups is surjective (for any choice of base points).
\end{proposition}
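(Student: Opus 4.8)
The plan is to reduce the surjectivity of $\qalb_*$ to the structure of $Q$ as an $H$-torsor over $J^1$ together with the known surjectivity properties of the classical Albanese map and the commutator pairing computation of Lemma~\ref{lem:fundamental_group_of_Gm-torsor}. After choosing base points, $Q$ becomes (upon trivialising the $H$-torsor over the image of the base point) a pointed $H$-torsor over $J$, and $\pi_1(Q^\an)$ sits in the central extension~\eqref{eq:homotopy_sequence}
\[
1 \to \pi_1(H^\an) \to \pi_1(Q^\an) \to \pi_1(J^\an) \to 1 \,.
\]
The composition $Y \xrightarrow{\qalb} Q \to J^1$ is the Abel--Jacobi map $\alb$ (this is part of the construction in Theorem~\ref{thm:quadratic_albanese}), which induces a surjection $\pi_1(Y^\an) \twoheadrightarrow \pi_1(J^\an)$ since $J^1$ is a torsor under the generalised Jacobian-quotient and $\alb$ is the classical Albanese map for the \emph{projective} curve $X$; more simply, $\pi_1(J^\an) = \rH_1(X^\an,\bZ)$ and every loop on $X$, hence on $Y$, hits it. Therefore it suffices to show that the image of $\qalb_*$ meets the central subgroup $\pi_1(H^\an) \subseteq \pi_1(Q^\an)$ in all of $\pi_1(H^\an)$; equivalently, writing $N \subseteq \pi_1(Q^\an)$ for the image of $\qalb_*$, that $N \cap \pi_1(H^\an) = \pi_1(H^\an)$.

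To control $N \cap \pi_1(H^\an)$, I would use that $N$ surjects onto $\pi_1(J^\an)$, so $N$ is itself an extension of $\pi_1(J^\an)$ by $N \cap \pi_1(H^\an)$, sitting inside the central extension above; hence the commutator pairing $\bigwedge^2 \pi_1(J^\an) \to \pi_1(H^\an)$ restricted through $N$ takes values in $N \cap \pi_1(H^\an)$. So it is enough to prove that the commutator pairing $\bigwedge^2 \rH_1(J^\an,\bZ) \to \rH_1(H^\an,\bZ)$ of the extension~\eqref{eq:homotopy_sequence} for $Q$ is surjective. Now $H$ has character lattice $\Lambda$, with basis corresponding to the pairs $(L_1,E_1),\dots,(L_r,E_r)$, so $\rH_1(H^\an,\bZ) \cong \bZ(1)^r$ and the commutator pairing is the $r$-tuple of commutator pairings of the pushout $\bG_m$-torsors $P_i$ over $J$ associated to the line bundles $L_i$ (each $P_i$ obtained from $Q$ by pushing out along the $i$-th coordinate projection $H \to \bG_m$). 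By Lemma~\ref{lem:fundamental_group_of_Gm-torsor}, the $i$-th component of the commutator pairing is $2\pi i E_i$ where $E_i = \Im H_i$ is the alternating form attached to $L_i$, and this is surjective onto $\bZ(1)$ as soon as $[L_i]$ is primitive in $\NS(J)$. The possible failure of primitivity is the one subtlety: the $(L_i,E_i)$ form a basis of the \emph{saturated} submodule $\Lambda = \ker(\eqref{eq:map_from_neron-severi})$ of $\NS(J) \oplus \bZ\cdot D(k)$ (saturated because it is a kernel of a map to the torsion-free group $\bZ$), but each individual $[L_i] \in \NS(J)$ need not be primitive, and worse, the pairing is only guaranteed surjective \emph{componentwise}, not necessarily as a whole if we only knew a single $P_i$.

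I expect the main obstacle to be precisely this: upgrading the componentwise/individual surjectivity statement of Lemma~\ref{lem:fundamental_group_of_Gm-torsor} to the simultaneous surjectivity of the full pairing $\bigwedge^2 \rH_1(J^\an,\bZ) \to \bZ(1)^r$. The way around it is to work not with a fixed basis but intrinsically: the commutator pairing factors as $\bigwedge^2 \rH_1(J^\an,\bZ) \to \Hom(\Lambda,\bZ(1))$, and by Appell--Humbert this map is, up to the factor $2\pi i$, the map $\bigwedge^2 \rH_1(J^\an,\bZ) \to \Hom(\Lambda,\bZ)$ sending $u\wedge v$ to the functional $[L] \mapsto E_L(u,v)$ on $\NS(J)$ (extended by zero on $\bZ \cdot D(k)$, which dies in the restriction to $\Lambda$ only after one checks the boundary divisor classes play no role in the commutator — indeed $\pi_1$ of the torus part only sees the $\NS(J)$-direction through the pairing). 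Surjectivity onto $\Hom(\Lambda,\bZ(1))$ then amounts to: the image of $\bigwedge^2 \rH_1(J^\an,\bZ) \to \Hom(\NS(J),\bZ)$ is all of $\Hom(\NS(J),\bZ)$, i.e.\ the natural map $\NS(J) \to \Hom(\bigwedge^2 \rH_1(J^\an,\bZ),\bZ)$ is a primitive (saturated) embedding. This last fact is standard: $\NS(J)$ injects into $\Hom(\bigwedge^2 \rH_1, \bZ) = \rH^2(J^\an,\bZ)$ as the group of integral $(1,1)$-classes, and this subgroup is saturated because the quotient $\rH^2(J^\an,\bZ)/\NS(J)$ injects into $\rH^2(\cO_J) \oplus \rH^0(\Omega^2_J)$ which is torsion-free. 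Restricting a saturated-embedding picture along the surjection to $\Lambda$ preserves the needed surjectivity after dualising, and we conclude that the commutator pairing of~\eqref{eq:homotopy_sequence} for $Q$ is surjective onto $\rH_1(H^\an,\bZ)$, whence $N \cap \pi_1(H^\an) = \pi_1(H^\an)$ and $\qalb_*$ is surjective.
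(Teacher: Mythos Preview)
Your reduction to showing $N\cap\pi_1(H^\an)=\pi_1(H^\an)$ is correct, but the claim that the commutator pairing $\bigwedge^2\rH_1(J^\an,\bZ)\to\pi_1(H^\an)$ is surjective is false in general, and this is where your argument breaks. The commutator pairing factors through the projection $\Lambda\to\NS(J)$ onto the first summand (as you correctly observe, the $\bZ\cdot D$-component plays no role in the commutator). But this projection is \emph{not} injective once $\lvert D\rvert\geq2$: elements of the form $(0,x_i-x_j)$ lie in $\Lambda$ and project to zero in $\NS(J)$. Dually, the image of the commutator pairing lands in the subgroup of functionals on $\Lambda$ that vanish on $\Lambda\cap(\{0\}\oplus\bZ\cdot D)$, which is proper. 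The extreme case $Y=\bP^1\smallsetminus\{0,1,\infty\}$ makes this vivid: there $J=0$, the commutator pairing is identically zero, yet $\pi_1(H^\an)\cong\bZ^2$.

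The missing ingredient is the local monodromy around the punctures. The paper's proof reduces, as you do, to a single primitive character $\chi\colon H\to\bG_m$ corresponding to a primitive element $([L],E)\in\Lambda$, and then combines two sources of elements in $\bZ(1)\subset\pi_1(P^\an)$: the commutator subgroup, which contributes $a\bZ(1)$ where $[L]=a[L_0]$ with $[L_0]$ primitive in $\NS(J)$; and the images of small loops $\gamma_m$ around each puncture $x_m$, which contribute $a_m\bZ(1)$ where $E=\sum a_mx_m$ (via an explicit local computation of the section of $\cO(E)$). Primitivity of $([L],E)$ forces $\gcd(a,a_1,\dots,a_n)=1$, so together these generate all of $\bZ(1)$. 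Your argument recovers only the first of these two contributions; it is valid when $Y=X$ is projective (where $\Lambda\hookrightarrow\NS(J)$ is saturated and your Hodge-theoretic saturation argument goes through), but not otherwise.
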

\begin{proof}
	The result to be proved is independent of the choice of base points, so let~$b\in Y^\an$ be a choice of base point, and use~$\tilde0\coloneqq \qalb(b)$ as a base point for~$Q^\an$. We fix notation as in the proof of Theorem~\ref{thm:quadratic_albanese}, so~$Q$ is a torsor under the torus~$H$ over $J^1=\bPic^1(X)$. The image of~$\tilde0$ in~$J^{1,\an}$ is the class of the line bundle~$\cO(b)$ on~$X$, so we may identify~$J^1=J$ so that the map $\alb\colon X\to J$ is the Abel--Jacobi map based at~$b$.
	
	This implies that the composition
	\[
	\pi_1(Y^\an;b) \to \pi_1(Q^\an;\tilde0) \to \pi_1(J^\an;0)
	\]
	is surjective: it is equal to the composition of the surjections $\pi_1(Y^\an;b) \twoheadrightarrow \pi_1(X^\an;b)$ and $\pi_1(X^\an;b)\twoheadrightarrow \pi_1(J^\an;0)$. Thus the image of~\eqref{eq:map_on_topological_pi1} is a central extension of~$\pi_1(J^\an;0)$ by a subgroup~$B\subseteq\pi_1(H^\an;1)\simeq\bZ^r$. In order to show that~\eqref{eq:map_on_topological_pi1} is surjective, it suffices to show that~$\chi_*(B)=\bZ$ for every surjective homomorphism $\chi_*\colon\pi_1(H^\an;1)\twoheadrightarrow \bZ$. The homomorphism~$\chi_*$ is induced by a character~$\chi\colon H\to\bG_m$, and surjectivity of~$\chi_*$ implies that~$\chi$ is primitive. Letting~$P=\chi_*Q$ denote the pushout and $f\colon Y\to P$ the induced pointed morphism, showing that $\chi_*(B)=\bZ$ is equivalent to showing that the induced homomorphism
	\begin{equation}\label{eq:map_on_topological_pi1_pushout}\tag{$\ast'$}
	f_*\colon \pi_1(Y^\an;b) \to \pi_1(P^\an;\tilde 0)
	\end{equation}
	is surjective.
	
	By the description of the character lattice of~$H$ in Theorem~\ref{thm:quadratic_albanese}, the primitive character~$\chi$ corresponds to an element~$([L],E)$ of the kernel of the homomorphism $\NS(J)\oplus\bZ\cdot D\to\bZ$. By the proof of Theorem~\ref{thm:quadratic_albanese}, there is a unique line bundle~$L\in\Pic(J)$ with N\'eron--Severi class~$[L]$ such that~$\alb^*L\cong\cO(E)$ as line bundles on~$X$. If we enumerate~$D$ as~$x_1,\dots,x_n$, then we may write~$E=\sum_{m=1}^na_mx_m$ for some integers~$a_m$ and $[L]=[L_0^{\otimes a}]$ for some integer~$a$ and primitive element~$[L_0]$ of~$\NS(J)$. Since~$([L],E)$ is primitive, we must have that~$a$ and~$a_1,\dots,a_n$ have no common factor.
	
	On the one hand, the image of~\eqref{eq:map_on_topological_pi1_pushout} surjects onto~$\pi_1(J^\an;0)$, and so also contains the commutator subgroup of~$\pi_1(P^\an;\tilde0)$. By Lemma~\ref{lem:fundamental_group_of_Gm-torsor}, this commutator subgroup is~$a\bZ(1)\subseteq\bZ(1)=\pi_1(\bC^\times;1)$.
	
	On the other hand, the pullback~$\alb^*L$ is isomorphic to~$\cO(E)$, with the section of~$\alb^*L$ over~$Y$ induced by~$f$ corresponding to the section of~$\cO(E)$ given by the rational function~$1$. It follows that locally around each point~$x_m\in D$, $f^*P^\an$ is biholomorphic to~$\bC^\times\times\bD$, and the induced map $Y^\an\to f^*P^\an$ is the map $\bD^*\to\bC^\times\times\bD$ given by $z\mapsto(z^{a_m},z)$, where~$\bD$ is the complex unit disc and~$\bD^*\subset\bD$ is the punctured disc. It follows from this description that if~$\gamma_m\in\pi_1(Y^\an;b)$ is conjugate to a small positively oriented loop around the puncture~$x_m$, then the image of~$\gamma_m$ in~$\pi_1(f^*P^\an;f^{\prime-1}(\tilde0))$ is~$2\pi ia_m\in\bZ(1)=\pi_1(\bC^\times;1)$. This implies that the image of~\eqref{eq:map_on_topological_pi1_pushout} contains~$a_m\bZ(1)$.
	
	All told, we have shown that the image of~\eqref{eq:map_on_topological_pi1_pushout} surjects onto~$\pi_1(J^\an;0)$ and its intersection with~$\pi_1(\bC^\times;1)=\bZ(1)$ contains~$a\bZ(1)$, as well as~$a_m\bZ(1)$ for all $1\leq m\leq n$. Since $a$ and the integers~$a_m$ have no common factor, this means that~\eqref{eq:map_on_topological_pi1_pushout} is surjective, which is what we wanted to prove.
\end{proof}

By the usual comparison theorems, Proposition~\ref{prop:surjection_on_pi1} implies that the induced map
\[
\qalb_*\colon \pi_1^{\bQ_p}(Y_\Kbar;b) \to \pi_1^{\bQ_p}(Q_\Kbar;f(b))
\]
on $\bQ_p$-pro-unipotent \'etale fundamental groups is surjective for any smooth curve~$Y$ defined over a characteristic~$0$ field~$K$. In other words, $Q$ realises a quotient of $\pi_1^{\bQ_p}(Y_\Kbar;b)$. When~$K$ is a number field, it turns out that this quotient is~$U^p_\AT$: the maximal quotient which is an extension of a representation which is pure of weight~$-1$ by an Artin--Tate representation of weight~$-2$

\begin{proposition}\label{prop:artin-tate_quotient_realised}
	Let~$Y$ be a pointed smooth curve over a number field~$K$ with a $K$-rational base point~$b\in Y(K)$. Then the quotient of~$\pi_1^{\bQ_p}(Y_\Kbar;b)$ induced by the quadratic Albanese map $\qalb\colon Y\to Q$ is~$U_\AT^p$.
\end{proposition}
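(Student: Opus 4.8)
The plan is to identify, as Galois representations, the quotient of $\pi_1^{\bQ_p}(Y_\Kbar;b)$ realised by $\qalb$ with $U^p_\AT$. As noted after Proposition~\ref{prop:surjection_on_pi1}, $\qalb$ does realise the quotient $U^p:=\pi_1^{\bQ_p}(Q_\Kbar;\qalb(b))$ of $\pi_1^{\bQ_p}(Y_\Kbar;b)$, so it suffices to show $U^p\cong U^p_\AT$ compatibly with the maps from $\pi_1^{\bQ_p}(Y_\Kbar;b)$. First I would fix the structure of $U^p$. The base point $\qalb(b)$ trivialises the $\bPic^1(X)$-component of $Q$ and identifies the fibre over it with the torus $H$, so the $\bQ_p$-pro-unipotent \'etale analogue of the homotopy sequence~\eqref{eq:homotopy_sequence} for $H\hookrightarrow Q\to\bPic^1(X)$, together with Lemma~\ref{lem:central_extension}, exhibits $U^p$ as a central extension
\[
1\to V_pH\to U^p\to V_pJ\to 1 \,,
\]
with $V_pH=\Lambda^\vee\otimes\bQ_p(1)$, where $\Lambda^\vee$ is the cocharacter lattice of $H$. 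As $H$ splits over a finite extension of $K$, the lattice $\Lambda^\vee\otimes\bQ_p$ is an Artin representation, so $V_pH$ is Artin--Tate and pure of weight $-2$. Hence $U^p$ is an extension of $V_pJ$ by a weight~$-2$ Artin--Tate representation, and the maximality in the definition of $U^p_\AT$ provides a canonical surjection $\psi\colon U^p_\AT\twoheadrightarrow U^p$ over $\pi_1^{\bQ_p}(Y_\Kbar;b)$; the remaining task is to prove $\psi$ is an isomorphism.

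Since $\psi$ induces the identity on $V_pJ$, it suffices to check that the induced surjection $\ker(U^p_\AT\to V_pJ)\twoheadrightarrow V_pH$ is an isomorphism of $\bQ_p$-vector spaces, i.e.\ that $\dim\ker(U^p_\AT\to V_pJ)=\rank\Lambda$. I would compute the left-hand side from the weight filtration on the depth-$2$ quotient $U^p_2$. The map $U^p_2\to V_pJ$ factors through $U^p_1$, with $\ker(U^p_1\to V_pJ)=\gr^W_{-2}U^p_1=:H^{(2)}$ the weight~$-2$ Artin--Tate representation coming from the boundary divisor $D$; thus $\ker(U^p_2\to V_pJ)$ is an extension of $H^{(2)}$ by $\gr^2\pi_1^{\bQ_p}(Y_\Kbar)$, the second graded piece of the lower central series. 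Taking associated gradeds, its maximal weight-$(-2)$ quotient is $H^{(2)}\oplus R$, where $R:=\gr^W_{-2}\bigl(\gr^2\pi_1^{\bQ_p}(Y_\Kbar)\bigr)$ is $\wedge^2V_pJ$ when $Y$ is affine (where $\pi_1^{\bQ_p}(Y_\Kbar)$ is free) and $\ker\bigl(\wedge^2V_pJ\to\bQ_p(1)\bigr)$ — the map being induced by the principal polarisation of $J$ — when $Y=X$ is projective. By the very definition of $U^p_\AT$, the representation $\ker(U^p_\AT\to V_pJ)$ is the maximal weight-$(-2)$ Artin--Tate quotient of $\ker(U^p_2\to V_pJ)$, that is, $H^{(2)}\oplus R^{\AT}$, with $R^{\AT}$ the maximal Artin--Tate quotient of $R$.

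The key input is that the maximal Artin--Tate quotient of $\wedge^2V_pJ$ is $\NS(J_\Kbar)^\vee\otimes\bQ_p(1)$; this is the ($\ell$-adic) Tate conjecture for divisor classes on abelian varieties over number fields, a theorem of Faltings. Granting it, $\dim R^{\AT}=\rho:=\rank\NS(J_\Kbar)$ when $Y$ is affine, while $\dim R^{\AT}=\rho-1$ when $Y=X$ is projective (then, because $[\Theta]$ is non-torsion in $\NS(J_\Kbar)$, pairing against $[\Theta]$ gives a surjection $\NS(J_\Kbar)^\vee\otimes\bQ_p(1)\twoheadrightarrow\bQ_p(1)$ whose kernel is $R^{\AT}$). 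On the other side, $\Lambda=\ker\bigl(\NS(J_\Kbar)\oplus\bZ\cdot D(\Kbar)\to\bZ\bigr)$ is the kernel of~\eqref{eq:map_from_neron-severi}, and this map is surjective when $D\neq\emptyset$ (already on the summand $\bZ\cdot D(\Kbar)$, via the degree) and non-zero when $D=\emptyset$ (since $\alb^*[\Theta]$ has degree $g\neq0$); so $\rank\Lambda=\rho+\#D(\Kbar)-1$ when $D\neq\emptyset$ and $\rank\Lambda=\rho-1$ when $D=\emptyset$. Since $\dim H^{(2)}=\#D(\Kbar)-1$ when $D\neq\emptyset$ and $H^{(2)}=0$ when $D=\emptyset$, in both cases $\dim\bigl(H^{(2)}\oplus R^{\AT}\bigr)=\rank\Lambda$, so $\psi$ is an isomorphism. (If one prefers to match the Galois structures directly rather than via dimensions, one checks that the commutator pairing $\wedge^2V_pJ\to V_pH$ attached to $U^p$ is the composite of $\wedge^2V_pJ\twoheadrightarrow\NS(J_\Kbar)^\vee\otimes\bQ_p(1)$ with the map dual to $\Lambda\hookrightarrow\NS(J_\Kbar)\oplus\bZ\cdot D(\Kbar)\to\NS(J_\Kbar)$ — again via Faltings.)

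The main obstacle is exactly the appeal to Faltings' theorem, which is what converts the geometric quantity $\rank\NS(J_\Kbar)$ governing the torus $H$ into the Galois-theoretic quantity $\dim R^{\AT}$ governing $U^p_\AT$. Everything else — unwinding the weight filtration on $U^p_2$, keeping the affine and projective cases (and the boundary contribution $H^{(2)}$) straight, and reconciling the two descriptions of $\Lambda$ via the proof of Theorem~\ref{thm:quadratic_albanese} — is routine.
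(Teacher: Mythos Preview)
Your proposal is correct and follows essentially the same approach as the paper: both arguments establish the surjection $U^p_\AT\twoheadrightarrow U^p$ from the shape of the extension, and then verify it is an isomorphism by a dimension count whose crux is Faltings' theorem identifying the Artin--Tate part of $\bigwedge^2 V_pJ$ with $\NS(J_\Kbar)\otimes\bQ_p(1)$. The paper's version is slightly more streamlined: it uses the unified description
\[
\gr^\rW_{-2}\pi_1^{\bQ_p}(Y_\Kbar;b)\;=\;\coker\Bigl(\bQ_p(1)\hookrightarrow\bigwedge\nolimits^{\!2}V_pJ\oplus\bQ_p(1)\!\cdot\! D(\Kbar)\Bigr)
\]
to handle the affine and projective cases at once, and then simply matches $\dim U^p_\AT = 2g+\rank\NS(J_\Kbar)+\deg(D)-1$ against $\dim U^p = \dim V_pJ + \dim V_pH$. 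One small slip in your write-up: in the projective case $R=\gr^\rW_{-2}\gr^2\pi_1^{\bQ_p}(X_\Kbar)$ is the \emph{cokernel} of $\bQ_p(1)\hookrightarrow\bigwedge^2V_pJ$, not the kernel of $\bigwedge^2V_pJ\to\bQ_p(1)$; by semisimplicity these are isomorphic, so your dimension count is unaffected.
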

\begin{proof}
	We follow the argument of \cite[Lemma~3.2]{balakrishnan-dogra:quadratic_chabauty_1}. We may suppose that~$Y$ is not $\bP^1$, as otherwise all the fundamental groups appearing are trivial. By the usual description of fundamental groups of curves (see e.g.~\cite[\S2.2]{balakrishnan-dogra:quadratic_chabauty_1} in the projective case, or \cite[Theorem~A.1]{betts-litt:weight-monodromy} for a much more general version), the $-1$st and $-2$nd graded pieces of the weight filtration on~$\pi_1^{\bQ_p}(Y_\Kbar;b)$ are given by
	\begin{align*}
		\gr^\rW_{-1}\pi_1^{\bQ_p}(Y_\Kbar;b) &= \rH_1^\et(J_\Kbar,\bQ_p) \\
		\gr^\rW_{-2}\pi_1^{\bQ_p}(Y_\Kbar;b) &= \coker\bigl(\bQ_p(1) \hookrightarrow \bigwedge\nolimits^{\!\!2}\rH_1^\et(J_\Kbar,\bQ_p)\oplus\bQ_p(1)\cdot D(\Kbar)\bigr) \,,
	\end{align*}
	where the inclusion appearing in the second line is induced from the dual of the cup product map and the sum of the elements of~$D(\Kbar)$. By the Tate Conjecture for $\rH^2$ of abelian varieties \cite[Theorem~(b)]{faltings:endlichkeitssaetze}, the dimension of the largest Artin--Tate subrepresentation of~$\bigwedge\nolimits^{\!2}\rH_1^\et(J_\Kbar,\bQ_p)$ is equal to~$\rank(\NS(J_\Kbar))$. This implies that
	\[
	\dim_{\bQ_p}U_\AT^p = 2g + \rank(\NS(J_\Kbar)) + \deg(D) - 1 \,.
	\]
	On the other hand, the quadratic Albanese map induces a quotient of~$U_\AT^p$ which is an extension of~$\pi_1^{\bQ_p}(J_\Kbar;0)$ by~$\pi_1^{\bQ_p}(H_\Kbar;1)$, which have dimensions~$2g$ and $\rank(\NS(J_\Kbar))+\deg(D)-1$, respectively. So the induced quotient must be exactly~$U_\AT^p$.
\end{proof}

There is a variant of all of the above theory for AT varieties where the torus~$G$ is split (isomorphic to~$\bG_m^r$). Namely, there is a universal morphism from a curve~$Y$ to an AT variety~$Q_\spl$ of this type, this quotient induces a surjection on fundamental groups, and over a number field~$K$ the quotient of~$\pi_1^{\bQ_p}(Y_\Kbar;b)$ induced by~$Q_\spl$ is the maximal quotient which is a central extension of a pure representation of weight~$-1$ by a representation of the form~$\bQ_p(1)^r$. Indeed, $Q_\spl$ is is the pushout of~$Q$ to the torus whose character lattice is~$\Lambda^{\Gal_K}$; the remaining details are easy to check.

\subsubsection{Tangential base points}\label{sss:tangential_points}

In the non-abelian Chabauty method for affine curves, as well as being allowed to consider a curve~$Y$ with a rational base point~$b$, one is also allowed to consider a curve~$Y$ with a rational tangential base point~$\vec b$. Accordingly, we should also generalise all of the results of the previous section to also cover AT varieties with a rational tangential base point. Fortunately, this turns out not to be necessary, and one can systematically replace rational tangential points on AT varieties with \emph{bona fide} rational points, as we will explain now. A reader interested in quadratic Chabauty only for projective curves can safely skip the following discussion.

For our purposes, it will be most convenient to take the perspective that a \emph{$k$-rational tangential point} on a smooth $k$-variety~$V$ means a $k\llparen t\rrparen$-rational point, where~$k\llparen t\rrparen$ is the field of Laurent series. One can make sense of the profinite \'etale fundamental group of~$V_\kbar$ based at any tangential point $\vec b\in V(k\llparen t\rrparen)$; indeed, $\vec b$ determines canonically a geometric point of~$V_\kbar$ valued in the algebraic closure of~$k\llparen t\rrparen$, and $\pi_1^\et(V_\kbar;\vec b)$ simply means the \'etale fundamental group of~$V_\kbar$ based at this geometric point. When $k=K$ has characteristic~$0$ (for simplicity), the algebraic closure of~$K\llparen t\rrparen$ is the same as the algebraic closure of the field~$F=K\llparen t^\bQ\rrparen$ of Puiseux series, and the inclusion $K\hookrightarrow F$ induces an isomorphism on absolute Galois groups by the Newton--Puiseux Theorem. The absolute Galois group $\Gal(\Kbar/K)=\Gal(\Fbar/F)$ acts on~$\Spec(\Fbar)$ in such a way that the morphism $\vec b\colon \Spec(\Fbar) \to V_\Kbar$ is Galois-equivariant, so there is an induced action of~$\Gal(\Kbar/K)$ on~$\pi_1^\et(V_\Kbar;\vec b)$. This is the same as the usual Galois action on~$\pi_1^\et(V_\Fbar;\vec b)$ (coming from $F$-rationality of~$\vec b$), and when~$\vec b=b$ is actually $K$-rational, it agrees with the usual Galois action on~$\pi_1^\et(V_\Kbar;b)$. A similar discussion holds for path-torsors.

\begin{remark}
	There are several different ways of treating tangential points, of which our approach of treating them as $k\llparen t\rrparen$-valued points is simply the most convenient for our purposes. Let us briefly say something about other approaches for making sense of tangential points.
	
	In \cite[\S15]{deligne:groupe_fondamental}, Deligne considers the case where~$V$ has a strict normal crossings compactification~$\hat V$. For each irreducible component~$D_i$ of the boundary divisor $D=\hat V\smallsetminus V$, let~$D_i^0$ be the complement of the other components in~$D_i$. One can consider the $\bG_m$-torsor $N_i^0$ on~$D_i$ corresponding to its normal bundle, and write~$N_i^{00}$ for its restriction to~$D_i^0$. Deligne then defines a ``restriction to the normal bundle'' functor, from finite \'etale coverings of~$V$ which are tame along~$D$ to finite \'etale coverings of~$N_i^{00}$ which are tame along the zero section. In particular, for any $k$-rational point~$\vec b_0\in N_i^{00}(k)$, one obtains a fibre functor as the composition
	\[
	\FEt_t(V) \to \FEt_t(N_i^{00}) \to \fSET \,,
	\]
	where the second functor is the usual fibre functor associated to~$\vec b_0$. One can show that if~$\vec b$ is a $k\llparen t\rrparen$-valued point on~$V$ whose image under $\hat V(k\llparen t\rrparen)=\hat V(k\llparen t\rrparen)\to\hat V(k[t]/(t^2))$ is a tangent vector~$\hat b_0$ based at a point on~$D_i^0$ whose image in the normal bundle is non-zero, then the fibre functor associated to~$\vec b_0$ in Deligne's construction is canonically isomorphic to the fibre functor associated to the geometric point~$\vec b$.
\end{remark}

In order to replace rational tangential points on AT varieties with actual rational points, we need to show that every rational tangential point is connected by a Galois-invariant path to an actual rational point. The target of this path is picked out by the following proposition.

\begin{proposition}\label{prop:principal_part}
	Let~$P$ be an AT variety over a field~$k$. Then there is a function
	\[
	\prin\colon P(k\llparen t\rrparen) \to P(k)
	\]
	uniquely characterised by the following properties:
	\begin{enumerate}[label = \emph{(\alph*)}, ref = (\alph*)]
		\item\label{condn:prin_naturality} $\prin$ is natural with respect to morphisms of AT varieties;
		\item\label{condn:prin_power_series} on $P(k\llbrack t\rrbrack)$, $\prin$ restricts to the function given by specialisation at~$t=0$; and
		\item\label{condn:prin_normalisation} when~$P=\bG_m$, we have $\prin(t)=1$.
	\end{enumerate}
	We call~$\prin$ the \emph{principal part}.
\end{proposition}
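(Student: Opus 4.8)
The plan is to build $\prin$ from its behaviour on the two ``atomic'' pieces of an AT variety, the abelian-torsor base~$T$ and the torus~$G$, and then glue along the torsor projection $\pi\colon P\to T$. For the base: since a torsor~$T$ under an abelian variety is proper, the valuative criterion of properness gives $T(k\llparen t\rrparen)=T(k\llbrack t\rrbrack)$, and one sets $\prin_T$ to be the composite of this identification with evaluation at~$t=0$. Naturality with respect to morphisms of abelian-variety torsors, and compatibility with power-series points, are then immediate; this is the only place properness enters.

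Next I would treat a torus~$G$ over~$k$. Writing~$G_s\subseteq G$ for the maximal split subtorus, an elementary argument (pass to a finite separable splitting field~$k'$, where~$G$ becomes~$\bG_m^r$ and one can read off valuations of coordinates) shows that every $g\in G(k\llparen t\rrparen)$ can be written $g=\mu(t)\cdot h$ with $\mu\in X_*(G)^{\Gal_k}=X_*(G_s)$ a $k$-rational cocharacter $\mu\colon\bG_m\to G$ and $h\in G(k\llbrack t\rrbrack)$, and that~$\mu$ and~$h$ are uniquely determined by~$g$. One then defines $\prin_G(g)\coloneqq h(0)$. This is a group homomorphism, is natural in~$G$, restricts to evaluation at~$t=0$ on $G(k\llbrack t\rrbrack)$, and satisfies $\prin_{\bG_m}(t)=1$; equivalently, $\prin_G$ is obtained by applying $\Hom_{\Gal_k}(X^*(G),-)$ to the homomorphism $(k^\sep\otimes_k k\llparen t\rrparen)^\times\to (k^\sep)^\times$ sending $t^n u$ to~$u(0)$ for a unit~$u$.

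For a general AT variety~$P$, a $G$-torsor over the $A$-torsor~$T$, the construction runs as follows. Given $\xi\in P(k\llparen t\rrparen)$, let $\eta\in T(k\llparen t\rrparen)=T(k\llbrack t\rrbrack)$ be its image and form the $G$-torsor $\eta^*P$ over $\Spec k\llbrack t\rrbrack$. This torsor acquires a section over the generic point (namely~$\xi$), hence is generically trivial, and the key technical input is that a torsor under a torus over the complete discrete valuation ring~$k\llbrack t\rrbrack$ which is generically trivial is in fact trivial --- i.e.\ injectivity of $\rH^1_\et(k\llbrack t\rrbrack,G)\to \rH^1_\et(k\llparen t\rrparen,G)$, a special case of a theorem of Nisnevich, which in characteristic~$0$ can also be seen directly from $\rH^1_\et(k\llbrack t\rrbrack,G)=\rH^1(\Gal(k'/k),G(k'\llbrack t\rrbrack))=\rH^1(\Gal(k'/k),G(k'))$ (using that $1+tk'\llbrack t\rrbrack$ is uniquely divisible). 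Choosing a trivialising section $\sigma\in(\eta^*P)(k\llbrack t\rrbrack)\subseteq P(k\llbrack t\rrbrack)$, we have $\xi=\rho(g,\sigma|_{k\llparen t\rrparen})$ for a unique $g\in G(k\llparen t\rrparen)$, and I would set $\prin_P(\xi)\coloneqq\rho(\prin_G(g),\sigma(0))\in P(k)$. Independence of the choice of~$\sigma$ holds because two choices differ by an element of $G(k\llbrack t\rrbrack)$, on which $\prin_G$ is evaluation at~$t=0$. Properties~\ref{condn:prin_power_series} and~\ref{condn:prin_normalisation} are then checked directly (take $\sigma=\xi$, respectively $\sigma=1$), and naturality~\ref{condn:prin_naturality} reduces, via the factorisation of Lemma~\ref{lem:decomposition_of_morphisms}, to the naturality statements already established for $\prin_T$ and $\prin_G$. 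This gluing step --- and inside it the generic-triviality-implies-triviality input --- is the part I expect to be the main obstacle; everything else is bookkeeping.

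For uniqueness, suppose $\prin$ satisfies \ref{condn:prin_naturality}--\ref{condn:prin_normalisation}. Naturality applied to the action map $\rho\colon G\times P\to P$ and to the two projections of the AT variety $G\times P$ forces $\prin_P(\rho(g,x))=\rho(\prin_G(g),\prin_P(x))$; combined with~\ref{condn:prin_power_series} and the decomposition $\xi=\rho(g,\sigma|_{k\llparen t\rrparen})$ above, this shows $\prin_P(\xi)=\rho(\prin_G(g),\sigma(0))$, so everything reduces to uniqueness of $\prin_G$ on tori. There, writing $g=\mu(t)h$ as before: naturality applied to multiplication $G\times G\to G$ forces $\prin_G$ to be a homomorphism; naturality applied to the cocharacter $\mu\colon\bG_m\to G$ together with~\ref{condn:prin_normalisation} forces $\prin_G(\mu(t))=\mu(1)=1$; and~\ref{condn:prin_power_series} forces $\prin_G(h)=h(0)$; hence $\prin_G(g)=h(0)$ is determined, completing the argument.
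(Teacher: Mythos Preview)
Your proof is correct and follows essentially the same architecture as the paper's: use properness of~$T$ to extend to $T(k\llbrack t\rrbrack)$, pull back the torsor along the resulting integral point, trivialize, decompose the section via a cocharacter times an integral point, and specialise. The uniqueness arguments are also the same in substance.

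The one point of divergence is how the non-split case is handled. You trivialize $\eta^*P$ over $k\llbrack t\rrbrack$ directly, invoking injectivity of $\rH^1_\et(k\llbrack t\rrbrack,G)\to\rH^1_\et(k\llparen t\rrparen,G)$ (Nisnevich, or your characteristic-$0$ sketch). The paper instead never trivializes over $k\llbrack t\rrbrack$: it passes to a finite separable splitting field $k'/k$, obtains the unique decomposition $\vec x=g\cdot\vec x_0$ over $k'$ (where the torsor is trivial simply because $k'\llbrack t\rrbrack$ is local), and then uses uniqueness plus naturality of the cocharacter map in the base field to see that $g$ and~$\vec x_0$ are $\Gal(k'/k)$-invariant, hence defined over~$k$. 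The paper's route is a bit more elementary --- it avoids any external citation and works uniformly in all characteristics --- whereas your characteristic-$0$ argument as written only establishes $\rH^1_\et(k\llbrack t\rrbrack,G)\cong\rH^1(k,G)$ and you would still need the (easy) further step that $\rH^1(k,G)\hookrightarrow\rH^1(k\llparen t\rrparen,G)$ via the inclusion $k\hookrightarrow k\llparen t\rrparen$.
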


\begin{remark}
	It follows from the characterisation that~$\prin$ is a section of the inclusion $P(k)\hookrightarrow P(k\llparen t\rrparen)$, and if~$P=B$ is a semiabelian variety, then $\prin$ is a homomorphism of abelian groups (apply~\ref{condn:prin_naturality} to the group law~$B\times B\to B$).
\end{remark}

For a torus~$G$ over~$k$, let us consider the homomorphism
\begin{equation}\label{eq:hom_from_cocharacters}
\Hom_k(\bG_m,G) \to G(k\llparen t\rrparen)
\end{equation}
sending a $k$-rational cocharacter $\psi\colon \bG_m\to G$ to $\psi(t)$, the image of~$t\in\bG_m(k\llparen t\rrparen)$ under~$\psi$. It is easy to check that the homomorphism~\eqref{eq:hom_from_cocharacters} is natural in both the torus~$G$ and the field~$k$. That is, for any homomorphism of tori $\phi\colon G\to G'$, resp.~any embedding of fields~$\iota\colon k\hookrightarrow k'$, we have commuting squares
\begin{center}
\begin{tikzcd}[column sep=small]
	\Hom_k(\bG_m,G) \arrow[r,"\eqref{eq:hom_from_cocharacters}"]\arrow[d,"\phi_*"] & G(k\llparen t\rrparen) \arrow[d,"\phi_*"] \\
	\Hom_k(\bG_m,G') \arrow[r,"\eqref{eq:hom_from_cocharacters}"] & G'(k\llparen t\rrparen) \,,
\end{tikzcd}
\quad\text{resp.}\quad
\begin{tikzcd}[column sep=small]
	\Hom_k(\bG_m,G) \arrow[r,"\eqref{eq:hom_from_cocharacters}"]\arrow[d] & G(k\llparen t\rrparen) \arrow[d] \\
	\Hom_{k'}(\bG_{m,k'},G_{k'}) \arrow[r,"\eqref{eq:hom_from_cocharacters}"] & G_{k'}(k'\llparen t\rrparen) \,.
\end{tikzcd}
\end{center}

\begin{lemma}
	The homomorphism~\eqref{eq:hom_from_cocharacters} is injective.
\end{lemma}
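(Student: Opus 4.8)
The plan is to reduce to the case of a split torus over an algebraically closed field, where injectivity can simply be read off from the $t$-adic valuation.

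First I would use the naturality of~\eqref{eq:hom_from_cocharacters} in the field~$k$ noted above, applied to the inclusion $\iota\colon k\hookrightarrow\kbar$. In the resulting commuting square the left-hand vertical arrow $\Hom_k(\bG_m,G)\to\Hom_{\kbar}(\bG_{m,\kbar},G_{\kbar})$ is injective: a $k$-morphism $\bG_{m,k}\to G$ is a homomorphism of $k$-algebras $\cO(G)\to k[t,t^{-1}]$, and since $k[t,t^{-1}]$ embeds into $\kbar[t,t^{-1}]$, any such homomorphism is determined by its base change to~$\kbar$. Combining this with commutativity of the square (and injectivity of the bottom arrow, once proved) reduces the lemma to the case where $k$ is algebraically closed, and hence $G$ is split.

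In that case I would fix an isomorphism $G\simeq\bG_m^r$. Under this identification $\Hom_{\kbar}(\bG_m,G)$ is the cocharacter lattice~$\bZ^r$, the group $G(\kbar\llparen t\rrparen)$ is $\bigl(\kbar\llparen t\rrparen^\times\bigr)^r$, and the map~\eqref{eq:hom_from_cocharacters} sends a tuple $(n_1,\dots,n_r)$ to $(t^{n_1},\dots,t^{n_r})$. Since $t^n=1$ in $\kbar\llparen t\rrparen^\times$ forces $n=0$ (compare $t$-adic valuations), this map is injective, which finishes the proof. There is no genuine obstacle here: the only step requiring a moment's thought is the reduction to an algebraically closed base, which is just faithfully flat base change for morphisms of affine schemes.
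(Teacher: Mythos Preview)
Your proof is correct, but it takes a somewhat different route from the paper's. The paper gives a one-line argument that works directly over~$k$ without splitting the torus: if $\psi\colon\bG_m\to G$ is a nontrivial cocharacter, then $\ker(\psi)$ is a finite $k$-subscheme of~$\bG_m$, and the transcendental element $t\in\bG_m(k\llparen t\rrparen)$ cannot lie in any such finite subscheme, so $\psi(t)\neq1$. Your approach instead reduces to the split case over~$\kbar$ via the naturality square, and then reads off injectivity from the $t$-adic valuation on $(\kbar\llparen t\rrparen^\times)^r$. Both arguments are elementary; the paper's avoids the reduction step and any choice of splitting, while yours has the advantage of making the image of the map completely explicit (which is in fact used immediately afterwards in the paper when it identifies $G(k\llparen t\rrparen)_1$ with $t^\bZ$ in the case $G=\bG_m$).
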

\begin{proof}
	If~$\psi$ is a non-trivial cocharacter then its kernel is a finite $k$-subscheme of~$\bG_m$, and in particular the transcendental element $t\in\bG_m(k\llparen t\rrparen)$ cannot be contained in the kernel $\ker(\psi)$.
\end{proof}

We let $G(k\llparen t\rrparen)_1\subseteq G(k\llparen t\rrparen)$ denote the image of the homomorphism~\eqref{eq:hom_from_cocharacters}. For example, when $G=\bG_m$, then $\bG_m(k\llparen t\rrparen)_1=t^\bZ$ is the multiplicative subgroup generated by~$t$ inside~$k\llparen t\rrparen^\times$. This subgroup gives us a direct product decomposition
\[
G(k\llparen t\rrparen) = G(k\llparen t\rrparen)_1\times G(k\llbrack t\rrbrack)
\]
for all $k$-tori~$G$. This is a special case of the following lemma.

\begin{lemma}\label{lem:shift_by_G1}
	Let~$G$ be a torus over~$k$ and let~$P$ be a $G$-torsor over a proper variety~$T$. Then any $k\llparen t\rrparen$-point $\vec x\in P(k\llparen t\rrparen)$ can be uniquely expressed as~$g\cdot\vec x_0$ for some~$g\in G(k\llparen t\rrparen)_1$ and some~$\vec x_0\in P(k\llbrack t\rrbrack)$.
\end{lemma}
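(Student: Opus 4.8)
The plan is to reduce to the case of a split torus by Galois descent, and to treat that case by using the valuative criterion of properness for~$T$ to pin down the image of~$\vec x$ in~$T$, and then the vanishing of~$\Pic(\Spec k\llbrack t\rrbrack)$ to trivialise the resulting torsor. Throughout I write $R=k\llbrack t\rrbrack$ and $K=k\llparen t\rrparen$, so that~$K$ is the fraction field of the discrete valuation ring~$R$.

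\emph{The split case.} Suppose first that $G=\bG_m^r$. Composing~$\vec x$ with the projection $\pi\colon P\to T$ yields a point $\pi(\vec x)\in T(K)$, and since~$T$ is proper over~$k$ the valuative criterion of properness supplies a \emph{unique} extension $\bar x_0\in T(R)$. Form the $G$-torsor $P_0\coloneqq P\times_{T,\bar x_0}\Spec R$ over~$\Spec R$; by the universal property of the fibre product, $\vec x$ lifts uniquely to a point of~$P_0(K)$. Because~$R$ is local we have $\Pic(\Spec R)=0$, hence $\rH^1_\et(\Spec R,\bG_m^r)=0$ and~$P_0$ is a trivial torsor; fix a trivialisation $P_0\cong\bG_{m,R}^r$. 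Under it~$\vec x$ corresponds to a tuple $(u_1,\dots,u_r)\in(K^\times)^r$, and writing each $u_i=t^{n_i}v_i$ uniquely with $n_i\in\bZ$ and $v_i\in R^\times$ produces the desired decomposition, with $g=(t^{n_1},\dots,t^{n_r})\in G(K)_1$ and $\vec x_0=(v_1,\dots,v_r)\in P_0(R)\subseteq P(R)$. For uniqueness, observe that in any decomposition $\vec x=g\cdot\vec x_0$ of the required form the point $\pi(\vec x_0)\in T(R)$ extends $\pi(\vec x)$ and hence equals~$\bar x_0$; thus~$\vec x$ and~$\vec x_0$ both lie in~$P_0$, and after trivialising~$P_0$ the claim reduces to the componentwise fact that $t^\bZ\cap R^\times=\{1\}$ inside~$K^\times$, which determines~$g$ uniquely, after which~$\vec x_0$ is determined by freeness of the $G$-action.

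\emph{The general case.} Let $k'/k$ be a finite Galois extension splitting~$G$, with group~$\Gamma$. Then $R\otimes_k k'=k'\llbrack t\rrbrack$ and $K\otimes_k k'=k'\llparen t\rrparen$, and Galois descent identifies $P(R)$, $G(K)$, $G(K)_1$, etc.\ with the $\Gamma$-invariants of~$P(k'\llbrack t\rrbrack)$, $G(k'\llparen t\rrparen)$, $G_{k'}(k'\llparen t\rrparen)_1$, etc.\ (for the last, using that~\eqref{eq:hom_from_cocharacters} is $\Gamma$-equivariant and compatible with base change, as recorded above). Base-changing~$\vec x$ to~$k'$ and applying the split case gives a unique decomposition $\vec x_{k'}=g'\cdot\vec x_0'$; since~$\vec x_{k'}$ is $\Gamma$-invariant and the subsets $G_{k'}(k'\llparen t\rrparen)_1\subseteq G_{k'}(k'\llparen t\rrparen)$ and $P_{k'}(k'\llbrack t\rrbrack)\subseteq P_{k'}(k'\llparen t\rrparen)$ are $\Gamma$-stable, uniqueness forces~$g'$ and~$\vec x_0'$ to be $\Gamma$-invariant. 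Hence $\vec x_0'\in P(R)$ and~$g'$ descends to a unique $g\in G(K)_1$, and these give the required decomposition over~$k$; its uniqueness over~$k$ follows from uniqueness over~$k'$ together with the injections $G(K)\hookrightarrow G(k'\llparen t\rrparen)$ and $P(R)\hookrightarrow P(k'\llbrack t\rrbrack)$.

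The proof is essentially formal, the only genuine geometric inputs being the valuative criterion of properness and the vanishing $\Pic(\Spec k\llbrack t\rrbrack)=0$. The step needing the most care is the Galois-descent bookkeeping in the general case: one must check that the relevant subsets of points over~$k'$ are $\Gamma$-stable and that the various invariants are computed as expected, so that the clean split-case statement can be transported back to~$k$.
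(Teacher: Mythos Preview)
Your proof is correct and follows essentially the same route as the paper: reduce to the split case by Galois descent, and in the split case use the valuative criterion of properness for~$T$ together with the triviality of $\bG_m^r$-torsors over the local ring~$k\llbrack t\rrbrack$ to reduce to the decomposition $k\llparen t\rrparen^\times = t^\bZ\times k\llbrack t\rrbrack^\times$. The only cosmetic differences are that you phrase torsor triviality via $\Pic(\Spec R)=0$ and spell out the uniqueness a little more explicitly.
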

\begin{proof}
	Consider first the case when $G=\bG_m^r$ is a split torus. The image of~$\vec x$ in~$T(k\llparen t\rrparen)$ is, by the valuative criterion for properness, the generic fibre of a unique $k\llbrack t\rrbrack$-point $\hat x\in T(k\llbrack t\rrbrack)$. Let~$P_{\hat x}$ denote the pullback of~$P$ along $\hat x\colon\Spec(k\llbrack t\rrbrack)\to T$. Then~$P_{\hat x}$ is a $\bG_m^r$-torsor over~$\Spec(k\llbrack t\rrbrack)$, and is necessarily trivial because~$k\llbrack t\rrbrack$ is a local ring. Fixing a trivialisation $P_{\hat x}\simeq\bG_{m,k\llbrack t\rrbrack}^r$ gives an identification
	\[
	P_{\hat x}(k\llparen t\rrparen) \simeq \bigl(k\llparen t\rrparen^\times\bigr)^r \,,
	\]
	from which we see that $P_{\hat x}(k\llparen t\rrparen)=\bG_m^r(k\llparen t\rrparen)_1\times P_{\hat x}(k\llbrack t\rrbrack)$ (because $k\llparen t\rrparen^\times=t^\bZ\cdot k\llbrack t\rrbrack$). In particular, $\vec x$ can be expressed as $g\times\vec x_0$ for some~$g\in G(k\llparen t\rrparen)_1$ and~$\tilde x_0\in P(k\llbrack t\rrbrack)$. This expression is unique, for if we had $\vec x=g'\cdot\vec x_0'$, then the image of~$\vec x_0'$ in~$T$ would have to be~$\hat x$, and so~$\vec x_0'$ would be a $k\llbrack t\rrbrack$-point on~$P_{\hat x}$. This proves the lemma when~$G$ is split.
	
	For the general case, choose a finite separable extension $k'/k$ over which~$G$ splits. We can then write~$\vec x$ uniquely as $g\cdot\vec x_0$ for $g\in G(k'\llparen t\rrparen)_1$ and $\vec x_0\in P(k'\llbrack t\rrbrack)$. Because the action map
	\[
	G(k'\llparen t\rrparen)\times P(k'\llparen t\rrparen) \to P(k'\llparen t\rrparen)
	\]
	arises from a morphism of $k$-varieties, it is automatically equivariant for the action of~$\Gal(k'/k)$. The Galois action on~$P(k'\llparen t\rrparen)$ preserves $P(k'\llbrack t\rrbrack)$ setwise, and the Galois action on~$G(k'\llparen t\rrparen)$ preserves~$G(k'\llparen t\rrparen)_1$ setwise (because~\eqref{eq:hom_from_cocharacters} is natural in the base field), and so we deduce by unicity that both~$g$ and~$\vec x_0$ must be Galois-invariant. So $\vec x_0\in P(k'\llbrack t\rrbrack)^{\Gal(k'/k)}=P(k\llbrack t\rrbrack)$ and $g\in G(k'\llparen t\rrparen)_1^{\Gal(k'/k)}=G(k\llparen t\rrparen)_1$ by Galois descent and the fact that $G(k'\llparen t\rrparen)_1=\Hom_{k'}(\bG_{m,k'},G_{k'})$. So we are done.
\end{proof}

Lemma~\ref{lem:shift_by_G1} allows us to define the principal part function in Proposition~\ref{prop:principal_part}: if~$\vec x\in P(k\llparen t\rrparen)$ is equal to $g\cdot\vec x_0$, then we define $\prin(\vec x)\in P(k)$ to be the specialisation of~$\vec x_0$ at~$t=0$. Property~\ref{condn:prin_power_series} is clear from this definition, as is property~\ref{condn:prin_normalisation}. For property~\ref{condn:prin_naturality}, if~$\phi\colon P\to P'$ is a morphism of AT varieties over~$k$ lying under the homomorphism of tori~$\phi_G\colon G\to G'$, then for any~$\vec x=g\cdot\vec x_0\in P(k\llparen t\rrparen)$ we have $\phi(\vec x)=\phi_G(g)\cdot\phi(\vec x_0)$ where $\phi(\vec x_0)\in P'(k\llbrack t\rrbrack)$ and $\phi_G(g)\in G'(k\llparen t\rrparen)_1$ by naturality of~\eqref{eq:hom_from_cocharacters} in the torus. So $\prin(\phi(\vec x))$ is the specialisation of~$\phi(\vec x_0)$ at~$t=0$, which is~$\phi(\prin(\vec x))$. So we have shown that the function~$\prin$ we defined satisfies properties~\ref{condn:prin_naturality}--\ref{condn:prin_normalisation}.

It remains to show why~$\prin$ is uniquely characterised by these properties. Suppose that~$\prin'$ were another function satisfying the same properties. If~$P$ is an AT variety over~$k$ and $\vec x\in P(k\llparen t\rrparen)$, then we may write~$\vec x$ as $\psi(t)\cdot\vec x_0$ for some~$\vec x_0\in P(k\llbrack t\rrbrack)$ and some cocharacter $\psi\colon \bG_m \to G$. Applying~\ref{condn:prin_naturality} to the composition
\[
\bG_m\times P \xrightarrow{\psi\times\id_P} G\times P \xrightarrow{\rho} P
\]
(where~$\rho$ is the action map) shows that $\prin'(\vec x)=\psi(\prin'(g))\cdot\prin'(\vec x_0)$. But we have~$\prin'(g)=1$ by~\ref{condn:prin_normalisation} and~$\prin'(\vec x_0)$ is the specialisation of~$\vec x_0$ at~$t=0$ by~\ref{condn:prin_power_series}, and so we have~$\prin'(\vec x)=\prin(\vec x)$ as claimed. This completes the proof of Proposition~\ref{prop:principal_part}. \qed

Now that we have defined the principal part, we can return to considering the profinite \'etale fundamental group of an AT variety over a field~$K$ of characteristic~$0$.

\begin{proposition}\label{prop:galois-invariant_path}
	Let~$P$ be an AT variety over the characteristic~$0$ field~$K$. Then for any point $\vec x\in P(K\llparen t\rrparen)$, there is a canonical Galois-invariant path
	\[
	\gamma_{\vec x}\in\pi_1^\et(P_\Kbar;\vec x,\prin(\vec x))^{\Gal(\Kbar/K)} \,.
	\]
	In particular, the profinite \'etale fundamental groups $\pi_1^\et(P_\Kbar;\vec x)$ and $\pi_1^\et(P_\Kbar;\prin(\vec x))$ are canonically and Galois-equivariantly isomorphic (by conjugating along~$\gamma_{\vec x}$).
\end{proposition}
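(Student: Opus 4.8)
The plan is to reduce, via Lemma~\ref{lem:shift_by_G1}, to two elementary cases, construct a canonical $\Gal(\Kbar/K)$-invariant path in each, and recombine using the action map. By Lemma~\ref{lem:shift_by_G1}, write $\vec x = g\cdot\vec x_0$ with $g\in G(K\llparen t\rrparen)_1$ and $\vec x_0\in P(K\llbrack t\rrbrack)$; since~\eqref{eq:hom_from_cocharacters} is injective, $g = \psi(t)$ for a unique cocharacter $\psi\colon\bG_m\to G$. Naturality of $\prin$ (property~\ref{condn:prin_naturality}), applied to $\psi$ and to the action $\rho\colon G\times P\to P$, together with property~\ref{condn:prin_normalisation}, gives $\prin(g) = \psi(1) = 1$ and hence $\prin(\vec x) = \rho(1,\prin(\vec x_0)) = \vec x_0(0)$. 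So it is enough to produce (i) a canonical $\Gal(\Kbar/K)$-invariant path $\gamma_{\vec x_0}$ in $P_\Kbar$ from the tangential point $\vec x_0$ to its specialisation $\vec x_0(0)$, and (ii) a canonical $\Gal(\Kbar/K)$-invariant path $\gamma_g$ in $G_\Kbar$ from $g$ to $1$: then $\gamma_{\vec x}\coloneqq\rho_*(\gamma_g\times\gamma_{\vec x_0})$ does the job, where $\gamma_g\times\gamma_{\vec x_0}$ is the corresponding path in $(G\times P)_\Kbar$ from $(g,\vec x_0)$ to $(1,\vec x_0(0))$ under the canonical product identification $\pi_1^\et((G\times P)_\Kbar)\cong\pi_1^\et(G_\Kbar)\times\pi_1^\et(P_\Kbar)$ valid in characteristic~$0$ (e.g.~by comparison with topological fundamental groups), and $\rho$ carries its endpoints to $\vec x$ and to $\prin(\vec x)$. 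Since $\rho$ and the product identification are defined over~$K$, the invariance and canonicity of $\gamma_{\vec x}$ follow from those of $\gamma_g$ and $\gamma_{\vec x_0}$.

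For~(i), I would use the standard specialisation path over a strictly henselian base. The $K\llbrack t\rrbrack$-point $\vec x_0$ induces a section $s\colon\Spec\Kbar\llbrack t\rrbrack\to P\times_K\Spec\Kbar\llbrack t\rrbrack$ whose closed point is the geometric point $\vec x_0(0)$ and whose geometric generic point, valued in the Puiseux field $\overline{K\llparen t\rrparen}$ of~\S\ref{sss:tangential_points}, is the tangential point $\vec x_0$. For any finite \'etale cover $P'\to P_\Kbar$, the pullback of $P'\times_{P_\Kbar}(P\times_K\Spec\Kbar\llbrack t\rrbrack)$ along $s$ is a finite \'etale cover of $\Spec\Kbar\llbrack t\rrbrack$, hence trivial; reading off its $\overline{K\llparen t\rrparen}$-points and its $\Kbar$-points identifies both the fibre of $P'$ over $\vec x_0$ and the fibre of $P'$ over $\vec x_0(0)$ canonically with its finite set of connected components. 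This natural identification is $\gamma_{\vec x_0}$, and it is $\Gal(\Kbar/K)$-equivariant because the section $s$ descends to~$K$. (This step uses nothing about AT varieties; it works for any smooth $K$-variety.)

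For~(ii), functoriality of $\pi_1^\et$ lets me push forward along $\psi\colon\bG_m\to G$ and so reduce to $G = \bG_m$, $g = t$; so I must exhibit a canonical $\Gal(\Kbar/K)$-invariant path $\gamma_t$ in $\bG_{m,\Kbar}$ from $t$ to $1$ and set $\gamma_g\coloneqq\psi_*\gamma_t$. Over $\Kbar$ every connected finite \'etale cover of $\bG_{m,\Kbar}$ is a Kummer cover $[n]\colon\bG_m\to\bG_m$, whose fibre over the tangential point $t$ is $\{a\in\overline{K\llparen t\rrparen}: a^n = t\} = \mu_n\cdot t^{1/n}$ (using the canonical Puiseux root $t^{1/n}$) and whose fibre over $1$ is $\mu_n$; the bijections $\zeta\,t^{1/n}\mapsto\zeta$ are $\mu_n$-equivariant and compatible as $n$ varies, since $(t^{1/mn})^m = t^{1/n}$, so they assemble into $\gamma_t$. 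The key point is $\Gal(\Kbar/K)$-invariance: by the Newton--Puiseux description recalled in~\S\ref{sss:tangential_points}, the Galois group $\Gal(\Kbar/K) = \Gal(\Fbar/F)$ with $F = K\llparen t^\bQ\rrparen$ fixes every $t^{1/n}$, hence acts on $\mu_n\cdot t^{1/n}$ purely through the cyclotomic action on $\mu_n$ --- exactly as it acts on the fibre $\mu_n$ over $1$ --- so the bijections above are Galois-equivariant.

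The main obstacle is precisely this last verification: the real content lies in correctly pinning down the Galois action on the fibre of a tame cover over a tangential base point, which is exactly what the Newton--Puiseux normalisation of~\S\ref{sss:tangential_points} provides. The remaining ingredients --- identifying the geometric generic point of the section $s$ with the tangential point $\vec x_0$, and checking that products and pushforwards of Galois-invariant paths along $K$-morphisms stay Galois-invariant --- are routine. The final assertion is then immediate: conjugating fibre functors along the invariant path $\gamma_{\vec x}$ yields an isomorphism $\pi_1^\et(P_\Kbar;\vec x)\xrightarrow{\sim}\pi_1^\et(P_\Kbar;\prin(\vec x))$ which intertwines the two Galois actions.
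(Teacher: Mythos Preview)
Your proof is correct and follows essentially the same approach as the paper's: decompose $\vec x = \psi(t)\cdot\vec x_0$ via Lemma~\ref{lem:shift_by_G1}, build the Kummer path $\gamma_t$ in $\bG_{m,\Kbar}$ using the Puiseux roots $t^{1/n}$ and the specialisation path $\gamma_{\vec x_0}$ from the henselian property of $K\llbrack t\rrbrack$, then push forward along the composition $\bG_m\times P\xrightarrow{\psi\times\id}G\times P\xrightarrow{\rho}P$. The only cosmetic difference is ordering---the paper treats the two special cases first and combines at the end, whereas you state the reduction up front---but the mathematical content is identical.
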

\begin{proof}
	We begin with two special cases. Suppose first that~$P=\bG_m$ and that~$\vec x=t\in K\llparen t\rrparen^\times$. Then the connected finite \'etale coverings of~$\bG_{m,\Kbar}$ are exactly given by the $n$th power maps $[n]\colon \bG_{m,\Kbar}\to \bG_{m,\Kbar}$ for~$n\geq1$. The fibre of this map over the point~$1=\prin(t)$ is the $n$th roots of~$1$, while the fibre over~$t$ is the $n$th roots of~$t$. We can define a bijection
	\[
	\sqrt[n]{t} \xrightarrow\sim \mu_n
	\]
	between these fibres by multiplying by the element $t^{-1/n}\in F=K\llparen t^\bQ\rrparen$ (the field of Puiseux series). This bijection is clearly natural in the covering and equivariant for the action of the absolute Galois group of~$F$, and so defines a Galois-invariant path
	\[
	\gamma_t\in\pi_1^\et(\bG_{m,\Kbar};t,1)^{\Gal(\Kbar/K)} \,.
	\]
	
	Suppose second that~$P$ is general, but that~$\vec x=\vec x_0$ is defined over~$K\llbrack t\rrbrack$. Because the scheme $\Spec(\Kbar\otimes_KK\llbrack t\rrbrack)$ is connected and has no non-trivial finite \'etale coverings (because $K\llbrack t\rrbrack$ is Henselian \cite[Proposition~I.4.4]{milne:etale_cohomology}), there is a unique path from its geometric generic fibre $\Spec(\Fbar)$ to its geometric special fibre $\Spec(\Kbar)$. The image of this unique path under $\vec x_0\colon \Spec(K\llbrack t\rrbrack) \to P$ is an \'etale path
	\[
	\gamma_{\vec x_0}\in\pi_1^\et(P_\Kbar;\vec x_0,\prin(\vec x_0))
	\]
	from $\vec x_0$ to its specialisation at~$t=0$. This path is necessarily Galois invariant because $\vec x_0\colon\Spec(K\llbrack t\rrbrack) \to P$ is a morphism of $K$-schemes.
	
	To handle the general case, write~$\vec x=\psi(t)\cdot\vec x_0$ for~$\vec x_0\in P(K\llbrack t\rrbrack)$ and $\psi\colon\bG_m\to G$ a cocharacter. The composition
	\[
	\bG_m\times P \xrightarrow{\psi\times\id_P} G\times P \xrightarrow{\rho} P
	\]
	induces a Galois equivariant map
	\[
	\pi_1^\et(\bG_{m,\Kbar};t,1)\times\pi_1^\et(P_\Kbar;\vec x_0,\prin(\vec x_0)) \to \pi_1^\et(P_\Kbar;\vec x,\prin(\vec x)) \,,
	\]
	and the image of~$(\gamma_t,\gamma_{\vec x_0})$ under this map is the Galois-invariant path from~$\vec x$ to~$\prin(\vec x)$ we seek.
\end{proof}

We will use this lemma in the following setup. Suppose that~$Y$ is a curve over~$K$ with a tangential base point $\vec b\in Y(K\llparen t\rrparen)$, and that~$f\colon Y\to P$ is a morphism from~$Y$ to an AT variety which induces a surjection on pro-unipotent fundamental groups. Let~$\tilde 0=\prin(f(\vec b))$ denote the principal part of $f(\vec b)$. Then we have Galois-equivariant homomorphisms
\[
\pi_1^{\bQ_p}(Y_\Kbar;\vec b) \twoheadrightarrow \pi_1^{\bQ_p}(P_\Kbar;f(\vec b)) \xrightarrow\sim \pi_1^{\bQ_p}(P_\Kbar;\tilde0)
\]
between $\bQ_p$-pro-unipotent fundamental groups, making~$\pi_1^{\bQ_p}(P_\Kbar;\tilde0)$ into a quotient of~$\pi_1^{\bQ_p}(Y_\Kbar;\vec b)$. In particular, the quotient realised by~$f$ is the fundamental group of~$P$ based at an actual $K$-rational point, rather than a tangential point. This allows us to avoid having to think in any depth about tangential points on~$P$.

\section{Isogeny geometric quadratic Chabauty}\label{s:isogeny_method}

Let~$Y$ be a smooth hyperbolic curve over the rationals~$\bQ$, and let~$\cY$ be a regular model over~$\bZ$ (by which we simply mean a regular, flat, separated~$\bZ$-scheme of finite type with generic fibre~$Y$). Let~$b$ be a $\bZ$-integral base point on~$\cY$, possibly tangential, so to every $\Gal_\bQ$-equivariant quotient~$U^p$ of~$\pi_1^{\bQ_p}(Y_\Qbar;b)$, the non-abelian Chabauty method associates a locus
\[
\cY(\bZ_p)_{U^p} \subseteq \cY(\bZ_p) \,,
\]
where~$p$ is a prime of good reduction for~$\cY$.

When the quotient~$U^p$ is realised by a morphism to a smooth variety~$V$, a natural question is whether the locus~$\cY(\bZ_p)_{U^p}$ can be described purely geometrically in terms of~$V$, without any reference to fundamental groups. For~$V=A$ an abelian variety, this is essentially the Chabauty--Coleman method. That is, if we assume for simplicity that~$Y=X$ is projective and~$\cY=\cX$ is its (proper) minimal regular model, then we have a natural commuting square
\begin{equation}\label{diag:chabauty-coleman_square}
\begin{tikzcd}
	X(\bQ) \arrow[r,hook]\arrow[d] & X(\bQ_p) \arrow[d] \\
	\bQ_p\otimes A(\bQ) \arrow[r] & \Lie(A_{\bQ_p}) \,,
\end{tikzcd}
\end{equation}
where the vertical maps are built out of the map~$f\colon X\to A$ and the logarithm map~$A(\bQ_p)\to\Lie(A_{\bQ_p})$. If~$U^p$ is the quotient realised by~$f\colon X\to A$, then the locus~$X(\bQ_p)_{U^p}$ can be described as the inverse image in~$X(\bQ_p)$ of the image of the bottom map in~\eqref{diag:chabauty-coleman_square}. Crucially, this describes $X(\bQ_p)_{U^p}$ purely in terms of~$A$ and the map $f\colon X\to A$, and not in terms of $p$-adic analysis.

\subsection{Geometric quadratic Chabauty}

In the more general setting that~$U^p$ is realised by a morphism from~$Y$ to a $\bG_m^r$-torsor~$P$ over an abelian variety~$A$, an answer of sorts is given by the geometric quadratic Chabauty method of Edixhoven and Lido \cite{edixhoven-lido:geometric_quadratic_chabauty}. Let us describe a variant of their construction, adapted to our setup. (For some comments on how our formulation compares to theirs, see Remark~\ref{rmk:comparison_vs_GQC}.) First, rather than considering all integral points on~$\cY$ simultaneously, it becomes advantageous to break up the search into smaller parts.

\begin{definition}
	We call a smooth and separated $\bZ$-scheme~$\cV$ \emph{simple} if its special fibres~$\cV_{\bF_\ell}$ are all connected, and $\cV_{\bF_\ell}(\bF_\ell)\neq\emptyset$ for all~$\ell$. This implies that the special fibres~$\cV_{\bF_\ell}$ are geometrically connected.
	
	Let~$\cY_\sm$ be the smooth locus of~$\cY\to\Spec(\bZ)$. A \emph{simple open} in~$\cY$ is an open subscheme~$\cU\subseteq\cY_\sm$ formed by deleting all but one connected component of each special fibre~$\cY_{\sm,\bF_\ell}$, such that~$\cU$ is simple (i.e.~the remaining component has an $\bF_\ell$-rational point).
\end{definition}

\begin{remark}
	The terminology of simple opens is taken from \cite{duque-rosero-hashimoto-spelier:geometric_quadratic_chabauty}, except that they only require that the special fibre~$\cU_{\bF_\ell}$ be geometrically connected, not necessarily to have an $\bF_\ell$-rational point.
\end{remark}

Because~$\cY$ is regular, all of its integral points lie in the smooth locus~$\cY_\sm$, and the components of the special fibres~$\cY_{\sm,\bF_\ell}$ which they intersect have $\bF_\ell$-points (tautologically). So we have a finite partition of~$\cY(\bZ)$ as
\[
\cY(\bZ) = \coprod_\cU\cU(\bZ) \,,
\]
the union being taken over simple opens in~$\cY$. We now focus on one simple open at a time.

We are going to study~$\cU(\bZ)$ via a map $f\colon Y\to P$ from~$Y$ to a $\bG_m^r$-torsor over an abelian variety~$A$, for which we will need to produce a suitable model of~$P$. The exact kind of model we will need is as follows.

\begin{definition}\label{def:simple_model}
	Let~$P$ be a $\bG_m^r$-torsor over an abelian variety~$A/\bQ$, and let~$\cA/\bZ$ be the N\'eron model of~$A$. A \emph{simple model} of~$P$ is a $\bG_m^r$-torsor~$\cP$ over one component~$\cT$ of~$\cA$, together with an isomorphism of its generic fibre with~$P$, such that~$\cP$ is simple.
\end{definition}

\begin{lemma}\label{lem:producing_simple_models}
	Let~$\cV$ be a simple $\bZ$-scheme, and let~$f\colon V\to P$ be a morphism from the generic fibre of~$\cV$ to a $\bG_m^r$-torsor over an abelian variety~$A$. Then there is a unique simple model~$\cP$ of~$P$ such that~$f$ is the generic fibre of a morphism
	\[
	f\colon \cV \to \cP
	\]
	of $\bZ$-schemes.
\end{lemma}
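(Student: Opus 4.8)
The plan is to build $\cP$ in three stages: first extend the ``abelian part'' of $f$ using the Néron mapping property; then observe that the torsor $P$ has an essentially unique model over the resulting base; and finally pin down the identification of the generic fibre of that model with $P$ so that $f$ itself extends.

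\emph{Stage 1: extending to the Néron model.} Composing $f$ with the structure map $P\to A$ gives a morphism $V\to A$, and since $\cV$ is smooth (hence flat) over $\bZ$, the Néron mapping property extends it uniquely to a morphism $\hat f\colon\cV\to\cA$ of $\bZ$-schemes. Because $\cV$ is simple, each $\cV_{\bF_\ell}$ is non-empty, so $\cV\to\Spec(\bZ)$ is surjective; and since the $\cV_{\bF_\ell}$ are connected and flat over $\bZ$, $\cV$ is itself connected (a decomposition of $\cV$ would give a disconnected special fibre at any prime lying in both open images). Hence $\hat f(\cV_{\bF_\ell})$ lies in a single connected component of $\cA_{\bF_\ell}$ for every $\ell$; let $\cT\subseteq\cA$ be the component through which $\hat f$ thereby factors. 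One checks $\cT$ is simple: each $\cT_{\bF_\ell}$ is a connected smooth group-scheme component, it meets $\hat f(\cV_{\bF_\ell})$ hence has an $\bF_\ell$-point, and a connected $\bF_\ell$-variety with an $\bF_\ell$-point is geometrically connected. Note $\cT$ is forced by $f$, since $\hat f$ is.

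\emph{Stage 2: the torsor is forced.} The engine of the argument is the following: for any connected smooth $\bZ$-scheme $\cW$ whose special fibres are irreducible (irreducible because smooth and connected) --- for instance $\cW=\cT$ or $\cW=\cV$ --- restriction $\Pic(\cW)\to\Pic(\cW_\bQ)$ is an isomorphism. Indeed $\cW$ is regular, so by the localisation sequence for divisor class groups the kernel of $\Pic(\cW)\to\Pic(\cW_\bQ)$ is generated by the vertical prime divisor classes $[\cW_{\bF_\ell}]$, and each of these equals $\div_\cW(\ell)$ (the fibres being reduced), hence is trivial; surjectivity is part of the same sequence. Applying this to $\cT$, there is a $\bG_m^r$-torsor $\cP$ over $\cT$, unique up to isomorphism of torsors, with $\cP_\bQ\cong P$ as a $\bG_m^r$-torsor over $A$. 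Such a $\cP$ is automatically simple: it is a torsor under the connected group $\bG_m^r$ over the simple base $\cT$, so has connected special fibres, and over each $\bF_\ell$-point of $\cT$ the fibre is a $\bG_m^r$-torsor over $\bF_\ell$, trivial by Hilbert~90, hence with $\bF_\ell$-points. Applying the Picard isomorphism instead to $\cV$, the pullback $\hat f^*\cP$ has trivial generic fibre $f^*P$, hence is itself trivial and admits a section $s\colon\cV\to\hat f^*\cP$.

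\emph{Stage 3: pinning down the identification, and uniqueness.} Composing $s$ with the projection $\hat f^*\cP\to\cP$ gives a morphism $g_0\colon\cV\to\cP$ over $\cT$; its generic fibre and $f$ are two sections of $P\to A$ over $V$, so differ by a unique $u=(u_1,\dots,u_r)\in\bG_m^r(V)=\cO^\times(V)^r$. As each $u_i$ is a unit on the generic fibre, $\div_\cV(u_i)$ is supported on the special fibres, so $\div_\cV(u_i)=\sum_\ell n_{i\ell}[\cV_{\bF_\ell}]=\div_\cV\!\bigl(\prod_\ell\ell^{n_{i\ell}}\bigr)$, giving $u_i=w_i\,c_i$ with $w_i\in\cO^\times(\cV)$ and $c_i\coloneqq\prod_\ell\ell^{n_{i\ell}}\in\bQ^\times$. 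Replacing $s$ by $w\cdot s$ absorbs $w=(w_i)$, and twisting the isomorphism $\cP_\bQ\xrightarrow{\sim}P$ by the constant $c=(c_i)\in\bG_m^r(\bQ)=\Aut(P/A)$ absorbs $c$; with these choices the resulting morphism $\cV\to\cP$ has generic fibre exactly $f$, so existence is done. For uniqueness, any other simple model $\cP'$ with $f$ extending is supported on the same forced component $\cT$, and the Picard isomorphism for $\cT$ gives an isomorphism $\cP\xrightarrow{\sim}\cP'$ of torsors over $\cT$; comparing it with the two identifications with $P$ and the two extensions of $f$ shows that, up to precomposition by an automorphism of $\cP$ over $\cT$, it can be taken to respect all this data modulo only a constant in $(\{\pm1\})^r=(\bZ^\times)^r$, and such a constant does extend to an automorphism of $\cP$ over $\cT$ since $\pm1$ are global units on $\bZ$ --- so $\cP\cong\cP'$ as simple models.

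I expect Stage~3 to require the most care. The naive statement ``$\cP$ is the unique $\bG_m^r$-torsor over $\cT$ extending $P$'' is not by itself enough, because a \emph{simple model} also records an identification of its generic fibre with $P$ and there is a $(\bQ^\times)^r$-worth of these; one has to use $f$ --- via the divisor computation on $\cV$ --- to single out the correct identification, and then check that the residual sign ambiguity is genuinely harmless, which is exactly where simplicity of $\cV$ (forcing $\cV\to\Spec(\bZ)$ surjective, hence no fractional constants among its units) is used.
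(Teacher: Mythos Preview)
Your proof is correct and follows the same three-stage approach as the paper: extend to the N\'eron model via the mapping property, use the Picard isomorphism $\Pic(\cW)\cong\Pic(\cW_\bQ)$ for simple $\cW$ to produce the torsor model and trivialise its pullback, then adjust the lift and the identification with~$P$. Your Stage~3 is in fact more careful than the paper's: the paper asserts $\Map_\bQ(V,\bG_m^r)=\bG_m^r(\bQ)$, which is false for a general simple $\cV$ (take $\cV=\bG_{m,\bZ}$), whereas your decomposition $u_i=w_i c_i$ with $w_i\in\cO^\times(\cV)$ and $c_i\in\bQ^\times$ correctly absorbs the global-unit part into the section and the constant part into the identification --- this is the right fix, and your closing paragraph identifies exactly where the subtlety lies.
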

\begin{proof}
	By the N\'eron mapping property, the composition $V\xrightarrow{f} P \twoheadrightarrow A$ is the generic fibre of a unique morphism
	\[
	\hat f\colon \cV \to \cA
	\]
	of~$\bZ$-schemes. Because all of the special fibres of~$\cV$ are connected, the image of~$\hat f$ is contained in one connected component of~$\cA$, call this~$\cT$. Because~$\cV_{\bF_\ell}(\bF_\ell)\neq\emptyset$, we automatically have~$\cT_{\bF_\ell}(\bF_\ell)\neq\emptyset$ for all~$\ell$, and so~$\cT$ is simple.
	
	Since $\cT$ has geometrically connected fibres, it follows that the generic fibre map~$\Pic(\cT)\to\Pic(A)$ is an isomorphism, and so~$P$ is the generic fibre of a $\bG_m^r$-torsor $\cP$ over~$\cT$. Because~$\cP\to\cT$ is Zariski-locally trivial, it follows that~$\cP_{\bF_\ell}(\bF_\ell)\to\cT_{\bF_\ell}(\bF_\ell)$ is surjective, and so~$\cP$ is automatically simple. The pullback $\hat f^*\cP$ is a $\bG_m^r$-torsor over~$\cV$ with generic fibre~$\hat f^*P$, which is a trivial torsor. Because~$\cV$ is simple, the generic fibre map~$\Pic(\cV)\to\Pic(V)$ is also an isomorphism, and so~$\hat f^*\cP$ is a trivial torsor. Thus, the morphism $\hat f\colon\cV\to\cT$ lifts to a morphism $f'\colon\cV\to\cP$. The generic fibre of~$f'$ differs from~$f$ by an element of $\Map_\bQ(V,\bG_m^r)=\bG_m^r(\bQ)$ so, adjusting the isomorphism $\cP_\bQ\cong P$ if necessary, we are free to assume that~$f$ is the generic fibre of~$f'$. So we have shown the existence of the desired simple model~$\cP$. Uniqueness follows by reversing the construction.
\end{proof}

\begin{example}\label{ex:model_with_point}
	Suppose that~$P$ comes with a chosen rational point~$\tilde0\in P(\bQ)$. By viewing this point as a morphism $\Spec(\bQ)\to P$ and applying Lemma~\ref{lem:producing_simple_models} to~$\cV=\Spec(\bZ)$, we find that there is a unique simple model~$\cP$ of~$P$ such that~$\tilde x\in\cP(\bZ)$. We say that~$\cP$ is the \emph{canonical model} of the pointed torsor~$(P,\tilde0)$.
\end{example}

In the setting of geometric quadratic Chabauty, we are considering a morphism $f\colon Y\to P$ from the smooth curve~$Y$ to a $\bG_m^r$-torsor~$P$ over an abelian variety~$A$. For each simple open~$\cU\subseteq\cY_\sm$, Lemma~\ref{lem:producing_simple_models} provides a unique simple model~$\cP$ of~$P$ such that~$f$ is the generic fibre of a morphism
\[
f\colon \cU \to \cP
\]
of $\bZ$-schemes. Note that the set~$\cP(\bZ)$ has a particularly simple structure: it is a $\bG_m^r(\bZ)$-torsor over~$\cT(\bZ)$ (using the triviality of the class group of~$\bZ$). This allows us to define the geometric quadratic Chabauty locus.

\begin{definition}
	The \emph{geometric quadratic Chabauty locus}~$\cU(\bZ_p)_f^\GQC$ for the simple open~$\cU$ relative to the morphism $f$ is defined to be the set of~$x\in\cU(\bZ_p)$ such that~$f(x)$ lies in the closure of~$\cP(\bZ)$ in $\cP(\bZ_p)$. It clearly contains $\cU(\bZ)$. 
	
	The geometric quadratic Chabauty locus for~$\cY$ is defined to be the union over simple opens:
	\[
	\cY(\bZ_p)_f^\GQC \coloneqq \bigcup_\cU\cU(\bZ_p)_f^\GQC \,.
	\]
	It is a subset of~$\cY(\bZ_p)$ and contains~$\cY(\bZ)$.
\end{definition}

\begin{remark}\label{rmk:comparison_vs_GQC}
	Let us comment on how the geometric quadratic Chabauty method we have described above is related to the original method of Edixhoven and Lido when~$Y=X$ is projective. Where we have described an obstruction relative to an arbitrary morphism~$f\colon X\to P$, the method of Edixhoven and Lido uses one particular morphism to one particular~$P$, which is a $\bG_m^{\rho-1}$-torsor over the Jacobian~$J$ (see \cite[\S2]{edixhoven-lido:geometric_quadratic_chabauty} for the construction). Even for this~$P$, the method we have described produces a possibly finer obstruction than the original method. The key difference lies in the kinds of integral models we consider: we have used a model~$\cP$ of~$P$ which is a $\bG_m^{\rho-1}$-torsor over one component of the N\'eron model~$\cJ$, whereas Edixhoven--Lido use a model~$\cP^\EL$ which is a $\bG_m^{\rho-1}$-torsor over all of~$\cJ$. This means that the intersection $\overline{\cP^\EL(\bZ)}\cap\cU(\bZ_p)$ studied in \cite{edixhoven-lido:geometric_quadratic_chabauty} could potentially be larger than the geometric quadratic Chabauty locus~$\overline{\cP(\bZ)}\cap\cU(\bZ_p)\eqqcolon\cU(\bZ_p)_f^\GQC$ as we have defined it. On the other hand, unlike \cite[\S9.2]{edixhoven-lido:geometric_quadratic_chabauty}, we do not give any general criteria for finiteness of the locus~$\cU(\bZ_p)_f^\GQC$ (though this can presumably be done).
	
	One further remark: the $\bG_m^{\rho-1}$-torsor constructed in \cite[\S2]{edixhoven-lido:geometric_quadratic_chabauty} is similar to the split quadratic Albanese variety of~$X$, but the two are not actually the same. Besides the obvious difference (the extra factor~$m$ appearing in~\cite[\S2]{edixhoven-lido:geometric_quadratic_chabauty}), there is another distinction coming from the fact that not every $\bG_m$-torsor over~$J$ is a pullback of the Poincar\'e torsor. That is, the construction in \cite[\S2]{edixhoven-lido:geometric_quadratic_chabauty} involves considering the kernel of the composition
	\begin{equation}\label{eq:homs_to_neron-severi}
	\Hom(J,J^\vee) \to \NS(J) \to \NS(X) \,,
	\end{equation}
	in which the left-hand map sends a homomorphism $f\colon J\to J^\vee$ to the pullback $(\id,f)^*P$ of the Poincar\'e torsor $P\to J\times J^\vee$. This left-hand map is not surjective in general. Indeed, under the identification $\NS(J)=\Hom(J,J^\vee)^+$ \cite[(2.10)]{edixhoven-lido:geometric_quadratic_chabauty}, this map is the symmetrisation map $f\mapsto f+f^\vee$. In particular, if the involution $f\mapsto f^\vee$ acts trivially on~$\Hom(J,J^\vee)$ (e.g.~if $J$ is isogenous to the product of two non-isogenous non-CM elliptic curves), then the left-hand map in~\eqref{eq:homs_to_neron-severi} is the doubling map on~$\Hom(J,J^\vee)=\Hom(J,J^\vee)^+$, and is certainly not surjective.
	
	What this means concretely is that if the Galois action on~$\NS(J_\Qbar)$ is trivial and if~$\Hom(J,J^\vee)=\Hom(J,J^\vee)^+$, then the $\bG_m^{\rho-1}$-torsor constructed in \cite[\S2]{edixhoven-lido:geometric_quadratic_chabauty} is actually the $2m$th tensor power of the quadratic Albanese variety of~$X$, where~$m$ is the exponent of the product of the component groups of the N\'eron model of~$J$.
\end{remark}

\subsection{Isogeny geometric quadratic Chabauty}

The reason that we said that geometric quadratic Chabauty only gives an answer \emph{of sorts} to the question of geometrically describing the locus~$\cY(\bZ_p)_{U^p}$ when~$U^p$ is realised by a morphism to an AT variety~$P$ is that the geometric quadratic Chabauty locus~$\cY(\bZ_p)_f^\GQC$ is not actually equal to~$\cY(\bZ_p)_{U^p}$ in general. Instead, at least for the particular AT variety studied in \cite[\S2]{edixhoven-lido:geometric_quadratic_chabauty}, Duque-Rosero, Hashimoto and Spelier have proved that the geometric quadratic Chabauty locus is contained in~$X(\bQ_p)_{U^p}$, but the inclusion can sometimes be strict \cite{duque-rosero-hashimoto-spelier:geometric_quadratic_chabauty}. This phenomenon already arises for abelian Chabauty, where the locus cut out by Chabauty's argument can be a proper subset of the Chabauty--Coleman locus \cite{hashimoto-spelier:geometric_linear_chabauty}.

Thus, it is natural to speculate on the existence of a variant of the geometric quadratic Chabauty method which recovers the locus~$\cY(\bZ_p)_{U^p}$ exactly. The ``isogeny geometric quadratic Chabauty method'' which we envisage would assign to every morphism $f\colon Y\to P$ to an AT variety~$P$ and every prime number~$p$ (of good reduction or no) an obstruction locus
\[
\cY(\bZ_p)_f\subseteq\cY(\bZ_p) \,,
\]
containing~$\cY(\bZ)$. This locus should have the following properties:
\begin{itemize}
	\item (functoriality) for every morphism~$\phi\colon P\to P'$, we have
	\begin{equation}\label{eq:containment_of_loci}
		\cY(\bZ_p)_f\subseteq\cY(\bZ_p)_{\phi\circ f} \,;
	\end{equation}
	\item (isogeny-invariance) if~$\phi$ is an isogeny, then~\eqref{eq:containment_of_loci} is an equality;
	\item (relationship to geometric quadratic Chabauty) if~$G$ is split and~$T$ has a rational point, then we have
	\[
	\cY(\bZ_p)_f^\GQC \subseteq \cY(\bZ_p)_f \,;
	\]
	\item (relationship to quadratic Chabauty) if $f$ induces a surjection on pro-unipotent fundamental groups, and~$U^p$ is the associated quotient of~$\pi_1^{\bQ_p}(Y_\Qbar;b)$ for~$p$ a prime of good reduction, then we have
	\[
	\cY(\bZ_p)_f = \cY(\bZ_p)_{U^p} \,.
	\]
\end{itemize}

We will not say much about how one might try to set up such a method in general, but instead will give a precise definition in a special case, when~$P$ is a $\bG_m^r$-torsor over an abelian variety~$A$ of Mordell--Weil rank~$0$.

The key idea is a construction of a certain canonical system of isogenies out of any such~$P$. For any line bundle~$L$ on an abelian variety~$A$, the line bundles
\[
L_n \coloneqq L^{\otimes(1+n)}\otimes[-1]^*L^{\otimes(1-n)}
\]
have the property that~$[n]^*L_n\simeq L^{\otimes2n^2}$ and $[m]^*L_{mn}\simeq L_n^{\otimes m^2}$ by the theorem of the cube. Since~$\bG_m^r$-torsors are the same thing as $r$-tuples of line bundles, can use these identities to perform the following construction (using the symbol $\otimes$ to denote the operation on $\bG_m^r$-torsors induced by tensor products of line bundles).

\begin{definition}\label{def:Pn}
	Let~$P$ be a $\bG_m^r$-torsor over an abelian variety~$A$ over a field~$k$. We define~$P_n$ to be the $\bG_m^r$-torsor over~$A$ given by
	\[
	P_n \coloneqq P^{\otimes(1+n)}\otimes[-1]^*P^{\otimes(1-n)} \,.
	\]
	By the theorem of the cube, $[n]^*P_n$ and $P^{\otimes2n^2}$ are isomorphic as $\bG_m^r$-torsors over~$A$, and so we define a morphism of varieties $\beta_n\colon P\to P_n$ to be the composition
	\[
	P \to P^{\otimes 2n^2} \simeq [n]^*P_n \to P_n \,,
	\]
	where the first map is the $2n^2$th power map and the final map is the projection from the pullback. In the terminology of Definition~\ref{def:over_under}, $\beta_n$ is a morphism under~$[2n^2]\colon\bG_m^r\to\bG_m^r$ over~$[n]\colon A\to A$. In particular, it is an isogeny.
\end{definition}

\begin{remark}
	The definition of the morphism~$\beta_n$ involves a choice of isomorphism $P^{\otimes 2n^2} \simeq [n]^*P_n$, but this choice only affects~$\beta_n$ up to scaling by an element of~$\bG_m^r(k)$. In particular, the pair~$(P_n,\beta_n)$ is unique up to unique isomorphism in the category of AT varieties with a morphism from~$P$.
\end{remark}

\begin{definition}\label{def:maps_between_Pn}
	In the setting of Definition~\ref{def:Pn}, the theorem of the cube implies that~$[m]^*P_{mn}$ and~$P_n^{\otimes m^2}$ are isomorphic as $\bG_m^r$-torsors over~$A$, so we can define a morphism $\beta_{n,m}\colon P_n\to P_{mn}$ in the same way, lying over~$[m]\colon A\to A$ and under~$[m^2]\colon\bG_m^r\to\bG_m^r$. The morphisms~$\beta_{n,m}$ are unique once we require that they satisfy the identity
	\[
	\beta_{n,m}\circ\beta_n = \beta_{mn} \,.
	\]
	This identity implies that they automatically satisfy the identity
	\[
	\beta_{mn,l}\circ\beta_{n,m} = \beta_{n,lm} \,.
	\]
	In other words, the varieties~$(P_n)_{n\geq1}$ and morphisms $(\beta_{n,m})_{n,m\geq1}$ form a filtered diagram indexed by~$\bN$ with the divisibility ordering, and the morphisms~$(\beta_n)_{n\geq1}$ form a cone over this diagram with vertex~$P$.
\end{definition}

We are going to be especially interested in the filtered colimit
\[
\varinjlim_nP_n(k)
\]
taken over the morphisms~$\beta_{n,m}$, as well as the function~$\beta_\infty\colon P(k)\to \varinjlim_nP_n(k)$ induced from the morphisms~$\beta_n$. The set~$\varinjlim_nP_n(k)$ is in a natural way a torsor under~$\varinjlim_nG(k)$ over~$\varinjlim_nA(k)$, because filtered colimits preserve torsor structures. Moreover, we have $\varinjlim_nA(k)=\bQ\otimes A(k)$ (because the colimit is over the maps $[m]\colon A(k)\to A(k)$) and $\varinjlim_nG(k)=\bQ\otimes G(k)$ (because the colimit is over the maps~$[m^2]\colon G(k)\to G(k)$), so one can think of~$\varinjlim_nP_n(k)$ as some kind of ``tensor product'' of the torsor~$P(k)$ with~$\bQ$. (We will not make this assertion precise, it is intended purely motivationally.) Like any good construction, the set~$\varinjlim_nP_n(k)$ is functorial in~$P$, and the function~$\beta_\infty\colon P(k)\to\varinjlim_nP_n(k)$ is a natural transformation.

\begin{lemma}\label{lem:P_n_functorial}
	The assignment $P\mapsto P_n$ is functorial in the AT variety~$P$, and the maps $\beta_n$ and~$\beta_{n,m}$ are the components of natural transformations.
\end{lemma}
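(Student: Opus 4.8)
The plan is to extend $P\mapsto P_n$ to a functor by sending a morphism $\phi\colon P\to P'$ of AT varieties (with $P$ a $\bG_m^r$-torsor over an abelian variety $A$ and $P'$ a $\bG_m^{r'}$-torsor over an abelian variety $A'$) to \emph{the} morphism $\phi_n\colon P_n\to P'_n$ satisfying
\[
\phi_n\circ\beta_n=\beta'_n\circ\phi\,,
\]
and then to read off all the naturality assertions formally from the uniqueness of such a $\phi_n$; so the real work is entirely in showing that a unique such $\phi_n$ exists. By Lemma~\ref{lem:AT_morphisms}, $\phi$ lies over a morphism $\phi_T\colon A\to A'$ which is a translate $\phi_T=t_{a'}\circ\phi_A$ of a homomorphism $\phi_A$ by an element $a'\in A'(k)$, and under a homomorphism $\phi_G\colon\bG_m^r\to\bG_m^{r'}$. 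First I would observe that any $\phi_n$ with $\phi_n\circ\beta_n=\beta'_n\circ\phi$ is forced to lie over $\psi\coloneqq t_{na'}\circ\phi_A$ and under $\phi_G$: since $\beta_n$ lies over $[n]_A$ and under $[2n^2]$ (Definition~\ref{def:Pn}), comparing the two sides of the defining identity gives $(\text{base of }\phi_n)\circ[n]_A=[n]_{A'}\circ\phi_T$ and $(\text{torus map of }\phi_n)\circ[2n^2]=[2n^2]\circ\phi_G$, and $[n]_A$ and $[2n^2]$ are faithfully flat, hence epimorphisms of schemes, so these determine the base and the torus map uniquely.

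Given this, I would produce $\phi_n$ using Lemma~\ref{lem:decomposition_of_morphisms}. The set of morphisms $P_n\to P'_n$ lying over $\psi$ and under $\phi_G$ is empty unless $\phi_{G*}(P_n)\cong\psi^*(P'_n)$ as $\bG_m^{r'}$-torsors over $A$, and in that case it is a torsor under $\Map(A,\bG_m^{r'})=\bG_m^{r'}(k)$; likewise the set of morphisms $P\to P'_n$ lying over $\psi\circ[n]_A=[n]_{A'}\circ\phi_T$ and under $\phi_G\circ[2n^2]$ is a $\bG_m^{r'}(k)$-torsor, nonempty since it contains $\beta'_n\circ\phi$. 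Precomposition with $\beta_n$ carries the first set into the second and is $\bG_m^{r'}(k)$-equivariant, so once the first set is nonempty it is a bijection, which furnishes the unique $\phi_n$ with $\phi_n\circ\beta_n=\beta'_n\circ\phi$. Thus everything reduces to the single isomorphism of torsors $\phi_{G*}(P_n)\cong\psi^*(P'_n)$.

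Verifying this isomorphism is the one computational step, and the main obstacle. Since $\phi$ lies over $\phi_T$ and under $\phi_G$, Lemma~\ref{lem:decomposition_of_morphisms} gives $\phi_{G*}P\cong\phi_T^*P'$; and since pushout and pullback commute with the operations $Q\mapsto Q^{\otimes m}$ and $Q\mapsto[-1]^*Q$ out of which $P_n$ is built, the desired isomorphism unwinds into isomorphisms of line bundles on $A$, one for each line bundle $L'_i$ defining $P'$, namely
\[
\lambda_i^{\otimes(1+n)}\otimes[-1]^*\lambda_i^{\otimes(1-n)}\ \cong\ \psi^*\bigl((L'_i)^{\otimes(1+n)}\otimes[-1]^*(L'_i)^{\otimes(1-n)}\bigr)\,,\qquad \lambda_i\coloneqq\phi_T^*L'_i\,.
\]
I would prove these by manipulating the canonical isomorphisms supplied by the theorem of the cube (which expresses $[n]^*$ in terms of $L'_i$ and $[-1]^*L'_i$, and in $\Pic(A)\otimes\bQ$ shows that either side is twice the symmetric part plus $2n$ times the antisymmetric part of the relevant line bundle) and the theorem of the square (by which $\psi^*L'_i$ and $\phi_T^*L'_i$ differ precisely by the pullback of the $\Pic^0$-class $\phi_{L'_i}((n-1)a')$, since $\phi_{L'_i}$ is a homomorphism): the scaling of the translation from $a'$ to $na'$ built into $\psi$ is exactly calibrated to the fact that the antisymmetric part of $P_n$ is the $n$-fold multiple of that of $P$, and the two sides match. (One may pass to $\kbar$ for this and descend by the canonicity of the isomorphisms, if desired.) I would also remark that when $\phi_T=\phi_A$ is an honest homomorphism, so $a'=0$ and $\psi=\phi_A$, this verification is immediate because then $[-1]^*$ and $\phi_A^*$ commute on the nose; this case already suffices to equip $(P_n)_{n\geq1}$ and the transition maps $\beta_{n,m}$ of Definition~\ref{def:maps_between_Pn} with the structure of a filtered system, since those maps all lie over multiplication maps.

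Finally I would harvest the naturality statements from the uniqueness just established. The assignment $\phi\mapsto\phi_n$ sends identities to identities and respects composition, because $\id_{P_n}$ satisfies $(\cdot)\circ\beta_n=\beta_n$ and $\phi'_n\circ\phi_n$ satisfies $(\cdot)\circ\beta_n=\beta''_n\circ\phi'\circ\phi$ (using $\phi_n\circ\beta_n=\beta'_n\circ\phi$ and $\phi'_n\circ\beta'_n=\beta''_n\circ\phi'$), and these are exactly the identities characterising $(\id_P)_n$ and $(\phi'\circ\phi)_n$. Naturality of $\beta_n$ is the defining identity $\phi_n\circ\beta_n=\beta'_n\circ\phi$ itself. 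For naturality of $\beta_{n,m}$ one checks that $\phi_{mn}\circ\beta_{n,m}$ and $\beta'_{n,m}\circ\phi_n$ both satisfy $(\cdot)\circ\beta_n=\beta'_{mn}\circ\phi$, using $\beta_{n,m}\circ\beta_n=\beta_{mn}$ and $\beta'_{n,m}\circ\beta'_n=\beta'_{mn}$ from Definition~\ref{def:maps_between_Pn} together with the defining identities for $\phi_n$ and $\phi_{mn}$; since both morphisms lie over the same base $t_{mna'}\circ\phi_A\circ[m]_A$ and under $\phi_G\circ[m^2]$, and precomposition with $\beta_n$ is injective on such morphisms (both relevant Hom-sets being $\bG_m^{r'}(k)$-torsors identified by $\beta_n$), they coincide.
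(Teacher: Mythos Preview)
Your proposal is correct and follows essentially the same approach as the paper: both reduce the construction of~$\phi_n$ to exhibiting the torsor isomorphism $\phi_{G*}(P_n)\cong\psi^*(P'_n)$ with $\psi=t_{na'}\circ\phi_A$, verify this via the theorem of the square (the paper writes the relevant $\Pic^0$-class as $\lambda_{P'}(x)^{\otimes 2n}$ and does the chain of isomorphisms explicitly), and then deduce functoriality and naturality of $\beta_n$, $\beta_{n,m}$ from uniqueness. Your framing of uniqueness via ``precomposition with $\beta_n$ is a $\bG_m^{r'}(k)$-equivariant map between torsors'' is a slightly cleaner packaging of the same argument the paper leaves implicit.
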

\begin{proof}
	Suppose that~$P$ is a $\bG_m^r$-torsor over~$A$ and~$P'$ is a $\bG_m^{r'}$-torsor over~$A'$, and that~$f\colon P\to P'$ is a morphism of AT varieties. Then~$f$ lies under a homomorphism $f_G\colon \bG_m^r\to\bG_m^{r'}$ and over a morphism $f_T\colon A\to A'$, the latter of which can be written as a composition~$\tau_x\circ f_A$ where~$f_A\colon A\to A'$ is a homomorphism and~$\tau_x$ is translation by $x\in A'(k)$. Recall that
	\[
	\lambda_{P'}(y)\coloneqq\tau_y^*P'\otimes P^{\prime\otimes-1} = \tau_y^*[-1]^*P'\otimes[-1]^*P^{\prime\otimes-1}
	\]
	defines a homomorphism from~$A'(k)$ to $\Pic^0(A')^r$. So
	\begin{align*}
		f_A^*\tau_{nx}^*(P'_n) &\simeq f_A^*\bigl(\tau_{nx}^*P^{\prime\otimes(1+n)}\otimes\tau_{nx}^*[-1]^*P^{\prime\otimes(1-n)}\bigr) \\
		&\simeq f_A^*\bigl(P^{\prime\otimes(1+n)}\otimes[-1]^*P^{\prime\otimes(1-n)}\otimes\lambda_{P'}(x)^{\otimes2n}\bigr) \\
		&\simeq f_A^*\bigl(\tau_x^*P^{\prime\otimes(1+n)}\otimes[-1]^*\tau_x^*P^{\prime\otimes(1-n)}\bigr) \\
		&\simeq f_T^*P^{\prime\otimes(1+n)}\otimes[-1]^*f_T^*P^{\prime\otimes(1-n)} \\
		&\simeq f_{G*}\bigl(P^{\otimes(1+n)}\otimes[-1]^*P^{\otimes(1-n)}\bigr) = f_{G*}(P_n)
	\end{align*}
	as~$G'$-torsors over~$A$. In particular, there exists a morphism $f_n\colon P_n\to P'_n$ lying over~$\tau_{nx}\circ f_A$ and under~$f_G$; this morphism is unique if we additionally require that the square
	\begin{center}
	\begin{tikzcd}
		P \arrow[r,"f"]\arrow[d,"\beta_n"] & P' \arrow[d,"\beta_n"] \\
		P_n \arrow[r,"f_n"] & P'_n
	\end{tikzcd}
	\end{center}
	commute. This implies that~$P\mapsto P_n$ is functorial and~$\beta_n$ is a natural transformation as claimed. Moreover, unicity implies that the squares
	\begin{center}
	\begin{tikzcd}
		P_n \arrow[r,"f_n"]\arrow[d,"\beta_{n,m}"] & P'_n \arrow[d,"\beta_{n,m}"] \\
		P_{mn} \arrow[r,"f_{mn}"] & P'_{mn}
	\end{tikzcd}
	\end{center}
	also commute, so the~$\beta_{n,m}$ are also natural transformations.
\end{proof}

Over~$k=\bQ$, we can extend this construction to integral models.

\begin{lemma}\label{lem:Pn_models}
	Let~$P$ be a $\bG_m^r$-torsor over an abelian variety~$A/\bQ$, and let~$\cP$ be a simple model of~$P$. Then each~$P_n$ has a unique simple model~$\cP_n$ such that $\beta_n\colon P\to P_n$ is the generic fibre of a morphism $\beta_n\colon \cP\to \cP_n$. Moreover, the morphism~$\beta_{n,m}\colon P_n\to P_{mn}$ is the generic fibre of a morphism $\beta_{n,m}\colon \cP_n\to\cP_{mn}$ for all~$m,n$.
\end{lemma}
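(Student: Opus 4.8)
The plan is to bootstrap everything from Lemma~\ref{lem:producing_simple_models}, applying it once to produce the models~$\cP_n$ and once to produce the transition morphisms, and using the uniqueness clause of that lemma to keep all the models mutually compatible. No new geometric input should be needed.

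First I would observe that~$\cP$, being a simple model of~$P$, is in particular a simple $\bZ$-scheme in the sense required by Lemma~\ref{lem:producing_simple_models}: by Definition~\ref{def:simple_model} it is a $\bG_m^r$-torsor over a component~$\cT$ of the Néron model~$\cA$, hence smooth and separated over~$\bZ$, with connected special fibres carrying $\bF_\ell$-points. Applying Lemma~\ref{lem:producing_simple_models} to the simple scheme~$\cV=\cP$ and the morphism $\beta_n\colon\cP_\bQ=P\to P_n$ then produces a unique simple model~$\cP_n$ of~$P_n$ through which~$\beta_n$ extends to a morphism $\beta_n\colon\cP\to\cP_n$ of $\bZ$-schemes; this is exactly the first assertion.

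For the transition maps, fix~$m,n\geq1$. Since~$\cP_n$ is again a simple model, hence a simple $\bZ$-scheme, I would apply Lemma~\ref{lem:producing_simple_models} a second time, to~$\cV=\cP_n$ and the morphism $\beta_{n,m}\colon(\cP_n)_\bQ=P_n\to P_{mn}$, obtaining a unique simple model~$\cP'$ of~$P_{mn}$ together with an extension $\beta_{n,m}\colon\cP_n\to\cP'$. The key step is then to identify~$\cP'$ with~$\cP_{mn}$: consider the composite $\cP\xrightarrow{\beta_n}\cP_n\xrightarrow{\beta_{n,m}}\cP'$ of $\bZ$-morphisms, whose generic fibre is $\beta_{n,m}\circ\beta_n=\beta_{mn}$ by Definition~\ref{def:maps_between_Pn}; thus~$\cP'$ is a simple model of~$P_{mn}$ of which~$\beta_{mn}$ is the generic fibre of a morphism from~$\cP$, so the uniqueness clause of Lemma~\ref{lem:producing_simple_models} (with~$\cV=\cP$, $f=\beta_{mn}$) forces~$\cP'=\cP_{mn}$. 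Hence~$\beta_{n,m}$ extends to a morphism $\cP_n\to\cP_{mn}$, as required. As a bonus, the cocycle identities $\beta_{n,m}\circ\beta_n=\beta_{mn}$ and $\beta_{mn,l}\circ\beta_{n,m}=\beta_{n,lm}$ persist at the integral level, since in each case both sides are $\bZ$-morphisms out of~$\cP$ (resp.~$\cP_n$) into a separated $\bZ$-scheme, agreeing on the generic fibre, which is schematically dense because~$\cP$ (resp.~$\cP_n$) is flat over~$\bZ$.

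The argument is entirely formal, so there is no real main obstacle; the one place to take care is precisely the identification $\cP'=\cP_{mn}$ in the last step, i.e.\ making sure that the simple model of~$P_{mn}$ manufactured by two successive applications of Lemma~\ref{lem:producing_simple_models} is recognised, through the uniqueness statement of that lemma, as the same one manufactured by a single application directly from~$\cP$.
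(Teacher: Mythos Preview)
Your proposal is correct and follows essentially the same approach as the paper: apply Lemma~\ref{lem:producing_simple_models} once with~$\cV=\cP$ to produce each~$\cP_n$, then again with~$\cV=\cP_n$ to produce a simple model~$\cP'$ of~$P_{mn}$, and use the uniqueness clause (applied to~$\cV=\cP$ and the composite~$\beta_{mn}=\beta_{n,m}\circ\beta_n$) to identify~$\cP'=\cP_{mn}$. Your additional remarks---checking explicitly that~$\cP$ is a simple $\bZ$-scheme, and noting that the cocycle identities persist integrally by density of the generic fibre---are correct and slightly more detailed than the paper's terse account, but the argument is the same.
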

\begin{proof}
	The first assertion is a special case of Lemma~\ref{lem:producing_simple_models}. For the second, by Lemma~\ref{lem:producing_simple_models} again there is a simple model $\cP_{mn}'$ of~$P_{mn}$ such that~$\beta_{n,m}\colon P_n\to P_{mn}$ is the generic fibre of a morphism $\beta_{n,m}\colon\cP_n\to\cP_{mn}$. This implies that $\beta_{mn}=\beta_{n,m}\circ\beta_n$ is the generic fibre of a morphism $\cP\to\cP'_{mn}$, and so~$\cP'_{mn}=\cP_{mn}$ by the unicity clause of Lemma~\ref{lem:producing_simple_models}.
\end{proof}

\begin{lemma}\label{lem:colimit_of_Pn}
	In the setting of Lemma~\ref{lem:Pn_models}, if~$A$ has Mordell--Weil rank~$0$, then
	\[
	\varinjlim_n\cP_n(\bZ)
	\]
	consists of a single point.
\end{lemma}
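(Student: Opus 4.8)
The plan is to exploit the rank~$0$ hypothesis, which by the Mordell--Weil theorem makes~$A(\bQ)$ a \emph{finite} group, say of order~$N$: the transition maps~$\beta_{n,m}$ are built from the multiplication-by-$m$ map on~$A$ and the multiplication-by-$m^2$ map on the torus~$\bG_m^r$, and once we pass to the colimit both of these collapse almost everything.

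First I would unwind the structure of the finite-level sets~$\cP_n(\bZ)$. By Definition~\ref{def:simple_model}, each~$\cP_n$ is a $\bG_m^r$-torsor over a component~$\cT_n$ of the N\'eron model~$\cA$ of~$A$; since $\Pic(\bZ)=0$ we have $\rH^1_\et(\Spec\bZ,\bG_m^r)=0$, so pulling~$\cP_n$ back along any $\bZ$-point of~$\cT_n$ gives a \emph{trivial} $\bG_m^r$-torsor over~$\Spec\bZ$. Hence $\cP_n(\bZ)\to\cT_n(\bZ)$ is surjective and makes~$\cP_n(\bZ)$ into a $\bG_m^r(\bZ)=\{\pm1\}^r$-torsor over $\cT_n(\bZ)\subseteq\cA(\bZ)=A(\bQ)$. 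I would also pin down the component~$\cT_n$: writing~$\cT$ for the component of~$\cA$ underlying~$\cP$, and recalling from Definition~\ref{def:Pn} that~$\beta_n$ lies over $[n]\colon A\to A$, the component~$\cT_n$ is the image of~$\cT$ under~$[n]$, that is $[\cT_n]=n[\cT]$ in the (finite) component groups~$\Phi_\ell$ of the special fibres of~$\cA$. Fixing a positive integer~$M$ that kills every~$\Phi_\ell$ (there are only finitely many nontrivial ones, each finite), we get $\cT_n=\cA^0$ whenever~$M\mid n$.

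Non-emptiness of $\varinjlim_n\cP_n(\bZ)$ is then immediate: for~$M\mid n$ the identity section gives $0\in\cA^0(\bZ)=\cT_n(\bZ)$, so $\cP_n(\bZ)\neq\emptyset$. For the main point --- that the colimit has at most one element --- I would take two of its elements, represent them (using that the~$n$ divisible by~$M$ are cofinal in~$\bN$ under divisibility) by points $x,y\in\cP_n(\bZ)$ for a common~$n$ with~$M\mid n$, and push them up the system. Their images $\bar x,\bar y\in\cT_n(\bZ)=\cA^0(\bZ)\subseteq A(\bQ)$ are killed by~$[N]$, so after applying~$\beta_{n,N}$ --- which lies over $[N]\colon\cT_n\to\cT_{Nn}$, hence on $\bZ$-points over the restriction of $[N]\colon A(\bQ)\to A(\bQ)$ --- the two resulting points of~$\cP_{Nn}(\bZ)$ both lie over the single base point $0\in\cA^0(\bZ)=\cT_{Nn}(\bZ)$, so they differ by a unique $\epsilon\in\bG_m^r(\bZ)=\{\pm1\}^r$, say $\beta_{n,N}(x)=\epsilon\cdot\beta_{n,N}(y)$. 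Applying one further transition map~$\beta_{Nn,2}$, which by Definition~\ref{def:maps_between_Pn} lies under $[4]\colon\bG_m^r\to\bG_m^r$ and therefore kills~$\{\pm1\}^r$, we obtain
\[
\beta_{n,2N}(x)=\beta_{Nn,2}\bigl(\epsilon\cdot\beta_{n,N}(y)\bigr)=[4](\epsilon)\cdot\beta_{n,2N}(y)=\beta_{n,2N}(y),
\]
using $\beta_{Nn,2}\circ\beta_{n,N}=\beta_{n,2N}$; thus~$x$ and~$y$ already agree at level~$2Nn$, and~$\varinjlim_n\cP_n(\bZ)$ is a single point.

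I do not expect a genuine obstacle: the only non-formal ingredient is the finiteness of~$A(\bQ)$, and the rest is bookkeeping with N\'eron models, the vanishing of~$\Pic(\bZ)$, and the composition relations $\beta_{mn,l}\circ\beta_{n,m}=\beta_{n,lm}$ of Definition~\ref{def:maps_between_Pn}. Two points need care. First, two points of~$\cP_n(\bZ)$ may lie over \emph{different} base points in~$A(\bQ)$, so one must first equalise the bases (using $[N]=0$ on~$A(\bQ)$) and only afterwards absorb the residual torus ambiguity; performing both operations in a single application of some~$\beta_{n,m}$ is not obviously legitimate. Second, one must verify that~$\cT_n$ genuinely \emph{becomes} the identity component~$\cA^0$ once~$M\mid n$ --- not merely a component meeting the identity section --- since this is what makes~$\cP_n(\bZ)$ non-empty and lets the argument get off the ground.
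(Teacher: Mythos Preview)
Your proposal is correct and follows essentially the same approach as the paper's proof: first use finiteness of the component groups to make~$\cT_n=\cA^0$ (hence~$\cP_n(\bZ)\neq\emptyset$), then use finiteness of~$A(\bQ)$ to equalise the base points under some~$\beta_{n,m}$, and finally apply~$\beta_{mn,2}$ (lying under~$[4]$) to absorb the residual~$\{\pm1\}^r$ ambiguity. Your write-up is in fact slightly more explicit than the paper's (e.g.\ spelling out why~$\cP_n(\bZ)\to\cT_n(\bZ)$ is surjective via~$\Pic(\bZ)=0$), but the two arguments are otherwise identical.
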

\begin{proof}
	We want to show that some~$\cP_n(\bZ)$ is non-empty, and that for any two elements~$\tilde x,\tilde y\in\cP_n(\bZ)$, there exists some~$m$ such that~$\beta_{n,m}(\tilde x)=\beta_{n,m}(\tilde y)$. For the first claim, since~$\cT_{mn}\supseteq m\cT_n$ and the component group of~$\cA$ is finite, there is some~$n$ such that~$\cT_n$ is the identity component of~$\cA$, and in particular~$\cT_n(\bZ)\neq\emptyset$. In particular, $\cP_n(\bZ)$ is non-empty, since it surjects onto~$\cT_n(\bZ)$.
	
	For the second claim, let~$x$ and~$y$ denote the images of~$\tilde x$ and~$\tilde y$ in~$\cT_n(\bZ)\subseteq \cA(\bZ)=A(\bQ)$. Since we are assuming that~$A(\bQ)$ is finite, there is some~$m$ such that~$mx=my$. This implies that~$\beta_{n,m}(\tilde x)$ and~$\beta_{n,m}(\tilde y)$ lie in the same fibre of the projection $\cP_{mn}\to\cT_{mn}$, so differ by an element of $\bG_m(\bZ)^r=\{\pm1\}^r$. Since~$\beta_{mn,2}$ is a morphism lying under~$[4]\colon\bG_m^r\to\bG_m^r$, we must have $\beta_{mn,2}(\beta_{n,m}(\tilde x))=\beta_{mn,2}(\beta_{n,m}(\tilde y))$, and so~$\beta_{n,2m}(\tilde x)=\beta_{n,2m}(\tilde y)$ as claimed.
\end{proof}

Now let us to return to the setting of interest, where we have a morphism $f\colon Y\to P$ where~$Y$ is a smooth curve and~$P$ is a $\bG_m^r$-torsor over an abelian variety~$A$, all defined over~$\bQ$, and we fix a regular model~$\cY$ of~$Y$ over~$\bZ$. We write~$f_n\colon Y\to P_n$ for the composition~$\beta_n\circ f$, and write~$f_\infty\colon Y(\bQ)\to\varinjlim_nP_n(\bQ)$ for the induced map into the colimit (and similarly for~$\bQ_\ell$-points). For each simple open~$\cU\subseteq\cY_\sm$, let~$\cP$ be the unique simple model of~$P$ so that~$f\colon Y\to P$ is the generic fibre of a morphism $f\colon\cU\to\cP$, and let~$\cP_n$ be the system of simple models of~$P_n$ produced by Lemma~\ref{lem:Pn_models}. It follows from the compatibility of all the maps involved that we have a commuting square
\begin{equation}\label{diag:isogeny_QC_square}
\begin{tikzcd}
	\cU(\bZ) \arrow[r,hook]\arrow[d,"f_\infty"] & \cU(\bZ_p) \arrow[d,"f_\infty"] \\
	\varinjlim_n\cP_n(\bZ) \arrow[r] & \varinjlim_n\cP_n(\bZ_p)
\end{tikzcd}
\end{equation}
for all primes~$p$.

When~$A$ has Mordell--Weil rank~$0$, let us define $W\subseteq\varinjlim_nP_n(\bQ)$ to be the union of the singleton sets~$\varinjlim_n\cP_n(\bZ)$ as~$\cU$ ranges over all simple opens in~$\cY$. This~$W$ is a finite set, and can equally be thought of as a subset of~$\varinjlim_nP_n(\bQ_p)$ because the natural map $\varinjlim_nP_n(\bQ)\to\varinjlim_nP_n(\bQ_p)$ is injective (it is a filtered colimit of injections).

\begin{definition}\label{def:isogeny_geometric_quadratic_chabauty_locus}
	Suppose that $A$ has Mordell--Weil rank~$0$, and let~$W\subseteq\varinjlim_nP_n(\bQ)$ be the finite subset defined above. We define the \emph{isogeny geometric quadratic Chabauty locus}
	\[
	\cY(\bZ_p)_f \subseteq \cY(\bZ_p)
	\]
	to be the inverse image of~$W$ under the function $f_\infty\colon\cY(\bZ_p)\to\varinjlim_nP_n(\bQ_p)$. The locus~$\cY(\bZ_p)_f$ contains the integer points~$\cY(\bZ)$, for if~$x\in\cY(\bZ)$ is integral, then $x$ lies in a simple open~$\cU\subseteq\cY$, and so~$f_\infty(x)\in W$ by the commutativity of~\eqref{diag:isogeny_QC_square}.
\end{definition}

The isogeny geometric quadratic Chabauty locus admits a decomposition in terms of the simple opens~$\cU\subseteq\cY$. For this, note that any simple open~$\cU$ is itself a regular model of~$Y$ (because of our relaxed definition of the word ``model''), and so the isogeny geometric quadratic Chabauty locus~$\cU(\bZ_p)_f$ is defined: it is simply the set of points~$x\in\cU(\bZ_p)$ such that~$f_\infty(x)$ is equal to the unique element of~$\varinjlim_n\cP_n(\bZ)$.

\begin{lemma}\label{lem:isogeny_locus_union_over_simple_opens}
	We have a decomposition
	\[
	\cY(\bZ_p)_f = \bigcup_\cU\cU(\bZ_p)_f \,,
	\]
	where the union is taken over all simple opens~$\cU\subseteq\cY$.
\end{lemma}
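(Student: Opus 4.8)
The inclusion $\bigcup_\cU\cU(\bZ_p)_f\subseteq\cY(\bZ_p)_f$ is immediate: for each simple open $\cU$ one has $\cU(\bZ_p)\subseteq\cY(\bZ_p)$ and, writing $\cP_n^{\cU}$ for the simple models attached to $\cU$ and $w_\cU$ for the unique point of $\varinjlim_n\cP_n^{\cU}(\bZ)$ (Lemma~\ref{lem:colimit_of_Pn}), the set $\cU(\bZ_p)_f=\{x\in\cU(\bZ_p):f_\infty(x)=w_\cU\}$ is contained in $f_\infty^{-1}(W)=\cY(\bZ_p)_f$ because $w_\cU\in W$. The content of the lemma is the reverse inclusion, which I would prove as follows.

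Take $x\in\cY(\bZ_p)_f$, so that $f_\infty(x)=w_{\cU_0}$ for some simple open $\cU_0$; in particular simple opens exist. Since $\cY$ is regular, $x$ factors through $\cY_\sm$; let $C$ be the component of $\cY_{\sm,\bF_p}$ into which $x$ reduces, so that $C(\bF_p)\neq\emptyset$. I would then introduce the simple open $\cU$ obtained from $\cU_0$ by retaining the component $C$ at the prime $p$ while retaining the same components as $\cU_0$ at every prime $\ell\neq p$. This $\cU$ is a legitimate simple open, since the component it retains at each prime $q$ has an $\bF_q$-point (at $q=p$ because $C(\bF_p)\neq\emptyset$, at $q\neq p$ because $\cU_0$ is simple), and $x\in\cU(\bZ_p)$ because $x$ reduces into $\cU_{\bF_p}=C$. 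It therefore suffices to prove $f_\infty(x)=w_\cU$, as this yields $x\in\cU(\bZ_p)_f$.

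The proof of $f_\infty(x)=w_\cU$ rests on three observations. First, unwinding the colimit, the equality $f_\infty(x)=w_{\cU_0}$ already holds at a finite level: there is an integer $m$ with $f_m(x)$ equal to the image in $P_m(\bQ_p)$ of a point of $\cP_m^{\cU_0}(\bZ)$, so in particular $f_m(x)\in\cP_m^{\cU_0}(\bZ)\subseteq P_m(\bQ)$. Second, $f_m\colon\cU\to\cP_m^{\cU}$ is a morphism of $\bZ$-schemes by Lemmas~\ref{lem:producing_simple_models} and~\ref{lem:Pn_models}, so $x\in\cU(\bZ_p)$ forces $f_m(x)\in\cP_m^{\cU}(\bZ_p)$. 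Third --- and this is the crux --- since $\cU$ and $\cU_0$ differ only at the prime $p$, so that $\cU\times_\bZ\bZ_\ell=\cU_0\times_\bZ\bZ_\ell$ for every $\ell\neq p$, the uniqueness clause in the $\bZ_\ell$-analogue of Lemma~\ref{lem:producing_simple_models}, applied to the morphism $f_m$, gives $\cP_m^{\cU}\times_\bZ\bZ_\ell=\cP_m^{\cU_0}\times_\bZ\bZ_\ell$ as integral models of $P_{m,\bQ_\ell}$. Combining these, the rational point $f_m(x)\in P_m(\bQ)$ is $\bZ_\ell$-integral with respect to $\cP_m^{\cU}$ at every prime $\ell$ --- at $\ell=p$ by the second observation, and at $\ell\neq p$ by the first and third --- so, $\cP_m^{\cU}$ being separated and of finite type over $\bZ$, we conclude $f_m(x)\in\cP_m^{\cU}(\bZ)$. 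Hence $f_\infty(x)$ lies in the singleton $\varinjlim_n\cP_n^{\cU}(\bZ)=\{w_\cU\}$, giving $f_\infty(x)=w_\cU$.

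The step I expect to be the main obstacle is the third observation: one must verify carefully that altering which component a simple open retains at the single prime $p$ leaves the resulting $\bZ_\ell$-models of the varieties $P_n$ literally unchanged for all $\ell\neq p$, which is exactly what allows a point recognised as $\bZ$-integral for the $\cU_0$-models to be recognised as $\bZ$-integral for the $\cU$-models. The remaining steps are routine unwinding of Definition~\ref{def:isogeny_geometric_quadratic_chabauty_locus} together with the elementary observation that, once one simple open is known to exist, a simple open may be freely re-chosen at any single prime, subject only to the existence of a rational point in the special fibre there.
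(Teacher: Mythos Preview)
Your proof is correct and follows the same overall strategy as the paper: replace the witnessing simple open $\cU_0$ by a simple open $\cU$ that agrees with $\cU_0$ away from $p$ but retains the component containing the reduction of $x$ at $p$, and then show $f_\infty(x) = w_\cU$. The only substantive difference lies in how you compare the models $\cP_m^{\cU}$ and $\cP_m^{\cU_0}$ at primes $\ell \neq p$. You invoke a $\bZ_\ell$-local version of Lemma~\ref{lem:producing_simple_models} to conclude directly that $(\cP_m^{\cU})_{\bZ_\ell} = (\cP_m^{\cU_0})_{\bZ_\ell}$; this is valid, since the proof of that lemma works verbatim over any Dedekind base, and base-changing the global simple model to $\bZ_\ell$ visibly satisfies the local characterising property. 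The paper instead first enlarges $m$ so that both $\cP_m^{\cU}$ and $\cP_m^{\cU_0}$ are $\bG_m^r$-torsors over the identity component $\cA^0$, writes them as differing by some $\lambda \in \bG_m^r(\bQ)$, and shows $\lambda \in \bG_m^r(\bZ_\ell)$ for each $\ell \neq p$ by exhibiting an auxiliary point $x_\ell \in \cU(\bZ_\ell) = \cU_0(\bZ_\ell)$ in the intersection $\cP_m^{\cU}(\bZ_\ell) \cap \cP_m^{\cU_0}(\bZ_\ell)$; combined with the same argument at $p$ using $f_m(x)$ itself, this yields $\lambda \in \bG_m^r(\bZ)$ and hence the stronger conclusion $\cP_m^{\cU} = \cP_m^{\cU_0}$ as global models. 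Your route is a little more direct and does not require passing to larger $m$, at the cost of appealing to the (routine) local analogue of Lemma~\ref{lem:producing_simple_models}; the paper's route stays entirely within lemmas already stated and gives a slightly stronger intermediate statement.
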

\begin{proof}
	One containment is easy: for each simple open~$\cU\subseteq\cY$ we have $\cU(\bZ_p)\subseteq\cY(\bZ_p)$ and $\varinjlim_n\cP_n(\bZ)\subseteq W$, which implies that $\cU(\bZ_p)_f\subseteq\cY(\bZ_p)_f$. So we have the inclusion $\bigcup_\cU\cU(\bZ_p)_f\subseteq\cY(\bZ_p)_f$.
	
	For the converse inclusion, suppose that~$x\in\cY(\bZ_p)_f$. We need to show that there is a simple open~$\cU\subseteq\cY$ such that~$x\in\cU(\bZ_p)$ and~$f_\infty(x)$ is the unique element of~$\varinjlim_n\cP_n(\bZ)$. For this, because~$f_\infty(x)\in W$, there certainly exists some simple open~$\cU\subseteq\cY$ such that $f_\infty(x)$ is the unique element of $\varinjlim_n\cP_n(\bZ)$, where~$(\cP_n)_{n\geq1}$ is the sequence of simple models of~$(P_n)_{n\geq1}$ corresponding to~$\cU$. Let~$\cU'\subseteq\cY$ be the simple open defined by~$\cU'_{\bF_\ell}=\cU_{\bF_\ell}$ for~$\ell\neq p$, and~$\cU'_{\bF_p}$ is the component of~$\cY_{\bF_p}$ containing the reduction of~$x\in\cY(\bZ_p)$. So~$x\in\cU'(\bZ_p)$, and we are going to show that~$\varinjlim_n\cP'_n(\bZ)=\varinjlim_n\cP_n(\bZ)$, where~$(\cP'_n)_{n\geq1}$ is the sequence of simple models corresponding to~$\cU'$.
	
	For this, choose some~$m$ such that~$\cP_m$ and~$\cP'_m$ are both $\bG_m^r$-torsors over the identity component~$\cA^0$, and such that $f_m(x)\in\cP_m(\bZ)$. (This is possible because $f_\infty(x)\in\varinjlim_n\cP_n(\bZ)$.) Because the generic fibre map $\Pic(\cA^0)\to\Pic(A)$ is an isomorphism, we know that~$\cP_m$ and~$\cP'_m$ are isomorphic as torsors. Fixing an isomorphism $\cP_m\xrightarrow\sim\cP'_m$, the induced map on generic fibres is an automorphism of~$P_m$, so is multiplication by some element~$\lambda\in\bG_m^r(\bQ)$. Concretely, this means that~$\cP'_m(R)=\lambda\cdot\cP_m(R)$ as subsets of~$P_m(\bQ\otimes R)$, for all rings~$R$.
	
	Now on the one hand, $\cP_m(\bZ_p)$ and~$\cP'_m(\bZ_p)$ have non-empty intersection inside~$P_m(\bQ_p)$: they both contain~$f_m(x)$ because~$f_m(x)\in\cP_m(\bZ)$ by assumption and~$f_m(x)\in\cP'_m(\bZ_p)$ because~$x\in\cU'(\bZ_p)$. Because~$\cP_m(\bZ_p)$ is a $\bG_m^r(\bZ_p)$-torsor, this implies that~$\lambda\in\bG_m^r(\bZ_p)$. On the other hand, for any prime~$\ell\neq p$, $\cP_m(\bZ_\ell)$ and~$\cP'_m(\bZ_\ell)$ have non-empty intersection inside~$P_m(\bQ_\ell)$: they both contain~$f_m(x_\ell)$ for any choice of~$x_\ell\in\cU(\bZ_\ell)=\cU'(\bZ_\ell)$. This implies that~$\lambda\in\bG_m^r(\bZ_\ell)$ also. We conclude that~$\lambda\in\bG_m^r(\bZ)$, whence~$\cP'_m=\cP_m$ as models of~$P_m$. By Lemma~\ref{lem:producing_simple_models}, we have that~$\cP'_{km}=\cP_{km}$ for all~$k\geq1$, and so $\varinjlim_n\cP'_n(\bZ)=\varinjlim_n\cP_n(\bZ)$. We conclude that~$x\in\cU'(\bZ_p)_f$ and we are done.
\end{proof}

\begin{corollary}\label{cor:isogeny_locus_union_of_geometric_loci}
	In the setup of Definition~\ref{def:isogeny_geometric_quadratic_chabauty_locus}, the isogeny geometric quadratic Chabauty locus is the union of geometric quadratic Chabauty loci:
	\[
	\cY(\bZ_p)_f = \bigcup_n\cY(\bZ_p)_{f_n}^\GQC \,.
	\]
\end{corollary}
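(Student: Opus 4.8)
The plan is to prove the identity one simple open at a time and then assemble. The right-hand side decomposes as $\cY(\bZ_p)_{f_n}^\GQC=\bigcup_\cU\cU(\bZ_p)_{f_n}^\GQC$ by the very definition of the geometric quadratic Chabauty locus, while the left-hand side decomposes as $\cY(\bZ_p)_f=\bigcup_\cU\cU(\bZ_p)_f$ by Lemma~\ref{lem:isogeny_locus_union_over_simple_opens}; interchanging the two unions, it suffices to prove that $\cU(\bZ_p)_f=\bigcup_n\cU(\bZ_p)_{f_n}^\GQC$ for each fixed simple open $\cU\subseteq\cY$. First I would observe that no ambiguity of integral models arises: if $\cP$ is the simple model of $P$ through which $f\colon\cU\to P$ factors, then by the uniqueness clause of Lemma~\ref{lem:producing_simple_models} the simple models $\cP_n$ of $P_n$ supplied by Lemma~\ref{lem:Pn_models} are precisely the ones through which $f_n=\beta_n\circ f\colon\cU\to P_n$ factors, so they coincide with the models used in forming $\cU(\bZ_p)_{f_n}^\GQC$.

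Fix such a $\cU$, let $w$ denote the unique element of $\varinjlim_n\cP_n(\bZ)$ (Lemma~\ref{lem:colimit_of_Pn}), and recall that $\cU(\bZ_p)_f=\{x\in\cU(\bZ_p):f_\infty(x)=w\}$. The key step is the equivalence, for $x\in\cU(\bZ_p)$,
\[
f_\infty(x)=w \quad\Longleftrightarrow\quad f_N(x)\in\cP_N(\bZ)\ \text{for some}\ N\geq1.
\]
The backward direction is immediate: if $f_N(x)\in\cP_N(\bZ)$ then its image in $\varinjlim_m\cP_m(\bZ)=\{w\}$ is $w$, and that image is $f_\infty(x)$ because the transition maps $\beta_{m,l}$ are morphisms of $\bZ$-schemes (Lemma~\ref{lem:Pn_models}) and hence compatible with the inclusions $\cP_m(\bZ)\hookrightarrow\cP_m(\bZ_p)\subseteq P_m(\bQ_p)$. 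For the forward direction, choose $n$ and a representative $\tilde w\in\cP_n(\bZ)$ of $w$; then $f_\infty(x)=w$ says that $f_n(x)$ and $\tilde w$ have the same image in the filtered colimit $\varinjlim_m P_m(\bQ_p)$, so there is some $m$ with $\beta_{n,m}(f_n(x))=\beta_{n,m}(\tilde w)$ in $P_{mn}(\bQ_p)$. Since $\beta_{n,m}(f_n(x))=f_{mn}(x)$ while $\beta_{n,m}(\tilde w)\in\cP_{mn}(\bZ)$, this gives $f_{mn}(x)\in\cP_{mn}(\bZ)$, as required.

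Finally I would feed in the rank-$0$ hypothesis. As $A(\bQ)$ is finite, $\cT_n(\bZ)\subseteq\cA(\bZ)=A(\bQ)$ is finite, hence so is $\cP_n(\bZ)$, being a torsor over $\cT_n(\bZ)$ under $\bG_m^r(\bZ)=\{\pm1\}^r$. A finite subset of the Hausdorff space $\cP_n(\bZ_p)\subseteq P_n(\bQ_p)$ is closed, so $\overline{\cP_n(\bZ)}=\cP_n(\bZ)$, and therefore $\cU(\bZ_p)_{f_n}^\GQC=\{x\in\cU(\bZ_p):f_n(x)\in\cP_n(\bZ)\}$. Combining with the displayed equivalence yields
\[
\bigcup_n\cU(\bZ_p)_{f_n}^\GQC=\{x\in\cU(\bZ_p):f_N(x)\in\cP_N(\bZ)\ \text{for some}\ N\}=\cU(\bZ_p)_f,
\]
and taking the union over all simple opens $\cU$ finishes the proof. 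I do not anticipate a serious obstacle: the only genuine content is the forward implication of the displayed equivalence, where one unwinds the filtered colimit to promote ``$f_n(x)$ agrees with $w$ in the colimit'' to ``$f_N(x)$ is an honest $\bZ$-point deep enough in the tower'', and even that is routine once the rank-$0$ assumption collapses the closure $\overline{\cP_n(\bZ)}$ to $\cP_n(\bZ)$ itself.
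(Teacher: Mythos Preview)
Your proof is correct and follows essentially the same route as the paper: reduce to a single simple open via Lemma~\ref{lem:isogeny_locus_union_over_simple_opens}, identify $\cU(\bZ_p)_f$ with the set of $x$ for which $f_n(x)\in\cP_n(\bZ)$ for some $n$, and then use the finiteness of $\cP_n(\bZ)$ (from the rank-$0$ hypothesis) to collapse the closure in the definition of $\cU(\bZ_p)_{f_n}^\GQC$. Your write-up is more explicit than the paper's about the colimit argument and the compatibility of integral models, but the logical skeleton is the same.
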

\begin{proof}
	By Lemma~\ref{lem:isogeny_locus_union_over_simple_opens}, it suffices to prove this when~$\cY=\cU$ is simple. A point~$x\in\cU(\bZ_p)$ satisfies $f_\infty(x)\in\varinjlim_n\cP_n(\bZ)$ if and only if~$f_n(x)\in\cP_n(\bZ)$ for some~$n$. Because~$\cP_n(\bZ)$ is finite, it is closed in~$\cP_n(\bZ_p)$ and so we have that $x\in\cU(\bZ_p)_f$ if and only if $x\in\cU(\bZ_p)_{f_n}^\GQC$ for some~$n$.
\end{proof}

\begin{remark}
	Corollary~\ref{cor:isogeny_locus_union_of_geometric_loci} is why we call~$\cY(\bZ_p)_f$ the isogeny geometric quadratic Chabauty locus: it is the union of geometric quadratic Chabauty loci over the system of maps~$f_n\colon Y\to P_n$ isogenous to the original~$f$. This is presumably special to the case that~$A$ has Mordell--Weil rank~$0$ (which is the only case within the scope of Definition~\ref{def:isogeny_geometric_quadratic_chabauty_locus}).
\end{remark}

\section{Comparison with quadratic Chabauty}\label{s:comparison}

The principal advantage of this more complicated version of geometric quadratic Chabauty is that it recovers the quadratic Chabauty locus exactly, and not just a subset thereof. We will prove
\begin{theorem}\label{thm:comparison}
	Let~$Y/\bQ$ be a smooth curve with a regular model~$\cY/\bZ$, and let $f\colon Y\to P$ be a morphism where~$P$ is a $\bG_m^r$-torsor over an abelian variety~$A$ of Mordell--Weil rank~$0$. Suppose that~$f$ induces a surjection on pro-unipotent fundamental groups, and fix a $\bQ$-rational base point~$b$ on~$Y$ (possibly tangential).
	
	Then
	\[
	\cY(\bZ_p)_{U^p} = \cY(\bZ_p)_f
	\]
	for every prime~$p$ of good reduction for~$(\cY,b)$, where~$U^p$ is the quotient realised by~$f$. Moreover, we have~$c(U^p)=\dim(P)$, where~$c(U^p)$ is the number of independent Coleman functions cutting out~$\cY(\bZ_p)_{U^p}$.
\end{theorem}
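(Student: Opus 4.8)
The plan is to sandwich both sides of the claimed equality between two translations of the same arithmetic-geometric datum on the isogeny tower $(P_n)_{n\ge1}$. Fix the base point $\tilde 0=\prin(f(b))$ on $P$, so that $U^p=\pi_1^{\bQ_p}(P_\Qbar;\tilde0)$ by the discussion closing \S\ref{sss:tangential_points}; since each $\beta_n\colon P\to P_n$ is an isogeny it induces an isomorphism $\pi_1^{\bQ_p}(P_\Qbar;\tilde0)\xrightarrow\sim\pi_1^{\bQ_p}(P_{n,\Qbar};\beta_n(\tilde0))$, so every $f_n=\beta_n\circ f$ also realises $U^p$. Exactly as in the proof of Lemma~\ref{lem:isogeny_locus_union_over_simple_opens}, together with the analogous decomposition $\Sel_{U^p}(\cY/\bZ)=\bigcup_\cU\Sel_{U^p}(\cU/\bZ)$ over simple opens, it suffices to treat one simple open at a time; so assume $\cY=\cU$ is simple, with simple models $\cP$ of $P$ and $(\cP_n)_{n\ge1}$ of $(P_n)_{n\ge1}$ (Lemmas~\ref{lem:producing_simple_models},~\ref{lem:Pn_models}) and with $f$ the generic fibre of $f\colon\cU\to\cP$. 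Write $j$ for the unipotent Kummer map.

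\emph{Step 1 (local comparison at $p$).} First I would construct a natural isomorphism
\[
\rH^1_f(\bQ_p,U^p)\;\xrightarrow{\ \sim\ }\;\varinjlim_n P_n(\bQ_p)
\]
identifying $j\colon\cU(\bZ_p)\to\rH^1_f(\bQ_p,U^p)$ with $f_\infty\colon\cU(\bZ_p)\to\varinjlim_nP_n(\bQ_p)$. By Lemma~\ref{lem:central_extension} (and the comparison theorems), $U^p$ is a central extension of $V_pA$ by $\bQ_p(1)^r$, so $\rH^1_f(\bQ_p,-)$ turns it into a torsor under $\rH^1_f(\bQ_p,\bQ_p(1)^r)$ over $\rH^1_f(\bQ_p,V_pA)$; on the graded pieces the Kummer maps $A(\bQ_p)\to\rH^1_f(\bQ_p,V_pA)$ and $\bG_m(\bZ_p)\to\rH^1_f(\bQ_p,\bQ_p(1))$ become isomorphisms after $-\otimes\bQ$, which is precisely what the colimits over the multiplication-by-$m$, resp.\ $m^2$, maps compute for $\varinjlim_nA(\bQ_p)$ and $\varinjlim_nG(\bQ_p)$. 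Compatibility with the Kummer maps follows from functoriality of $j$ applied to $f\colon\cU\to\cP$ and to the isogenies $\beta_n$, using the centrality of~\eqref{eq:homotopy_sequence} to match the torsor structures. In particular $\dim\rH^1_f(\bQ_p,U^p)=\dim A+r=\dim P$.

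\emph{Step 2 (global comparison).} Next I would identify the image of the Selmer scheme under localisation at $p$,
\[
\loc_p\bigl(\Sel_{U^p}(\cU/\bZ)\bigr)\ \subseteq\ \rH^1_f(\bQ_p,U^p)\ \cong\ \varinjlim_nP_n(\bQ_p),
\]
with the image of the single point $\varinjlim_n\cP_n(\bZ)$ (Lemma~\ref{lem:colimit_of_Pn}; this is where $\rank A(\bQ)=0$ enters). The containment $\supseteq$ is the standard fact that integral points map into the Selmer scheme, via the commuting square~\eqref{diag:isogeny_QC_square}. For $\subseteq$ one runs the weight-graded analysis of Step~1 over every completion simultaneously: a global class that is integral at all primes and crystalline at $p$ has abelian part in $\rH^1_f(\bQ,V_pA)$, whose $\loc_p$-image is zero since $A(\bQ)$ is torsion (any leftover $\Sha$-part dies at $p$), and then its torus part lies in $\rH^1_f(\bQ,\bQ_p(1)^r)=0$, hence is rigid and is the class of the compatible family $\varinjlim_n\cP_n(\bZ)$. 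The real technical content here is a dictionary matching, prime by prime, the defining local conditions of $\Sel_{U^p}(\cU/\bZ)$ with integrality of the associated points on the $\cP_n$ — equivalently, showing that for classes factoring through $f$ the curve-level integrality condition ``$\loc_\ell(\xi)\in j(\cU(\bZ_\ell))$'' coincides with torsor-level integrality of the image in $\cP_n(\bZ_\ell)$ — and this is where the primes of bad reduction must be handled carefully. (The inclusion $\cY(\bZ_p)_f\subseteq\cY(\bZ_p)_{U^p}$ alternatively follows from Corollary~\ref{cor:isogeny_locus_union_of_geometric_loci} together with a geometric-to-quadratic-Chabauty comparison of the type in~\cite{duque-rosero-hashimoto-spelier:geometric_quadratic_chabauty}, adapted to an arbitrary $f_n$.)

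\emph{Conclusion.} Combining the two steps, for $x\in\cU(\bZ_p)$ we have $x\in\cY(\bZ_p)_{U^p}=j^{-1}\bigl(\loc_p\Sel_{U^p}(\cU/\bZ)\bigr)$ if and only if $f_\infty(x)$ equals the unique point of $\varinjlim_n\cP_n(\bZ)$, i.e.\ $x\in\cY(\bZ_p)_f$; reassembling over simple opens (as in Lemma~\ref{lem:isogeny_locus_union_over_simple_opens}) gives $\cY(\bZ_p)_{U^p}=\cY(\bZ_p)_f$. For the last assertion, $c(U^p)$ is by definition the codimension of $\loc_p\bigl(\Sel_{U^p}(\cY/\bZ)\bigr)$ in $\rH^1_f(\bQ_p,U^p)$; by Step~2 this image is a finite set of points, and by Step~1 the ambient space has dimension $\dim P$, whence $c(U^p)=\dim P$. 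The main obstacle is Step~2: making the cohomological dictionary precise and checking that the integral-and-crystalline Selmer conditions for the non-abelian group $U^p$ correspond exactly to integrality on the isogeny tower, including at the bad primes and in the passage to the colimit. Step~1, by contrast, is essentially a weight-graded bookkeeping exercise once the central-extension structure is tracked.
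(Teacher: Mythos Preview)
Your overall strategy matches the paper's closely: reduce to a simple open $\cU$, identify the local Selmer scheme at $p$ with the colimit on the isogeny tower, show the global Selmer scheme is the single point $\varinjlim_n\cP_n(\bZ)$, and conclude. Two points need correction.

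In Step~1, the isomorphism should have target $\varinjlim_n\cP_n(\bZ_p)$, not $\varinjlim_n P_n(\bQ_p)$. You correctly note that $\bG_m(\bZ_p)\to\rH^1_f(\bQ_p,\bQ_p(1))$ becomes an isomorphism after $\otimes\bQ$, but this computes $\varinjlim_n\bG_m(\bZ_p)$, which is strictly smaller than $\varinjlim_n\bG_m(\bQ_p)=\bQ\otimes\bQ_p^\times$. The error is harmless for the final conclusion (since $f_\infty$ on $\cU(\bZ_p)$ lands in the smaller set), but the stated isomorphism is false as written; compare the paper's Lemma~\ref{lem:AT_local_kummer_p}.

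In Step~2, your treatment of the abelian part has a genuine gap. You argue that the abelianised Selmer class lies in $\rH^1_f(\bQ,V_pA)$ and that its $\loc_p$-image vanishes because ``$A(\bQ)$ is torsion (any leftover $\Sha$-part dies at $p$)''. But this implicit decomposition $\rH^1_f(\bQ,V_pA)=\bigl(\bQ_p\otimes A(\bQ)\bigr)\oplus(\Sha\text{-part})$ is only known when $\Sha(A/\bQ)[p^\infty]$ is finite. The paper avoids this by building condition~\ref{condn:albanese_image} into the definition of the Selmer scheme (Definition~\ref{def:global_selmer_scheme}, and see Remark~\ref{rmk:balakrishnan-dogra_modification}), so that the abelian part is \emph{forced} to lie in the Kummer image of $J(\bQ)$. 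With this in hand, the paper shows the global Selmer scheme itself is a single point (Lemma~\ref{lem:AT_global_selmer}), not merely its $\loc_p$-image. Separately, the ``dictionary matching prime by prime'' that you flag as the hard part is in fact handled by one clean lemma: for every $\ell\neq p$ the image $j_{P,\ell}(\cP(\bZ_\ell))$ is a \emph{singleton} (Lemma~\ref{lem:AT_local_kummer_l}), so $j_{U^p,\ell}(\cU(\bZ_\ell))$, being a non-empty subset of it, coincides with it. This together with condition~\ref{condn:albanese_image} immediately gives $\Sel_{U^p}(\cU/\bZ)\cong\Sel_{U^p}(\cP/\bZ)$ (Lemma~\ref{lem:same_global_selmer}) --- no bad-prime casework is required.
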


\begin{corollary}[{cf.~\cite[Theorem~A]{duque-rosero-hashimoto-spelier:geometric_quadratic_chabauty}}]
	In the setup of Theorem~\ref{thm:comparison}, the quadratic Chabauty locus contains the geometric quadratic Chabauty locus:
	\[
	\cY(\bZ_p)_{U^p} \supseteq \cY(\bZ_p)_f^\GQC \,.
	\]
\end{corollary}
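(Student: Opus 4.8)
The plan is to deduce this formally from the two results that precede it. By Theorem~\ref{thm:comparison} we already know $\cY(\bZ_p)_{U^p}=\cY(\bZ_p)_f$, so the only thing that needs checking is the purely geometric containment
\[
\cY(\bZ_p)_f^\GQC \subseteq \cY(\bZ_p)_f \,.
\]

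First I would reduce to the case of a single simple open: both sides decompose as a union over the simple opens $\cU\subseteq\cY$ (the geometric locus by its very definition, the isogeny locus by Lemma~\ref{lem:isogeny_locus_union_over_simple_opens}), so it suffices to show $\cU(\bZ_p)_f^\GQC\subseteq\cU(\bZ_p)_f$ for each $\cU$. Fixing $\cU$, let $\cP$ be the simple model of $P$ through which $f$ factors (Lemma~\ref{lem:producing_simple_models}), and let $\cP_n$ together with the morphisms $\beta_n\colon\cP\to\cP_n$ of $\bZ$-schemes be the simple models furnished by Lemma~\ref{lem:Pn_models}, so that $f_n=\beta_n\circ f$ as maps $\cU\to\cP_n$. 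The key observation I would use is that $\beta_1$, being a morphism of $\bZ$-schemes, induces a $p$-adically continuous map $\cP(\bZ_p)\to\cP_1(\bZ_p)$ carrying $\cP(\bZ)$ into $\cP_1(\bZ)$, hence carrying the $p$-adic closure of $\cP(\bZ)$ in $\cP(\bZ_p)$ into the $p$-adic closure of $\cP_1(\bZ)$ in $\cP_1(\bZ_p)$. Thus if $x\in\cU(\bZ_p)$ has $f(x)$ in the closure of $\cP(\bZ)$, then $f_1(x)=\beta_1(f(x))$ lies in the closure of $\cP_1(\bZ)$, which says exactly $x\in\cU(\bZ_p)_{f_1}^\GQC$. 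Combining this with Corollary~\ref{cor:isogeny_locus_union_of_geometric_loci} applied to $\cY=\cU$, which gives $\cU(\bZ_p)_{f_1}^\GQC\subseteq\bigcup_n\cU(\bZ_p)_{f_n}^\GQC=\cU(\bZ_p)_f$, completes the reduction.

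Re-assembling over the simple opens and invoking Theorem~\ref{thm:comparison} then yields $\cY(\bZ_p)_{U^p}=\cY(\bZ_p)_f\supseteq\cY(\bZ_p)_f^\GQC$. I do not expect any genuine obstacle: all of the content is already packaged in Theorem~\ref{thm:comparison} (the equality of the two loci) and Corollary~\ref{cor:isogeny_locus_union_of_geometric_loci} (the expression of $\cY(\bZ_p)_f$ as a union of geometric loci over the isogenous system $f_n$). The only mildly delicate step is the one near the end of the second paragraph, namely that pushing a $\bZ$-integral point — and, more to the point, a $p$-adic limit of such points — forward along the integral morphism $\beta_1$ again lands in the $p$-adic closure of the integral points; but this is immediate from $\beta_1$ being defined over $\bZ$ and the continuity of the induced map on $\bZ_p$-points.
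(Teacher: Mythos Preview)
Your proof is correct and follows the approach the paper implicitly intends: combine Theorem~\ref{thm:comparison} with Corollary~\ref{cor:isogeny_locus_union_of_geometric_loci}, bridging $f$ to the system $(f_n)$ via the integral morphism $\beta_1$. One small remark: in the rank-$0$ setting the set $\cP(\bZ)$ is already finite (being a $\{\pm1\}^r$-torsor over a subset of the finite group $A(\bQ)$), hence closed in $\cP(\bZ_p)$, so the continuity argument for $\beta_1$ is not strictly needed --- but your version works just as well and is arguably cleaner in that it does not invoke this finiteness explicitly.
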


\subsection{Selmer schemes and Kummer maps}\label{ss:non-ab_Chab}

Before we begin the proof of Theorem~\ref{thm:comparison}, let us recall the construction of the non-abelian Chabauty locus, in slightly more generality than usual. Let~$V/\bQ$ be a smooth geometrically connected quasi-projective variety with a $\bQ$-rational base point~$b$, possibly tangential, and let $\cV/\bZ$ be a regular model of~$V$ (a regular, flat, separated~$\bZ$-scheme of finite type with generic fibre~$V$). We say that a prime~$p$ is of \emph{good reduction} for~$(\cV,b)$ if~$\cV_{\bZ_p}$ is the complement of a relative normal crossings divisor in a smooth proper $\bZ_p$-scheme, and~$b$ is $\bZ_p$-integral on~$\cV$. Here, $\bZ_p$-integrality means that $b\in\cV(\bZ_p)$ if~$b$ is non-tangential, and means~$b\in\cV(\bZ_p\llparen t\rrparen)$ if~$b$ is tangential, where~$\bZ_p\llparen t\rrparen\coloneqq\bZ_p\llbrack t\rrbrack[t^{-1}]$ is the ring of~$\bZ_p$-integral Laurent series.

Fix a good reduction prime~$p$ and let~$\pi_1^{\bQ_p}(V_\Qbar;b)$ be the $\bQ_p$-pro-unipotent \'etale fundamental group of~$V_\Qbar$ based at~$b$. It is a finitely generated $\bQ_p$-pro-unipotent group, and comes with a continuous action of the Galois group~$\Gal_\bQ$ (in the sense of \cite[Definition--Lemma~4.1]{betts:motivic_local_heights}, for example). It also comes with a decreasing and separated $\Gal_\bQ$-invariant \emph{weight filtration}
\[
\pi_1^{\bQ_p}(V_\Qbar;b) = \rW_{-1}\pi_1^{\bQ_p}(V_\Qbar;b) \supseteq \rW_{-2}\pi_1^{\bQ_p}(V_\Qbar;b) \supseteq \dots \,.
\]
The weight filtration is defined as follows. Let~$\hat V$ be a smooth projective normal crossings compactification of~$V$. Then~$\rW_{-2}\pi_1^{\bQ_p}(V_\Qbar;b)$ is defined to be the kernel of the map
\[
\pi_1^{\bQ_p}(V_\Qbar;b) \to \pi_1^{\bQ_p}(\hat V_\Qbar;b)^\ab \,,
\]
and thereafter for~$n\geq3$, $\rW_{-n}\pi_1^{\bQ_p}(V_\Qbar;b)$ is generated by the commutator subgroups $[\rW_{-i}\pi_1^{\bQ_p}(V_\Qbar;b),\rW_{-j}\pi_1^{\bQ_p}(V_\Qbar;b)]$ for~$i+j=n$. The graded pieces $\gr^\rW_{-n}\pi_1^{\bQ_p}(V_\Qbar;b)\coloneqq \rW_{-n}\pi_1^{\bQ_p}(V_\Qbar;b)/\rW_{-n-1}\pi_1^{\bQ_p}(V_\Qbar;b)$ are all abelian unipotent groups (i.e.~vector groups).

\begin{lemma}\label{lem:semisimplicity}
	Each $\gr^\rW_{-n}\pi_1^{\bQ_p}(V_\Qbar;b)$ is a semisimple representation of~$\Gal_\bQ$.
\end{lemma}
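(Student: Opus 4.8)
The plan is to deduce semisimplicity of all of the graded pieces from that of the bottom two, $\gr^\rW_{-1}\pi_1^{\bQ_p}(V_\Qbar;b)$ and $\gr^\rW_{-2}\pi_1^{\bQ_p}(V_\Qbar;b)$. The mechanism is that the class of finite-dimensional continuous semisimple $\bQ_p$-representations of $\Gal_\bQ$ is closed under finite direct sums, under subquotients, and — because $\bQ_p$ has characteristic zero — under tensor products, by Chevalley's theorem that the tensor product of two semisimple representations over a field of characteristic zero is again semisimple; in particular it is closed under exterior powers. Granting this, only the two base cases require geometric input.

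First I would record the reduction to $n\leq 2$. For $n\geq 3$, the weight filtration is defined so that $\rW_{-n}\pi_1^{\bQ_p}(V_\Qbar;b)$ is generated by the subgroups $[\rW_{-i}\pi_1^{\bQ_p}(V_\Qbar;b),\rW_{-j}\pi_1^{\bQ_p}(V_\Qbar;b)]$ with $i+j=n$ and $i,j\geq 1$. The commutator map $[\cdot,\cdot]\colon\rW_{-i}\times\rW_{-j}\to\rW_{-n}$ is bilinear modulo terms of strictly lower weight, hence descends to a $\Gal_\bQ$-equivariant $\bQ_p$-linear map $\gr^\rW_{-i}\pi_1^{\bQ_p}(V_\Qbar;b)\otimes_{\bQ_p}\gr^\rW_{-j}\pi_1^{\bQ_p}(V_\Qbar;b)\to\gr^\rW_{-n}\pi_1^{\bQ_p}(V_\Qbar;b)$, and the sum of these over all such $(i,j)$ is surjective. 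Thus $\gr^\rW_{-n}\pi_1^{\bQ_p}(V_\Qbar;b)$ is a quotient of a finite direct sum of tensor products of lower graded pieces, and a strong induction on $n$ reduces us to $n=1$ and $n=2$.

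For $n=1$, we have $\gr^\rW_{-1}\pi_1^{\bQ_p}(V_\Qbar;b)=\pi_1^{\bQ_p}(\hat V_\Qbar;b)^\ab=\rH_1^\et(\hat V_\Qbar,\bQ_p)$, which is canonically isomorphic to the rational $p$-adic Tate module $V_p\Alb(\hat V)$ of the Albanese variety of the smooth proper $\bQ$-variety $\hat V$; this is a semisimple $\Gal_\bQ$-representation by Faltings' theorem on semisimplicity of Tate modules of abelian varieties over number fields. For $n=2$, I would invoke the standard description of the weight filtration on the fundamental group of a smooth variety — recalled in the text above in the case of curves, and available in general from \cite{betts-litt:weight-monodromy} or directly from the Gysin sequence computing $\rH_1^\et$ of a smooth variety — which, writing $D=\hat V\smallsetminus V$ as a normal crossings divisor with normalisation $\tilde D$, exhibits $\gr^\rW_{-2}\pi_1^{\bQ_p}(V_\Qbar;b)$ as a subquotient of $\bigwedge^2\rH_1^\et(\hat V_\Qbar,\bQ_p)\oplus\rH_0^\et(\tilde D_\Qbar,\bQ_p)(1)$. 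The first summand is semisimple since $\rH_1^\et(\hat V_\Qbar,\bQ_p)$ is and exterior powers preserve semisimplicity in characteristic zero; the second is semisimple since $\rH_0^\et(\tilde D_\Qbar,\bQ_p)$ is a finite-dimensional permutation representation through which $\Gal_\bQ$ acts via a finite quotient, hence semisimple by Maschke's theorem, and twisting by the one-dimensional representation $\bQ_p(1)$ changes nothing. This settles $n=2$ and completes the induction.

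The only inputs not of a purely formal nature are Faltings' semisimplicity theorem and the structural description of $\gr^\rW_{-2}$, so I expect the one point needing care to be the $n=2$ case: one must identify $\gr^\rW_{-2}\pi_1^{\bQ_p}(V_\Qbar;b)$ as a subquotient of a representation that is \emph{a priori} semisimple — built only from $\rH_1^\et(\hat V_\Qbar,\bQ_p)$ and Tate twists — rather than of, say, $\rH^2_\et(\hat V_\Qbar,\bQ_p)$, whose semisimplicity over $\bQ$ is not known. In the situations used elsewhere in this paper $V$ is a curve or an AT variety, where the description of $\gr^\rW_{-2}$ is entirely explicit and no such subtlety arises.
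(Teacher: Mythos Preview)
Your proof is correct and follows essentially the same approach as the paper's: Faltings for $n=1$, the structural description from \cite{betts-litt:weight-monodromy} for $n=2$, and induction via commutator maps for $n\geq3$. The only cosmetic difference is that the paper's inductive step uses the slightly sharper observation that $\gr^\rW_{-n}$ is already a quotient of $(\gr^\rW_{-1}\otimes\gr^\rW_{1-n})\oplus(\gr^\rW_{-2}\otimes\gr^\rW_{2-n})$ (since the associated graded is generated in degrees $-1$ and $-2$), whereas you sum over all $i+j=n$; either version suffices.
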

\begin{proof}
	When~$V$ is a curve, this is \cite[Lemma~6.0.1]{betts:weight_filtrations}; we indicate how to generalise this to arbitrary~$V$. Let~$b_0=b$ if~$b$ is not tangential, and let~$b_0$ be the specialisation of~$b\in\hat V(\bQ\llparen t\rrparen)=\hat V(\bQ\llbrack t\rrbrack)$ at~$t=0$ if~$b$ is tangential. Let~$J$ be the Albanese variety of~$(\hat V,b_0)$, with Albanese map $\alb\colon \hat V\to J$. The Albanese map realises the abelianisation of the fundamental group of~$\hat V_\Qbar$ \cite[p.~331]{griffiths-harris:principles_algebraic_geometry}, so we have
	\[
	\gr^\rW_{-1}\pi_1^{\bQ_p}(V_\Qbar;b) = \pi_1^{\bQ_p}(\hat V_\Qbar;b)^\ab = \pi_1^{\bQ_p}(J_\Qbar;\alb(b)) = \pi_1^{\bQ_p}(J_\Qbar;0) = V_pJ \,,
	\]
	which is semisimple by \cite[Theorem~(a)]{faltings:endlichkeitssaetze}. This proves the lemma for~$n=1$.
	
	For~$n=2$, according to \cite[Theorem~8]{betts-litt:weight-monodromy}, $\gr^\rW_{-2}\pi_1^{\bQ_p}(V_\Qbar;b)$ is isomorphic to a quotient of
	\begin{equation}\label{eq:weight_minus-2}\tag{$\ast$}
	\bigwedge\nolimits^{\!\!2}V_pJ\oplus\bQ_p(1)^{\oplus \pi_0(D_\Qbar)} \,,
	\end{equation}
	where~$\pi_0(D_\Qbar)$ denotes the set of geometric components of the boundary divisor $D\coloneqq\hat V\smallsetminus V$. Because the class of semisimple representations is closed under tensor products \cite[Proposition~IV.5.2]{chevalley:lie_groups_3}, direct sums and quotients and contains all Artin--Tate representations, it follows that~\eqref{eq:weight_minus-2} is semisimple, and hence so too is~$\gr^\rW_{-2}\pi_1^{\bQ_p}(V_\Qbar;b)$.
	
	For~$n\geq3$, we proceed inductively, using that the commutator maps exhibit $\gr^\rW_{-n}\pi_1^{\bQ_p}(V_\Qbar;b)$ as a quotient of
	\[
	\bigl(\gr^\rW_{-1}\pi_1^{\bQ_p}(V_\Qbar;b)\otimes\gr^\rW_{1-n}\pi_1^{\bQ_p}(V_\Qbar;b)\bigr)\oplus\bigl(\gr^\rW_{-2}\pi_1^{\bQ_p}(V_\Qbar;b)\otimes\gr^\rW_{2-n}\pi_1^{\bQ_p}(V_\Qbar;b)\bigr) \,,
	\]
	which is semisimple by inductive assumption.
\end{proof}

Now fix a $\Gal_\bQ$-equivariant quotient~$U^p$ of~$\pi_1^{\bQ_p}(V_\Qbar;b)$, not necessarily finite-dimensional, and endow~$U^p$ with the filtration~$\rW_\bullet U^p$ induced from the weight filtration on~$\pi_1^{\bQ_p}(V_\Qbar;b)$.

\begin{lemma}\label{lem:Up_properties}
	$U^p$ satisfies the following properties:
	\begin{itemize}
		\item the $\Gal_\bQ$-action on~$U^p$ is ramified at only finitely many primes~$\ell$;
		\item the Lie algebra $\Lie(U^p)$ is pro-crystalline at~$p$;
		\item $\gr^\rW_{-n}U^p$ is pure of weight~$-n$ at all primes~$\ell$ (in the sense of the Weight--Monodromy Conjecture, see \cite[Definition~3]{betts-litt:weight-monodromy} for example).
	\end{itemize}
	In particular, we have $\bigl(\gr^\rW_{-n}U^p\bigr)^{\Gal_{\bQ_\ell}}=0$ for all~$n>0$ and all primes~$\ell$.
\end{lemma}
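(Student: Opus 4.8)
The plan is to deduce all three bullet points from the corresponding facts about the full pro-unipotent fundamental group $\pi_1^{\bQ_p}(V_\Qbar;b)$, using only that $U^p$ is a $\Gal_\bQ$-equivariant pro-quotient of it and that the weight filtration on $U^p$ is the induced one; the ``in particular'' clause will then be a formal consequence of the third bullet. For the first bullet I would spread $V$ out: by a standard limit argument there is an integer $N$, divisible by $p$ and by the bad primes of $\cV$, together with a smooth proper $\bZ[1/N]$-scheme containing a relative normal crossings divisor whose complement has generic fibre $V$, with $b$ extending to a $\bZ[1/N]$-integral (possibly tangential) point. The homotopy exact sequence for the pro-unipotent \'etale fundamental group (equivalently, specialisation of fundamental groups) then shows that the $\Gal_\bQ$-action on $\pi_1^{\bQ_p}(V_\Qbar;b)$ factors through $\pi_1^\et(\Spec\bZ[1/N])$, hence is unramified outside $N$; passing to the quotient $U^p$ preserves this.

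For the second bullet the input is crystallinity of the unipotent \'etale fundamental group of a variety with good reduction: since $\cV_{\bZ_p}$ is the complement of a relative normal crossings divisor in a smooth proper $\bZ_p$-scheme and $b$ is $\bZ_p$-integral, every finite-dimensional $\Gal_{\bQ_p}$-equivariant quotient of $\pi_1^{\bQ_p}(V_\Qbar;b)$ is a crystalline representation --- in the curve case this goes back to Kim \cite{kim:siegel}, and in general one appeals to the theory of the (log-)unipotent crystalline fundamental group. Since $\Lie(U^p)$ is a pro-quotient of $\Lie\pi_1^{\bQ_p}(V_\Qbar;b)$ and quotients of crystalline representations are crystalline, $\Lie(U^p)$ is pro-crystalline at $p$.

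For the third bullet I would either quote the weight--monodromy statement for fundamental groups of smooth varieties from \cite{betts-litt:weight-monodromy} directly, or re-run the induction of Lemma~\ref{lem:semisimplicity} keeping track of weights: $\gr^\rW_{-1}\pi_1^{\bQ_p}(V_\Qbar;b)=V_pJ$ is pure of weight $-1$ at every prime by weight--monodromy for abelian varieties; $\gr^\rW_{-2}$ is a quotient of $\bigwedge\nolimits^{\!2}V_pJ\oplus\bQ_p(1)^{\oplus\pi_0(D_\Qbar)}$ by \cite[Theorem~8]{betts-litt:weight-monodromy}, hence pure of weight $-2$ because weight--monodromy purity is preserved by subquotients, direct sums and tensor/exterior powers (using strict compatibility of the monodromy filtration under morphisms and the tensor-product formula for it); and for $n\geq3$ the commutator maps present $\gr^\rW_{-n}$ as a quotient of a sum of tensor products of strictly lower graded pieces, pure of weight $-n$ by induction. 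Each $\gr^\rW_{-n}U^p$ is a quotient of $\gr^\rW_{-n}\pi_1^{\bQ_p}(V_\Qbar;b)$, hence pure of weight $-n$. The final assertion then follows because subrepresentations of a representation pure of weight $w$ are again pure of weight $w$, while the trivial representation $\bQ_p$ is pure of weight $0$; thus for $n>0$ we get $\bigl(\gr^\rW_{-n}U^p\bigr)^{\Gal_{\bQ_\ell}}=\Hom_{\Gal_{\bQ_\ell}}(\bQ_p,\gr^\rW_{-n}U^p)=0$.

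The only real difficulty is bookkeeping rather than mathematical depth: one must check that crystallinity and weight--monodromy purity genuinely descend along $\Gal_\bQ$-equivariant (pro-)quotients, that the weight filtration induced on $U^p$ coincides with the intrinsic one so the cited results apply verbatim, and that the precise good-reduction hypothesis used here (allowing a tangential base point) is the one under which the crystallinity reference is stated. These are exactly the points where care is needed.
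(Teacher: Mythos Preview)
Your proposal is correct and follows essentially the same route as the paper: reduce all three properties to the full fundamental group $\pi_1^{\bQ_p}(V_\Qbar;b)$ and then cite the standard inputs (good reduction gives unramifiedness, Olsson's non-abelian $p$-adic Hodge theory gives crystallinity, and \cite{betts-litt:weight-monodromy} gives purity of the graded pieces).

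The one structural difference is how you pass to the quotient $U^p$. The paper invokes Lemma~\ref{lem:semisimplicity} to say that each $\gr^\rW_{-n}U^p$ is a \emph{direct summand} of $\gr^\rW_{-n}\pi_1^{\bQ_p}(V_\Qbar;b)$, after which all three properties transfer trivially. You instead observe that unramifiedness, crystallinity, and weight--monodromy purity are each closed under quotients, so the semisimplicity step is not needed. Your route is slightly more elementary and makes explicit exactly which closure property is being used at each step; the paper's route is more uniform but relies on the (non-obvious) semisimplicity result. Both are fine. Your explicit treatment of the ``in particular'' clause via $\Hom_{\Gal_{\bQ_\ell}}(\bQ_p,-)$ between representations of different weights is also correct and matches what the paper leaves implicit.
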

\begin{proof}
	Each~$\gr^\rW_{-n}U^p$ is a quotient of $\gr^\rW_{-n}\pi_1^{\bQ_p}(V_\Qbar;b)$, hence a direct summand by Lemma~\ref{lem:semisimplicity}, and so it suffices to prove the lemma in the case that~$U^p=\pi_1^{\bQ_p}(V_\Qbar;b)$. In this case, the first point is a consequence of the fact that~$(\cV,b)$ has good reduction at all but finitely many primes~$\ell$, the second point is \cite[Theorem~1.8 \& Corollary~9.29]{olsson:non-abelian_p-adic_hodge_theory}, and the third is \cite[Theorem~1.3(1)]{betts-litt:weight-monodromy}.
\end{proof}

\begin{remark}\label{rmk:weight_filtration}
	The third condition in Lemma~\ref{lem:Up_properties} characterises the weight filtration~$\rW_\bullet U^p$ uniquely; indeed, it suffices that $\rW_\bullet U^p$ is unramified and pure of weight~$-n$ at all but finitely many primes. It follows that~$\rW_\bullet U^p$ does not depend on the choice of compactification~$\hat V$.
\end{remark}

\subsubsection{Kummer maps}

The continuous Galois cohomology set $\rH^1(\bQ,U^p(\bQ_p))$ for the action of~$\Gal_\bQ$ on~$U^p(\bQ_p)$ parametrises $\Gal_\bQ$-equivariant torsors under~$U^p$. The function
\[
j_{U^p}\colon V(\bQ) \to \rH^1(\bQ,U^p(\bQ_p))
\]
sending a point $x\in V(\bQ)$ to the class of the torsor of paths $\pi_1^{\bQ_p}(V_\Qbar;b,x)$, pushed out to~$U^p$, is called the (non-abelian) \emph{Kummer map} associated to~$U^p$. For each prime~$\ell$, there are also analogously-defined \emph{local Kummer maps}
\[
j_{U^p,\ell}\colon V(\bQ_\ell) \to \rH^1(\bQ_\ell,U^p(\bQ_p)) \,.
\]
The local and global Kummer maps are related to one another by a commuting square
\begin{equation}\label{diag:local_vs_global_kummer}
\begin{tikzcd}
	V(\bQ) \arrow[r,hook]\arrow[d,"j_{U^p}"] & V(\bQ_\ell) \arrow[d,"j_{U^p,\ell}"] \\
	\rH^1(\bQ,U^p(\bQ_p)) \arrow[r,"\loc_\ell"] & \rH^1(\bQ_\ell,U^p(\bQ_p)) \,,
\end{tikzcd}
\end{equation}
where the \emph{localisation map} $\loc_\ell$ is restriction to a decomposition group at~$\ell$. These Kummer maps satisfy two useful compatibility conditions.

\begin{lemma}[Independence of base point, {\cite[\S2.9]{balakrishnan-dan-cohen-kim-wewers:non-abelian_conjecture}}]\label{lem:independence_of_base_point}
	Suppose that~$b'$ is another $\bQ$-rational base point on~$V$, possibly tangential. Then there is a corresponding quotient~$U^{\prime p}$ of~$\pi_1^{\bQ_p}(V_\Qbar;b')$ which is the Serre twist of~$U^p$ by~$j_{U^p}(b')$, and such that the square
	\begin{center}
	\begin{tikzcd}
		V(\bQ) \arrow[r,equal]\arrow[d,"j_{U^p}"] & V(\bQ) \arrow[d,"j_{U^{\prime p}}"] \\
		\rH^1(\bQ,U^p(\bQ_p)) \arrow[r,equal,"\sim"] & \rH^1(\bQ,U^{\prime p}(\bQ_p))
	\end{tikzcd}
	\end{center}
	commutes, where the bottom isomorphism is the Serre twisting bijection \cite[Proposition~I.35 bis]{serre:galois_cohomology}. Similarly for the local Kummer maps $j_{U^p,\ell}$.
\end{lemma}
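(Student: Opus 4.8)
The plan is to produce $U^{\prime p}$ explicitly by transporting the kernel of $U^p$ along a path from $b$ to $b'$, and then to read off both the identification with the Serre twist and the commutativity of the Kummer square from the composition law on path torsors. Concretely, I fix a geometric path $\gamma$ from $b$ to $b'$, i.e.\ a $\Qbar$-point of the path torsor $\pi_1^{\bQ_p}(V_\Qbar;b,b')$; this makes sense even when $b$ or $b'$ is tangential, by the formalism of \S\ref{sss:tangential_points} (tangential points determine geometric points, and all path torsors between them carry a $\Gal_\bQ$-action). Conjugation by $\gamma$ gives an isomorphism $c_\gamma\colon\pi_1^{\bQ_p}(V_\Qbar;b)\xrightarrow{\sim}\pi_1^{\bQ_p}(V_\Qbar;b')$ of $\bQ_p$-pro-unipotent groups over $\Qbar$, which fails to be $\Gal_\bQ$-equivariant but satisfies $\sigma\circ c_\gamma=c_\gamma\circ\mathrm{ad}_{u_\sigma}\circ\sigma$, where $u_\sigma\coloneqq\gamma^{-1}\sigma(\gamma)$ is the usual $1$-cocycle valued in $\pi_1^{\bQ_p}(V_\Qbar;b)(\Qbar)$ attached to $\gamma$. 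Writing $N$ for the kernel of $U^p$, the subgroup $N'\coloneqq c_\gamma(N)$ of $\pi_1^{\bQ_p}(V_\Qbar;b')$ is independent of the choice of $\gamma$ (any other choice differs from $\gamma$ by an inner automorphism of $\pi_1^{\bQ_p}(V_\Qbar;b)$, which fixes the normal subgroup $N$), and the displayed cocycle identity together with $\Gal_\bQ$-stability of $N$ shows that $N'$ is $\Gal_\bQ$-stable. I then set $U^{\prime p}\coloneqq\pi_1^{\bQ_p}(V_\Qbar;b')/N'$: this is a $\Gal_\bQ$-equivariant quotient of $\pi_1^{\bQ_p}(V_\Qbar;b')$, and $c_\gamma$ descends to an isomorphism $U^p\xrightarrow{\sim}U^{\prime p}$ of $\Qbar$-groups intertwining the Galois action on $U^{\prime p}$ with the Galois action on $U^p$ twisted by the image $\bar u_\sigma$ of $u_\sigma$ in $U^p$. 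Since $(\bar u_\sigma)_\sigma$ is exactly the cocycle obtained by trivialising, via $\gamma$, the torsor $\pi_1^{\bQ_p}(V_\Qbar;b,b')$ pushed out to $U^p$ --- that is, a cocycle representing $j_{U^p}(b')$ --- this exhibits $U^{\prime p}$ as the Serre twist of $U^p$ by $j_{U^p}(b')$, and supplies the twisting bijection $\rH^1(\bQ,U^p(\bQ_p))\xrightarrow{\sim}\rH^1(\bQ,U^{\prime p}(\bQ_p))$ of \cite[Proposition~I.35 bis]{serre:galois_cohomology}.

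For the commutativity of the square, the input is that for each $x\in V(\bQ)$ the composition of paths gives a $\Gal_\bQ$-equivariant isomorphism identifying $\pi_1^{\bQ_p}(V_\Qbar;b,x)$ with the contracted product of $\pi_1^{\bQ_p}(V_\Qbar;b,b')$ and $\pi_1^{\bQ_p}(V_\Qbar;b',x)$ amalgamated over $\pi_1^{\bQ_p}(V_\Qbar;b')$ \cite[\S2.9]{balakrishnan-dan-cohen-kim-wewers:non-abelian_conjecture}. Pushing this out to $U^p$ and unwinding the standard description of the Serre twisting bijection, the identity says precisely that the class of $\pi_1^{\bQ_p}(V_\Qbar;b',x)$ pushed out to $U^{\prime p}$, namely $j_{U^{\prime p}}(x)$, is the image of $j_{U^p}(x)$ under the twisting bijection; this is the asserted commutativity. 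The local statement follows by running the identical argument over $\bQ_\ell$ in place of $\bQ$ (the path torsors in question remain $\Gal_{\bQ_\ell}$-equivariant), giving the analogous commuting square for the local Kummer maps $j_{U^p,\ell}$.

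I expect the only genuine obstacle to be bookkeeping. One must keep consistent track of the two torsor structures on each path space (the left action of the fundamental group at the target and the right action at the source, say) and of the order of path composition --- the paper's functional convention --- so that the contracted-product isomorphisms line up with the cocycle-level formula for Serre twisting; the argument is otherwise formal. The tangential-base-point case introduces no extra work beyond invoking \S\ref{sss:tangential_points} to ensure that every path torsor appearing is an honest $\Gal_\bQ$-equivariant object.
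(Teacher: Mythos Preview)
The paper does not actually supply a proof of this lemma; it is stated with a citation to \cite[\S2.9]{balakrishnan-dan-cohen-kim-wewers:non-abelian_conjecture} and no further argument. Your proposal is the standard argument one finds behind that citation, and it is correct: transport the kernel of $U^p$ along a chosen path, check Galois-stability via the cocycle identity $\sigma\circ c_\gamma = c_\gamma\circ\mathrm{ad}_{u_\sigma}\circ\sigma$, and then read off the commutativity from the contracted-product decomposition of path torsors. The cocycle computation you sketch does indeed give $\bar a_\sigma = c_\gamma^{-1}(\bar a'_\sigma)\cdot\bar u_\sigma$, which is exactly the formula for the Serre twisting bijection of \cite[Proposition~I.35~bis]{serre:galois_cohomology}.

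One small terminological slip: you speak of a ``$\Qbar$-point'' of $\pi_1^{\bQ_p}(V_\Qbar;b,b')$ and of $U^p$ and $U^{\prime p}$ as ``$\Qbar$-groups''. These objects are affine $\bQ_p$-schemes, not $\Qbar$-schemes; the path $\gamma$ should be taken as a $\bQ_p$-point (which exists because torsors under pro-unipotent groups over a characteristic~$0$ field are trivial). This does not affect the argument, but you should correct the language. Your remark about bookkeeping with left/right torsor structures and the functional path-composition convention is apt; as long as you keep those straight (and you have, in the contracted-product formula), the argument goes through verbatim for the local Kummer maps over~$\bQ_\ell$.
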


\begin{lemma}\label{lem:functoriality_of_kummer}
	Let~$f\colon (V,b)\to(V',b')$ be a morphism of pointed varieties, and let~$U^{\prime p}$ be a quotient of~$\pi_1^{\bQ_p}(V'_\Qbar;b')$ such that the homomorphism $f_*\colon\pi_1^{\bQ_p}(V_\Qbar;b)\to\pi_1^{\bQ_p}(V'_\Qbar;b')$ factors through a homomorphism $f_*\colon U^p\to U^{\prime p}$. Then the square
	\begin{center}
	\begin{tikzcd}
		V(\bQ) \arrow[r,"f"]\arrow[d,"j_{U^p}"] & V'(\bQ) \arrow[d,"j_{U^{\prime p}}"] \\
		\rH^1(\bQ,U^p(\bQ_p)) \arrow[r,"f_*"] & \rH^1(\bQ,U^{\prime p}(\bQ_p))
	\end{tikzcd}
	\end{center}
	commutes, and similarly for the local Kummer maps $j_{U^p,\ell}$.
\end{lemma}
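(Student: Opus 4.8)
The plan is to deduce the commutativity directly from functoriality of the $\bQ_p$-pro-unipotent \'etale fundamental groupoid, together with the transitivity of pushouts along a factored homomorphism of groups carrying compatible Galois actions. Recall that for a point $x\in V(\bQ)$ the class $j_{U^p}(x)\in\rH^1(\bQ,U^p(\bQ_p))$ is that of the $\Gal_\bQ$-equivariant $U^p$-torsor
\[
P^{U^p}_x \coloneqq U^p\times^{\pi_1^{\bQ_p}(V_\Qbar;b)}\pi_1^{\bQ_p}(V_\Qbar;b,x)
\]
obtained by pushing out the path-torsor $\pi_1^{\bQ_p}(V_\Qbar;b,x)$ along the surjection $\pi_1^{\bQ_p}(V_\Qbar;b)\twoheadrightarrow U^p$, the Galois action being induced by the fact that $V$, $b$ and $x$ are all defined over $\bQ$; likewise the map $f_*$ on $\rH^1$ is induced by pushing out $\Gal_\bQ$-equivariant $U^p$-torsors along $f_*\colon U^p\to U^{\prime p}$.

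First I would record that $f$ induces, for every $\Qbar$-point $x$ of $V$, a morphism of path-spaces
\[
\pi_1^{\bQ_p}(V_\Qbar;b,x)\to\pi_1^{\bQ_p}(V'_\Qbar;b',f(x))
\]
which is equivariant for the group homomorphism $f_*\colon\pi_1^{\bQ_p}(V_\Qbar;b)\to\pi_1^{\bQ_p}(V'_\Qbar;b')$ and compatible with composition of paths --- this is simply functoriality of the pro-unipotent fundamental groupoid, and it applies verbatim when $b$ or $b'$ is tangential, since by \S\ref{sss:tangential_points} such base points determine honest geometric points of $V_\Qbar$ (resp.~$V'_\Qbar$) and the whole formalism is functorial in the pointed variety. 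When $x\in V(\bQ)$ this morphism is moreover $\Gal_\bQ$-equivariant.

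The main step is then the observation that, since the homomorphism $\pi_1^{\bQ_p}(V_\Qbar;b)\to\pi_1^{\bQ_p}(V'_\Qbar;b')$ factors as $\pi_1^{\bQ_p}(V_\Qbar;b)\twoheadrightarrow U^p\xrightarrow{f_*}U^{\prime p}$, the pushout of $\pi_1^{\bQ_p}(V_\Qbar;b,x)$ to $U^{\prime p}$ may be computed in two stages: the morphism of path-spaces above induces a $\Gal_\bQ$-equivariant isomorphism of $U^{\prime p}$-torsors
\[
U^{\prime p}\times^{U^p}P^{U^p}_x \;\xrightarrow{\sim}\; U^{\prime p}\times^{\pi_1^{\bQ_p}(V'_\Qbar;b')}\pi_1^{\bQ_p}(V'_\Qbar;b',f(x)) \;=\; P^{U^{\prime p}}_{f(x)} \,,
\]
and passing to cohomology classes this reads $f_*(j_{U^p}(x))=j_{U^{\prime p}}(f(x))$, which is the desired commutativity. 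The local statement for $j_{U^p,\ell}$ follows by the identical argument with $\bQ$ replaced by $\bQ_\ell$ and $V(\bQ)$, $V'(\bQ)$ replaced by $V(\bQ_\ell)$, $V'(\bQ_\ell)$, all torsors being restricted to a fixed decomposition group at $\ell$.

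I expect no essential difficulty here. The only points requiring care are the bookkeeping for tangential base points, where one must invoke the description of $\pi_1^\et$ at a tangential point from \S\ref{sss:tangential_points} and the evident functoriality of that description, and the verification that the two-stage pushout is $\Gal_\bQ$-equivariantly isomorphic to the one-stage pushout --- but this last point is a purely formal property of associated torsors along a factored homomorphism of topological groups with compatible continuous Galois actions.
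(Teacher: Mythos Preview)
Your argument is correct and is exactly the standard one: functoriality of the pro-unipotent path-torsors together with transitivity of pushout along the factored homomorphism $\pi_1^{\bQ_p}(V_\Qbar;b)\twoheadrightarrow U^p\to U^{\prime p}$ gives the desired $\Gal_\bQ$-equivariant isomorphism of $U^{\prime p}$-torsors. The paper itself does not supply a proof of this lemma, treating it as a well-known compatibility of Kummer maps, so your write-up is in fact more detailed than what the paper provides.
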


Although we have defined the Kummer maps on $V(\bQ)$ (resp.~$V(\bQ_\ell)$), we are primarily interested in studying them on~$\bZ$-points (resp.~$\bZ_\ell$-points). Regarding these, we recall two well-known facts.

\begin{lemma}\label{lem:kummer_image_good_reduction}
	If~$\ell\neq p$ is a prime of good reduction for~$(\cV,b)$, then the image of
	\[
	j_{U^p,\ell}\colon \cV(\bZ_\ell) \to \rH^1(\bQ_\ell,U^p(\bQ_p))
	\]
	is contained inside~$\{*\}$.
\end{lemma}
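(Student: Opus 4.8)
The plan is to split the statement into two parts: (i) the image of~$j_{U^p,\ell}$ on $\bZ_\ell$-integral points lands in the subset of \emph{unramified} cohomology classes, and (ii) that subset is trivial. Part (i) is a standard spreading-out argument, and part (ii) is a d\'evissage along the weight filtration powered by Lemma~\ref{lem:Up_properties}.

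For part (i), I would recall, citing \cite[\S2]{balakrishnan-dan-cohen-kim-wewers:non-abelian_conjecture} (together with its log-\'etale variant to handle a tangential base point), that because $\ell\neq p$ and $(\cV,b)$ has good reduction at~$\ell$, the $\bQ_p$-pro-unipotent \'etale fundamental groupoid of~$V_{\bQ_\ell}$ extends to a lisse unipotent fundamental groupoid over~$\cV_{\bZ_\ell}$. Consequently, for any $\bZ_\ell$-integral point $x\in\cV(\bZ_\ell)$, the path torsor $\pi_1^{\bQ_p}(V_\Qbar;b,x)$ acquires an $I_\ell$-fixed point, and hence so does its pushout to~$U^p$; in other words $j_{U^p,\ell}(x)$ lies in the unramified subset $\rH^1_{\nr}(\bQ_\ell,U^p(\bQ_p))\coloneqq\ker\bigl(\rH^1(\bQ_\ell,U^p(\bQ_p))\to\rH^1(I_\ell,U^p(\bQ_p))\bigr)$, where $I_\ell\subseteq\Gal_{\bQ_\ell}$ is the inertia subgroup.

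For part (ii), I would show $\rH^1_{\nr}(\bQ_\ell,U^p(\bQ_p))=\{*\}$. For a finite-dimensional $\bQ_p$-representation~$V$ of~$\Gal_{\bQ_\ell}$, inflation--restriction identifies $\rH^1_{\nr}(\bQ_\ell,V)$ with $\rH^1(\widehat{\bZ},V^{I_\ell})$, which has the same $\bQ_p$-dimension as $\rH^0(\widehat{\bZ},V^{I_\ell})=V^{\Gal_{\bQ_\ell}}$ (a Frobenius endomorphism of a finite-dimensional space has $\ker$ and $\coker$ of equal dimension). Applied to $V=\gr^\rW_{-n}U^p$ for $n>0$, the last part of Lemma~\ref{lem:Up_properties} gives $V^{\Gal_{\bQ_\ell}}=0$, so $\rH^0_{\nr}$ and $\rH^1_{\nr}$ of every graded piece of negative weight vanish, and $\gr^\rW_0U^p=0$ since $U^p$ is a quotient of a pro-unipotent fundamental group. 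I would then feed this into the usual inductive argument along the central extensions $1\to\gr^\rW_{-n}U^p\to U^p/\rW_{-n-1}U^p\to U^p/\rW_{-n}U^p\to1$: restricting the associated exact sequences of pointed sets to unramified classes, vanishing of $\rH^0_{\nr}$ and $\rH^1_{\nr}$ of the kernel kills the twisting obstruction, so each map $\rH^1_{\nr}(\bQ_\ell,U^p/\rW_{-n-1}U^p)\to\rH^1_{\nr}(\bQ_\ell,U^p/\rW_{-n}U^p)$ is a bijection; starting from the trivial group and passing to the inverse limit yields $\rH^1_{\nr}(\bQ_\ell,U^p(\bQ_p))=\{*\}$, hence $j_{U^p,\ell}(\cV(\bZ_\ell))\subseteq\{*\}$.

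The only genuinely delicate point I anticipate is making part (i) precise in the tangential case, i.e.\ checking that a $\bZ_\ell$-integral tangential base point together with an integral target gives a path torsor unramified at~$\ell$; this needs the log-geometric form of the good-reduction/lisseness statement, whereas for honest rational base points it is completely standard. The d\'evissage in part (ii) is routine, the only care needed being that the cohomology involved is of pointed sets rather than of abelian groups.
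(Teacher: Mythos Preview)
The paper does not actually give a proof of this lemma; it is stated without argument as a standard fact from the non-abelian Chabauty literature (cf.\ \cite{kim:siegel,kim:selmer,balakrishnan-dan-cohen-kim-wewers:non-abelian_conjecture}). Your two-step proof---(i) path torsors of $\bZ_\ell$-integral points are unramified at a good-reduction prime~$\ell\neq p$, and (ii) $\rH^1_{\nr}(\bQ_\ell,U^p(\bQ_p))=\{*\}$ by d\'evissage along the weight filtration using Lemma~\ref{lem:Up_properties}---is correct and is precisely the standard argument.

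One small simplification: since $(\cV,b)$ has good reduction at~$\ell$, the $\Gal_{\bQ_\ell}$-action on~$\pi_1^{\bQ_p}(V_\Qbar;b)$, and hence on its quotient~$U^p$, is already unramified. So you may identify $\rH^1_{\nr}(\bQ_\ell,-)$ with $\rH^1(\hat\bZ,-)$ from the outset and run the d\'evissage there directly, avoiding any concern about whether ``restricting to unramified classes'' preserves exactness in the non-abelian setting. Your caution about the tangential base point in part~(i) is well-placed; the relevant log-geometric input is exactly what is invoked in \cite[\S2]{balakrishnan-dan-cohen-kim-wewers:non-abelian_conjecture}.
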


\begin{lemma}\label{lem:image_in_H1f}
	The image of
	\[
	j_{U^p,p}\colon \cV(\bZ_p) \to \rH^1(\bQ_p,U^p(\bQ_p))
	\]
	is contained inside
	\[
	\rH^1_f(\bQ_p,U^p(\bQ_p)) \coloneqq \ker\bigl(\rH^1(\bQ_p,U^p(\bQ_p))\to\rH^1(\bQ_p,U^p(\sB_\cris))\bigr) \,.
	\]
\end{lemma}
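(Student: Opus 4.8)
The plan is to reduce to the universal quotient and then quote non-abelian $p$-adic Hodge theory in the good reduction case.

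First, it suffices to prove the statement for $U^p=\pi_1^{\bQ_p}(V_\Qbar;b)$. Indeed, by the construction of $j_{U^p,p}$ (pushout of path torsors along $\pi_1^{\bQ_p}(V_\Qbar;b)\twoheadrightarrow U^p$), it is the composite of $j_{\pi_1^{\bQ_p}(V_\Qbar;b),p}$ with the induced map $\rH^1(\bQ_p,\pi_1^{\bQ_p}(V_\Qbar;b)(\bQ_p))\to\rH^1(\bQ_p,U^p(\bQ_p))$; and since for any pro-unipotent group $W$ over $\bQ_p$ the inclusion $W(\bQ_p)\hookrightarrow W(\sB_\cris)$ is functorial in $W$ and $\Gal_{\bQ_p}$-equivariant, this map carries $\rH^1_f$ into $\rH^1_f$.

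So fix $x\in\cV(\bZ_p)$; we must show that the $\Gal_{\bQ_p}$-equivariant path torsor $\pi_1^{\bQ_p}(V_\Qbar;b,x)$ is \emph{crystalline}, that is, becomes trivial after pushout along $\pi_1^{\bQ_p}(V_\Qbar;b)(\bQ_p)\hookrightarrow\pi_1^{\bQ_p}(V_\Qbar;b)(\sB_\cris)$. By hypothesis $p$ is of good reduction for $(\cV,b)$, which by definition (\S\ref{ss:non-ab_Chab}) means that $\cV_{\bZ_p}$ is the complement of a relative normal crossings divisor in a smooth proper $\bZ_p$-scheme and that both $b$ and $x$ are $\bZ_p$-integral (possibly tangential). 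This is exactly the input required for Olsson's non-abelian comparison theorem \cite{olsson:non-abelian_p-adic_hodge_theory}, which shows that the $\bQ_p$-pro-unipotent \'etale path torsor $\pi_1^{\bQ_p}(V_\Qbar;b,x)$ is crystalline: it is associated, as a $\Gal_{\bQ_p}$-representation, to the de Rham path torsor of $(V_{\bQ_p};b,x)$, which in the good reduction case carries a crystalline Frobenius, so that $\pi_1^{\bQ_p}(V_\Qbar;b,x)$ admits a Frobenius-invariant $\sB_\cris$-point. Its class therefore dies in $\rH^1(\bQ_p,\pi_1^{\bQ_p}(V_\Qbar;b)(\sB_\cris))$, which is precisely the assertion $j_{\pi_1^{\bQ_p}(V_\Qbar;b),p}(x)\in\rH^1_f(\bQ_p,\pi_1^{\bQ_p}(V_\Qbar;b)(\bQ_p))$. (That integral points land in $\rH^1_f$ in this way goes back to Kim \cite{kim:siegel,kim:selmer}.)

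Since the lemma is essentially a citation of known $p$-adic Hodge theory, there is no real mathematical obstacle; the only point requiring a little care is the case of a tangential base point, where one either invokes the logarithmic version of the comparison theorem directly or reduces to the case of an honest integral point after passing to a Kummer cover of the compactification $\hat V$ ramified along the boundary divisor.
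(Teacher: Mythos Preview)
Your proof is correct and follows essentially the same approach as the paper: reduce to $U^p=\pi_1^{\bQ_p}(V_\Qbar;b)$ and then invoke Olsson's non-abelian $p$-adic Hodge theory to see that the path torsor is crystalline. The paper phrases Olsson's result as ``the affine ring of the path torsor is a crystalline representation'' and cites \cite{betts:motivic_local_heights} for the implication that this lands in $\rH^1_f$, and its footnote on tangential base points takes a slightly different tack (using the principal part theory of \S\ref{sss:tangential_points} to avoid them altogether in the cases needed), but these are cosmetic differences.
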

\begin{proof}
	It suffices to prove the result when~$U^p=\pi_1^{\bQ_p}(V_\Qbar;b)$ is the whole fundamental group. It is a result of Olsson that the torsor of paths $\pi_1^{\bQ_p}(V_{\Qbar_p};b,x)$ has the property that its affine ring is a crystalline representation of $\Gal_{\bQ_p}$ \cite{olsson:non-abelian_p-adic_hodge_theory}\footnote{This is not quite stated explicitly in \cite{olsson:non-abelian_p-adic_hodge_theory} in this level of generality. \cite[Theorem~1.8]{olsson:non-abelian_p-adic_hodge_theory} is the statement when~$b=x$ is not tangential, and \cite[Corollary~9.29]{olsson:non-abelian_p-adic_hodge_theory} is the statement when~$b=x$ is tangential. The statement for general non-tangential $b$ and $x$ is \cite[Theorem~1.11]{olsson:non-abelian_p-adic_hodge_theory}; it seems that the same argument should work also when~$b$ is tangential, but the author has not carefully verified this. In any case, in this paper we will only need the result when~$V$ is a $\bG_m^r$-torsor over an abelian variety, in which case one can avoid using tangential base points by the theory of the principal part (\S\ref{sss:tangential_points}).}. This implies that its class lies in~$\rH^1_f(\bQ_p,U^p(\bQ_p))$, see e.g.~\cite[Definition--Lemma~4.27]{betts:motivic_local_heights}.
\end{proof}

\subsubsection{Selmer schemes}

In the non-abelian Chabauty method, one replaces the local and global cohomology sets $\rH^1(\bQ_p,U^p(\bQ_p))$ and $\rH^1(\bQ,U^p(\bQ_p))$ with certain subsets which are rather more manageable, and importantly which carry the structure of affine $\bQ_p$-schemes. We have more or less seen the local Selmer scheme already. For a $\bQ_p$-algebra~$R$, let~$\rH^1_f(\bQ_p,U^p(R))$ denote the kernel of the natural map
\[
\rH^1(\bQ_p,U^p(R)) \to \rH^1(\bQ_p,U^p(\sB_\cris\otimes_{\bQ_p}R))
\]
on continuous Galois cohomology sets for the local Galois group $\Gal_{\bQ_p}$. For the conventions on how to topologise~$U^p(R)$, see \cite[\S1]{kim:siegel}. The assignment $R\mapsto\rH^1_f(\bQ_p,U^p(R))$ is functorial in~$R$, and this functor is representable by an affine $\bQ_p$-scheme, which is of finite type if~$U^p$ is finite-dimensional \cite[Proposition~1.4]{kim:tangential_localisation}.

\begin{definition}
	The \emph{local Selmer scheme} $\rH^1_f(\bQ_p,U^p)$ is the affine $\bQ_p$-scheme representing the functor~$\rH^1_f(\bQ_p,U^p(-))$.
\end{definition}

The global Selmer scheme will similarly parametrise Galois cohomology classes for the global Galois group~$\Gal_\bQ$ satisfying certain local conditions. The exact local conditions which one imposes varies somewhat in the literature: we will impose the conditions from \cite[Definition~2.2]{balakrishnan-dogra:quadratic_chabauty_1}, generalised appropriately to higher-dimensional~$V$. Recall that~$J$ was defined to be the Albanese variety of~$(\hat V,b_0)$; let~$\alb\colon V\to J$ denote the composition of the inclusion $V\hookrightarrow\hat V$ and the Albanese map $\hat V\to J$. It follows from the definition of the weight filtration that $\gr^\rW_{-1}U^p$ is a quotient of $\gr^\rW_{-1}\pi_1^{\bQ_p}(V_\Qbar;b)=V_pJ$.

\begin{definition}\label{def:global_selmer_scheme}
	Let~$R$ be a $\bQ_p$-algebra. We define~$\Sel^\circ_{U^p}(\cV/\bZ)(R)$ to be the set of global Galois cohomology classes $[\xi]\in\rH^1(\bQ,U^p(R))$ satisfying the following two conditions:
	\begin{enumerate}[label=\roman*), ref=(\roman*)]
		\item\label{condn:local_image} $\loc_\ell([\xi])$ lies in the image of $j_{U^p,\ell}\colon\cV(\bZ_\ell)\to\rH^1(\bQ_\ell,U^p(R))$ for all~$\ell\neq p$;
		\item\label{condn:crystalline} $\loc_p([\xi])$ lies in $\rH^1_f(\bQ_p,U^p(R))$.
	\end{enumerate}
	The cohomology group~$\rH^1(\bQ,\gr^\rW_{-1}U^p(R))$ is equal to $R\otimes_{\bQ_p}\rH^1(\bQ,\gr^\rW_{-1}U^p(\bQ_p))$ (see e.g.~\cite[Proposition~2.2.4]{betts:weight_filtrations}). We define $\Sel_{U^p}(\cV/\bZ)(R)\subseteq\Sel^\circ_{U^p}(\cV/\bZ)(R)$ to be the subset of cohomology classes which additionally satisfy:	
	\begin{enumerate}[resume, label=\roman*), ref=(\roman*)]
		\item\label{condn:albanese_image} $\alb_*([\xi])$ lies in the $R$-span of the image of the Kummer map
		\[
		J(\bQ) \to \rH^1(\bQ,\gr^\rW_{-1}U^p(R)) \,.
		\]
	\end{enumerate}
	
	The functor~$\Sel_{U^p}(\cV/\bZ)(-)$ is representable by an affine $\bQ_p$-scheme $\Sel_{U^p}(\cV/\bZ)$, which is of finite type if~$U^p$ is finite-dimensional. We call~$\Sel_{U^p}(\cV/\bZ)$ the \emph{global Selmer scheme}.
\end{definition}

\begin{remark}[cf.~{\cite[Remark~2.3]{balakrishnan-dogra:quadratic_chabauty_1}}]\label{rmk:balakrishnan-dogra_modification}
	If the Tate--Shafarevich group of~$J$ is finite, then the Kummer map
	\[
	\bQ_p\otimes J(\bQ) \to \rH^1(\bQ,V_pJ)
	\]
	is an isomorphism onto the subspace of cohomology classes which are crystalline at~$p$. Since the quotient map $V_pJ\twoheadrightarrow\gr^\rW_{-1}U^p$ splits (by semisimplicity), this would imply that the third condition in Definition~\ref{def:global_selmer_scheme} is vacuous, i.e.~that the inclusion $\Sel_{U^p}(\cV/\bZ)(R)\subseteq\Sel^\circ_{U^p}(\cV/\bZ)(R)$ is an equality. The reason we work with $\Sel^\circ_{U^p}(\cV/\bZ)$ rather than~$\Sel_{U^p}(\cV/\bZ)$ is so that our results are not conditional on the Tate--Shafarevich Conjecture.
\end{remark}

We briefly recall why the functor~$\Sel_{U^p}(\cV/\bZ)(-)$ in Definition~\ref{def:global_selmer_scheme} is representable. The local Kummer map
\[
j_{U^p,\ell}\colon\cV(\bZ_\ell) \to \rH^1(\bQ_\ell,U^p(\bQ_p))
\]
is locally constant in the $\ell$-adic topology for all~$\ell\neq p$ \cite[Theorem~1.1]{betts:local_constancy} (for curves, see \cite[Corollary~0.2]{kim-tamagawa:albanese_map}), and so has finite image because $\cV(\bZ_\ell)$ is compact ($\cV$ is of finite type). Since this image is equal to~$\{*\}$ for all but finitely many~$\ell$ (Lemma~\ref{lem:kummer_image_good_reduction}), it follows that the sets $(j_{U^p,\ell}(\cV(\bZ_\ell)))_{\ell\neq p}$ form a Selmer structure on~$U^p$ in the sense of \cite[\S3.2]{betts:weight_filtrations}. Then \cite[Proposition~3.2.4]{betts:weight_filtrations} (cf.~\cite[\S2.8]{balakrishnan-dan-cohen-kim-wewers:non-abelian_conjecture}) implies that~$\Sel^\circ_{U^p}(\cV/\bZ)(-)$ is representable by a $\bQ_p$-scheme~$\Sel^\circ_{U^p}(\cV/\bZ)$. One can check that the third condition in Definition~\ref{def:global_selmer_scheme} imposes a closed condition on~$[\xi]$, whence $\Sel_{U^p}(\cV/\bZ)$ is representable by a closed subscheme of~$\Sel^\circ_{U^p}(\cV/\bZ)$. \qed

It is clear from the definitions that the images of the local and global Kummer maps are contained in~$\rH^1_f(\bQ_p,U^p(\bQ_p))$ and~$\Sel_{U^p}(\cV/\bZ)(\bQ_p)$, respectively, so we have the commuting square
\begin{equation}\label{diag:c-k_square}
\begin{tikzcd}
	\cV(\bZ) \arrow[r,hook]\arrow[d,"j_{U^p}"] & \cV(\bZ_p) \arrow[d,"j_{U^p,p}"] \\
	\Sel_{U^p}(\cV/\bZ)(\bQ_p) \arrow[r,"\loc_p"] & \rH^1_f(\bQ_p,U^p(\bQ_p)) \,,
\end{tikzcd}
\end{equation}
which we call the \emph{non-abelian Chabauty square}. (This is a generalisation of the Chabauty--Coleman square \eqref{diag:chabauty-coleman_square}.) The map~$\loc_p$ appearing in~\eqref{diag:c-k_square} is calle the \emph{localisation map}, and is given by restriction to a decomposition group at~$p$ ($\loc_p$ comes from a morphism of schemes because it is a natural transformation of functors).

\begin{definition}
	In the above setup, the \emph{non-abelian Chabauty locus} relative to the quotient~$U^p$ is the set~$\cV(\bZ_p)_{U^p}$ consisting of elements~$x\in\cV(\bZ_p)$ such that~$j_{U^p,p}(x)$ lies in the scheme-theoretic image of~$\loc_p$.
\end{definition}

Because it is not relevant here, we avoid saying anything about the locus~$\cV(\bZ_p)_{U^p}$ beyond this definition. In particular, we will not need to discuss Coleman functions or the like. The one fact we will need to know about the locus is that it decomposes as a union over simple opens in~$\cV$.

\begin{lemma}\label{lem:locus_union_over_simple_opens}
	We have
	\[
	\cV(\bZ_p)_{U^p} = \bigcup_\cU\cU(\bZ_p)_{U^p} \,,
	\]
	the union being taken over simple opens~$\cU\subseteq\cV$.
\end{lemma}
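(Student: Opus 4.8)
The plan is to compare the Selmer schemes $\Sel_{U^p}(\cU/\bZ)$ attached to the simple opens $\cU\subseteq\cV$ with $\Sel_{U^p}(\cV/\bZ)$, and then to pass to scheme-theoretic images under $\loc_p$, exploiting the fact that the Selmer scheme only ``sees'' the components of the model away from $p$. First I would dispose of a degenerate case: if $\cV_{\sm,\bF_\ell}$ has no $\bF_\ell$-rational point for some~$\ell$, then $\cV(\bZ_\ell)=\emptyset$ (every $\bZ_\ell$-point of the regular flat scheme $\cV$ lands in the smooth locus, as used in \S\ref{s:isogeny_method}), so condition~\ref{condn:local_image} of Definition~\ref{def:global_selmer_scheme} is unsatisfiable, $\Sel_{U^p}(\cV/\bZ)=\emptyset$, and $\cV(\bZ_p)_{U^p}=\emptyset$; since there are also no simple opens in $\cV$, both sides of the claimed identity vanish. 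So I may assume every $\cV_{\sm,\bF_\ell}$ has a rational point, in particular that simple opens exist. For the inclusion $\bigcup_\cU\cU(\bZ_p)_{U^p}\subseteq\cV(\bZ_p)_{U^p}$ one notes that a simple open is itself a regular model of~$V$ with $\cU(\bZ_\ell)\subseteq\cV(\bZ_\ell)$ for all~$\ell$, so condition~\ref{condn:local_image} for $\cU$ is stronger than for $\cV$ while conditions~\ref{condn:crystalline} and~\ref{condn:albanese_image} are model-independent; hence $\Sel_{U^p}(\cU/\bZ)\subseteq\Sel_{U^p}(\cV/\bZ)$ as closed subschemes, the scheme-theoretic image of $\loc_p$ on the former lies inside that on the latter, and $\cU(\bZ_p)_{U^p}\subseteq\cV(\bZ_p)_{U^p}$.

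The substance is the reverse inclusion, and the crucial structural point is that the model enters Definition~\ref{def:global_selmer_scheme} only through the finite sets $j_{U^p,\ell}(\cU(\bZ_\ell))$ for $\ell\neq p$; in particular $\Sel_{U^p}(\cU/\bZ)$ is unchanged if one alters $\cU$ only at the prime~$p$. Because each local Kummer map $j_{U^p,\ell}$ is locally constant with finite image and equals $\{*\}$ for all but finitely many~$\ell$, the construction of $\Sel^\circ_{U^p}(\cV/\bZ)$ as a Selmer scheme (cf.\ \cite[\S3.2]{betts:weight_filtrations}) exhibits it as a finite disjoint union of open-and-closed subschemes indexed by the possible tuples of values of $\loc_\ell$ at the finitely many remaining primes, and imposing condition~\ref{condn:albanese_image} cuts this down to a similar finite decomposition of $\Sel_{U^p}(\cV/\bZ)$. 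Writing $\cV(\bZ_\ell)=\cV_\sm(\bZ_\ell)$ as the disjoint union, over components~$C$ of $\cV_{\sm,\bF_\ell}$, of the tube of points reducing into~$C$, we have $j_{U^p,\ell}(\cV(\bZ_\ell))=\bigcup_{C}j_{U^p,\ell}(\{x:\bar x\in C\})$ over those~$C$ having a rational point; hence every tuple appearing in the decomposition is realised by a choice of such a component at each prime $\ell\neq p$, i.e.\ by a simple open. This shows that $\Sel_{U^p}(\cV/\bZ)=\bigcup_\cU\Sel_{U^p}(\cU/\bZ)$ is a \emph{finite} union of closed subschemes. Since $\loc_p$ is a morphism of affine $\bQ_p$-schemes, it is quasi-compact, so the underlying set of $\overline{\loc_p(\Sel_{U^p}(\cV/\bZ))}$ is the (finite) union of the underlying sets of the $\overline{\loc_p(\Sel_{U^p}(\cU/\bZ))}$.

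To conclude, take $x\in\cV(\bZ_p)_{U^p}$. The reduction of~$x$ lies in some component $C_p$ of $\cV_{\sm,\bF_p}$, which has a rational point — namely that reduction — so there is a simple open having $C_p$ as its component at~$p$, and hence containing~$x$. The class $j_{U^p,p}(x)$ is a $\bQ_p$-point of $\rH^1_f(\bQ_p,U^p)$ lying in the closed subscheme $\overline{\loc_p(\Sel_{U^p}(\cV/\bZ))}$; a field-valued point lies in a closed subscheme if and only if its image point does, so by the previous paragraph $j_{U^p,p}(x)$ lies in $\overline{\loc_p(\Sel_{U^p}(\cU'/\bZ))}$ for some simple open~$\cU'$. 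Replacing the component of~$\cU'$ at~$p$ by~$C_p$ changes neither $\Sel_{U^p}(\cU'/\bZ)$ nor its $\loc_p$-image, and produces a simple open $\cU''$ with $x\in\cU''(\bZ_p)$ and $j_{U^p,p}(x)\in\overline{\loc_p(\Sel_{U^p}(\cU''/\bZ))}$, that is, $x\in\cU''(\bZ_p)_{U^p}$. This gives $\cV(\bZ_p)_{U^p}\subseteq\bigcup_\cU\cU(\bZ_p)_{U^p}$ and hence the lemma.

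The step I expect to require the most care is the reduction of $\Sel_{U^p}(\cV/\bZ)=\bigcup_\cU\Sel_{U^p}(\cU/\bZ)$ to a \emph{finite} union of closed subschemes, which is precisely what makes scheme-theoretic image commute with the union; alongside this sits the routine but essential verification that the Selmer scheme is insensitive to the component of the model at~$p$, so that a simple open witnessing membership of $j_{U^p,p}(x)$ can always be adjusted to contain~$x$.
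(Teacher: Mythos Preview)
Your proof is correct, but you have worked considerably harder than the paper does. The key simplification you missed is the good reduction hypothesis at~$p$: since $\cV_{\bZ_p}$ is the complement of a relative normal crossings divisor in a smooth proper $\bZ_p$-scheme, the special fibre $\cV_{\bF_p}$ is smooth and geometrically connected. Consequently $\cV_{\sm,\bF_p}=\cV_{\bF_p}$ has a single component, so \emph{every} simple open $\cU$ satisfies $\cU_{\bF_p}=\cV_{\bF_p}$ and hence $\cU(\bZ_p)=\cV(\bZ_p)$. The paper's proof is then essentially two lines: from $\cV(\bZ_\ell)=\bigcup_\cU\cU(\bZ_\ell)$ for all $\ell$ one gets $\Sel_{U^p}(\cV/\bZ)=\bigcup_\cU\Sel_{U^p}(\cU/\bZ)$, and since all the $\cU(\bZ_p)$ coincide with $\cV(\bZ_p)$ there is nothing to adjust at~$p$.

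Your argument --- observing that the Selmer scheme depends only on the components away from~$p$, decomposing it into clopen pieces, and then modifying the simple open at~$p$ to contain a given point~$x$ --- is exactly what one would need if the lemma were stated without the good reduction assumption. So your route is more general, but in the present setting the extra machinery (the finite clopen decomposition, the quasi-compactness argument for scheme-theoretic images, the surgery on $\cU$ at~$p$) is unnecessary.
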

\begin{proof}
	Because~$\cV$ is regular, we have $\cV(\bZ_\ell)=\bigcup_\cU\cU(\bZ_\ell)$ for all primes~$\ell$. This implies by definition that $\Sel_{U^p}(\cV/\bZ)=\bigcup_\cU\Sel_{U^p}(\cU/\bZ)$. Since $\cV_{\bF_p}$ is geometrically connected by assumption, it follows that~$\cV(\bZ_p)=\cU(\bZ_p)$ for all simple opens~$\cU$, and the result follows.
\end{proof}

\subsection{Selmer schemes and Kummer maps for AT varieties}

We now want to study the structure of Selmer schemes when~$V=P$ is a $\bG_m^r$-torsor over an abelian variety~$A$, all defined over~$\bQ$. We fix a rational base point~$\tilde0\in P(\bQ)$ (not tangential!) lying in the fibre over~$0\in A(\bQ)$. We will frequently make use of the following fact.

\begin{lemma}\label{lem:iso_on_pi1}
	Let~$P$ be a pointed $\bG_m^r$-torsor over an abelian variety~$A$, and let~$P'$ be a pointed $\bG_m^{r'}$-torsor over an abelian variety~$A'$, and let~$f\colon P\to P'$ be an isogeny which preserves base points. Then~$f$ induces an isomorphism on pro-unipotent fundamental groups.
\end{lemma}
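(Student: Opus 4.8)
The plan is to reduce the lemma to a five-lemma argument on the central extensions computing these fundamental groups. Since pro-unipotent étale fundamental groups are unchanged under extension of the (characteristic~$0$) base field, and all hypotheses are preserved under such extensions, I would first base change to assume~$k=\bC$; then, by the comparison isomorphism between the $\bQ_p$-pro-unipotent étale fundamental group and the $\bQ_p$-Malcev completion of the topological fundamental group, it suffices to show that~$f$ induces an isomorphism on $\bQ_p$-Malcev completions of topological fundamental groups, equivalently on their Malcev Lie algebras~$\fg_{(-)}$. Using the base points to trivialise the underlying torsors~$T\cong A$ and~$T'\cong A'$, we reduce to the case where~$P\to A$ and~$P'\to A'$ are~$G=\bG_m^r$- and~$G'=\bG_m^{r'}$-torsors with~$\tilde0,\tilde0'$ lying over the identities. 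Finally, because~$f$ is an isogeny, Lemma~\ref{lem:AT_morphisms} presents it as lying over an isogeny~$\phi_A\colon A\to A'$ of abelian varieties and under an isogeny~$\phi_G\colon G\to G'$ of tori; in particular~$r=r'$.

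Next I would compare the two locally trivial fibre sequences~$G^\an\hookrightarrow P^\an\twoheadrightarrow A^\an$ and~$G'^\an\hookrightarrow P'^\an\twoheadrightarrow A'^\an$. By functoriality of~$\pi_1$ together with Lemma~\ref{lem:central_extension}, the map~$f$ induces a morphism of central extensions of topological fundamental groups whose outer vertical arrows are~$(\phi_G)_*$ on~$\pi_1(G^\an;1)=\rH_1(G^\an,\bZ)$ and~$(\phi_A)_*$ on~$\pi_1(A^\an;0)=\rH_1(A^\an,\bZ)$. Each of these is the finite-index inclusion of lattices induced by the corresponding isogeny, and hence becomes an isomorphism upon applying Malcev completion (for these abelian groups, just~$\otimes_\bZ\bQ$). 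The one remaining ingredient is that the central extension~$1\to\pi_1(G^\an;1)\to\pi_1(P^\an;\tilde0)\to\pi_1(A^\an;0)\to1$, and its counterpart for~$P'$, stays exact after Malcev completion, i.e.\ that~$0\to\fg_G\to\fg_P\to\fg_A\to0$ is exact. This I would verify by a short computation: $\pi_1(P^\an;\tilde0)$ is finitely generated, torsion-free and two-step nilpotent with commutator subgroup contained in the central subgroup~$\pi_1(G^\an;1)$, so~$\fg_P$ has dimension equal to the Hirsch length~$2g+r$ of~$\pi_1(P^\an;\tilde0)$; the surjection~$\fg_P\twoheadrightarrow\fg_A\cong\bQ^{2g}$ has central kernel of dimension~$r$; and~$\pi_1(G^\an;1)$ injects into~$\pi_1(P^\an;\tilde0)$ and, being central, yields an injection~$\fg_G\cong\bQ^{r}\hookrightarrow\fg_P$ landing in that kernel, so comparing dimensions forces exactness. (Alternatively, this exactness is precisely the description of~$\gr^\rW_{-1}$ and~$\gr^\rW_{-2}$ of~$\pi_1^{\bQ_p}(P_\Qbar)$ used in the proof of Proposition~\ref{prop:artin-tate_quotient_realised}.)

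Granting this, $f$ induces a morphism of short exact sequences of Malcev Lie algebras which is an isomorphism on the sub-object and on the quotient, hence an isomorphism in the middle by the five lemma; exponentiating and transporting back along the comparison isomorphism yields the lemma. I expect the only genuine obstacle to be the exactness of Malcev completion on the central extension — the rest is formal functoriality together with the classical facts that isogenies of abelian varieties and of tori induce isomorphisms on rational first homology. Should the direct computation of~$\fg_P$ prove awkward, one can instead lean on the weight-filtration description of~$\pi_1^{\bQ_p}(P_\Qbar)$ established elsewhere in the paper and carry out the same five-lemma argument directly with $\bQ_p$-pro-unipotent étale fundamental groups, bypassing the passage to~$\bC$ altogether.
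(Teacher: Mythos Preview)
Your proposal is correct and the core strategy --- a five-lemma argument on the central extension $\bQ_p(1)^r \hookrightarrow \pi_1^{\bQ_p}(P) \twoheadrightarrow V_pA$, using that isogenies of tori and abelian varieties induce isomorphisms on rational Tate modules --- is exactly the paper's approach. The paper, however, executes this directly with the $\bQ_p$-pro-unipotent \'etale fundamental groups, without the detour through~$\bC$, topological~$\pi_1$, and verification of exactness under Malcev completion; this is precisely the alternative you sketch in your final sentence. The paper simply asserts the exactness of the rows (as the \'etale analogue of the sequence~\eqref{eq:homotopy_sequence}) and applies the five-lemma in two lines.

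Your more careful justification of exactness after Malcev completion is not wrong, but it is unnecessary overhead here: the short exact sequence of $\bQ_p$-pro-unipotent fundamental groups is already available from the fibre sequence (or equivalently from the weight filtration, as you note), so there is no need to establish it by transcendental means. If you adopt the direct route, the proof collapses to a single diagram chase.
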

\begin{proof}
	Saying that~$f$ is an isogeny means that it lies over an isogeny~$f_A\colon A\to A'$ and under an isogeny~$f_G\colon\bG_m^r\to\bG_m^{r'}$. So the induced map on fundamental groups fits into a commuting diagram
	\begin{center}
	\begin{tikzcd}
		1 \arrow[r] & \bQ_p(1)^r \arrow[r]\arrow[d,"f_{G*}"] & \pi_1^{\bQ_p}(P_\Qbar;\tilde0) \arrow[r]\arrow[d,"f_*"] & V_pA_\Qbar \arrow[r]\arrow[d,"f_{A*}"] & 1 \\
		1 \arrow[r] & \bQ_p(1)^{r'} \arrow[r] & \pi_1^{\bQ_p}(P'_\Qbar;\tilde0) \arrow[r] & V_pA'_\Qbar \arrow[r] & 1
	\end{tikzcd}
	\end{center}
	with exact rows, where~$V_pA_\Qbar=\pi_1^{\bQ_p}(A_\Qbar;0)$ denotes the $\bQ_p$-linear Tate module. The outermost vertical maps are isomorphisms because~$f_G$ and~$f_A$ are isogenies, and so the central vertical map is also an isomorphism by the five-lemma.
\end{proof}

Now suppose that~$\cP$ is a simple model of~$P$, and let~$\cP_n$ be the corresponding simple models of each~$P_n$ (Lemma~\ref{lem:Pn_models}). Let us write~$U^p=\pi_1^{\bQ_p}(P_\Qbar;\tilde0)$ for the $\bQ_p$-pro-unipotent \'etale fundamental group of~$P_\Qbar$. By Lemma~\ref{lem:iso_on_pi1}, we may also identify the $\bQ_p$-pro-unipotent \'etale fundamental group of each~$P_{n,\Qbar}$ with~$U^p$, via the isomorphism induced by the maps~$\beta_n$. Under these identifications, the map on fundamental groups induced by~$\beta_{n,m}\colon P_n\to P_{mn}$ is also the identity. So by naturality of Kummer maps, we have a commuting diagram
\begin{center}
\begin{tikzcd}
	\cP(\bZ) \arrow[r,"\beta_n"]\arrow[d,"j_P"] & \cP_n(\bZ) \arrow[r,"\beta_{n,m}"]\arrow[d,"j_{P_n}"] & \cP_{mn}(\bZ) \arrow[d,"j_{P_{mn}}"] \\
	\rH^1(\bQ,U^p(\bQ_p)) \arrow[r,equal] & \rH^1(\bQ,U^p(\bQ_p)) \arrow[r,equal] & \rH^1(\bQ,U^p(\bQ_p))
\end{tikzcd}
\end{center}
for all~$n$ and~$m$, as well as a similar diagram for the local Kummer maps~$j_{P,\ell}$. This means that the global and local Kummer maps induce functions out of the colimits
\[
j_P\colon \varinjlim_n\cP_n(\bZ) \to \rH^1(\bQ,U^p(\bQ_p)) \quad\text{and}\quad j_{P,\ell}\colon \varinjlim_n\cP_n(\bZ_\ell) \to \rH^1(\bQ_\ell,U^p(\bQ_p)) \,.
\]

At primes~$\ell\neq p$, the local Kummer maps turn out to be particularly simple.

\begin{lemma}\label{lem:AT_local_kummer_l}
	For any prime~$\ell\neq p$, the image of the local Kummer map
	\[
	j_{P,\ell}\colon \cP(\bZ_\ell) \to \rH^1(\bQ_\ell,U^p(\bQ_p))
	\]
	consists of a single point.
\end{lemma}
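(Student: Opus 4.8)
The plan is to reduce the statement to the already-established fact about Kummer maps at good reduction primes (Lemma~\ref{lem:kummer_image_good_reduction}), by exploiting the flexibility to change the simple model and the base point, together with the isogeny-invariance furnished by Lemma~\ref{lem:iso_on_pi1}. First I would observe that the set $\cP(\bZ_\ell)$ is non-empty precisely when $\cT_{\bF_\ell}(\bF_\ell)\neq\emptyset$, which holds by simplicity, so there is nothing to prove when $\cP(\bZ_\ell)=\emptyset$; otherwise pick a point $\tilde x_0\in\cP(\bZ_\ell)$. Since the assertion is that $j_{P,\ell}$ is constant on $\cP(\bZ_\ell)$, and any two points of $\cP(\bZ_\ell)$ can be compared, it suffices to show that $j_{P,\ell}$ sends every point to the same class as $\tilde x_0$.

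The key step is to verify that the prime $\ell$ is of good reduction for the pair $(\cP,\tilde x_0)$ in the sense of \S\ref{ss:non-ab_Chab}: here $\cP$ is smooth and separated over $\bZ$ with generic fibre $P$ (an open subvariety of a projective variety compactifying $P$ by filling in the $\bG_m^r$-fibres), and $\cP_{\bZ_\ell}$ is the complement of a relative normal crossings divisor in a smooth proper $\bZ_\ell$-scheme since $\cP\to\cT$ is Zariski-locally a product with $\bG_m^r$ and $\cT$ has good reduction as a component of a N\'eron model of an abelian variety. Moreover $\tilde x_0\in\cP(\bZ_\ell)$ is $\bZ_\ell$-integral by construction. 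Therefore Lemma~\ref{lem:kummer_image_good_reduction}, applied with $U^p=\pi_1^{\bQ_p}(P_\Qbar;\tilde x_0)$ — which is identified with our $U^p$ up to the base-point change of Lemma~\ref{lem:independence_of_base_point} — tells us that the local Kummer map $j_{U^p,\ell}\colon\cP(\bZ_\ell)\to\rH^1(\bQ_\ell,U^p(\bQ_p))$ based at $\tilde x_0$ has image contained in the single point $\{*\}$. Translating back via the base-point twist of Lemma~\ref{lem:independence_of_base_point} converts this into the statement that $j_{P,\ell}$ (based at $\tilde0$) is constant on $\cP(\bZ_\ell)$.

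The main obstacle is purely bookkeeping: one has to be careful that Lemma~\ref{lem:kummer_image_good_reduction} as recalled is stated for base points that may be tangential and for a general regular model, whereas here $P$ is smooth and $\tilde x_0$ is honest, so the hypotheses are easy to check; the only genuine point is confirming that a $\bG_m^r$-torsor over a smooth proper $\bZ_\ell$-scheme with geometrically connected fibres admits a smooth proper compactification with normal crossings boundary, which follows by compactifying fibrewise (e.g. embedding each $\bG_m^r$ into $(\bP^1)^r$ and taking the relative normal crossings divisor given by the $0$ and $\infty$ sections). Once that is in hand, the conclusion is immediate from the cited lemma and the base-point independence, with no $p$-adic Hodge theory required at $\ell\neq p$.
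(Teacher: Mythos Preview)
Your approach has a genuine gap: you assume that $\ell$ is a prime of good reduction for $(\cP,\tilde x_0)$, but this need not hold. The lemma is stated for \emph{all} primes $\ell\neq p$, and nothing in the setup guarantees that the abelian variety $A$ has good reduction at $\ell$. When $A$ has bad reduction at $\ell$, the component $\cT$ of the N\'eron model is smooth over $\bZ_\ell$ but not proper, so $\cP_{\bZ_\ell}$ is not the complement of a relative normal crossings divisor in a smooth proper $\bZ_\ell$-scheme. Your appeal to Lemma~\ref{lem:kummer_image_good_reduction} therefore fails precisely at the primes where the result is most interesting. (Indeed, the later Lemma~\ref{lem:A_good_reduction} only establishes good reduction of $A$ at the distinguished prime $p$, not at arbitrary $\ell$.)

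The paper's argument is genuinely different and avoids this issue. It uses local constancy of $j_{P,\ell}$ (valid at all $\ell\neq p$, good or bad, via \cite{betts:local_constancy}) together with functoriality to reduce to showing constancy of an induced map $\bar j_{P,\ell}$ on $\cA^0(\bZ_\ell)$. The key trick is then an isogeny argument: one introduces the auxiliary torsor $P'=P\times_A[-1]^*P$ with morphisms $\psi_n\colon P'\to P_n$ lying over the identity on $A$, and uses these to show that local constancy on $\cA^0(\bZ_\ell)$ actually implies global constancy (because the maps $\bar j_{P_n,\ell}$ are all controlled by the single locally constant map $\bar j_{P',\ell}$, and one can choose $n$ to be the index of any open subgroup on which the latter is constant). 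This argument makes essential use of the semigroup of isogenies $\beta_n$ and would not reduce to a single application of the good-reduction lemma.
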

\begin{proof}
	We have that~$\cP(\bZ_\ell)\neq\emptyset$ because~$\cP$ is simple, so we only need to prove that~$j_{P,\ell}$ is constant. We first reduce to the case that~$\cP$ is the canonical model of~$(P,\tilde0)$ (Example~\ref{ex:model_with_point}). By the compatibility of the local Kummer maps, we have that~$j_{P,\ell}=j_{P_n,\ell}\circ\beta_n$, and so we are free to replace~$\cP$ with~$\cP_n$. In particular, we are free to assume that~$\cP$ is a $\bG_m^r$-torsor over the identity component~$\cA^0$ of~$\cA$. Then~$\cP(\bZ)\subseteq P(\bQ)$ contains a point~$\tilde0'$ in the fibre over~$0\in A(\bQ)$. Changing the base point on~$P$ to~$\tilde0'$ if necessary and using Lemma~\ref{lem:independence_of_base_point}, it suffices to prove the result in the case that~$\tilde0'=\tilde0$, i.e.~$\cP$ is the canonical model of~$(P,\tilde0)$ containing the base point~$\tilde0$. This implies that each~$\cP_n$ is also the canonical model of~$(P_n,\beta_n(\tilde0))$.
	
	Now the map~$j_{P,\ell}$ is locally constant in the $\ell$-adic topology by \cite[Theorem~1.1]{betts:local_constancy}. Moreover, because it is natural in~$(P,\tilde0)$, we have a commuting square
	\begin{equation}\label{diag:local_kummer_equivariant_l}
	\begin{tikzcd}
		\bG_m^r(\bZ_\ell)\times\cP(\bZ_\ell) \arrow[r,"\rho"]\arrow[d,"j_{\bG_m^r,\ell}\times j_{P,\ell}"] & \cP(\bZ_\ell) \arrow[d,"j_{P,\ell}"] \\
		\rH^1(\bQ_\ell,\bQ_p(1)^r)\times\rH^1(\bQ_\ell,U^p(\bQ_p)) \arrow[r,"\rho_*"] & \rH^1(\bQ_\ell,U^p(\bQ_p)) \,,
	\end{tikzcd}
	\end{equation}
	where~$\rho$ is the action of~$\bG_m^r$ on~$P$. Because $j_{\bG_m^r,\ell}\colon \bG_m^r(\bZ_\ell) \to \rH^1(\bQ_\ell,\bQ_p(1)^r)$ is a locally constant group homomorphism from a compact group to a torsion-free group, it must be the zero homomorphism, so the square~\eqref{diag:local_kummer_equivariant_l} says that~$j_{P,\ell}$ is $\bG_m^r(\bZ_\ell)$-invariant. Thus, it factors through a locally constant function
	\[
	\bar j_{P,\ell}\colon \cA^0(\bZ_\ell)\to\rH^1(\bQ_\ell,U^p(\bQ_p)) \,.
	\]
	By exactly the same argument, each map~$j_{P_n,\ell}$ factors through a locally constant function $\bar j_{P_n,\ell}\colon \cA^0(\bZ_\ell)\to\rH^1(\bQ_\ell,U^p(\bQ_p))$.
	
	Consider the fibre product~$P'=P\times_A[-1]^*P$, which is a $\bG_m^{2r}$-torsor over~$A$, and let~$\cP'$ be the canonical model of~$(P',(\tilde0,\tilde0))$. For each positive integer~$n$, there is a pointed morphism $\psi_n\colon P'\to P_n$ lying over the identity on~$A$ and lying under the homomorphism $\bG_m^{2r}\to\bG_m^r$ given by $(g,h)\mapsto g^{1+n}h^{1-n}$. By Lemma~\ref{lem:producing_simple_models}, $\psi_n$ is the generic fibre of a unique morphism $\psi_n\colon\cP'\to\cP_n$. So by Lemma~\ref{lem:functoriality_of_kummer}, we have a commuting square
	\begin{center}
	\begin{tikzcd}
		\cA^0(\bZ_\ell) \arrow[d,"\bar j_{P',\ell}"]\arrow[r,equal] & \cA^0(\bZ_\ell) \arrow[d,"\bar j_{P_n,\ell}"] \\
		\rH^1(\bQ_\ell,U^{\prime p}(\bQ_p)) \arrow[r] & \rH^1(\bQ_\ell,U^p(\bQ_p)) \,,
	\end{tikzcd}
	\end{center}
	where~$U^{\prime p}$ is the $\bQ_p$-pro-unipotent \'etale fundamental group of~$P'$.
	
	Because~$\bar j_{P',\ell}$ is locally constant, there exists an open subgroup~$V\subseteq\cA^0(\bZ_\ell)$ on which~$\bar j_{P',\ell}$ is constant. The commutativity of the above diagram then implies that $\bar j_{P_n,\ell}$ is constant on~$V$ for all~$n$. In particular, if we take~$n$ to be the index of~$V$ in~$\cA^0(\bZ_\ell)$, then for any points~$x,y\in\cA^0(\bZ_\ell)$, we have
	\[
	\bar j_{P,\ell}(x) = \bar j_{P_n,\ell}(nx) = \bar j_{P_n,\ell}(ny) = \bar j_{P,\ell}(y) \,,
	\]
	because~$nx,ny\in V$. So~$\bar j_{P,\ell}$ is actually a constant function, which is what we wanted to prove.
\end{proof}

\begin{remark}[Relation with local heights]\label{rmk:local_heights}
	Any rigidified line bundle~$L$ on an abelian variety~$A$ comes with a canonical family of $\ell$-adic metrics $||\cdot||_{L,\ell}\colon L(\Qbar_\ell)\to\bR_{\geq0}$ \cite[Theorem~9.5.7]{bombieri-gubler:heights}. These metrics play the role of components of the N\'eron--Tate canonical height function $\hat h_L$ \cite[Corollary~9.5.14]{bombieri-gubler:heights}. In \cite{betts:motivic_local_heights}, the author showed that these metrics are controlled by the local Kummer maps for the $\bG_m$-torsor $P$ corresponding to~$L$: the inclusion~$\bQ_p(1)\hookrightarrow U^p=\pi_1^{\bQ_p}(P_\Qbar;b)$ induces a bijection $\rH^1(\bQ_\ell,\bQ_p(1))\xrightarrow\sim\rH^1(\bQ_\ell,U^p(\bQ_p))$ for every~$\ell\neq p$, and the composition
	\[
	P(\bQ_\ell) \xrightarrow{j_{P,\ell}} \rH^1(\bQ_\ell,U^p(\bQ_p)) \cong \rH^1(\bQ_\ell,\bQ_p(1)) = \bQ_p
	\]
	is, up to a scalar, the logarithm of $||\cdot||_{L,\ell}$.
	
	Thus, Lemma~\ref{lem:AT_local_kummer_l} implies that $||\tilde x||_{L,\ell}=||\tilde y||_{L,\ell}$ for any two points $\tilde x,\tilde y\in\cP(\bZ_\ell)$. (This can also be proved directly.) More strongly, because $\cP(\bZ_\ell)$ is a $\bG_m(\bZ_\ell)$-torsor, it follows that the inverse image of~$\cT(\bZ_\ell)$ under the projection $\pi\colon P(\bQ_\ell)\to A(\bQ_\ell)=\cA(\bZ_\ell)$ is the disjoint union $\coprod_{n\in\bZ}\ell^n\cP(\bZ_\ell)$, and so $\cP(\bZ_\ell)$ is \emph{exactly} equal to the set of elements~$\tilde x\in P(\bQ_\ell)$ such that~$\pi(\tilde x)\in\cT(\bZ_\ell)$ and $||\tilde x||_{L,\ell}=||\tilde x_0||_{L,\ell}$, for any fixed~$\tilde x_0\in\cP(\bZ_\ell)$. See \S\ref{ss:local_heights} for some more discussion on local heights.
\end{remark}

Next we examine the local Kummer map at~$p$. Suppose that~$A$ has good reduction at~$p$ and that the base point~$\tilde0$ lies in~$\cP(\bZ_p)$. Because~$\cP_{\bZ_p}$ is Zariski-locally trivial over~$\cA_{\bZ_p}$, it has a relative compactification~$\hat\cP_{\bZ_p}$ which, Zariski-locally on~$\cA_{\bZ_p}$, looks like $(\bP^1)^r\times\cA_{\bZ_p}\supseteq\bG_m^r\times\cA_{\bZ_p}$. Since~$\cA_{\bZ_p}$ is proper by assumption, it follows that~$\hat\cP_{\bZ_p}$ is a smooth proper $\bZ_p$-scheme, containing~$\cP_{\bZ_p}$ as the complement of a relative normal crossings divisor. In other words, $(\cP,\tilde0)$ has good reduction at~$p$. This implies that the image of
\[
j_{P,n}\colon \cP(\bZ_p) \to \rH^1(\bQ_p,U^p(\bQ_p))
\]
is contained in~$\rH^1_f(\bQ_p,U^p(\bQ_p))$ (Lemma~\ref{lem:image_in_H1f}). Applying the same argument to each~$\cP_n$, we find that the image of
\[
j_{P,n}\colon \varinjlim_n\cP_n(\bZ_p) \to \rH^1(\bQ_p,U^p(\bQ_p))
\]
is also contained in~$\rH^1_f(\bQ_p,U^p(\bQ_p))$.

\begin{lemma}\label{lem:AT_local_kummer_p}
	Suppose that~$A$ has good reduction at~$p$ and that the base point~$\tilde0$ lies in~$\cP(\bZ_p)$. Then the local Kummer map is a bijection
	\[
	j_{P,p}\colon \varinjlim_n\cP_n(\bZ_p) \xrightarrow\sim \rH^1_f(\bQ_p,U^p(\bQ_p)) \,.
	\]
\end{lemma}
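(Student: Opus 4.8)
The plan is to exhibit both sides of the claimed bijection as torsors under the $\bQ_p$-points of a vector group over a common base, compatibly under $j_{P,p}$, and then deduce bijectivity from the bijectivity on the base and on the fibre. For the source: since~$A$ has good reduction at~$p$, the Néron model~$\cA_{\bZ_p}$ is an abelian scheme, so each~$\cP_n$ is a $\bG_m^r$-torsor over~$\cA_{\bZ_p}$, necessarily Zariski-locally trivial. Hence $\cP_n(\bZ_p)\to\cA(\bZ_p)=A(\bQ_p)$ is surjective with fibres torsors under $\bG_m^r(\bZ_p)=(\bZ_p^\times)^r$. The transition maps~$\beta_{n,m}$ lie over~$[m]$ and under~$[m^2]$, and filtered colimits are exact and preserve torsor structures, so $\varinjlim_n\cP_n(\bZ_p)$ is a torsor under $\varinjlim_n(\bZ_p^\times)^r$ over $\varinjlim_nA(\bQ_p)$, with surjective projection.

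For the target, recall (as in the proof of Lemma~\ref{lem:iso_on_pi1}) that~$U^p$ sits in a central extension $1\to\bQ_p(1)^r\to U^p\to V_pA\to1$. All three representations are crystalline: $V_pA$ and~$\bQ_p(1)^r$ obviously, and~$U^p$ because~$(\cP,\tilde0)$ has good reduction at~$p$ as noted above (cf.~Lemma~\ref{lem:Up_properties}). Applying~$D_\cris$, which is exact on crystalline representations, one sees that $D_\cris(\Lie U^p)\to D_\cris(V_pA)$ is surjective, so the relevant $\rH^2$-obstruction to lifting crystalline classes vanishes; combined with $\rH^0(\bQ_p,V_pA)=0$ (Lemma~\ref{lem:Up_properties}), this shows that $\rH^1_f(\bQ_p,U^p(\bQ_p))\to\rH^1_f(\bQ_p,V_pA)$ is surjective with fibres torsors under $\rH^1_f(\bQ_p,\bQ_p(1)^r)$ (freeness of the action again using $\rH^0(\bQ_p,V_pA)=0$).

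Next I would check that~$j_{P,p}$ respects both fibration structures. By naturality of the local Kummer maps applied to the projection $\cP\to\cA$, to the fibre inclusion $\bG_m^r\hookrightarrow P$, and to the action $\rho\colon\bG_m^r\times P\to P$ --- exactly as in the proof of Lemma~\ref{lem:AT_local_kummer_l} --- the map~$j_{P,p}$ covers the colimit $j_A\colon\varinjlim_nA(\bQ_p)\to\rH^1_f(\bQ_p,V_pA)$ of the abelian Kummer maps and is equivariant for the action of $\varinjlim_n(\bZ_p^\times)^r$ via the colimit $\overline\jmath\colon\varinjlim_n(\bZ_p^\times)^r\to\rH^1_f(\bQ_p,\bQ_p(1)^r)$ of the $\bG_m^r$-Kummer maps. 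The two base cases are standard: $\overline\jmath$ is a bijection by Kummer theory ($\bZ_p^\times\xrightarrow{\sim}\rH^1_f(\bQ_p,\bZ_p(1))$, and the colimit over the isogenies~$[m^2]$ inverts everything and kills the torsion~$\mu_{p-1}$); and~$j_A$ is a bijection because for an abelian variety with good reduction the Kummer map identifies $A(\bQ_p)\otimes_\bZ\bQ$ with $\rH^1_f(\bQ_p,V_pA)\cong\Lie A_{\bQ_p}$ (Bloch--Kato), while $\varinjlim_n\bigl(A(\bQ_p),[m]\bigr)=A(\bQ_p)\otimes_\bZ\bQ$.

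Finally I would conclude by a ``five lemma for torsor fibrations''. For injectivity: if $j_{P,p}(x)=j_{P,p}(y)$, projecting down and using bijectivity of~$j_A$ shows~$x,y$ lie in one fibre, so $y=u\cdot x$ for a unique~$u$; equivariance then makes $\overline\jmath(u)$ act trivially on~$j_{P,p}(x)$, and freeness of the action on the target fibre forces $\overline\jmath(u)=0$, hence $u=0$ and $x=y$. For surjectivity: given $c\in\rH^1_f(\bQ_p,U^p(\bQ_p))$, lift its image in $\rH^1_f(\bQ_p,V_pA)$ through~$j_A$ to $\overline x\in\varinjlim_nA(\bQ_p)$ and then, by surjectivity of the torsor projection, to $x\in\varinjlim_n\cP_n(\bZ_p)$; now $j_{P,p}(x)$ and~$c$ lie in the same fibre over $\rH^1_f(\bQ_p,V_pA)$, so $c=z\cdot j_{P,p}(x)$ for some $z=\overline\jmath(u)$, whence $c=j_{P,p}(u\cdot x)$. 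I expect the main obstacle to be the bookkeeping in the second paragraph: establishing that the crystallinity condition is compatible with the central extension, so that $\rH^1_f(\bQ_p,U^p(\bQ_p))$ genuinely is a torsor-fibration over $\rH^1_f(\bQ_p,V_pA)$ with fibres torsors under $\rH^1_f(\bQ_p,\bQ_p(1)^r)$; the remaining ingredients (Kummer theory, Bloch--Kato for abelian varieties, naturality of Kummer maps) are either formal or standard.
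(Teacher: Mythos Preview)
Your proposal is correct and follows essentially the same approach as the paper: both establish that source and target are torsors under $\rH^1_f(\bQ_p,\bQ_p(1)^r)$ over $\rH^1_f(\bQ_p,V_pA)$, verify compatibility of $j_{P,p}$ with the torsor structures via naturality (the paper's diagrams~\eqref{diag:local_kummer_projection_p} and~\eqref{diag:local_kummer_equivariant_p} are exactly your projection/action squares), and conclude from the known bijectivity of the Kummer maps for~$A$ and~$\bG_m^r$. For the point you flagged as the main obstacle, the paper's device is cleaner than your $D_\cris$/$\rH^2$-obstruction sketch: it invokes the Bloch--Kato logarithm $\rH^1_f(\bQ_p,U^p(\bQ_p))\cong\sD_\deR(U^p)/\rF^0$ directly, then uses Fontaine's strictness of Hodge filtrations on the central extension to read off the torsor-fibration structure, together with an Eckmann--Hilton argument to identify the action.
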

\begin{proof}
	To prove this, we need to say something about the structure of the set~$\rH^1_f(\bQ_p,U^p(\bQ_p))$. For this, we have the Bloch--Kato logarithm \cite[Proposition~1.4]{kim:tangential_localisation}
	\[
	\rH^1_f(\bQ_p,U^p(\bQ_p)) \cong \frac{\sD_\deR(U^p)(\bQ_p)}{\rF^0\sD_\deR(U^p)(\bQ_p)} \,,
	\]
	where~$\sD_\deR(-)$ is Fontaine's Dieudonn\'e functor and~$\rF^0\sD_\deR(U^p)$ is the $0$th step of its Hodge filtration (see \cite[\S4.2]{betts:motivic_local_heights} for a discussion of how to evaluate Dieudonn\'e functors on unipotent groups). Since~$U^p$ is de Rham (see e.g.~\cite[Theorem~1.4]{betts:local_constancy}) and a central extension of~$V_pA$ by~$\bQ_p(1)^r$, it follows that~$\sD_\deR(U^p)$ is a central extension of~$\frac{\sD_\deR(V_pA)}{\rF^0\sD_\deR(V_pA)}$ by~$\bQ_p^r$, strictly compatible with Hodge filtrations \cite[Th\'eor\`eme~3.8(ii)]{fontaine:representations_semi-stables}. In particular, the set $\rH^1_f(\bQ_p,U^p(\bQ_p))$ carries the structure of a $\rH^1_f(\bQ_p,\bQ_p(1)^r)$-torsor over $\rH^1_f(\bQ_p,V_pA)$, functorial in~$(P,\tilde0)$. By an Eckmann--Hilton-style argument, the action of~$\rH^1_f(\bQ_p,\bQ_p(1)^r)$ on~$\rH^1_f(\bQ_p,U^p(\bQ_p))$ is the natural one induced from the fact that~$\rH^1_f(\bQ_p,-)$ is a product-preserving functor.
	
	This in particular implies that the local Kummer map $j_{P,p}\colon\cP(\bZ_p)\to\rH^1_f(\bQ_p,U^p(\bQ_p))$ is compatible with torsor structures, in the sense that we have two commuting squares
	\begin{equation}\label{diag:local_kummer_projection_p}
	\begin{tikzcd}
		\varinjlim_n\cP_n(\bZ_p) \arrow[r,"j_{P,p}"]\arrow[d,"\pi"] & \rH^1_f(\bQ_p,U^p(\bQ_p))  \arrow[d,"\pi_*"] \\
		\varinjlim_n\cA^0(\bZ_p) \arrow[r,"j_{A,p}"] & \rH^1_f(\bQ_p,V_pA) \,.
	\end{tikzcd}
	\end{equation}
	\begin{equation}\label{diag:local_kummer_equivariant_p}
	\begin{tikzcd}[column sep=large]
		\varinjlim_n\bG_m^r(\bZ_p)\times\varinjlim_n\cP_n(\bZ_p) \arrow[r,"j_{\bG_m^r,p}\times j_{P,p}"]\arrow[d,"\rho"] & \rH^1_f(\bQ_p,\bQ_p(1)^r)\times\rH^1_f(\bQ_p,U^p(\bQ_p)) \arrow[d,"\rho_*"] \\
		\varinjlim_n\cP_n(\bZ_p) \arrow[r,"j_{P,p}"] & \rH^1_f(\bQ_p,U^p(\bQ_p))
	\end{tikzcd}
	\end{equation}
	where~$\pi\colon P\to A$ is the projection and~$\rho\colon G\times P\to P$ is the action.
	
	The group~$\varinjlim_n\cA^0(\bZ_p)$ appearing in~\eqref{diag:local_kummer_projection_p} is the colimit of $\cA^0(\bZ_p)$ along the maps $[n]\colon\cA^0(\bZ_p)\to\cA^0(\bZ_p)$, and thus is equal to~$\bQ_p\otimes_\Zhat\cA^0(\bZ_p)$ (as~$\cA^0(\bZ_p)$ is virtually a finitely generated $\bZ_p$-module). The Kummer map~$j_{A,p}$ is then the one induced from the Kummer sequences
	\[
	0 \to A[p^m] \to A \to A \to 0 \,.
	\]
	By \cite[Example~3.10.1]{bloch-kato:tamagawa_numbers}, this map fits into a commuting square
	\begin{center}
	\begin{tikzcd}
		\bQ_p\otimes_\Zhat\cA^0(\bZ_p) \arrow[r,"j_{A,p}"]\arrow[d,"\log_A"] & \rH^1_f(\bQ_p,V_pA) \arrow[d,"\log_\BK"] \\
		\Lie(A_{\bQ_p}) \arrow[r,equal,"\sim"] & \frac{\sD_\deR(V_pA)}{\rF^0\sD_\deR(V_pA)} \,,
	\end{tikzcd}
	\end{center}
	where~$\log_A$ is the logarithm for the $p$-adic Lie group~$\cA^0(\bZ_p)$ and $\log_\BK$ is the Bloch--Kato logarithm. Both~$\log_A$ and~$\log_\BK$ are isomorphisms, and so we deduce that~$j_{A,p}$ is an isomorphism of groups.
	
	Similarly, the group~$\varinjlim_n\bG_m^r(\bZ_p)$ appearing in~\eqref{diag:local_kummer_equivariant_p} is equal to~$\bQ_p\otimes_\Zhat\bG_m^r(\bZ_p)$ (it is the colimit of~$\bG_m^r(\bZ_p)$ along the maps $[n^2]\colon\bG_m^r(\bZ_p)\to\bG_m^r(\bZ_p)$). Exactly the same argument then shows that~$j_{\bG_m,p}$ is also an isomorphism of groups.
	
	Finally, a diagram-chase using~\eqref{diag:local_kummer_projection_p} and~\eqref{diag:local_kummer_equivariant_p} and the torsor structures shows that $j_{P,p}\colon\varinjlim_n\cP_n(\bZ_p)\to\rH^1_f(\bQ_p,U^p(\bQ_p))$ is an isomorphism, as claimed.
\end{proof}

\begin{lemma}\label{lem:AT_global_selmer}
	Suppose that~$A$ has good reduction at~$p$, that the base point~$\tilde0$ lies in~$\cP(\bZ_p)$, and that~$A$ has Mordell--Weil rank~$0$. Then the global Selmer scheme $\Sel_{U^p}(\cP/\bZ)$ consists of a single point, and the global Kummer map is a bijection
	\[
	j_P\colon \varinjlim_n\cP_n(\bZ) \xrightarrow\sim \Sel_{U^p}(\cP/\bZ)(\bQ_p) \,.
	\]
\end{lemma}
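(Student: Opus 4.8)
The plan is to compute the functor $R\mapsto\Sel_{U^p}(\cP/\bZ)(R)$ on all $\bQ_p$-algebras $R$ directly from Definition~\ref{def:global_selmer_scheme}, using the weight filtration to peel off the central $\bQ_p(1)^r$ and reducing to a classical vanishing statement over $\bQ$. The structural input is that $U^p=\pi_1^{\bQ_p}(P_\Qbar;\tilde0)$ is a central extension $1\to\bQ_p(1)^r\xrightarrow{i}U^p\xrightarrow{\pi_*}V_pA\to1$, where $\pi_*$ is induced by the projection $\pi\colon P\to A$ and also equals the map $\alb_*$ coming from the Albanese map of $\hat P$ (whose Albanese is $A$); hence $\gr^\rW_{-1}U^p=V_pA$ and $\gr^\rW_{-2}U^p=\bQ_p(1)^r$. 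Since $V_pA$ is pure of weight $-1$ we have $\rH^0(\bQ,V_pA\otimes R)=0$ and $\rH^0(\bQ_\ell,V_pA\otimes R)=0$ for all $\ell$, so the connecting maps in the non-abelian cohomology sequence of this central extension vanish; thus $i_*$ is injective and identifies $\rH^1(\bQ,\bQ_p(1)^r\otimes R)$ with the fibre of $\pi_*$ over the trivial class, and similarly for the local cohomology of $\bQ_\ell$ and $\bQ_p$.

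Next I would unwind the three conditions on a class $[\xi]\in\rH^1(\bQ,U^p(R))$. Since $\mathrm{rank}\,A(\bQ)=0$, the group $A(\bQ)$ is finite, so its image in the $\bQ_p$-vector space $\rH^1(\bQ,V_pA\otimes R)$ is $0$, and condition~\ref{condn:albanese_image} forces $\pi_*([\xi])=0$, i.e.\ $[\xi]=i_*(c)$ for a unique $c\in\rH^1(\bQ,\bQ_p(1)^r\otimes R)$. Condition~\ref{condn:local_image} asks that $\loc_\ell([\xi])$ lie in $j_{P,\ell}(\cP(\bZ_\ell))$, which is a single point by Lemma~\ref{lem:AT_local_kummer_l}. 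For condition~\ref{condn:crystalline}, the proof of Lemma~\ref{lem:AT_local_kummer_p} exhibits $\rH^1_f(\bQ_p,U^p(R))$ as a torsor under $\rH^1_f(\bQ_p,\bQ_p(1)^r\otimes R)$ over $\rH^1_f(\bQ_p,V_pA\otimes R)$ with $i_*$ equivariant, and since $\pi_*([\xi])=0$ this shows $\loc_p([\xi])\in\rH^1_f(\bQ_p,U^p(R))$ if and only if $\loc_p(c)\in\rH^1_f(\bQ_p,\bQ_p(1)^r\otimes R)$. Therefore any two elements of $\Sel_{U^p}(\cP/\bZ)(R)$ yield values of $c$ differing by an element of
\[
\Sigma_R:=\{\,c'\in\rH^1(\bQ,\bQ_p(1)^r\otimes R):\loc_\ell(c')=0\ \ (\ell\neq p),\ \ \loc_p(c')\in\rH^1_f(\bQ_p,\bQ_p(1)^r\otimes R)\,\}.
\]

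It then remains to produce one element of $\Sel_{U^p}(\cP/\bZ)(\bQ_p)$ and to show $\Sigma_R=0$. For the former, Lemma~\ref{lem:colimit_of_Pn} gives that $\varinjlim_n\cP_n(\bZ)$ is a single point $w$, say represented by $\tilde w_n\in\cP_n(\bZ)$; then $j_P(w)=j_{P_n}(\tilde w_n)$ satisfies condition~\ref{condn:local_image} because $\loc_\ell(j_{P_n}(\tilde w_n))=j_{P_n,\ell}(\tilde w_n)\in j_{P_n,\ell}(\cP_n(\bZ_\ell))=j_{P,\ell}(\cP(\bZ_\ell))$ (both are single points by Lemma~\ref{lem:AT_local_kummer_l}, and $j_{P,\ell}=j_{P_n,\ell}\circ\beta_n$), satisfies condition~\ref{condn:crystalline} by Lemma~\ref{lem:image_in_H1f} since $A$ has good reduction at $p$ and $\beta_n(\tilde0)\in\cP_n(\bZ_p)$ makes $(\cP_n,\beta_n\tilde0)$ have good reduction at $p$, and satisfies condition~\ref{condn:albanese_image} because $\pi(\tilde w_n)\in\cA(\bZ)=A(\bQ)$. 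Base change along $\bQ_p\to R$ then gives $\Sel_{U^p}(\cP/\bZ)(R)\neq\emptyset$ for every $R$. For the vanishing, compatibility of continuous cohomology, localisation and the $\rH^1_f$-conditions with the flat base change by the Tate-twisted module $\bQ_p(1)^r\otimes R$ (cf.~\cite[Proposition~2.2.4]{betts:weight_filtrations}) gives $\Sigma_R=R\otimes_{\bQ_p}\Sigma_{\bQ_p}$, so we may take $R=\bQ_p$; and $\Sigma_{\bQ_p}$ is contained in the Bloch--Kato Selmer group $\rH^1_f(\bQ,\bQ_p(1)^r)\cong(\cO_\bQ^\times\otimes_\bZ\bQ_p)^{\oplus r}=0$, because $\bZ^\times$ is finite and $\Spec\bZ$ has trivial class group. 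Hence $\Sel_{U^p}(\cP/\bZ)(R)$ is a single point for every $R$, so $\Sel_{U^p}(\cP/\bZ)=\Spec\bQ_p$, and $j_P\colon\varinjlim_n\cP_n(\bZ)\to\Sel_{U^p}(\cP/\bZ)(\bQ_p)$ is a map between one-point sets, hence the asserted bijection.

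The step I expect to be delicate is not the arithmetic vanishing (which is classical) but the cohomological bookkeeping: one must check that the splitting-off of $\bQ_p(1)^r$ via the weight filtration behaves well in \emph{non-abelian} $\rH^1$ and over an arbitrary test algebra $R$ (so that condition~\ref{condn:albanese_image} genuinely pins down $c$ and $i_*$ is injective on the relevant fibres), and that the crystalline condition at $p$ transfers between $U^p$ and $\bQ_p(1)^r$ with no loss; for the latter the torsor structure on $\rH^1_f(\bQ_p,U^p)$ established in the proof of Lemma~\ref{lem:AT_local_kummer_p}, suitably extended to $R$-coefficients, is exactly what makes the argument go through.
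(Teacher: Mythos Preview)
Your argument is correct and follows essentially the same strategy as the paper: use the weight filtration to reduce to the vanishing of $\rH^1_f(\bQ,\bQ_p(1)^r)\cong\bQ_p\otimes\bZ^\times=0$ together with $\bQ_p\otimes A(\bQ)=0$. The paper compresses your cohomological bookkeeping into a single citation of \cite[Lemma~3.2.7]{betts:weight_filtrations}, which packages the torsor structure of $\Sel^\circ_{U^p}(\cP/\bZ)$ over a closed subscheme of $\rH^1_f(\bQ,V_pA)$; you instead unwind this by hand via the non-abelian cohomology sequence of the central extension, and you are more explicit than the paper about why the Selmer scheme is non-empty (exhibiting $j_P(w)$ rather than appealing to pointedness).
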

\begin{proof}
	According to \cite[Lemma~3.2.7]{betts:weight_filtrations}, the scheme $\Sel^\circ_{U^p}(\cP/\bZ)$ is a torsor under $\rH^1_f(\bQ,\bQ_p(1)^r)$ over a closed subscheme of~$\rH^1_f(\bQ,V_pA)$, where~$\rH^1_f(\bQ,-)$ denotes the subset of global Galois cohomology which is crystalline at~$p$ and unramified away from~$p$. The Albanese variety of the smooth compactification of~$P$ is~$A$ (e.g.~by using the compactification~$\hat P\supset P$ which Zariski-locally on~$A$ looks like $(\bP^1)^r\times A\supset \bG_m^r\times A$), and so $\gr^\rW_{-1}U^p=V_pA$. This implies that $\Sel_{U^p}(\cP/\bZ)$ is a torsor under~$\rH^1_f(\bQ,\bQ_p(1)^r)$ over a closed subscheme of the image of the Kummer map $\bQ_p\otimes A(\bQ) \to \rH^1_f(\bQ,V_pA)$.
	
	Now $\rH^1_f(\bQ,\bQ_p(1)^r)=\bQ_p\otimes\bG_m^r(\bZ)=0$ and $\bQ_p\otimes A(\bQ)=0$ since we are assuming that~$A(\bQ)$ is finite. This implies that $\Sel_{U^p}(\cP/\bZ)$ is the one-point scheme over~$\bQ_p$ (it is non-empty because it is a pointed scheme). The global Kummer map is then automatically a bijection by Lemma~\ref{lem:colimit_of_Pn}.
\end{proof}

\subsection{Proof of Theorem~\ref{thm:comparison}}

We henceforth assume that we are in the setup of Theorem~\ref{thm:comparison}, i.e.~$f\colon Y\to P$ induces a surjection on pro-unipotent fundamental groups, $b$ is a fixed $\bQ$-rational base point, possibly tangential, and~$p$ is a prime of good reduction for~$(\cY,b)$. We equip~$P$ with a rational base point~$\tilde0\in P(\bQ)$, where~$\tilde0=f(b)$ if~$b$ is non-tangential, and~$\tilde0=\prin(f(b))$ if~$b$ is tangential. We are free to assume that~$\tilde0$ lies in the fibre over~$0\in A(\bQ)$. In either case, there is a $\Gal_\bQ$-invariant path from~$f(b)$ to~$\tilde0$ by Proposition~\ref{prop:galois-invariant_path}, so we have a $\Gal_\bQ$-equivariant pushforward map
\[
f_*\colon \pi_1^{\bQ_p}(Y_\Qbar;b) \twoheadrightarrow \pi_1^{\bQ_p}(P_\Qbar;\tilde0)
\]
on fundamental groups, realising~$U^p\coloneqq\pi_1^{\bQ_p}(P_\Qbar;\tilde0)$ as a quotient of~$\pi_1^{\bQ_p}(Y_\Qbar;b)$.

\begin{lemma}\label{lem:A_good_reduction}
	$A$ has good reduction at~$p$.
\end{lemma}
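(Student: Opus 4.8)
The plan is to reduce the statement to the observation that $A$ is, up to isogeny, a quotient of the Jacobian of the smooth compactification of $Y$, and then to transport good reduction along that quotient map. Write $X$ for the smooth compactification of $Y$ and $J=\Jac(X)$; recall from the proof of Lemma~\ref{lem:semisimplicity} that $\gr^\rW_{-1}\pi_1^{\bQ_p}(Y_\Qbar;b)=V_pJ$, while $\gr^\rW_{-1}U^p=V_pA$ as noted in the proof of Lemma~\ref{lem:AT_global_selmer}. So the hypothesis on $f$ should give a surjection $V_pJ\twoheadrightarrow V_pA$, and the point is that this forces $A$ to inherit good reduction from $J$.

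In order, the steps I would carry out are as follows. First, since $f$ induces a surjection on $\bQ_p$-pro-unipotent \'etale fundamental groups, so does the composite $Y\xrightarrow{f}P\to A$ (the projection $P\to A$ being surjective on pro-unipotent $\pi_1$, cf.\ the exact sequence in Lemma~\ref{lem:central_extension}), and passing to $\gr^\rW_{-1}$ yields a surjection of $\Gal_\bQ$-representations $V_pJ\twoheadrightarrow V_pA$. Concretely, because $A$ is proper the morphism $Y\to A$ extends uniquely to a morphism $\hat f\colon X\to A$, whose linear part is a homomorphism $J=\Alb(X)\to A$ inducing this map on $\bQ_p$-Tate modules; a homomorphism of abelian varieties surjective on $\bQ_p$-Tate modules is surjective (its image is an abelian subvariety of $A$ with full Tate module), so $A$ is a quotient of $J$. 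Second, I would unwind the good-reduction hypothesis: $\cY_{\bZ_p}$ is by definition the complement of a relative normal crossings divisor in a smooth proper $\bZ_p$-scheme $\hat{\cY}_{\bZ_p}$, whose generic fibre is a smooth proper $\bQ_p$-curve containing $Y_{\bQ_p}$ densely, hence equal to $X_{\bQ_p}$; thus $X$ has good reduction at $p$, and so does $J=\Jac(X)$, its N\'eron model over $\bZ_p$ being $\Pic^0$ of $\hat{\cY}_{\bZ_p}$. Third, good reduction passes to quotients: writing $0\to B\to J\to A\to0$ with $B$ an abelian variety and choosing any prime $\ell\neq p$, exactness of $V_\ell$ exhibits $V_\ell A$ as a quotient of the $\Gal_{\bQ_p}$-representation $V_\ell J$, which is unramified at $p$ by the criterion of N\'eron--Ogg--Shafarevich; hence $V_\ell A$ is unramified at $p$ and $A$ has good reduction at $p$.

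I do not expect a genuine obstacle here: the argument is essentially bookkeeping. The only points needing a little care are (i) checking that the assumption that $(\cY,b)$ has good reduction at $p$ really forces the \emph{compactification} $X$ (and not just some model of $Y$) to have good reduction, and (ii) extracting honest surjectivity of $J\to A$ from the surjectivity of $f$ on fundamental groups. If one preferred to avoid N\'eron--Ogg--Shafarevich, an alternative is to observe that $V_pA$, being a quotient of the crystalline representation $V_pJ$, is crystalline, and then to invoke the $p$-adic analogue of N\'eron--Ogg--Shafarevich for abelian varieties (Coleman--Iovita, Breuil); but the route above is more elementary and self-contained.
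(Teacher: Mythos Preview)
Your argument is correct, but it takes a somewhat longer route than the paper's. Both proofs end with the N\'eron--Ogg--Shafarevich criterion, but the paper bypasses the Jacobian entirely: it observes directly that for any $\ell\neq p$ the $\Gal_{\bQ_p}$-action on $\pi_1^{\bQ_\ell}(Y_{\Qbar_p};b)$ is unramified (this being what good reduction of $(\cY,b)$ means for the tame/log-smooth fundamental group), and hence the same holds for its quotient $\pi_1^{\bQ_\ell}(A_{\Qbar_p};0)=V_\ell A$. Your approach instead materialises the surjection $J\twoheadrightarrow A$ of abelian varieties, checks that $X$ (and hence $J$) has good reduction from the normal-crossings-compactification hypothesis, and then transports good reduction along the quotient. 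The paper's version is shorter and avoids the detour through $X$ and $J$, at the cost of invoking unramifiedness of $\pi_1^{\bQ_\ell}$ under log-smooth reduction as a black box; your version is more geometric and makes the use of N\'eron--Ogg--Shafarevich for $J$ completely explicit. Either works.
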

\begin{proof}
	Because~$f$ induces a surjection on pro-unipotent fundamental groups, so too does the composition~$Y\xrightarrow{f} P \twoheadrightarrow A$. For any prime~$\ell\neq p$, the Galois action on the fundamental group~$\pi_1^{\bQ_\ell}(Y_{\Qbar_p};b)$ is unramified because~$(\cY,b)$ has good reduction at~$p$. So the action on the quotient~$\pi_1^{\bQ_\ell}(A_{\Qbar_p};b)=\rH^1_\et(A_{\Qbar_p},\bQ_\ell)^*$ is also unramified, and so~$A$ has good reduction by the N\'eron--Ogg--Shafarevich criterion.
\end{proof}

Let us now fix a simple open~$\cU\subseteq\cY_\sm$ and consider the simple model~$\cP$ for which~$f\colon Y\to P$ is the generic fibre of a morphism $f\colon \cU\to\cP$ (Lemma~\ref{lem:producing_simple_models}).

\begin{lemma}\label{lem:base_point_integral}
	The base point~$\tilde0\in P(\bQ)$ lies in~$\cP(\bZ_p)$.
\end{lemma}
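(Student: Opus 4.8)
\end{lemma}

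\begin{proof}
The plan is to separate the non-tangential and tangential cases, reducing in both to the fact that the morphisms of $\bZ$-schemes $f\colon\cU\to\cP$ and $\cP\to\cT$ carry integral points to integral points.

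The first step is to record the simplification coming from good reduction at $p$. Since $p$ is a prime of good reduction for $(\cY,b)$, the scheme $\cY_{\bZ_p}$ is an open subscheme of a smooth proper $\bZ_p$-scheme with geometrically connected special fibre; hence $\cY_{\bZ_p}$ is smooth over $\bZ_p$ with geometrically connected special fibre, so $\cY_{\bZ_p}\subseteq\cY_\sm$ and $\cU_{\bZ_p}=\cY_{\bZ_p}$, because deleting all but one component of the already-connected $\cY_{\bF_p}$ removes nothing. Moreover, any $\bZ_p$-point or $\bZ_p\llparen t\rrparen$-point of $\cY$ factors through $\cU$, since the image of $\Spec\bZ_p$ (resp.\ $\Spec\bZ_p\llparen t\rrparen$) in $\Spec\bZ$ lies in $\{(0),(p)\}$, so that its image in $\cY$ avoids $\cY\smallsetminus\cU$, which is supported on the special fibres at the primes $\ell\neq p$ (cf.\ the proof of Lemma~\ref{lem:locus_union_over_simple_opens}). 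In particular $\cU(\bZ_p)=\cY(\bZ_p)$ and $\cU(\bZ_p\llparen t\rrparen)=\cY(\bZ_p\llparen t\rrparen)$. If $b$ is non-tangential, this already concludes the proof: $\bZ_p$-integrality gives $b\in\cY(\bZ_p)=\cU(\bZ_p)$, and then $\tilde0=f(b)\in\cP(\bZ_p)$.

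For the tangential case, the same reasoning gives $b\in\cU(\bZ_p\llparen t\rrparen)$, hence $f(b)\in\cP(\bZ_p\llparen t\rrparen)$, and it remains to show that $\tilde0=\prin(f(b))\in P(\bQ_p)$ actually lies in $\cP(\bZ_p)$. The key is an integral refinement of Lemma~\ref{lem:shift_by_G1}: I would show that $f(b)$ can be written $g\cdot\vec x_0$ with $g$ a tuple of monomials in~$t$ (an element of $\bG_m^r(\bQ_p\llparen t\rrparen)_1$ in the notation of \S\ref{sss:tangential_points}) and $\vec x_0\in\cP(\bZ_p\llbrack t\rrbrack)$. Uniqueness of such a decomposition over the field $\bQ_p$ then identifies it with the one in Lemma~\ref{lem:shift_by_G1}, so that $\prin(f(b))$ is the specialisation $\vec x_0|_{t=0}$, which lies in $\cP(\bZ_p)$, completing the proof. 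To produce the decomposition, I would project $f(b)$ to $\hat x\in\cT(\bZ_p\llparen t\rrparen)$. By Lemma~\ref{lem:A_good_reduction}, $A$ has good reduction at $p$, so $\cT_{\bZ_p}=\cA_{\bZ_p}$ is an abelian scheme over $\bZ_p$; by the N\'eron mapping property of this abelian scheme (equivalently, Weil's extension theorem applied over the regular ring $\bZ_p\llbrack t\rrbrack$ and the dense open $\Spec\bZ_p\llparen t\rrparen$), the section $\hat x$ extends to a section $\hat x_0\in\cT(\bZ_p\llbrack t\rrbrack)$. Pulling $\cP$ back along $\hat x_0$ gives a $\bG_m^r$-torsor over the local ring $\bZ_p\llbrack t\rrbrack$, which is trivial; a trivialisation identifies $f(b)$ with an element of $\bigl(\bZ_p\llparen t\rrparen^\times\bigr)^r=(t^{\bZ})^r\times\bigl(\bZ_p\llbrack t\rrbrack^\times\bigr)^r$, and splitting off the monomial factor produces the required $g$ and $\vec x_0$.

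The main obstacle is the extension step in the tangential case: since $\Spec\bZ_p\llbrack t\rrbrack$ is two-dimensional one cannot simply invoke the valuative criterion of properness, and one genuinely needs the group structure of the abelian scheme $\cT_{\bZ_p}$ — i.e.\ that it is the N\'eron model of $A_{\bQ_p}$ — which is exactly why Lemma~\ref{lem:A_good_reduction} is established first. Everything else (the bookkeeping with simple opens, the non-tangential case, triviality of the pulled-back torsor, and matching the scheme-theoretic principal part with $\prin$) is routine.
\end{proof}
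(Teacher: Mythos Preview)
Your non-tangential case is identical to the paper's. In the tangential case your argument is correct, but you take a genuinely different route: you reprove Lemma~\ref{lem:shift_by_G1} directly over the two-dimensional base $\Spec\bZ_p\llbrack t\rrbrack$, which forces you to extend the $\cT$-valued point across $V(t)$ using the group-scheme structure of the abelian scheme $\cA_{\bZ_p}$.

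The paper instead applies Lemma~\ref{lem:shift_by_G1} over the field $k=\bQ$, obtaining $f(\vec b)=g\cdot\vec b_0$ with $g\in\bG_m^r(\bQ\llparen t\rrparen)_1=(t^\bZ)^r$ and $\vec b_0\in P(\bQ\llbrack t\rrbrack)$. The key observation is that the monomial $g$ is automatically a unit in $\bZ_p\llparen t\rrparen$, so $\vec b_0=g^{-1}\cdot f(\vec b)$ also lies in $\cP(\bZ_p\llparen t\rrparen)$. One then concludes $\vec b_0\in\cP(\bZ_p\llparen t\rrparen)\cap P(\bQ\llbrack t\rrbrack)\subseteq\cP(\bZ_p\llbrack t\rrbrack)$ and specialises at $t=0$. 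This replaces your two-dimensional extension problem by an intersection-of-rings statement, so the paper never needs Lemma~\ref{lem:A_good_reduction} or any group-scheme extension result at this point.

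One small comment on your justification of the extension step: neither the N\'eron mapping property (extending from the generic fibre over a Dedekind base to smooth test schemes) nor Weil's extension theorem (extending across codimension $\geq2$) applies on its own, since $V(t)\subseteq\Spec\bZ_p\llbrack t\rrbrack$ has codimension~$1$. You need both in tandem: the valuative criterion of properness at the DVR $(\bZ_p\llbrack t\rrbrack)_{(t)}$ extends $\hat x$ to an open containing the generic point of $V(t)$, and then Weil's theorem for smooth separated group schemes handles the remaining closed point of codimension~$2$. This is standard, but your parenthetical ``equivalently'' suggests either tool alone suffices, which is not quite right.
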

\begin{proof}
	When~$b$ is non-tangential, this is straightforward: good reduction implies that~$\cY_{\bF_p}$ is smooth and geometrically connected, so~$\cU_{\bZ_p}=\cY_{\bZ_p}$ and~$b\in\cU(\bZ_p)$. Thus~$\tilde0=f(b)\in\cP(\bZ_p)$.
	
	When~$b=\vec b$ is tangential, we have $f(\vec b)\in\cP(\bZ_p\llparen t\rrparen)$, and may also write~$f(\vec b)=g\cdot\vec b_0$ for some~$g\in\bG_m^r(\bQ\llparen t\rrparen)_1$ and~$\vec b_0\in P(\bQ\llbrack t\rrbrack)$ by Lemma~\ref{lem:shift_by_G1}. Because~$\bG_m^r(\bQ\llparen t\rrparen)_1=(t^\bZ)^r$ as a subgroup of~$\bQ\llparen t\rrparen^\times$, it follows that~$g$ lies in~$\bG_m^r(\bZ_p\llparen t\rrparen)$. So $\vec b_0=g^{-1}\cdot f(\vec b)$ lies in both~$\cP(\bZ_p\llparen t\rrparen)$ and $P(\bQ\llbrack t\rrbrack)$, and so $\vec b_0\in\cP(\bZ_p\llbrack t\rrbrack)$. It follows that $\prin(f(\vec b))$, i.e.~the specialisation of~$\vec b_0$ at~$t=0$, lies in~$\cP(\bZ_p)$.
\end{proof}

We can now compare the Selmer schemes for~$\cU$ and~$\cP$ relative to the quotient~$U^p$. It is clear that the local Selmer schemes agree: they are both $\rH^1_f(\bQ_p,U^p)$. For the global Selmer schemes, we have
\begin{lemma}\label{lem:same_global_selmer}
	The induced map $f_*\colon\Sel_{U^p}(\cU/\bZ)\to\Sel_{U^p}(\cP/\bZ)$ is an isomorphism of $\bQ_p$-schemes.
\end{lemma}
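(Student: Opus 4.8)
The map $f\colon (Y,b)\to (P,\tilde0)$ (composed with the Galois-invariant path of Proposition~\ref{prop:galois-invariant_path}, as set up at the start of this section) induces the \emph{identity} on the coefficient group $U^p$, since $U^p$ is by definition the quotient of $\pi_1^{\bQ_p}(Y_\Qbar;b)$ realised by $f$; hence $f_*$ acts as the identity on $\rH^1(\bQ,U^p(-))$, and the morphism $f_*\colon\Sel_{U^p}(\cU/\bZ)\to\Sel_{U^p}(\cP/\bZ)$ is simply the inclusion of one subfunctor of $\rH^1(\bQ,U^p(-))$ into another. So it suffices to show that a class $[\xi]\in\rH^1(\bQ,U^p(R))$ (for an arbitrary $\bQ_p$-algebra $R$) satisfies the three conditions of Definition~\ref{def:global_selmer_scheme} for $\cU$ if and only if it satisfies the corresponding three conditions for $\cP$. (Combined with Lemmas~\ref{lem:AT_global_selmer}, \ref{lem:A_good_reduction} and~\ref{lem:base_point_integral}, this will also re-prove that $\Sel_{U^p}(\cU/\bZ)$ is a single reduced point.)

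\textbf{Conditions \ref{condn:crystalline} and \ref{condn:local_image}.} Condition~\ref{condn:crystalline} (crystallinity of $\loc_p([\xi])$) is literally identical for $\cU$ and $\cP$. For condition~\ref{condn:local_image}, recall from Lemma~\ref{lem:producing_simple_models} that $f$ extends to a morphism $f\colon\cU\to\cP$ of $\bZ$-schemes, so $f\bigl(\cU(\bZ_\ell)\bigr)\subseteq\cP(\bZ_\ell)$ for every $\ell$, and by functoriality of Kummer maps (Lemma~\ref{lem:functoriality_of_kummer}) we have $j_{U^p,\ell}=j_{U^p,\ell}\circ f$ on $\cU(\bZ_\ell)$. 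Thus $\im\bigl(j_{U^p,\ell}\colon\cU(\bZ_\ell)\to\rH^1(\bQ_\ell,U^p(R))\bigr)\subseteq\im\bigl(j_{U^p,\ell}\colon\cP(\bZ_\ell)\to\rH^1(\bQ_\ell,U^p(R))\bigr)$. For $\ell\neq p$ the right-hand side is a single point by Lemma~\ref{lem:AT_local_kummer_l}, while the left-hand side is non-empty because $\cU$, being a simple open, is smooth over $\bZ$ with $\cU_{\bF_\ell}(\bF_\ell)\neq\emptyset$, so $\cU(\bZ_\ell)\neq\emptyset$ by Hensel. Hence the two images coincide for all $\ell\neq p$, so condition~\ref{condn:local_image} for $\cU$ and for $\cP$ cut out the same subfunctor.

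\textbf{Condition \ref{condn:albanese_image}.} This is the only substantive point. First, the map $\alb_*\colon\rH^1(\bQ,U^p(R))\to\rH^1(\bQ,\gr^\rW_{-1}U^p(R))$ is the same for $\cU$ and for $\cP$: in both cases it is induced by the projection $U^p\twoheadrightarrow\gr^\rW_{-1}U^p$, and $\gr^\rW_{-1}U^p=V_pA$ since the smooth compactification of $P$ has Albanese $A$ (cf.~the proof of Lemma~\ref{lem:AT_global_selmer}). What differs is the reference subspace: for $\cP$ it is the $R$-span of $\im\bigl(A(\bQ)\to\rH^1(\bQ,V_pA\otimes R)\bigr)$, whereas for $\cU$ it is the $R$-span of the image of $J(\bQ)\to\rH^1(\bQ,V_pJ)\to\rH^1(\bQ,V_pA\otimes R)$, the second arrow coming from $V_pJ\twoheadrightarrow\gr^\rW_{-1}U^p=V_pA$. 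By functoriality of the Kummer map for abelian varieties applied to the homomorphism $J\to A$ induced by $\hat f\colon X\to A$ (Lemma~\ref{lem:morphisms_to_Gm_torsor}), this latter composite factors as $J(\bQ)\to A(\bQ)\to\rH^1(\bQ,V_pA\otimes R)$. Since $A$ has Mordell--Weil rank $0$, both $A(\bQ)$ and $\im(J(\bQ)\to A(\bQ))$ are finite, and any homomorphism from a finite group to the $\bQ_p$-vector space $\rH^1(\bQ,V_pA\otimes R)$ is zero; hence \emph{both} reference subspaces vanish, and condition~\ref{condn:albanese_image} for both $\cU$ and $\cP$ reduces to $\alb_*([\xi])=0$. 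This establishes the equality of subfunctors and hence the lemma. The main obstacle is purely bookkeeping: tracking that the maps $\alb_*$ and the Kummer maps into $\gr^\rW_{-1}U^p$ are correctly identified for the two varieties, and that the tangential-base-point case does not disturb the identification $f_*=\id$ on cohomology (which it does not, by the Galois-invariance of the chosen path, Proposition~\ref{prop:galois-invariant_path}, together with Lemma~\ref{lem:independence_of_base_point}).
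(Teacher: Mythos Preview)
Your proof is correct and follows the same overall structure as the paper's: both recognise that $f_*$ is the identity on $\rH^1(\bQ,U^p(-))$ and then verify that conditions~\ref{condn:local_image}, \ref{condn:crystalline}, \ref{condn:albanese_image} of Definition~\ref{def:global_selmer_scheme} coincide for $\cU$ and $\cP$. Your treatment of conditions~\ref{condn:crystalline} and~\ref{condn:local_image} is essentially identical to the paper's.

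The one point of difference is condition~\ref{condn:albanese_image}. You use the standing hypothesis that $A$ has Mordell--Weil rank~$0$ to conclude that both reference subspaces (the $R$-span of the Kummer images of $A(\bQ)$ and of $J(\bQ)$ in $\rH^1(\bQ,V_pA\otimes R)$) vanish, so the condition reduces to $\alb_*([\xi])=0$ in both cases. The paper instead argues \emph{without} invoking rank~$0$: from the assumption that $f$ induces a surjection on pro-unipotent fundamental groups it deduces that the induced homomorphism $g\colon J\to A$ of Albanese varieties is surjective, hence $g\colon J(\bQ)\to A(\bQ)$ has finite cokernel, and so $\bQ_p\otimes J(\bQ)$ and $\bQ_p\otimes A(\bQ)$ have the same image in $\rH^1(\bQ,V_pA)$. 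Your argument is shorter and perfectly valid in the stated context; the paper's argument is a little more robust in that it shows the two Selmer schemes agree even before one knows $\rank(A(\bQ))=0$, which may be useful if one later wants to drop that hypothesis.
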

\begin{proof}
	Both~$\Sel_{U^p}(\cU/\bZ)$ and~$\Sel_{U^p}(\cP/\bZ)$ are subfunctors of the cohomology functor~$\rH^1(\bQ,U^p)$, so we just need to check that the local conditions in Definition~\ref{def:global_selmer_scheme} for~$\cY$ and~$\cP$ agree. For condition~\ref{condn:crystalline} there is nothing to prove. For condition~\ref{condn:local_image}, Lemma~\ref{lem:functoriality_of_kummer} implies that
	\[
	j_{U^p,\ell}(\cU(\bZ_\ell))\subseteq j_{U^p,\ell}(\cP(\bZ_\ell))
	\]
	for all primes~$\ell\neq p$. The left-hand side is non-empty because~$\cU_{\bF_\ell}(\bF_\ell)\neq\emptyset$, and the right-hand side is a singleton by Lemma~\ref{lem:AT_local_kummer_l}. So this containment is actually an equality, and thus the condition~\ref{condn:local_image} is the same for~$\cY$ and~$\cP$.
	
	For condition~\ref{condn:albanese_image}, the Albanese variety of the smooth compactification~$X$ of~$Y$ is its Jacobian~$J=\Jac(X)$, while the Albanese variety of the smooth compactification of~$P$ is the abelian variety~$A$ (as we saw in the proof of Lemma~\ref{lem:AT_global_selmer}). By the universal property of~$J$, the corresponding Albanese maps fit into a commuting square
	\begin{center}
	\begin{tikzcd}
		Y \arrow[r,"f"]\arrow[d,"\alb_Y"'] & P \arrow[d,"\alb_P"] \\
		J \arrow[r,"g"] & A \,.
	\end{tikzcd}
	\end{center}
	Since~$f\colon Y\to P$ induces a surjection on pro-unipotent fundamental groups, the same must be true for the homomorphism of abelian varieties~$g\colon J\to A$, which is only possible if~$g$ is surjective. In particular, $g\colon J(\bQ)\to A(\bQ)$ has finite cokernel. Also, $\gr^\rW_{-1}U^p$ is identified with the $\bQ_p$-linear Tate module~$V_pA$.
	
	From the commuting square
	\begin{center}
	\begin{tikzcd}
		\bQ_p\otimes J(\bQ) \arrow[r,"g"]\arrow[d,"j_J"] & \bQ_p\otimes A(\bQ) \arrow[d,"j_A"] \\
		\rH^1(\bQ,V_pJ) \arrow[r,"g_*"] & \rH^1(\bQ,V_pA) \,,
	\end{tikzcd}
	\end{center}
	we see that~$\bQ_p\otimes J(\bQ)$ and~$\bQ_p\otimes A(\bQ)$ have the same image in~$\rH^1(\bQ,V_pA)=\rH^1(\bQ,\gr^\rW_{-1}U^p)$. This means that the condition~\ref{condn:albanese_image} is the same for~$\cY$ and~$\cP$, and we are done.
\end{proof}

Combining Lemmas~\ref{lem:same_global_selmer}, \ref{lem:AT_global_selmer}, \ref{lem:AT_local_kummer_p}, \ref{lem:functoriality_of_kummer} and the diagram~\eqref{diag:local_vs_global_kummer}, we find that in the setup of Theorem~\ref{thm:comparison}, the non-abelian Chabauty square~\eqref{diag:c-k_square} for~$\cU$ can be identified with the square
\begin{center}
\begin{tikzcd}
	\cU(\bZ) \arrow[r,hook]\arrow[d,"f_\infty"] & \cU(\bZ_p) \arrow[d,"f_\infty"] \\
	\varinjlim_n\cP_n(\bZ) \arrow[r] & \varinjlim_n\cP_n(\bZ_p) \,.
\end{tikzcd}
\end{center}
Since the global Selmer scheme~$\Sel_{U^p}(\cU/\bZ)$ is a single $\bQ_p$-valued point, it follows that the $\bQ_p$-points of the scheme-theoretic image of the localisation map $\loc_p\colon\Sel_{U^p}(\cU/\bZ)\to\rH^1_f(\bQ_p,U^p)$ is equal to the image of $\loc_p$ on~$\bQ_p$-points, i.e.~to the image of $\varinjlim_n\cP_n(\bZ)\to\varinjlim_n\cP_n(\bZ_p)$. So by the definition of the non-abelian Chabauty locus~$\cU(\bZ_p)_{U^p}$, we have
\[
\cU(\bZ_p)_{U^p} = \{x\in\cU(\bZ_p) \::\: f_\infty(x)\in\varinjlim_n\cP_n(\bZ)\} = \cU(\bZ_p)_f \,.
\]
Taking the union over all simple opens~$\cU$ and using Lemma~\ref{lem:locus_union_over_simple_opens}, we have proven the main assertion in Theorem~\ref{thm:comparison}. The final assertion, that~$c(U^p)=\dim(P)$, follows by noting that the local Selmer scheme has dimension~$\dim(P)=\dim(A)+r$ (because it is a torsor under~$\rH^1_f(\bQ_p,\bQ_p(1)^r)$ over~$\rH^1_f(\bQ_p,V_pA)$, as we saw in the proof of Lemma~\ref{lem:AT_local_kummer_p}), while the global Selmer scheme has dimension~$0$ because it is a finite set. So the codimension~$c(U^p)$ of the image of the localisation map~$\loc_p$ is necessarily equal to~$\dim(P)$. \qed

\section{Unlikely intersections}\label{s:unlikely_intersections}

From the perspective of Stoll's Conjecture, the advantage of having a geometric description of some non-abelian Chabauty locus is that it allows results from the theory of unlikely intersections to be brought to bear. As an illustration of this idea, consider the special case of Chabauty--Coleman for a morphism $f\colon X\to A$ where~$X$ is a smooth projective curve and~$A$ is an abelian variety of Mordell--Weil rank~$0$. In this case, the Chabauty--Coleman square~\eqref{diag:chabauty-coleman_square} becomes
\begin{center}
\begin{tikzcd}
	X(\bQ_p) \arrow[r,hook]\arrow[d] & X(\bQ_p) \arrow[d] \\
	0 \arrow[r] & \Lie(A_{\bQ_p}) \,,
\end{tikzcd}
\end{center}
so the Chabauty--Coleman locus is just the kernel of the map~$X(\bQ_p)\to\Lie(A_{\bQ_p})$, i.e.~$f^{-1}(A(\bQ_p)_\tors)$. If~$f$ induces a surjection on fundamental groups and $\dim(A)\geq2$, then $\im(f)$ is not contained in any translate of a strict subgroup of~$A$, and so the intersection of~$\im(f)$ and~$A_\tors$ is a finite subscheme of~$A$ by the Manin--Mumford Conjecture \cite{raynaud:manin-mumford_curves}. In this case,
\[
Z\coloneqq f^{-1}(A_\tors)
\]
is a finite subscheme of~$X$ with the property that the Chabauty--Coleman locus of~$X$ relative to~$f$ is $Z(\bQ_p)$ for all primes~$p$. So we have proved the easy
\begin{theorem}
	Stoll's Conjecture holds when~$U^p$ is the quotient realised by a morphism $f\colon X\to A$ when~$A$ is an abelian variety of dimension~$\geq2$ and Mordell--Weil rank~$0$.
\end{theorem}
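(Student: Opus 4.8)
The plan is to recognise that in this situation the non-abelian Chabauty locus is literally the classical Chabauty--Coleman locus, and that the rank hypothesis makes the Chabauty--Coleman square~\eqref{diag:chabauty-coleman_square} degenerate in a way that exhibits the locus as an unlikely intersection. First I would unwind the left vertical map of~\eqref{diag:chabauty-coleman_square}: it is the Kummer map $X(\bQ)\to\bQ_p\otimes A(\bQ)$ followed by the injective logarithm $\bQ_p\otimes A(\bQ)\hookrightarrow\Lie(A_{\bQ_p})$, and since $A(\bQ)$ is finite the group $\bQ_p\otimes A(\bQ)$ vanishes. Hence the bottom row of the square is $0\hookrightarrow\Lie(A_{\bQ_p})$, and $X(\bQ_p)_{U^p}$ --- the preimage in $X(\bQ_p)$ of the image of the bottom map --- is the fibre over $0$ of $X(\bQ_p)\xrightarrow{f}A(\bQ_p)\xrightarrow{\log_A}\Lie(A_{\bQ_p})$. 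As the kernel of $\log_A$ on the compact $p$-adic group $A(\bQ_p)$ is $A(\bQ_p)_\tors$, this is exactly $f^{-1}\bigl(A(\bQ_p)_\tors\bigr)$, i.e.\ the set of $x\in X(\bQ_p)$ with $f(x)$ torsion.

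The key step is then to identify the finite subscheme cutting this out. Since $f$ realises $U^p$, the induced map $V_pJ=\pi_1^{\bQ_p}(X_\Qbar;b)^\ab\twoheadrightarrow U^p=V_pA$ is surjective, where $J=\Jac(X)$; hence the associated homomorphism $J\to A$ of abelian varieties is surjective and $\im(f)$ generates $A$, so $\im(f)$ is contained in no translate of a strict abelian subvariety. With $\dim(A)\geq2$, Raynaud's Manin--Mumford theorem~\cite{raynaud:manin-mumford_curves} then forces $S\coloneqq\im(f)\cap A_\tors$ to be finite (were it infinite, $\im(f)$ would be a translate of a one-dimensional, hence strict, abelian subvariety). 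This $S$ is $\Gal_\bQ$-stable, hence the set of geometric points of a finite $\bQ$-subscheme of $A$, and $Z\coloneqq f^{-1}(S)$ is a finite $\bQ$-subscheme of $X$, independent of $p$. For every prime $p$ of good reduction, $X(\bQ_p)_{U^p}=\{x\in X(\bQ_p):f(x)\in S\}=Z(\bQ_p)$; as good reduction holds outside a finite set of primes, Conjecture~\ref{conj:non-abelian_stoll} holds (with equality) for this $U^p$. The hypothesis $c(U^p)\geq2$ of that conjecture is also verified: the local Selmer scheme $\rH^1_f(\bQ_p,V_pA)\cong\Lie(A_{\bQ_p})$ has dimension $\dim(A)$ and the global one is a point, so $c(U^p)=\dim(A)\geq2$.

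I expect no real obstacle: the only genuine input is Raynaud's theorem, and everything else is formal manipulation of the Chabauty--Coleman diagram. The two points that warrant a sentence of care are the identity $\ker(\log_A)=A(\bQ_p)_\tors$ and the observation that, although $A_\tors$ is Zariski dense in $A$, its intersection with the \emph{curve} $\im(f)$ is finite, so that $Z$ is genuinely finite and $X(\bQ_p)_{U^p}$ is cut out by finitely many algebraic points, as Stoll predicts. As emphasised in the text, this case is purely illustrative; the substantive work of the paper is to run the analogous argument when $A$ is replaced by a $\bG_m^r$-torsor over an abelian variety, where isogeny geometric quadratic Chabauty and the Manin--Mumford Theorem~\ref{thm:manin-mumford} for AT varieties play the roles of the Chabauty--Coleman square and Raynaud's theorem.
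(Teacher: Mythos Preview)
Your proposal is correct and follows essentially the same argument as the paper: both degenerate the Chabauty--Coleman square using $\bQ_p\otimes A(\bQ)=0$, identify the locus as $f^{-1}(A(\bQ_p)_\tors)$, use surjectivity on fundamental groups to rule out $\im(f)$ lying in a translate of a strict abelian subvariety, and then invoke Raynaud's Manin--Mumford to obtain the finite $\bQ$-subscheme $Z=f^{-1}(A_\tors)$. Your version is slightly more explicit (spelling out $\ker(\log_A)=A(\bQ_p)_\tors$ and checking $c(U^p)=\dim(A)\geq2$), but there is no substantive difference in approach.
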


We now want to do the same for the quadratic Chabauty, and prove
\begin{theorem}\label{thm:main_isogeny}
	Let~$Y/\bQ$ be a smooth curve with a regular model~$\cY/\bZ$, and let~$f\colon Y\to P$ be a morphism from~$Y$ to a $\bG_m^r$-torsor over an abelian variety~$A$ which induces a surjection on pro-unipotent fundamental groups.
	
	Suppose that~$A$ has Mordell--Weil rank~$0$ and~$\dim(P)\geq2$. Then there is a finite subscheme~$Z\subset Y$ with the property that
	\[
	\cY(\bZ_p)_f = \cZ(\bZ_p)
	\]
	for all primes~$p$, where~$\cZ\subset\cY$ is the closure of~$Z$.
\end{theorem}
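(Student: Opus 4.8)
The plan is to identify the isogeny geometric quadratic Chabauty locus with a finite union of quadratic torsion packets, and then to deduce finiteness of each packet from the Manin--Mumford Theorem~\ref{thm:manin-mumford}.

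\textbf{Step 1: unwinding the definition.} By Definition~\ref{def:isogeny_geometric_quadratic_chabauty_locus}, $\cY(\bZ_p)_f$ is the preimage under $f_\infty\colon\cY(\bZ_p)\to\varinjlim_nP_n(\bQ_p)$ of the finite set $W\subseteq\varinjlim_nP_n(\bQ)$, and $W$ depends only on $(\cY,f)$, not on~$p$. Fixing an embedding $\Qbar\hookrightarrow\Qbar_p$, I would first show that for each $w\in W$ the set of $x\in Y(\bQ_p)$ with $f_\infty(x)=w$ (regarding $W$ inside $\varinjlim_nP_n(\bQ_p)$ via the injection $\varinjlim_nP_n(\bQ)\hookrightarrow\varinjlim_nP_n(\bQ_p)$) is exactly the set of $\bQ_p$-points lying in the quadratic torsion packet $T_w\coloneqq f_\infty^{-1}(w)\subseteq Y(\Qbar)$, that is, the fibre of~\eqref{eq:f_infty} over~$w$. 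The content here --- and the source of the ``all points are algebraic'' phenomenon --- is the nontrivial inclusion: given $x\in Y(\bQ_p)$ with $f_\infty(x)=w$, pick $\tilde x_0\in P_{n_0}(\bQ)$ representing~$w$; then $f_{mn_0}(x)=\beta_{n_0,m}(\tilde x_0)$ in $P_{mn_0}(\bQ_p)$ for some $m\geq1$, so $x$ factors through the $\bQ$-subscheme $f^{-1}\!\bigl(\beta_{mn_0}^{-1}(\beta_{n_0,m}(\tilde x_0))\bigr)$ of $Y$, which is finite because $\beta_{mn_0}$ is an isogeny hence finite (Lemma~\ref{lem:isogenies_finite}) and $f$ is non-constant (it surjects onto the nontrivial group $\pi_1^{\bQ_p}(P_\Qbar)$). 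Thus $x$ is algebraic, and reading the identity $f_{mn_0}(x)=\beta_{n_0,m}(\tilde x_0)$ over~$\Qbar$ yields $f_\infty(x)=w$ in $\varinjlim_nP_n(\Qbar)$, i.e.\ $x\in T_w$; the reverse inclusion is immediate.

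\textbf{Step 2: invoking Manin--Mumford.} Each $T_w$ is a quadratic torsion packet, and since $w$ is $\bQ$-rational and $f$ is defined over~$\bQ$, $T_w$ is $\Gal_\bQ$-stable. To apply Theorem~\ref{thm:manin-mumford} I must check its hypotheses: $\dim(P)\geq2$ is assumed, and I would verify that $\im(f)$ is contained in no strict AT subvariety of~$P$. This last point follows from the surjectivity of $f$ on pro-unipotent fundamental groups. Indeed, suppose $\im(f)\subseteq P'\subsetneq P$ with $P'$ an AT subvariety. Then $\pi\colon P'\to A$ has image dense in~$A$ (because $Y\to A$ is already surjective on $\pi_1$), and since there are no non-constant morphisms from positive-dimensional abelian varieties to tori and $P'\hookrightarrow P$ is a closed immersion, this forces the base abelian variety $A'$ of $P'$ to satisfy $\dim A'=\dim A$ and hence its torus to have rank $r'<r$; therefore $\dim_{\bQ_p}\pi_1^{\bQ_p}(P'_\Qbar)=2\dim A'+r'<2\dim A+r=\dim_{\bQ_p}\pi_1^{\bQ_p}(P_\Qbar)$, so the composite $\pi_1^{\bQ_p}(Y_\Qbar)\to\pi_1^{\bQ_p}(P'_\Qbar)\to\pi_1^{\bQ_p}(P_\Qbar)$ cannot be surjective --- a contradiction. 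Hence Theorem~\ref{thm:manin-mumford} applies, and every $T_w$ is finite.

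\textbf{Step 3: assembling $Z$.} I would then take $Z\subset Y$ to be the reduced subscheme supported on the finite set $\bigcup_{w\in W}T_w$; it is finite, defined over~$\bQ$ (as each $T_w$ is $\Gal_\bQ$-stable), and independent of~$p$, and I let $\cZ\subset\cY$ be its scheme-theoretic closure. For a prime~$p$ and a point $x\in\cY(\bZ_p)$, separatedness of $\cY$ together with the fact that $\cZ$ is the closure of $Z$ show that $x$ lies in $\cZ(\bZ_p)$ if and only if the generic fibre of $x$ lies in $Z(\bQ_p)$; by Step~1 and the construction of $Z$ this is equivalent to $f_\infty(x)\in W$, i.e.\ to $x\in\cY(\bZ_p)_f$. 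This gives $\cY(\bZ_p)_f=\cZ(\bZ_p)$ for all~$p$, completing the proof.

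Granting Theorem~\ref{thm:manin-mumford}, the argument is essentially formal; the one step requiring genuine care is the implication in Step~2 that surjectivity on $\pi_1$ forbids $\im(f)$ from lying in a strict AT subvariety, which I expect to be the main obstacle when writing the proof in full.
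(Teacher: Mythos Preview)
Your overall architecture matches the paper's exactly: define $Z(\Qbar)=f_\infty^{-1}(W)$, prove algebraicity of points in the locus via finiteness of the isogenies $\beta_n$, and reduce finiteness of $Z$ to Theorem~\ref{thm:manin-mumford}. Steps~1 and~3 are correct and essentially identical to what the paper does in \S\ref{ss:Z}.

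The difference --- and the place where your sketch is genuinely incomplete --- is Step~2. The paper verifies the hypothesis of Theorem~\ref{thm:manin-mumford} via Lemma~\ref{lem:surjection_on_pi1_and_AT_subvarieties}, which factors $f$ through the quadratic Albanese $Q\to P$ and uses strictness of the weight filtration on $\pi_1$ coming from mixed Hodge theory to reduce to Lemma~\ref{lem:surjection_of_AT_varieties}. Your direct dimension count is a legitimate alternative, but the step ``$\dim A'=\dim A$'' is not adequately justified by the sentence you wrote. What you need is that the fibres of $P'\to A$ are closed in the fibres of $P\to A$ (i.e.\ in $\bG_m^r$), hence affine; but a $G'$-torsor over a positive-dimensional abelian variety $B$ has $H^1(\mathcal{O})\supseteq H^1(B,\mathcal{O}_B)\neq 0$ and so is never affine. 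This forces $\ker(\phi_A)$ to be finite. The phrase ``no non-constant morphisms from abelian varieties to tori'' gestures at the right phenomenon but is not the argument. Once $\dim A'=\dim A$ is established, your inequality $2\dim A'+r'<2\dim A+r$ follows and the rest of Step~2 is fine. Note that without $\dim A'=\dim A$, the dimension count can genuinely fail: one only knows $\dim A'+r'<\dim A+r$, and doubling the abelian contribution could overshoot.

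So: correct strategy, same as the paper, with one sub-argument that differs from the paper's and needs the extra cohomological input you didn't supply.
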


Combined with Theorem~\ref{thm:comparison}, this implies our main Theorem~\ref{thm:main}. In fact, Theorem~\ref{thm:main_isogeny} is strictly more general: it has content even when the prime~$p$ has bad reduction or~$Y$ does not have a rational base point.

\subsection{Description of~$Z$}\label{ss:Z}

The subscheme~$Z$ appearing in Theorem~\ref{thm:main_isogeny} can be described explicitly, in a manner similar to the definition of the isogeny geometric quadratic Chabauty locus. For each simple open~$\cU\subseteq\cY$, we have a corresponding sequence of models~$(\cP_n)_{n\geq1}$ of the varieties~$(P_n)_{n\geq1}$. Because filtered colimits preserve injections, we have containments
\[
\varinjlim_n\cP_n(\bZ) \subseteq \varinjlim_nP_n(\bQ) \subseteq \varinjlim_nP_n(\Qbar) \,,
\]
where the leftmost set is a singleton by Lemma~\ref{lem:colimit_of_Pn} (using that~$A$ has Mordell--Weil rank~$0$). Recall that~$W\subseteq\varinjlim_nP_n(\bQ)$ was defined to be the union of the singleton subsets $\varinjlim_n\cP_n(\bZ)$ as~$\cU$ ranges over simple opens in~$\cY$. Similarly to before, we write~$f_\infty$ for the composition
\[
Y(\Qbar) \xrightarrow{f} P(\Qbar) \xrightarrow{\beta_\infty} \varinjlim_nP_n(\Qbar) \,,
\]
and set~$Z(\Qbar)\coloneqq f_\infty^{-1}(W)$ to be the inverse image of~$W$ under~$f_\infty$. Since~$f_\infty$ is equivariant for the action of the absolute Galois group of~$\bQ$, it follows that~$Z(\Qbar)$ is setwise invariant under the Galois action. We do not yet know $Z(\Qbar)$ to be a finite set (this is the main result we will prove in this section!), but once we know finiteness, it follows that~$Z(\Qbar)$ is the $\Qbar$-points of a finite subscheme~$Z\subset Y$. It is for this subscheme~$Z$ that we will prove Theorem~\ref{thm:main_isogeny}.

Postponing for the time being the question of finiteness, let us complete the rest of the proof of Theorem~\ref{thm:main_isogeny} for this~$Z$. Throughout the proof, we will fix an embedding $\Qbar\hookrightarrow\Qbar_p$, and make use of the commuting diagram
\begin{equation}
\begin{tikzcd}
	Y(\bQ_p) \arrow[r,hook]\arrow[d,"f_\infty"] & Y(\Qbar_p) \arrow[d,"f_\infty"] & Y(\Qbar) \arrow[l,hook']\arrow[d,"f_\infty"] \\
	\varinjlim_nP_n(\bQ_p) \arrow[r,hook] & \varinjlim_nP_n(\Qbar_p) & \varinjlim_nP_n(\Qbar) \arrow[l,hook'] \,,
\end{tikzcd}
\end{equation}
in which the horizontal arrows are all injective. In particular, for a point~$x$ of~$Y$ defined over~$\bQ_p^\alg\coloneqq\bQ_p\cap\Qbar$, its image under~$f_\infty\colon Y(\bQ_p)\to\varinjlim_nP_n(\bQ_p)$ lies in the set~$W$ if and only if its image under~$f_\infty\colon Y(\Qbar)\to\varinjlim_nP_n(\Qbar)$ lies in~$W$.

Because we are assuming that~$Z(\Qbar)$ is a finite set, it follows that~$\cZ(\bZ_p)=\cY(\bZ_p)\cap Z(\Qbar)$, i.e.~$\cZ(\bZ_p)$ is the set of points~$x\in\cY(\bZ_p)$ which are algebraic over~$\bQ$ and satisfy~$f_\infty(x)\in W$. So we have the containment~$\cZ(\bZ_p)\subseteq\cY(\bZ_p)_f$. For the converse containment, if~$x\in\cY(\bZ_p)_f$, then there is some~$n$ such that~$f_n(x)\in\cP_n(\bZ)$, where~$(\cP_n)_{n\geq1}$ is the sequence of models of~$(P_n)_{n\geq1}$ associated to a simple open~$\cU\subseteq\cY$. Because~$f\colon Y\to P$ is non-constant and~$\beta_n\colon P\to P_n$ is finite by Lemma~\ref{lem:isogenies_finite}, we have that~$f_n^{-1}(f_n(x))$ is a finite subscheme of~$Y$ defined over~$\bQ$, and in particular, all of its points are algebraic over~$\bQ$. So~$x\in\cZ(\bZ_p)$ as desired. \qed

\subsection{Higher torsion packets}\label{ss:higher_torsion}

The description of~$Z(\Qbar)$ above gives us a clear idea of its structure: when~$f$ is a locally closed immersion, then~$Z(\Qbar)$ is the intersection of~$Y_\Qbar$ with a finite number of fibres of the map $P(\Qbar)\to\varinjlim_nP_n(\Qbar)$, or in general~$Z(\Qbar)$ is the inverse image of a finite number of fibres under~$f$. Since sets of this kind will play a key role in proving the finiteness of~$Z(\Qbar)$, we will give them a name.

\begin{definition}
	Let~$P$ be an AT variety over a field~$k$ (usually algebraically or separably closed). A \emph{quadratic torsion coset} on~$P$ is a fibre of the map
	\[
	P(k) \to \varinjlim_nP_n(k) \,.
	\]
	If~$P$ comes with a chosen base point $\tilde0\in P(k)$, then we define $P_\tors(k)$ to be the quadratic torsion coset containing~$\tilde0$, and call $P_\tors(k)$ the set of \emph{quadratic torsion points} on~$P$.
	
	If~$Y$ is a variety over~$k$ with a morphism $f\colon Y\to P$, then a \emph{quadratic torsion packet} on~$Y$ relative to~$f$ is the inverse image of a quadratic torsion coset under~$f$.
\end{definition}

In this terminology, the set~$Z(\Qbar)$ is a finite union of quadratic torsion packets in~$Y_\Qbar$, and its finiteness is going to follow from a general finiteness theorem for quadratic torsion packets (Theorem~\ref{thm:manin-mumford} from the introduction). We note one lemma on the functoriality properties of quadratic torsion.

\begin{lemma}\label{lem:morphisms_preserve_quadratic_torsion}
	Let~$P$ and~$P'$ be AT varieties over a field~$k$ equipped with base points $\tilde0$ and~$\tilde0'$, respectively. Let~$\psi\colon P\to P'$ be a morphism such that~$\psi(\tilde0)\in P'_\tors$.
	
	Then~$\psi(P_\tors(k))\subseteq P'_\tors(k)$. If moreover~$\psi$ lies over and under homomorphisms $A\to A'$ and~$G\to G'$ with finite kernel, then additionally~$\psi^{-1}(P'_\tors(k))=P_\tors(k)$.
\end{lemma}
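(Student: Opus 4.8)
The plan is to deduce both assertions from the functoriality of the construction $P\mapsto\varinjlim_nP_n(k)$ recorded in Lemma~\ref{lem:P_n_functorial} and the remarks following it. Write $\beta_\infty\colon P(k)\to\varinjlim_nP_n(k)$ and $\beta_\infty'\colon P'(k)\to\varinjlim_nP'_n(k)$ for the canonical maps, so that $P_\tors(k)=\beta_\infty^{-1}(\beta_\infty(\tilde0))$ by definition. By Lemma~\ref{lem:AT_morphisms}, $\psi$ is automatically a morphism of AT varieties, lying over some $\phi_T\colon A\to A'$ (a translate of a homomorphism with linear part $\phi_A$) and under some $\phi_G\colon G\to G'$, so functoriality supplies an induced map $\psi_\infty\colon\varinjlim_nP_n(k)\to\varinjlim_nP'_n(k)$ with $\psi_\infty\circ\beta_\infty=\beta_\infty'\circ\psi$ on $k$-points. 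The hypothesis $\psi(\tilde0)\in P'_\tors(k)$ says exactly $\beta_\infty'(\psi(\tilde0))=\beta_\infty'(\tilde0')$, so for any $x\in P_\tors(k)$ we get $\beta_\infty'(\psi(x))=\psi_\infty(\beta_\infty(x))=\psi_\infty(\beta_\infty(\tilde0))=\beta_\infty'(\psi(\tilde0))=\beta_\infty'(\tilde0')$, i.e.\ $\psi(x)\in P'_\tors(k)$. This is the first assertion, and it already gives $\psi^{-1}(P'_\tors(k))\supseteq P_\tors(k)$ in the setting of the second.

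For the reverse inclusion it suffices to show that $\psi_\infty$ is injective when $\phi_A$ and $\phi_G$ have finite kernel: then $\psi_\infty(\beta_\infty(x))=\beta_\infty'(\psi(x))=\beta_\infty'(\tilde0')=\psi_\infty(\beta_\infty(\tilde0))$ forces $\beta_\infty(x)=\beta_\infty(\tilde0)$, i.e.\ $x\in P_\tors(k)$. To prove the injectivity I would invoke the torsor description from \S\ref{s:isogeny_method}: $\varinjlim_nP_n(k)$ is a torsor under $\varinjlim_nG(k)=\bQ\otimes G(k)$ over $\varinjlim_nA(k)=\bQ\otimes A(k)$, and likewise for $P'$, and $\psi_\infty$ is a morphism of these structures — it covers the map $\bQ\otimes A(k)\to\bQ\otimes A'(k)$ induced by $\phi_A$ (up to a fixed translation), and it is equivariant along the map $\bQ\otimes G(k)\to\bQ\otimes G'(k)$ induced by $\phi_G$. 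Since $\phi_A$ and $\phi_G$ have finite kernel, the kernels of the corresponding maps on $k$-points are finite, hence annihilated by $\otimes_\bZ\bQ$; by exactness of $\otimes_\bZ\bQ$ both $\bQ\otimes A(k)\to\bQ\otimes A'(k)$ and $\bQ\otimes G(k)\to\bQ\otimes G'(k)$ are injective, and composing with a translation preserves injectivity of the former. A short diagram chase then concludes: two points of $\varinjlim_nP_n(k)$ with the same image under $\psi_\infty$ have the same image in $\bQ\otimes A(k)$, so lie in a single $\bQ\otimes G(k)$-orbit, and $\bQ\otimes G(k)$-equivariance together with injectivity of $\bQ\otimes G(k)\to\bQ\otimes G'(k)$ forces them to coincide.

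The argument is essentially formal once the colimit and torsor bookkeeping are in place, and I do not expect a serious obstacle. The one point deserving care is that the induced morphism $\psi_n\colon P_n\to P'_n$ lies over a translate $\tau_{nc}\circ\phi_A$ of $\phi_A$ rather than over $\phi_A$ itself (see the proof of Lemma~\ref{lem:P_n_functorial}), so one must check — as sketched above — that on passing to the colimit these translations collapse to a single fixed translation of $\bQ\otimes A'(k)$, which is harmless for injectivity. (If the hypothesis of the second assertion is read as requiring $\phi_T$ itself to be a homomorphism, this translation is simply absent and the point is vacuous.)
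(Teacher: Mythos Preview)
Your proposal is correct and follows essentially the same approach as the paper: both deduce the first assertion from the commuting square of Lemma~\ref{lem:P_n_functorial}, and both obtain the second by arguing that $\psi_\infty$ is injective because it is a morphism of torsors lying over and under the injective maps $\bQ\otimes A(k)\hookrightarrow\bQ\otimes A'(k)$ and $\bQ\otimes G(k)\hookrightarrow\bQ\otimes G'(k)$. Your worry about the translation $\tau_{nc}$ is unnecessary, since the hypothesis of the second assertion explicitly requires $\psi$ to lie over a \emph{homomorphism} $A\to A'$, so $c=0$ and the point is indeed vacuous.
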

\begin{proof}
	The first assertion follows from the commuting square (Lemma~\ref{lem:P_n_functorial})
	\begin{center}
	\begin{tikzcd}
		P(k) \arrow[r,"\psi"]\arrow[d,"\beta_\infty"] & P'(k) \arrow[d,"\beta_\infty"] \\
		\varinjlim_nP_n(k) \arrow[r,"\psi_\infty"] & \varinjlim_nP'_n(k) \,.
	\end{tikzcd}
	\end{center}
	When~$\psi$ lies over and under homomorphisms with finite kernel, then the bottom arrow~$\psi_\infty$ in the square is a morphism of torsors lying over and under the induced injective homomorphisms $\bQ\otimes A(k)\hookrightarrow\bQ\otimes A'(k)$ and $\bQ\otimes G(k)\hookrightarrow \bQ\otimes G'(k)$. It follows that~$\psi_\infty$ is also injective, whence~$\psi^{-1}(P'_\tors(k))=P_\tors(k)$ as desired.
\end{proof}

\begin{lemma}
	Let~$B$ be a semiabelian variety, viewed as a pointed AT variety using its identity element as~$\tilde0$. Then the quadratic torsion subset~$B_\tors(k)$ is equal to the torsion subgroup (in the sense of commutative group schemes).
\end{lemma}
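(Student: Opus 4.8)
The plan is to describe the maps $\beta_n$ explicitly in the semiabelian case and thereby reduce the statement to the tautology that the kernel of $B(k)\to\bQ\otimes B(k)$ is the torsion subgroup. Write $B$ as an extension $0\to G\to B\to A\to 0$, with $G=\bG_m^r$ the torus and $A$ the abelian variety, so that as an AT variety $B$ is a $G$-torsor over $A$ (with its canonical trivialisation as an $A$-torsor) and $\tilde0=e\in B(k)$ is the identity, lying in the fibre $G$ over $0\in A(k)$. We must show $\beta_\infty^{-1}(\beta_\infty(e))=B(k)_\tors$, where $\beta_\infty\colon B(k)\to\varinjlim_nB_n(k)$.

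The first and main step is to pin down the structure of $\beta_n\colon B\to B_n$. When $P=B$ is semiabelian, every $\bG_m^r$-torsor occurring in Definition~\ref{def:Pn} is again semiabelian: $B^{\otimes m}$ is the pushout of $B$ along $[m]\colon\bG_m^r\to\bG_m^r$, $[-1]^*B$ is the pushout along $[-1]\colon\bG_m^r\to\bG_m^r$ (equivalently the pullback along $[-1]\colon A\to A$), and $[n]^*B_n$ is obtained by further pullback along $[n]\colon A\to A$; since $\Pic^0$-classes are antisymmetric (so $[-1]^*B\cong B^{\otimes-1}$) and $[n]^*$ acts as multiplication by $n$ on $\Pic^0$, the extension class in $\Ext^1(A,\bG_m^r)=\Pic^0(A)^r$ of each of these is the corresponding integer multiple of that of $B$, and in particular $[n]^*B_n$ and $B^{\otimes2n^2}$ have the same class. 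The power map $B\to B^{\otimes2n^2}$ and the projection $[n]^*B_n\to B_n$ are group homomorphisms, and the isomorphism $B^{\otimes2n^2}\cong[n]^*B_n$ furnished by the theorem of the cube is a group isomorphism composed with a translation by some $c_n\in G(k)$ (two semiabelian varieties with equal extension class are isomorphic as such, and any two isomorphisms of the underlying $\bG_m^r$-torsors over the proper connected base $A$ differ by translation by an element of $\Map(A,G)=G(k)$). Composing the three maps, $\beta_n=c_n\cdot\tilde\beta_n$ for a group-homomorphism isogeny $\tilde\beta_n\colon B\to B_n$ lying over $[n]$ and under $[2n^2]$; by Lemma~\ref{lem:isogenies_finite} it is finite, so $\ker(\tilde\beta_n)$ is a finite group scheme, $\beta_n(e)=c_n$, and $\beta_n^{-1}(\beta_n(e))=\ker(\tilde\beta_n)(k)\subseteq B(k)_\tors$.

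Granting this, both inclusions are formal. For $B_\tors(k)\subseteq B(k)_\tors$: if $\beta_\infty(x)=\beta_\infty(e)$ then, the colimit being filtered and $\beta_{n,m}\circ\beta_n=\beta_{mn}$, there is some $n$ with $\beta_n(x)=\beta_n(e)$, whence $x\in\ker(\tilde\beta_n)(k)\subseteq B(k)_\tors$. For $B(k)_\tors\subseteq B_\tors(k)$: suppose $[N]x=e$. Then the image of $x$ in $A$ is $N$-torsion, so $\tilde\beta_N(x)$ maps to $[N]$ of it, namely $0$, and hence lies in the fibre $G$ of $B_N$ over $0$; since $\tilde\beta_N$ is a homomorphism, $[N](\tilde\beta_N(x))=\tilde\beta_N([N]x)=e$, so $\tilde\beta_N(x)\in G(k)[N]$. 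Thus $\beta_N(x)=c_N\tilde\beta_N(x)=\tilde\beta_N(x)\cdot\beta_N(e)$, the last product being the torus action of $\tilde\beta_N(x)\in G(k)$ on $\beta_N(e)\in B_N(k)$, and applying $\beta_{N,N}$, which lies under $[N^2]\colon\bG_m^r\to\bG_m^r$ and satisfies $\beta_{N,N}\circ\beta_N=\beta_{N^2}$, gives
\[
\beta_{N^2}(x)=\tilde\beta_N(x)^{N^2}\cdot\beta_{N^2}(e)=\beta_{N^2}(e),
\]
since $\tilde\beta_N(x)^N=e$ forces $\tilde\beta_N(x)^{N^2}=e$. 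Hence $\beta_\infty(x)=\beta_\infty(e)$, i.e.\ $x\in B_\tors(k)$.

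I expect the only genuine work to be in the first step — confirming that, in the semiabelian case, the single $G(k)$-worth of ambiguity in the theorem-of-the-cube isomorphism is the \emph{only} obstruction to $\beta_n$ being a homomorphism, so that it factors as a translation times a genuine isogeny $\tilde\beta_n$. Once that normalisation is in hand, the remainder is bookkeeping with the torsor structure and the compatibilities $\beta_{n,m}\circ\beta_n=\beta_{mn}$.
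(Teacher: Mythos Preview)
Your argument is correct, and it proceeds by a genuinely different mechanism than the paper's. The paper does not analyse the structure of~$\beta_n$ directly; instead it first observes that when~$B$ is a pure abelian variety or pure torus, all of the~$B_n$ coincide with~$B$ and the maps~$\beta_n$, $\beta_{n,m}$ are simply multiplication by the appropriate integers, so~$\varinjlim_nB_n(k)=\bQ\otimes B(k)$ and the result is immediate. It then bootstraps to the general semiabelian case using the functoriality lemma (Lemma~\ref{lem:morphisms_preserve_quadratic_torsion}): for one inclusion, apply that lemma to the isogeny~$[n]\colon B\to B$; for the other, project to~$A$ to see the image is torsion, then intersect with the fibre~$G$ and invoke the torus case.

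Your route, by contrast, pins down~$\beta_n$ as a $G(k)$-translate of a group-homomorphism isogeny~$\tilde\beta_n$, which immediately gives~$\beta_n^{-1}(\beta_n(e))=\ker(\tilde\beta_n)(k)$, and then you finish with a direct torsor computation. The structural claim you flag as ``the only genuine work'' is exactly right, and your justification (same extension class in~$\Ext^1(A,\bG_m^r)$ forces a unique group isomorphism, with the torsor isomorphisms forming a~$G(k)$-torsor over it) is sound. The paper's argument is shorter because it recycles a lemma proved for other purposes; yours is more self-contained and gives a more explicit picture of what~$\beta_n$ actually is in the semiabelian case.
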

\begin{proof}
	The result is easy if~$B$ is an abelian variety (resp.~torus), for in this case all of the AT varieties~$B_n$ are equal to~$B$, and the morphisms $\beta_n\colon B\to B_n$ and $\beta_{n,m}\colon B_n\to B_{mn}$ are equal to multiplication by $n$ and $m$ (resp.~$2n^2$ and~$m^2$). So $\varinjlim_nB_n(k)=\bQ\otimes B(k)$, and the function $\beta_\infty\colon B(k)\to\varinjlim_nB_n(k)=\bQ\otimes B(k)$ is the usual map $x\mapsto 1\otimes x$. So it is clear that quadratic torsion agrees with usual torsion in this case.
	
	For the general case, suppose that~$B$ is an extension of an abelian variety~$A$ by a torus~$G$. If~$x\in B(k)$ is a torsion point, then choose some~$n>0$ such that $nx=0$. By Lemma~\ref{lem:morphisms_preserve_quadratic_torsion} applied to the morphism $[n]\colon B\to B$, we have $x\in[n]^{-1}(\tilde0)\subseteq B_\tors(k)$. So~$x$ is a quadratic torsion point. Conversely, if~$x\in B(k)$ is a quadratic torsion point, then its image in~$A(k)$ is a torsion point by Lemma~\ref{lem:morphisms_preserve_quadratic_torsion}. Choose some~$n>0$ such that~$nx\in G(k)$. By Lemma~\ref{lem:morphisms_preserve_quadratic_torsion} again, $nx\in B_\tors(k)\cap G(k)=G_\tors(k)$ is a torsion point on~$G$. So~$x$ is a torsion point.
\end{proof}

\subsection{AT subvarieties}

For the remainder of this section, we switch to working over the complex numbers~$\bC$ (or sometimes~$\Qbar$), and for notational convenience permit ourselves to conflate varieties over $\bC$ or~$\Qbar$ with their sets of closed points. We are going to study quadratic torsion packets by placing them in the framework of unlikely intersections. To do this, we need to identify a suitable class of ``weakly special'' subvarieties of an AT variety~$P$. This class will be the \emph{AT subvarieties} of~$P$: locally closed subvarieties which are themselves AT varieties. The most obvious kinds of AT subvarieties are torsors under subtori of~$G$ over translates of abelian subvarieties of~$A$; for a slightly more intricate example, we have

\begin{example}\label{ex:AT_subvariety}
	Let~$E$ be an elliptic curve, and let~$Q_1,Q_2\in E$ be points whose difference~$Q_1-Q_2$ is of order~$2$. Let~$P$ be the $\bG_m$-torsor over~$E$ corresponding to the divisor~$D=Q_1-Q_2$. Because~$D$ is two-torsion, it follows that~$P^{\otimes2}$ is the trivial $\bG_m$-torsor. Let~$E'\subseteq P$ be the kernel of the composition
	\[
	P \to P^{\otimes2} = \bG_m\times E \to \bG_m \,.
	\]
	It is a closed subvariety of~$P$, and is a $\{\pm1\}$-torsor over~$E$. By construction, the projection $P\to E$ is not split, and so~$E'$ must be a non-trivial torsor, hence connected, and so is itself an elliptic curve. In particular, $E'$ is an AT subvariety of~$P$ which is not a torsor under a subtorus of~$\bG_m$.
\end{example}

We now formulate and prove a few basic results regarding AT subvarieties.

\begin{lemma}\label{lem:AT_image_of_subvariety}
	Let~$\phi\colon P\to P'$ be a morphism of AT varieties over~$\bC$. Then the image of~$\phi$ is an AT subvariety of~$P'$.
\end{lemma}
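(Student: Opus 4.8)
The plan is to run the morphism $\phi$ through the canonical factorisation supplied by Lemmas~\ref{lem:AT_morphisms} and~\ref{lem:decomposition_of_morphisms}, and then identify the image explicitly over~$\bC$. By Lemma~\ref{lem:AT_morphisms}, $\phi$ lies over a morphism $\phi_T\colon T\to T'$ and under a homomorphism $\phi_G\colon G\to G'$, with $\phi_T$ lying under a homomorphism $\phi_A\colon A\to A'$. By Lemma~\ref{lem:decomposition_of_morphisms}, $\phi$ factors as
\[
P \xrightarrow{\iota} \phi_{G*}P \xrightarrow{\sim} \phi_T^*P' \xrightarrow{q} P' ,
\]
where $\iota$ is the canonical map to the pushout, the middle arrow is an isomorphism of $G'$-torsors over~$T$, and $q$ is the projection from the pullback. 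Hence $\im(\phi)$ is the image of $S\coloneqq\iota(P)\subseteq\phi_{G*}P$ under the composite $\phi_{G*}P\to P'$, and it suffices to understand this.

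Second, I would check that $S$ is a closed AT subvariety of $\phi_{G*}P$, namely a torsor under the subtorus $\im(\phi_G)\subseteq G'$ over~$T$. Indeed $\iota$ is equivariant for $G$ acting on $\phi_{G*}P$ through~$\phi_G$, so over each point of~$T$ its image is a coset of $\im(\phi_G)$ in the corresponding fibre; passing to the quotient $(G'/\im(\phi_G))$-torsor $(\phi_{G*}P)/\im(\phi_G)\to T$, the map $\iota$ induces a section $s\colon T\to(\phi_{G*}P)/\im(\phi_G)$, whose image is closed (a section of a separated morphism is a closed immersion), and $S$ is the preimage of $s(T)$. Thus $S$, a torsor under the torus $\im(\phi_G)$ over the $A$-torsor~$T$, is itself an AT variety, and the isomorphism $\phi_{G*}P\cong\phi_T^*P'$ carries it to a closed subvariety $S'\subseteq\phi_T^*P'$ which is a torsor under $\im(\phi_G)$ over~$T$.

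Third, I would compute $q(S')$ directly over~$\bC$. Trivialising torsors, write $P'\cong G'\times A'$, $T\cong A$, $T'\cong A'$ with $\phi_T$ identified with $\phi_A$ (absorbing the translation into the trivialisation of~$T'$); then $\phi_T^*P'\cong G'\times A$ and $q$ is $(g',a)\mapsto(g',\phi_A(a))$. Now $S'$ is an $\im(\phi_G)$-subtorsor of the trivial $G'$-torsor $G'\times A\to A$, so corresponds to a section $A\to(G'/\im(\phi_G))\times A$, i.e.\ to a morphism $A\to G'/\im(\phi_G)$; since any morphism from a complete variety to an affine variety --- in particular from an abelian variety to a torus --- is constant, we get $S'=(g_0'\cdot\im(\phi_G))\times A$ for some $g_0'\in G'$. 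Therefore
\[
\im(\phi)=q(S')=(g_0'\cdot\im(\phi_G))\times\im(\phi_A)\subseteq G'\times A'=P' ,
\]
a translate of the product of the subtorus $\im(\phi_G)$ with the abelian subvariety $\im(\phi_A)$; this is a semiabelian variety up to translation, in particular an AT variety, and it is closed in~$P'$. Hence $\im(\phi)$ is an AT subvariety.

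The bulk of the work is bookkeeping: keeping the various torsor structures consistent across the factorisation of Lemma~\ref{lem:decomposition_of_morphisms}, and confirming that $S=\iota(P)$ really is a closed subvariety carrying an AT structure rather than merely a constructible set. The one genuinely geometric input is the rigidity statement that morphisms from abelian varieties to tori are constant; this is precisely what forces the a priori ``twisted'' subtorsor $S'$ to untwist into a product, the same phenomenon visible in Example~\ref{ex:AT_subvariety}, where the twist is present but dies after a suitable pushout. I expect this rigidity step, together with correctly tracking which torus and which abelian variety ends up acting on the image, to be the only point requiring care.
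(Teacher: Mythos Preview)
Your argument has a genuine gap in the third step. You write ``Trivialising torsors, write $P'\cong G'\times A'$'', but a $G'$-torsor over an abelian variety $A'$ is \emph{not} trivial in general: such torsors are classified by $\Pic(A')^{r'}$ when $G'=\bG_m^{r'}$, and this group is far from zero. Consequently your formula $q(g',a)=(g',\phi_A(a))$ for the projection $q\colon\phi_T^*P'\to P'$ is invalid, and your conclusion $\im(\phi)=(g_0'\cdot\im(\phi_G))\times\im(\phi_A)$ is simply false. Example~\ref{ex:AT_subvariety}, which you yourself cite, is a direct counterexample: there $\phi\colon E'\hookrightarrow P'$ has $G=1$, $\phi_G$ trivial, and $\phi_A\colon E'\to E$ a degree-$2$ isogeny, so your formula predicts $\im(\phi)$ is a section of $P'\to E$ over $\im(\phi_A)=E$; but the actual image $E'$ maps two-to-one onto $E$, and the paper explicitly notes that $P'\to E$ is not split.

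What your argument misses is precisely the phenomenon that the map $q\colon\phi_T^*P'\to P'$ can identify distinct $\im(\phi_G)$-cosets in different fibres of $\phi_T$, so that $q(S')$ is an $\im(\phi_G)$-torsor over something \emph{finite} over $\im(\phi_A)$, not over $\im(\phi_A)$ itself. The paper handles this by first reducing to the case where the source $P=A$ is an abelian variety (via a pushout along $G'\to\coker(\phi_G)$, much as you do), and then for that case introducing the homomorphism $\bar\xi\colon\ker(\phi_A)\to G'$ measuring exactly how the morphism twists over the kernel; the image is then the abelian variety $A/\ker(\bar\xi)$, which covers $\im(\phi_A)$ with degree $\lvert\im(\bar\xi)\rvert$. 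Your factorisation and your description of $S$ in step~2 are correct and could serve as the reduction step, but to finish you need this $\bar\xi$ analysis rather than a trivialisation of $P'$.
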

\begin{proof}
	Let us first consider the case that~$P=A$ is an abelian variety. We may suppose that~$\phi$ lies over a homomorphism $\phi_A\colon A\to A'$ of abelian varieties. Consider the morphism $\xi\colon A\times\ker(\phi_A)\to G'$ given by
	\[
	\xi(x,y)\coloneqq \frac{\phi(x+y)}{\phi(x)} \,.
	\]
	(The right-hand side means the unique element~$g\in G'$ such that~$g\cdot\phi(x)=\phi(x+y)$.) Because each component of~$A\times\ker(\phi_A)$ is an abelian variety, it follows that $\xi(x,y)$ is independent of the first variable~$x$ and factors through a morphism $\bar\xi\colon \ker(\phi_A)\to G'$. By inspection, $\bar\xi$ is a homomorphism, so $\ker(\bar\xi)$ is an algebraic subgroup of~$A$. The morphism~$\phi$ is constant on cosets of~$\ker(\bar\xi)$ so, writing~$\bar A$ for the abelian variety $A/\ker(\bar\xi)$, we have that $\phi$ factors through a morphism $\bar\phi\colon \bar A\to P'$. We claim that~$\bar\phi$ is a closed immersion. To see this, note that $\im(\bar\xi)$ is a finite subgroup of~$G'$; let~$G''\coloneqq G'/\im(\bar\xi)$ be the quotient, and let~$P''$ be the pushout of~$P'$ along the quotient map~$G'\twoheadrightarrow G''$. By construction, the composition
	\[
	A \xrightarrow\phi P' \twoheadrightarrow P''
	\]
	factors through a morphism $\bar\phi'\colon A'\to P''$ which is a section of the projection $P''\twoheadrightarrow A'$. Thus the morphism~$\bar\phi'$ is a closed immersion; considering the commuting square
	\begin{center}
	\begin{tikzcd}
		\bar A \arrow[r,"\bar\phi"]\arrow[d,two heads] & P' \arrow[d,two heads] \\
		A' \arrow[r,"\bar\phi'"] & P''
	\end{tikzcd}
	\end{center}
	in which both vertical arrows are torsors under the finite group~$\im(\bar\xi)$, we deduce that~$\bar\phi$ is a closed immersion too.
	
	Now to handle the general case when~$P$ is not necessarily an abelian variety, let $\psi\colon G'\twoheadrightarrow\coker(\phi_G)$ be the canonical map to the cokernel, and let~$P''\coloneqq \psi_*P'$ be the pushout of~$P'$ along~$\psi$. By construction, the composition
	\[
	P \xrightarrow\phi P' \to P''
	\]
	is a morphism under the zero homomorphism $G\to\coker(\phi_G)$, and so factors through a morphism $\bar\psi\colon A\to P''$. By the previous part, the image of~$\bar\psi$ is an abelian variety~$B\subseteq P''$, and it follows from the construction that $\im(\phi)$ is the preimage of~$B$ in~$P'$. But~$P'\to P''$ is a torsor under~$\im(\phi_G)$, and so we find that~$\im(\phi)$ is a torsor under~$\im(\phi_G)$ over~$B$. So it is an AT subvariety.
\end{proof}

\begin{remark}
	It follows from the proof of Lemma~\ref{lem:AT_image_of_subvariety} that the image of any morphism $\phi\colon P\to P'$ of AT varieties is a closed subvariety of~$P'$. In particular, any AT subvariety of~$P$ is automatically closed.
\end{remark}

\begin{lemma}\label{lem:surjection_of_AT_varieties}
	Let~$\phi\colon P\to P'$ be a morphism of AT varieties, lying over a homomorphism $\phi_G\colon \bG_m^r\to \bG_m^{r'}$ and under a homomorphism $\phi_A\colon A\to A'$. Then $\phi$ is surjective if and only if both $\phi_A$ and $\phi_G$ are surjective.
\end{lemma}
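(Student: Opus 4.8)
The plan is to prove both implications by combining the canonical factorisation of Lemma~\ref{lem:decomposition_of_morphisms} with the structural description of images from Lemma~\ref{lem:AT_image_of_subvariety}. Since surjectivity of a morphism of finite-type $k$-schemes, and likewise of $\phi_A$ and $\phi_G$, may be checked after base change to $\kbar$, I would first reduce to the case $k=\kbar$, so the tori are split and the torsors $T,T'$ are trivial; in particular $\phi_T$ is then a translate of $\phi_A$, so $\phi_T$ is surjective if and only if $\phi_A$ is.

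For the implication $(\Leftarrow)$, suppose $\phi_A$ and $\phi_G$ are surjective. By Lemma~\ref{lem:decomposition_of_morphisms}, $\phi$ factors as
\[
P \xrightarrow{\ \alpha\ } \phi_{G*}P \xrightarrow{\ \sim\ } \phi_T^*P' \xrightarrow{\ \beta\ } P' \,.
\]
The map $\beta$ is the base change of $\phi_T\colon T\to T'$ along the faithfully flat (hence surjective) structure map $\pi'\colon P'\to T'$, so it is surjective because $\phi_T$ is. The map $\alpha$ lies over $\id_T$ and under $\phi_G$, so Zariski-locally on $T$ it is identified with a morphism of the form $\phi_G\times\id$, and hence is surjective because $\phi_G$ is and surjectivity is local on the target. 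Thus $\phi$ is a composite of surjections, hence surjective.

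For the implication $(\Rightarrow)$, suppose $\phi$ is surjective. Composing with the torsor projection $\pi'\colon P'\twoheadrightarrow T'$ gives a surjection $\phi_T\circ\pi\colon P\to T'$; since $\pi\colon P\to T$ is surjective, $\phi_T$ is surjective, hence so is $\phi_A$. It remains to treat $\phi_G$. Set $G''\coloneqq G'/\im(\phi_G)$ and let $q\colon P'\to P''$ be the pushout of $P'$ along the quotient $G'\twoheadrightarrow G''$; then $P''$ is a torsor under the torus $G''$ over $T'$. The composite $q\circ\phi\colon P\to P''$ is a morphism of AT varieties lying under $G\to G''$, which is the zero homomorphism, so by Lemma~\ref{lem:AT_image_of_subvariety} and its proof its image is a torsor under the trivial torus over an abelian variety, i.e.\ is itself an abelian subvariety $B\subseteq P''$. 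Since $q$ is surjective and $\phi$ is surjective, $q\circ\phi$ is surjective, so $B=P''$; thus $P''$ is proper. But a torsor under a non-trivial torus over a proper base is never proper (its fibres are affine of positive dimension), so $G''$ is trivial, i.e.\ $\phi_G$ is surjective.

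The only step that is not purely formal is the surjectivity of $\phi_G$ in the forward direction: surjectivity of the composite $\phi=\beta\circ\alpha$ forces only the last factor $\beta$ to be surjective, not $\alpha$, so one genuinely needs extra input — here the structural fact, extracted from Lemma~\ref{lem:AT_image_of_subvariety}, that the image of an AT morphism is a torus-torsor over an abelian variety. I expect this to be the main point to get right; everything else is bookkeeping with the factorisation of Lemma~\ref{lem:decomposition_of_morphisms} and stability of surjectivity under composition and base change.
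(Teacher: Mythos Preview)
Your proof is correct and follows essentially the same approach as the paper. Both arguments, for the nontrivial direction, push out $P'$ along $G'\twoheadrightarrow\coker(\phi_G)$ to obtain $P''$, observe that the composite $P\to P''$ lies under the zero homomorphism, and derive a contradiction from the structure of the induced map. The only difference is in the final step: the paper factors the composite through $A$ directly and then restricts to $\ker(\phi_A)$, obtaining a surjection $\ker(\phi_A)\twoheadrightarrow\coker(\phi_G)$ which is impossible (no nontrivial morphisms from a proper group scheme to a torus); you instead invoke Lemma~\ref{lem:AT_image_of_subvariety} to conclude that the image of $q\circ\phi$ is an abelian variety, hence $P''$ is proper, which forces $G''=1$. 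Your route is slightly less self-contained (it leans on the earlier lemma), while the paper's argument is more elementary; but they are minor variations on the same idea. Your treatment of the easy direction via Lemma~\ref{lem:decomposition_of_morphisms} is more explicit than the paper's, which simply declares it clear.
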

\begin{proof}
	One direction is clear: if~$\phi_G$ and~$\phi_A$ are surjective, then~$\phi$ is surjective. For the converse, suppose that~$\phi$ is surjective; it is clear that this implies that~$\phi_A$ is surjective. Let~$\phi_G'\colon\bG_m^{r'}\twoheadrightarrow\coker(\phi_G)$ be the cokernel of~$\phi_G$, and consider the pushout $\phi'_{G*}P'$ (which is a $\coker(\phi_G)$-torsor over~$A'$). The map $\phi'\colon P' \to \phi'_{G*}P'$ is surjective (because~$\phi'_G$ is), and hence so is the composition
	\[
	P \xrightarrow{\phi} P' \to \phi'_{G*}P' \,.
	\]
	This composition is, by construction, invariant for the action of~$\bG_m^r$ on~$P$, and so factors as
	\[
	P \to A \xrightarrow{\phi''} \phi'_{G*}P'
	\]
	for some morphism~$\phi''$, which is also automatically surjective. Identifying the kernel of the projection $\phi'_{G*}P'\to A'$ with~$\coker(\phi_G)$, we find that~$\phi''$ restricts to a surjective morphism
	\[
	\ker(\phi_A) \twoheadrightarrow \coker(\phi_G) \,.
	\]
	But $\ker(\phi_A)$ is an extension of a finite abelian group by an abelian variety, so the only way that this morphism can be surjective is if~$\coker(\phi_G)=1$ is trivial. Thus, we have proved that~$\phi_G$ is surjective as claimed.
\end{proof}

\begin{remark}
	The analogous statement with injections is untrue: we saw in Example~\ref{ex:AT_subvariety} an example of an AT subvariety~$E'$ of a $\bG_m$-torsor over an elliptic curve such that the inclusion $E'\hookrightarrow P$ lies over a two-to-one map~$E'\to E$.
\end{remark}

\begin{lemma}\label{lem:surjection_on_pi1_and_AT_subvarieties}
	A morphism $f\colon Y\to P$ induces a surjection on pro-unipotent fundamental groups if and only if the image of~$f$ is not contained in any strict AT subvariety of~$P$.
\end{lemma}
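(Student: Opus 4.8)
The plan is to reduce everything to the quadratic Albanese map, where the statement becomes essentially formal. Let $\qalb\colon Y\to Q$ be the quadratic Albanese map of Theorem~\ref{thm:quadratic_albanese}; by Proposition~\ref{prop:surjection_on_pi1} together with the usual comparison theorems it induces a surjection on pro-unipotent fundamental groups. Factoring $f$ uniquely as $f=\phi\circ\qalb$ for a morphism of AT varieties $\phi\colon Q\to P$, surjectivity of $\qalb_*$ shows that $f_*$ is surjective if and only if $\phi_*$ is. So I would aim to prove two equivalences: (i) $\im(f)$ is contained in a strict AT subvariety of $P$ if and only if $\phi$ is not surjective; and (ii) $\phi$ is surjective if and only if $\phi_*$ is surjective. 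Granting these, $f_*$ surjective $\iff\phi_*$ surjective $\iff\phi$ surjective $\iff\im(f)$ is not contained in any strict AT subvariety, which is the lemma.

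Step (i) is the geometric half and is easy. If $\phi$ is not surjective then, by Lemma~\ref{lem:AT_image_of_subvariety} and the fact that AT subvarieties are closed, $\im(\phi)$ is a strict AT subvariety of $P$, and it contains $\im(f)=\phi(\qalb(Y))$. Conversely, if $\im(f)\subseteq P'$ for a strict AT subvariety $P'$, then $f$ factors through the closed immersion $P'\hookrightarrow P$; applying the universal property of $Q$ to the resulting morphism $Y\to P'$, and then the uniqueness clause of Theorem~\ref{thm:quadratic_albanese} to $f$ itself, one finds that $\phi$ factors through $P'$, so it is not surjective.

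The real content is step (ii). Over $\bC$ or $\Qbar$ all tori are split, so Lemma~\ref{lem:surjection_of_AT_varieties} applies directly: $\phi$ is surjective if and only if $\phi_A$ and $\phi_G$ both are. If both are, then $\phi_{A*}$ and $\phi_{G*}$ are surjective on pro-unipotent fundamental groups, and a five-lemma argument using the extensions $1\to\pi_1^{\bQ_p}(G)\to\pi_1^{\bQ_p}(P)\to\pi_1^{\bQ_p}(A)\to1$ (as in the proof of Lemma~\ref{lem:iso_on_pi1}) shows $\phi_*$ is surjective. For the converse I would assume $\phi_*$ surjective; projecting to $A$ gives that $\phi_A$ is surjective, so by Lemma~\ref{lem:surjection_of_AT_varieties} it remains to see $\phi_G$ is surjective. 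If it were not, one puts $\bar G\coloneqq G_P/\im(\phi_G)\neq1$ and lets $q\colon P\to\bar P$ be the pushout of $P$ along $G_P\twoheadrightarrow\bar G$, a surjective morphism with $q_*$ surjective. Since $q\circ\phi$ lies under the trivial homomorphism $G_Q\to\bar G$, it factors as $Q\to A_Q\xrightarrow{\psi}\bar P$, and $\im(\psi)$ is an abelian subvariety $B\subseteq\bar P$ by Lemma~\ref{lem:AT_image_of_subvariety}. Because $B$ is complete while the fibres of $\bar P\to A_P$ are affine, the map $B\to A_P$ has finite fibres, so $\dim B\le\dim A_P$ and hence $\dim_{\bQ_p}\pi_1^{\bQ_p}(B)=2\dim B<2\dim A_P+\dim\bar G=\dim_{\bQ_p}\pi_1^{\bQ_p}(\bar P)$. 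Therefore $(q\circ\phi)_*=q_*\circ\phi_*$ has image contained in $\pi_1^{\bQ_p}(B)$ and cannot be surjective, contradicting surjectivity of $q_*$ and $\phi_*$.

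I expect step (ii), and in particular this last dimension count, to be the main obstacle: one has to track the ranks of the abelian and toric parts of the various fundamental groups through the pushout, and use that a morphism from an abelian variety into an AT variety has image an abelian subvariety. Everything else should be routine bookkeeping with the universal property of $Q$ and with Lemmas~\ref{lem:AT_morphisms}, \ref{lem:AT_image_of_subvariety} and~\ref{lem:surjection_of_AT_varieties}.
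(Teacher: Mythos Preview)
Your proof is correct, and the overall architecture---factoring through the quadratic Albanese and reducing to the equivalence ``$\phi$ surjective $\iff$ $\phi_*$ surjective''---matches the paper's. The difference lies entirely in how you handle step~(ii).

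The paper invokes Hain--Zucker: the Lie algebras $\Lie\pi_1^\bQ(Q)$ and $\Lie\pi_1^\bQ(P)$ carry pro-mixed Hodge structures, the extensions $1\to\bQ(1)^r\to\Lie\pi_1^\bQ(P)\to\rH_1(A,\bQ)\to1$ are exact sequences of mixed Hodge structures, and $\phi_*$ is a morphism of such. Since morphisms of mixed Hodge structures are automatically strict for the weight filtration, $\phi_*$ is surjective iff both of its weight-graded pieces $\phi_{G*}$ and $\phi_{A*}$ are. This is then matched against Lemma~\ref{lem:surjection_of_AT_varieties} exactly as you do.

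Your step~(ii) replaces this Hodge-theoretic strictness by hand: the forward direction is a bare five-lemma, and the converse is your pushout-and-dimension-count, which is genuinely different. It works because pushing out along $G_P\twoheadrightarrow\bar G$ kills exactly the image of $\phi_G$ in the toric part, forcing $q\circ\phi$ to factor through an abelian variety whose fundamental group is too small. This is more elementary---you never need to know that $\pi_1$ carries a mixed Hodge structure---at the cost of being slightly ad hoc. The paper's argument, by contrast, identifies the structural reason (strictness) and would generalise immediately to longer weight filtrations, whereas your argument is tailored to the two-step case at hand.
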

\begin{proof}
	Let~$\qalb\colon Y\to Q$ be the quadratic Albanese map for~$Y$, so that~$f$ factors uniquely as $\phi\circ\qalb$ for some morphism $\phi\colon Q\to P$. The image of~$\phi$ is an AT subvariety of~$P$ by Lemma~\ref{lem:AT_image_of_subvariety}; it is automatically the smallest AT subvariety containing the image of~$f$. Since~$\qalb$ induces a surjection on pro-unipotent fundamental groups by Proposition~\ref{prop:surjection_on_pi1}, we know that~$f$ induces a surjection on pro-unipotent fundamental groups if and only if~$\phi$ does.
	
	Now~$\phi$ is a morphism under a homomorphism~$\phi_G\colon \bG_m^{\rho-1}\to\bG_m^r$ over a homomorphism $\phi_A\colon J\to A$, and the induced map on $\bQ$-unipotent Betti fundamental groups fits into a commuting diagram
	\begin{equation}\label{eq:map_between_pi1}
	\begin{tikzcd}
		1 \arrow[r] & \bQ(1)^{\rho-1} \arrow[r]\arrow[d,"\phi_{G*}"] & \Lie\pi_1^\bQ(Q) \arrow[r]\arrow[d,"\phi_*"] & \rH_1(J,\bQ) \arrow[r]\arrow[d,"\phi_{A*}"] & 1 \\
		1 \arrow[r] & \bQ(1)^r \arrow[r] & \Lie\pi_1^\bQ(P) \arrow[r] & \rH_1(A,\bQ) \arrow[r] & 1
	\end{tikzcd}
	\end{equation}
	with exact rows. By Hain--Zucker \cite{hain-zucker:unipotent_variations}, $\Lie\pi_1^\bQ(Q)$ and~$\Lie\pi_1^\bQ(P)$ carry a canonical mixed Hodge structure such that~$\phi_*$ is a morphism of mixed Hodge structures, and the rows of~\eqref{eq:map_between_pi1} are exact sequences of mixed Hodge structures. Because morphisms of mixed Hodge structures are automatically strict for their weight filtrations, we find that~$\phi_*$ is surjective if and only if both~$\phi_{G*}$ and~$\phi_{A*}$ are surjective. So we have the implications
	\begin{align*}
		\text{$f_*$ surjective} &\Leftrightarrow \text{$\phi_*$ surjective} \\
		&\Leftrightarrow \text{$\phi_{G*}$ and $\phi_{A*}$ both surjective} \\
		&\Leftrightarrow \text{$\phi_G$ and $\phi_A$ both surjective} \\
		&\Leftrightarrow \text{$\phi$ surjective} \,,
	\end{align*}
	using Lemma~\ref{lem:surjection_of_AT_varieties} in the last line. This completes the proof.
\end{proof}

\subsection{Manin--Mumford for curves in AT varieties}

We are now ready to complete the proof of Theorem~\ref{thm:main_isogeny}. As discussed above, the only aspect which remains to be proved is the finiteness of the set~$Z(\Qbar)$, for which it suffices to prove that all quadratic torsion packets in~$Y_\Qbar$ relative to the morphism~$f\colon Y_\Qbar\to P_\Qbar$ are finite. We are assuming that~$f$ induces a surjection on pro-unipotent fundamental groups, which is equivalent by Lemma~\ref{lem:surjection_on_pi1_and_AT_subvarieties} to assuming that the image of~$f$ is not contained in any strict AT subvariety (of~$P_\bC$). So we are reduced to proving the following unlikely intersections result.

\begin{theorem*}[= {Theorem~\ref{thm:manin-mumford}}]
		Let~$Y/\Qbar$ be a smooth hyperbolic curve, and let~$f\colon Y\to P$ be a morphism from~$Y$ to a $\bG_m^r$-torsor over an abelian variety. Suppose that $\dim(P)\geq2$ and that the image of~$f$ is not contained in any strict AT subvariety of~$P$. Then every quadratic torsion packet in~$Y$ relative to~$f$ is finite.
\end{theorem*}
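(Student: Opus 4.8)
The plan is to reduce Theorem~\ref{thm:manin-mumford} to the classical Manin--Mumford conjecture for semiabelian varieties (and, in one recalcitrant case, to the relative Manin--Mumford theorem of Bertrand--Masser--Pillay--Zannier). First I would factor $f$ through the quadratic Albanese map, writing $f = \phi\circ\qalb$ with $\phi\colon Q\to P$; by Lemma~\ref{lem:AT_image_of_subvariety} the image of $\phi$ is the smallest AT subvariety of $P$ containing $\im(f)$, so our hypothesis forces $\phi$ to be surjective. By Lemma~\ref{lem:morphisms_preserve_quadratic_torsion} applied to $\phi$ (which lies over and under surjective homomorphisms $J\to A$, $\bG_m^{\rho-1}\to\bG_m^r$), every quadratic torsion packet of $Y$ relative to $f$ is contained in a quadratic torsion packet relative to $\qalb$; but a packet relative to $\qalb$ need not be a single packet relative to $f$, so a little care is needed. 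The cleanest route is to work directly with $P$: a quadratic torsion packet in $Y$ relative to $f$ is $f^{-1}(C)$ for a quadratic torsion coset $C$ on $P$, so it suffices to show $\im(f)\cap C$ is finite for each such $C$, i.e.\ that $\im(f)$ meets the set of quadratic torsion points finitely after translating $P$ so that $C$ becomes the torsion coset through the base point.

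The second step is to analyse the structure of the set $P_\tors(\Qbar)$ of quadratic torsion points, for $P$ a $\bG_m^r$-torsor over an abelian variety $A$. When the torsor $P$ is itself (an open piece of) a semiabelian variety $B$ — equivalently, when the class of $P$ in $\bigoplus_{i}\Pic^0(A)$ is torsion, so that some $P^{\otimes N}$ is trivial and $P$ carries a group structure — the preceding lemma identifies $P_\tors(\Qbar)$ with $B_\tors(\Qbar)$, and the classical Manin--Mumford conjecture for the semiabelian variety $B$ (Raynaud, Hindry) applies: $\im(f)\cap B_\tors$ is finite unless $\im(f)$ lies in a translate of a proper algebraic subgroup, which is excluded since $\im(f)$ lies in no proper AT subvariety. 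The general case is reduced to this one by observing that, after replacing $P$ by a suitable pushout/pullback, one can split off the ``non-torsion part'' of the torsor class; the torus quotient and abelian-variety base both receive surjections from $J$ with finite kernel on the relevant pieces, and Lemma~\ref{lem:morphisms_preserve_quadratic_torsion} lets me pull back finiteness. The one case this does not cover is when $\dim A = 1$ (so $P$ is a $\bG_m$-torsor over an elliptic curve $E$) and the torsor class is genuinely non-torsion in $\Pic^0(E)=E^\vee$; here $P$ is not a group and the colimit $\varinjlim_n P_n$ is not a semiabelian variety.

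For that exceptional case, the plan — following the strategy flagged in the introduction — is to use $f\colon Y\to P$ to build a semiabelian scheme over $Y$ with a section and invoke relative Manin--Mumford. Concretely, the torsor $P\to E$ corresponds to a point $\xi\in E^\vee(\Qbar)$ of infinite order; pulling back the Poincar\'e biextension along $f$ and the constant section $\xi$ produces a family of semiabelian varieties $\mathcal{G}\to Y$ (extensions of the constant elliptic curve $E$ by $\bG_m$) whose fibrewise class over $y\in Y$ records the position of $f(y)$ in the torsor, together with a section $s\colon Y\to\mathcal{G}$ whose torsion locus is exactly the quadratic torsion packet. One then checks the section $s$ is not ``constrained'' in the sense of Bertrand--Masser--Pillay--Zannier — i.e.\ it is not a Ribet section, and does not factor through a proper subgroup scheme — using again the hypothesis that $\im(f)$ lies in no proper AT subvariety and that $\dim(P)=2\geq 2$; then \cite{bertrand-masser-pillay-zannier:relative_manin-mumford} gives finiteness of the torsion locus.

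The main obstacle I anticipate is precisely the bookkeeping in this last case: verifying that the semiabelian scheme $\mathcal{G}\to Y$ and its section $s$ satisfy the non-degeneracy hypotheses of the relative Manin--Mumford theorem, and in particular ruling out the Ribet-section phenomenon. Translating ``$\im(f)$ is not contained in a proper AT subvariety of $P$'' into the statement that $s$ is generically non-torsion and not a torsion translate of a Ribet section requires unwinding the relation between AT subvarieties of a $\bG_m$-torsor over $E$ and subgroup schemes of the corresponding semiabelian family; the isotriviality of the base elliptic curve here is what makes the Ribet obstruction a genuine possibility rather than an automatic non-issue, so this is where the real work lies. Everything else — the reduction via the quadratic Albanese map, the identification of $B_\tors$ with quadratic torsion, and the appeal to classical Manin--Mumford — is essentially formal given the lemmas already established.
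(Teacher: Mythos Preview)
Your overall architecture matches the paper's: reduce to classical Manin--Mumford for semiabelian varieties whenever possible, and invoke the relative Manin--Mumford theorem of Bertrand--Masser--Pillay--Zannier in the one remaining case. You also correctly identify Ribet sections as the crux of that case. However, the reduction step and the identification of the hard case are both off.

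\textbf{The reduction.} Your plan to ``split off the non-torsion part of the torsor class'' is vague and, as written, presupposes that the line bundles defining $P$ lie in $\Pic^0(A)$ (you speak of ``the class of $P$ in $\bigoplus_i\Pic^0(A)$''). In general they lie in $\Pic(A)$ and may have non-trivial N\'eron--Severi class, so there is nothing to split. The paper instead does a clean case split on $(\dim A, r)$. When $\dim A\geq 2$, one simply projects to $A$: the image of $P_\tors$ in $A$ lies in $A_\tors$, and $\im(\hat f)$ cannot lie in a translate of a proper abelian subvariety (its preimage would be a proper AT subvariety), so classical Manin--Mumford for $A$ suffices. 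When $\dim A=0$, $P$ is a torus. When $\dim A=1$ and $r\geq 2$, one changes basis in $\bG_m^r$ so that $L_1$ has degree~$0$, and then the associated extension $B$ of $E$ by $\bG_m$ is a semiabelian quotient of $P$ of dimension~$2$; classical Manin--Mumford for $B$ applies. Your reduction does not visibly cover the first of these cases.

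\textbf{The hard case.} You describe the exceptional case as ``$\dim A=1$ and the torsor class is genuinely non-torsion in $\Pic^0(E)=E^\vee$''. This is incorrect: if the line bundle lies in $\Pic^0(E)$ at all, then $P$ is already a semiabelian variety by Barsotti--Weil, and classical Manin--Mumford handles it regardless of whether the class is torsion. The genuinely hard case is $\dim A=1$, $r=1$, with $L$ of \emph{nonzero degree} (so $\lambda_L\colon E\to E^\vee$ is an isogeny and $P$ is not a group).

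\textbf{The construction for the hard case.} Rather than pulling back the Poincar\'e biextension along a ``constant section $\xi$'', the paper builds an explicit $\bG_m$-torsor $P'=m^*P\otimes\pr_1^*P^{\otimes -1}\otimes(-\pr_2)^*P^{\otimes -1}$ on $E\times E$, which by the theorem of the cube restricts to degree-$0$ line bundles on vertical fibres and is therefore a semiabelian scheme over $E$ via $\pr_1$. The map $f'\colon Y\to P'$ (through the diagonal) yields the section $s$ of $B=\hat f^*P'\to Y$. In the BMPZ trichotomy, the two invariants are $p=\hat f\in E(Y)$ and $q=\lambda_P\circ\hat f\in E^\vee(Y)$; the Ribet case $q=\alpha(p)$ with $\alpha$ antisymmetric is dispatched by observing that $\lambda_P$ is symmetric, so $2n\lambda_P=(n\lambda_P-n\alpha)+(n\lambda_P-n\alpha)^\vee$ forces either $\lambda_P=0$ (then $P$ is semiabelian) or $\hat f$ has image in a finite subgroup (then $\im(f)$ lies in a fibre). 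Your proposal gestures at this but does not identify $p$ and $q$, and your claim that the section's torsion locus is ``exactly'' the quadratic torsion packet is stronger than needed and not what is proved --- only the containment $f^{-1}P_\tors\subseteq\{y:s(y)\text{ torsion}\}$ is used.
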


We remark that the theorem applies to any hyperbolic curve~$Y/\Qbar$ embedded inside its quadratic Albanese variety~$Q$. Indeed, writing~$Y=X\smallsetminus D$ as usual, we have
\[
\dim(Q) = 
\begin{cases}
	g + \rank(\NS(J)) + \#D - 1 & \text{if~$X$ has genus~$g\geq1$,} \\
	\#D - 1 & \text{if~$X$ has genus~$0$,}
\end{cases}
\]
and the right-hand side is always at least~$2$. In particular, Theorem~\ref{thm:manin-mumford} has content even in the case that~$Y=E\smallsetminus\{\infty\}$ is a once-punctured elliptic curve, about which the usual Manin--Mumford does not say anything (because all maps from~$Y$ to a semi-abelian variety factor through~$E$).\smallskip

For the proof of Theorem~\ref{thm:manin-mumford}, let us fix a quadratic torsion coset in~$P$, whose inverse image in~$Y$ we wish to show is finite. Choose a base point~$\tilde 0\in P(\Qbar)$ lying in this quadratic torsion coset, so that~$P$ becomes a pointed AT variety and the quadratic torsion coset of interest is~$P_\tors$. Because the statement of Theorem~\ref{thm:manin-mumford} does not depend on the abelian group structure of~$A$, we are free to change the zero element of~$A$ in such a way that~$\tilde0$ lies over~$0\in A(\Qbar)$ as usual. We now prove that~$f^{-1}P_\tors$ is finite.

We begin with some easy cases which follow from usual Manin--Mumford for semiabelian varieties. First we handle the case that~$\dim(A)\geq2$. In this case, let~$\hat f\colon Y\to A$ denote the composition of~$f$ with the projection~$\pi\colon P\to A$. We observe that the inverse image under~$\pi$ of any translate of an abelian subvariety of~$A$ is an AT subvariety of~$P$, and so the image of~$\hat f$ cannot be contained in any translate of a strict subvariety of~$A$. The image of~$P_\tors$ in~$A$ is contained in~$A_\tors$ by Lemma~\ref{lem:morphisms_preserve_quadratic_torsion} and so~$f^{-1}P_\tors\subseteq\hat f^{-1}A_\tors$, and the latter is finite by Manin--Mumford for abelian varieties.

Next we handle the case that~$\dim(A)=0$. In this case, $P=G$ is a torus, and $\im(f)$ is not contained in a translate of a strict subtorus of~$G$ by assumption. So~$f^{-1}P_\tors=f^{-1}G_\tors$ is finite by Manin--Mumford for tori.

Next we handle the case that~$\dim(A)=1$ and~$r\geq2$. In this case, $P$ is the $\bG_m^r$-torsor on the elliptic curve~$E=A$ corresponding to an $r$-tuple of line bundles $L_1,\dots,L_r$. Pushing out along an automorphism $\bG_m^r\cong\bG_m^r$ if necessary, we are free to assume that~$L_1$ has degree~$0$. So if~$B\to E$ is the $\bG_m$-torsor associated to~$L_1$, then~$B$ has the structure of a semiabelian variety by the Barsotti--Weil Theorem \cite[Theorem~VII.16.6]{serre:algebraic_groups_class_fields}, being an extension of~$E$ by~$\bG_m$. There is a base point-preserving projection $\pi'\colon P\to B$ which makes $P$ into a $\bG_m^{r-1}$-torsor over~$B$. Let~$f'$ denote the composition of~$f$ with~$\pi'$. The image of~$f'$ cannot be contained in any translate of a strict semiabelian subvariety of~$B$: the only possibilities are points, fibres of the projection $B\to E$, or elliptic curves in~$B$ mapping finitely onto~$E$, and in all cases~$\im(f)$ would be contained in a strict AT subvariety of~$P$. So $f^{-1}P_\tors\subseteq f^{\prime-1}B_\tors$ is finite by Manin--Mumford for semiabelian varieties.\smallskip

The one remaining case is the most interesting: $\dim(A)=1$ and~$r=1$, so~$P$ is the $\bG_m$-torsor corresponding to a line bundle on the elliptic curve~$E=A$. Because we are working with an elliptic curve, we switch to writing~$\infty$ and~$\tilde\infty$ for~$0\in E(\Qbar)$ and~$\tilde0\in P(\Qbar)$. Let~$\pr_i\colon E\times E\to E$ denote the $i$th projection for~$i\in\{1,2\}$ and let~$m\colon E\times E\to E$ denote the addition map. Let~$P'$ denote the $\bG_m$-torsor
\[
m^*P\otimes\pr_1^*P^{\otimes-1}\otimes(-\pr_2)^*P^{\otimes-1}
\]
on~$E\times E$. By the theorem of the cube, $P'$ has the following properties:
\begin{enumerate}[label = (\alph*), ref = (\alph*)]
	\item\label{condn:P'_on_diagonal} the restriction of~$P'$ to the diagonal is~$P^{\otimes2}$;
	\item\label{condn:P'_on_horizontal_slice} the restriction of~$P'$ to~$E\times\{\infty\}$ is trivial; and
	\item\label{condn:P'_on_vertical_slices} the restriction of~$P'$ to each~$\{x\}\times E$ is the $\bG_m$-torsor corresponding to a degree~$0$ line bundle.
\end{enumerate}
By~\ref{condn:P'_on_diagonal}, there is a morphism $\psi\colon P\to P'$ of AT varieties lying over the diagonal $\Delta\colon E\to E\times E$ and under the squaring map~$[2]\colon\bG_m\to\bG_m$. Let~$f'\coloneqq\psi\circ f$, and make~$P'$ into a pointed AT variety via the base point~$\psi(\tilde\infty)$.

We will want to think of~$P'$ in two different ways: as an AT variety, and as a smooth family of surfaces over~$E$ via the composition
\[
P' \to E\times E \xrightarrow{\pr_1} E \,.
\]
By~\ref{condn:P'_on_horizontal_slice}, there is a section~$e$ of the projection $P'\to E$ lying over the zero section of the constant family~$\pr_1\colon E\times E\to E$. The section~$e$ is uniquely determined once we require that~$e(\infty)=\psi(\tilde\infty)$. By~\ref{condn:P'_on_vertical_slices}, $P'$ defines a section of the relative Picard scheme $\bPic^0(E\times E/E)$, and so has the structure of a semiabelian scheme over~$E$ with identity section~$e$ by the relative Barsotti--Weil Theorem \cite[Theorem~III.18.1]{oort:commutative_group_schemes}. The quadratic torsion on $P'$ as an AT variety is closely related to the torsion locus of the semiabelian scheme $P'\to E$.

\begin{lemma}\label{lem:P'_quadratic_torsion}
	We have
	\[
	P'_\tors = \bigcup_{x\in E_\tors}P'_{x,\tors} \,.
	\]
	(On the left-hand side, $P'_\tors$ denotes the quadratic torsion subset of the pointed AT variety~$(P',\psi(\tilde\infty))$, and on the right-hand side, $P'_{x,\tors}$ denotes the torsion subgroup of the semiabelian variety~$P'_x$.)
\end{lemma}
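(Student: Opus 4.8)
The plan is to compare the quadratic torsion of $P'$ viewed as an AT variety with the torsion of $P'$ viewed as a semiabelian scheme over~$E$, working fibre-by-fibre over~$E$; the main tool throughout is the functoriality of quadratic torsion recorded in Lemma~\ref{lem:morphisms_preserve_quadratic_torsion}.

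First I would show that every quadratic torsion point of $P'$ lies over a torsion point of~$E$ under $\pr_1\circ\pi'\colon P'\to E\times E\to E$. The torsor projection $\pi'\colon P'\to E\times E$ is a morphism of pointed AT varieties whose value at the base point $\psi(\tilde\infty)$ lies over $(\infty,\infty)=0\in(E\times E)_\tors$, so $\pi'(P'_\tors)\subseteq(E\times E)_\tors$ by Lemma~\ref{lem:morphisms_preserve_quadratic_torsion}, and projecting to the first factor gives $\pr_1(\pi'(P'_\tors))\subseteq E_\tors$. Hence $P'_\tors=\bigcup_{x\in E_\tors}(P'_\tors\cap P'_x)$, and it remains to identify $P'_\tors\cap P'_x$ with $P'_{x,\tors}$ for each fixed $x\in E_\tors$.

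Next I would fix base points compatibly. The identity section $e\colon E\to P'$ of the semiabelian scheme is a morphism of AT varieties from $(E,\infty)$ to $(P',\psi(\tilde\infty))$ sending $\infty$ to the base point $\psi(\tilde\infty)\in P'_\tors$; since $E$ is an abelian (hence semiabelian) variety its quadratic torsion is its usual torsion, so the first assertion of Lemma~\ref{lem:morphisms_preserve_quadratic_torsion} gives $e(E_\tors)\subseteq P'_\tors$. In particular, for $x\in E_\tors$ the identity element $e(x)$ of the semiabelian variety $P'_x$ lies in $P'_\tors$. Now the fibre $P'_x$ of $\pr_1\circ\pi'$ over~$x$ is a $\bG_m$-torsor over the coset $\{x\}\times E$, hence is itself an AT variety and is closed in~$P'$, so it is an AT subvariety; the inclusion $\iota_x\colon P'_x\hookrightarrow P'$ lies over the immersion $\{x\}\times E\hookrightarrow E\times E$ (a translate of the injective homomorphism $z\mapsto(0,z)$) and under the identity $\bG_m\to\bG_m$. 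Taking $e(x)$ as the base point of $P'_x$, the second assertion of Lemma~\ref{lem:morphisms_preserve_quadratic_torsion} applies (finite, indeed trivial, kernels) and yields the equality $\iota_x^{-1}(P'_\tors)=(P'_x)_\tors$, the quadratic torsion of the pointed AT variety $(P'_x,e(x))$.

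Finally I would observe that the AT-variety structure of $P'_x$ (torus $\bG_m$, abelian quotient $\{x\}\times E$, base point $e(x)$) is exactly the one underlying its structure as a semiabelian variety with identity $e(x)$, these data being intrinsic to the pointed variety by Lemma~\ref{lem:AT_morphisms}; so the lemma identifying quadratic torsion with ordinary torsion on a semiabelian variety gives $(P'_x)_\tors=P'_{x,\tors}$. Combining the last two steps, $P'_\tors\cap P'_x=P'_{x,\tors}$ for every $x\in E_\tors$, and taking the union over $x\in E_\tors$ proves the claim. The step I expect to demand the most care is verifying that $e(E)$ and the fibres $P'_x$ really are AT subvarieties of~$P'$ and that the relevant inclusions are morphisms of AT varieties lying over and under homomorphisms with finite kernel: here properties~\ref{condn:P'_on_horizontal_slice} and~\ref{condn:P'_on_vertical_slices} of~$P'$ (both consequences of the theorem of the cube) are essential — the latter being what makes each $P'_x$ semiabelian — as is the bookkeeping that $e(x)$ is simultaneously the semiabelian identity of $P'_x$ and a quadratic torsion point of~$P'$.
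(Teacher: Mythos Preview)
Your proposal is correct and follows essentially the same argument as the paper: both proofs use the projection $P'\to E$ together with Lemma~\ref{lem:morphisms_preserve_quadratic_torsion} to reduce to fibres over $E_\tors$, then use the section~$e$ to place a quadratic-torsion base point in each fibre $P'_x$, and finally invoke the second assertion of Lemma~\ref{lem:morphisms_preserve_quadratic_torsion} (injective underlying homomorphisms) plus the identification of quadratic torsion with ordinary torsion on semiabelian varieties to conclude $P'_\tors\cap P'_x=P'_{x,\tors}$. Your write-up is slightly more explicit about why $P'_x\hookrightarrow P'$ satisfies the hypotheses of that lemma, but the logical structure is the same.
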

\begin{proof}
	The section $e\colon E\to P'$ is a morphism of pointed AT varieties, so $e(x)\in P'_\tors$ for all~$x\in E_\tors$ by Lemma~\ref{lem:morphisms_preserve_quadratic_torsion}. The inclusion $P'_x\hookrightarrow P'$ takes the identity element $e(x)$ to a quadratic torsion point on~$P'$, and so $P'_{x,\tors}\subseteq P'_\tors$ by Lemma~\ref{lem:morphisms_preserve_quadratic_torsion} again. This proves that $P'_\tors \supseteq \bigcup_{x\in E_\tors}P'_{x,\tors}$.
	
	For the converse inclusion, the projection~$P'\to E$ is also a morphism of pointed AT varieties, so every quadratic torsion point on~$P'$ lies in a fibre over a torsion point on~$E$. For each~$x\in E_\tors$, the inclusion $P'_x\hookrightarrow P$ is a morphism of AT varieties lying over and under injective homomorphisms, and so $P'_\tors\cap P'_x=P'_{x,\tors}$ by the second part of Lemma~\ref{lem:morphisms_preserve_quadratic_torsion}. So we have
	\[
	P'_\tors \subseteq \bigcup_{x\in E_\tors}(P'_\tors\cap P_x) = \bigcup_{x\in E_\tors}P'_{x,\tors}
	\]
	as desired.
\end{proof}

Now write~$\hat f$ for the composition $Y\xrightarrow{f} P\to E$ and define~$B\coloneqq \hat f^*P'$ to be the pullback of~$P'$ to~$Y$, so~$B$ is a semiabelian scheme over~$Y$, being an extension of the constant elliptic scheme $E_Y\coloneqq Y\times E$ by $\bG_{m,Y}\coloneqq Y\times \bG_m$. From the commuting diagram
\begin{equation}\label{eq:Y_mapping_to_abelian_scheme}
\begin{tikzcd}
	Y \arrow[r,"f"]\arrow[rd,"\hat f"'] & P \arrow[r,"\psi"]\arrow[d] & P' \arrow[d] \\
	 & E \arrow[r,"\Delta"]\arrow[rd,equal] & E\times E \arrow[d,"\pr_1"] \\
	 && E \,,
\end{tikzcd}
\end{equation}
the map $f'\colon Y\to P'$ induces a section~$s$ of the semiabelian scheme~$B\to Y$, lifting the section~$(1,\hat f)$ of~$Y\times E\to Y$. Lemma~\ref{lem:P'_quadratic_torsion} then implies
\begin{corollary}\label{cor:quadratic_torsion_vs_semiabelian_torsion}
	For every point~$y\in f^{-1}P_\tors$, we have that~$s(y)\in B_{y,\tors}$ is a torsion point on~$B_y$.
\end{corollary}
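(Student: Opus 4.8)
The plan is to combine Lemmas~\ref{lem:morphisms_preserve_quadratic_torsion} and~\ref{lem:P'_quadratic_torsion}, the only subtlety being to keep straight the two incarnations of~$P'$: as a pointed AT variety, and as a semiabelian scheme over~$E$.

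First I would unwind the definition of the section~$s$. By construction $B=\hat f^*P'$ is the fibre product of~$Y$ and~$P'$ over~$E$, formed along $\hat f\colon Y\to E$ and the composite $P'\to E\times E\xrightarrow{\pr_1}E$. Chasing the diagram~\eqref{eq:Y_mapping_to_abelian_scheme} (using that $\psi$ lies over the diagonal $\Delta\colon E\to E\times E$ and that $\pr_1\circ\Delta=\id_E$) shows that the composite $Y\xrightarrow{f'}P'\to E$ is exactly~$\hat f$, so $(\id_Y,f')$ is precisely the section $s\colon Y\to B$. Hence for any~$y\in Y$ the fibre~$B_y$ is canonically identified with the fibre $P'_{\hat f(y)}$ of $P'\to E$ over~$\hat f(y)$ (in the notation of Lemma~\ref{lem:P'_quadratic_torsion}), and under this identification $s(y)$ corresponds to the point $f'(y)=\psi(f(y))$.

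Now suppose $y\in f^{-1}P_\tors$, so $f(y)\in P_\tors$. Since $\psi\colon P\to P'$ is a morphism of AT varieties and we have equipped~$P'$ with the base point $\psi(\tilde\infty)$, the base point of~$P$ is sent into~$P'_\tors$; so Lemma~\ref{lem:morphisms_preserve_quadratic_torsion} gives $\psi(P_\tors)\subseteq P'_\tors$, and therefore $f'(y)=\psi(f(y))\in P'_\tors$. Applying Lemma~\ref{lem:P'_quadratic_torsion}, which identifies $P'_\tors=\bigcup_{x\in E_\tors}P'_{x,\tors}$, the point $f'(y)$ lies in $P'_{x,\tors}$ for some $x\in E_\tors$; but it also lies in the fibre $P'_{\hat f(y)}$, and distinct fibres of $P'\to E$ are disjoint, so $x=\hat f(y)$ and $f'(y)\in P'_{\hat f(y),\tors}$. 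Transporting back through $B_y\cong P'_{\hat f(y)}$ yields $s(y)\in B_{y,\tors}$.

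I do not expect any genuine obstacle here: the entire content is already packaged in the two preceding lemmas, and the only thing requiring care is the bookkeeping at the start — verifying that the section~$s$ of $B\to Y$ really does carry~$y$ to the point $f'(y)$ inside the fibre~$B_y$ — which is immediate from the fibre-product description of~$B$ once one checks that $f'$ lies over~$\hat f$.
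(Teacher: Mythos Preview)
Your proposal is correct and follows essentially the same approach as the paper: both arguments apply Lemma~\ref{lem:morphisms_preserve_quadratic_torsion} to~$\psi$ to get $f'(y)\in P'_\tors$, then invoke Lemma~\ref{lem:P'_quadratic_torsion} to place $f'(y)$ in the torsion of the relevant fibre. The paper's version is terser, writing directly $f'(y)\in P'_\tors\cap P'_{\hat f(y)}=P'_{\hat f(y),\tors}$ (implicitly using that $\hat f(y)\in E_\tors$ and the intersection identity from the proof of Lemma~\ref{lem:P'_quadratic_torsion}), whereas you spell out the bookkeeping identifying $s(y)$ with $f'(y)$ and argue via disjointness of fibres; these are the same argument at different levels of detail.
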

\begin{proof}
	By Lemma~\ref{lem:morphisms_preserve_quadratic_torsion}, if~$f(y)\in P_\tors$, then $f'(y) \in P'_\tors\cap P'_{\hat f(y)} = P'_{\hat f(y),\tors}$ and hence $s(y) \in B_{y,\tors}$.
\end{proof}

Accordingly, if~$f^{-1}P_\tors$ is infinite, then the image of the section~$s$ meets the torsion locus of the semiabelian scheme $B\to Y$ in infinitely many points. This is exactly the setup studied in \cite{bertrand-masser-pillay-zannier:relative_manin-mumford}, whose main theorem classifies the ways in which a section of a certain type of semiabelian scheme can meet the torsion locus at infinitely many points. The precise statement we need is
\begin{theorem}[{\cite[Theorem~2(ii)]{bertrand-masser-pillay-zannier:relative_manin-mumford}}]\label{thm:relative_manin-mumford}
	Suppose that~$Y/\Qbar$ is a smooth curve, that~$E/\Qbar$ is an elliptic curve with dual~$E^\vee$, and that $B$ is a semiabelian scheme over~$Y$ which is an extension of~$E_Y$ by~$\bG_{m,Y}$. Let~$s$ be a section of~$B\to Y$, write~$p\in E(Y)$ for the composition
	\[
	Y \xrightarrow{s} B \to E_Y \,,
	\]
	and write~$q\in E^\vee(Y)$ for the map classifying~$B$ under the Barsotti--Weil formula
	\[
	\Ext^1(E_Y,\bG_{m,Y}) = E^\vee(Y) \,.
	\]
	
	If~$s$ meets the torsion locus of~$B\to Y$ in infinitely many points, then either:
	\begin{itemize}
		\item $p\in E(Y)_\tors$ is torsion; or
		\item $q\in E^\vee(Y)_\tors$ is torsion; or
		\item $p$ and~$q$ are non-torsion, and $q=\alpha(p)$ modulo torsion for an antisymmetric element~$\alpha\in\bQ\otimes\Hom(E,E^\vee)$ (i.e.~$\alpha^\vee=-\alpha$).
	\end{itemize}
\end{theorem}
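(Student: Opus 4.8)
The plan is to prove the theorem by the Pila--Zannier strategy, reducing it to a point-counting statement in an o-minimal structure. First I would spread everything out over a number field $K$ and dispose of the cases where $p\in E(Y)$ or $q\in E^\vee(Y)$ is already torsion; so assume both are non-torsion, the aim being to produce the antisymmetric $\alpha\in\bQ\otimes\Hom(E,E^\vee)$ with $q=\alpha(p)$ modulo torsion. Next I would pass to a small simply connected domain $\Omega\subseteq Y^{\an}$, trivialise $B$ there, and use the mixed uniformisation of the fibres: since $E$ is \emph{constant}, write $E^{\an}=\bC/(\bZ+\bZ\tau)$ with $\tau$ fixed, and uniformise each $B_y$ as $\bC^2/\Lambda_y$, where $\Lambda_y$ is generated by $(0,2\pi i)$ together with $(1,\eta_1(y))$ and $(\tau,\eta_2(y))$, the quasi-periods $\eta_i$ depending real-analytically on $y$ and expressible through the Weierstrass $\zeta$-function and a Legendre-type relation in terms of $q(y)$. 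Choosing a continuously varying basis of $\Lambda_y$ then equips the section $s$ with \emph{Betti coordinates} $\beta\colon\Omega\to[0,1)^3$, definable in $\bR_{\mathrm{an},\exp}$, with the key feature that $s(y)$ is torsion of order dividing $n$ exactly when $\beta(y)\in\tfrac1n\bZ^3$.

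The counting step would run as follows. If $s$ meets the torsion locus infinitely often, a Galois argument produces many torsion values: whenever $s(y)$ has exact order $n$, each $K$-conjugate of $y$ also lies on $\im(s)$ and carries a torsion value of order $n$, so the number of order-$n$ torsion values of $s$ is $0$ or at least $[K(y):K]\ge[K(s(y)):K]$. I would then invoke a uniform lower bound $[K(s(y)):K]\gg_\varepsilon n^{c-\varepsilon}$ with $c>0$ depending only on the family, obtained from Masser's estimates for torsion on semiabelian varieties applied to the fibres $B_y$ and rendered uniform through Silverman's specialisation theorem (bounding the heights of the $B_y$ and of the relevant period data for $y$ of bounded degree). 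This gives $N(T):=\#\{y\in\Omega:\ s(y)\ \text{torsion of order}\le T\}\gg T^{c'}$ for some $c'>0$. Applying the Pila--Wilkie theorem to the graph $Z\subseteq\Omega\times[0,1)^3$ of $\beta$, such a lower bound is incompatible with all the relevant rational points being isolated, so a positive-dimensional semialgebraic block $B_0\subseteq Z$ must contain infinitely many of the pairs $(y,\beta(y))$ arising from torsion values.

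The final, structural step is to extract the trichotomy from this block. Pulling $B_0$ back through the uniformisation and applying an Ax--Lindemann/functional-transcendence statement for the two-dimensional mixed period domain attached to $B\to Y$ --- available in this low-dimensional case from the bi-algebraicity theory of $\bG_m$-by-elliptic extensions, or from Manin's theorem on the associated Gauss--Manin connection --- one concludes that the Zariski closure $\overline{\im(s)}\subseteq B$ is a \emph{weakly special} curve, i.e.\ a torsion translate of a subgroup scheme of $B\to Y$. I would then classify the weakly special curves projecting isomorphically onto $Y$: one lying in a proper sub-semiabelian scheme either sits over a torsion multisection of $E_Y$ (so $p$ is torsion) or is carried into the unit section of $\bG_{m,Y}$ once the extension is trivialised (so $q$ is torsion); the only remaining possibility is that $\overline{\im(s)}$ is a \emph{Ribet section}, the graph of a partial splitting $E_Y\to B$ of the extension over a finite cover, and the theorem of the cube together with the behaviour of the Poincar\'e bundle under $[-1]$ shows that such a splitting exists precisely when $q=\alpha(p)$ modulo torsion for an antisymmetric $\alpha$ (for symmetric $\alpha$ the associated Betti pairing is non-degenerate and obstructs any splitting). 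This yields the third alternative.

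I expect the main obstacles to be two. First, the uniform Galois bound $[K(s(y)):K]\gg_\varepsilon n^{c-\varepsilon}$: Masser's fibrewise estimates are classical, but securing a constant independent of $y$ as $B_y$ degenerates requires controlling isogeny degrees and heights uniformly across the family, and this is the arithmetically delicate input. Second, the block analysis: converting a real-semialgebraic block into the precise algebraic/Ribet structure needs both the functional-transcendence statement above and an exact computation of the quasi-period relation pinning down the condition $\alpha^\vee=-\alpha$ --- this is exactly where the special nature of $\bG_m$-by-elliptic extensions, rather than general semiabelian surfaces, is decisive. As an alternative to the o-minimal route, one could replace Pila--Wilkie by Hrushovski's Mordell--Lang for function fields applied to the Manin kernel of $B\to Y$, in which case the same two difficulties resurface in the analysis of the socle of that kernel.
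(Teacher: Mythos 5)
There is nothing in the paper to compare against here: Theorem~\ref{thm:relative_manin-mumford} is not proved in this article at all, but is imported verbatim from Bertrand--Masser--Pillay--Zannier, and the paper's only contribution is to translate its own quadratic-torsion problem into the hypotheses of that theorem (via the semiabelian scheme $B=\hat f^*P'$ and the section $s$). Your sketch is, in outline, exactly the Pila--Zannier strategy that the cited authors themselves carry out: Betti coordinates definable in $\bR_{\mathrm{an},\exp}$ over a simply connected chart, a Galois-orbit lower bound of Masser type for torsion on the fibres, the Pila--Wilkie counting theorem forcing a positive-dimensional semialgebraic block, an Ax--Lindemann-type functional transcendence statement identifying $\overline{\im(s)}$ as weakly special, and finally the classification of weakly special sections in which the antisymmetric (Ribet) case $q=\alpha(p)$ with $\alpha^\vee=-\alpha$ appears as the exceptional third branch. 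So your proposal reconstructs the right argument, but be aware that what you have written is a route map rather than a proof: the two points you flag yourself --- uniformity of the Galois lower bound as the fibres $B_y$ vary (including degenerating or isogenous fibres), and the passage from a real semialgebraic block to the precise Ribet/antisymmetry dichotomy --- are precisely the substantive content of the cited paper, and neither is established by the sketch. For the purposes of the present article, the correct move is the one the author makes: cite the result rather than reprove it.
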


\begin{remark}
	The final option in Theorem~\ref{thm:relative_manin-mumford} can only occur if~$E$ has complex multiplication, and is related to the phenomenon of \emph{Ribet sections}: sections of certain semiabelian schemes which meet the torsion locus in infinitely many points, but do not lie in a strict subgroup scheme. See \cite{bertrand-edixhoven:semiabelian_families} for more discussion.
\end{remark}

Back in our setting of interest, the element~$p\in E(Y)$ is the map $\hat f\colon Y\to E$, while~$q\in E^\vee(Y)$ is equal to~$\lambda_P\circ\hat f$, where~$\lambda_P\colon E\to E^\vee=\Pic^0(E)$ is the homomorphism
\[
x\mapsto \tau_x^*P\otimes P^{-1} \,.
\]
Let us now suppose that~$f^{-1}P_\tors$ is infinite, which by Corollary~\ref{cor:quadratic_torsion_vs_semiabelian_torsion} implies that the section~$s$ meets the torsion locus of~$B\to Y$ in infinitely many points. Then, according to Theorem~\ref{thm:relative_manin-mumford}, either $\hat f$ is torsion, or $\lambda_P\circ\hat f$ is torsion, or $\lambda_P\circ\hat f=\alpha\circ\hat f$ as elements of~$\bQ\otimes E^\vee(Y)$, for some antisymmetric~$\alpha\in\bQ\otimes\Hom(E,E^\vee)$. In the latter case, we may choose a positive integer~$n$ such that $n\lambda_P\circ\hat f=n\alpha\circ\hat f$ as elements of~$E^\vee(Y)$, with~$n\alpha$ an antisymmetric element of $\Hom(E,E^\vee)$. There are then two possibilities. If~$n\lambda_P-n\alpha=0$, then we also have that
\[
0 = (n\lambda_P-n\alpha)^\vee = n\lambda_P+n\alpha^\vee
\]
because~$\lambda_P$ is symmetric, and so~$2n\lambda_P=0$. This implies that~$\lambda_P=0$ itself. Otherwise, if~$n\lambda_P-n\alpha\neq0$, then we know that the image of~$\hat f$ is contained in the kernel of the non-zero homomorphism $n\lambda_P-n\alpha\in\Hom(E,E^\vee)$. This kernel is finite, and so~$\hat f\in E(Y)_\tors$ again.

In every single case, we have that~$\lambda_P\circ\hat f\in E^\vee(Y)_\tors$ is torsion. If~$\hat f$ itself is torsion, then it is constant, meaning that the image of the morphism $f\colon Y\to P$ is contained in a single fibre of the projection~$P\to E$. This fibre is in particular a strict AT subvariety of~$P$. Otherwise, $\hat f$ is a non-constant map from~$Y$ to~$E$, so its image is dense in~$E$ and the only way that~$\lambda_P\circ\hat f$ can be torsion is if~$\lambda_P=0$. This implies that~$P\in\Pic^0(E)$, and so~$P$ is actually a semiabelian variety. So by Manin--Mumford for semiabelian varieties, the image of~$f$ must be contained in a strict semiabelian subvariety of~$P$, which is automatically an AT subvariety.

In summary, we have shown that if~$f^{-1}P_\tors$ is infinite, then the image of~$f$ is contained in a strict AT subvariety of~$P$, completing the proof of Theorem~\ref{thm:manin-mumford}.\qed

\section{Once-punctured elliptic curves}\label{s:elliptic_curves}

Finally, we make all of the theory we have developed completely explicit in the case of once-punctured elliptic curves of rank~$0$. To begin with, let~$E/\bQ$ be an elliptic curve, and let~$Y=E\smallsetminus\{\infty\}$ be the complement of the origin in~$E$. Let~$P$ denote the~$\bG_m$-torsor corresponding to the line bundle~$\cO(\infty)$ on~$E$, and let~$f\colon Y\to P$ be the morphism coming from the nowhere vanishing section $1\in\rH^0(Y,\cO(\infty))$.

We can describe~$P$ explicitly as the gluing of two affine surfaces. For this, fix a rational Weierstrass equation
\begin{equation}\label{eq:weierstrass}
y^2 + a_1xy + a_3y = x^3 + a_2x^2 + a_4x + a_6
\end{equation}
for~$E$, not necessarily minimal or integral. Thus~$Y$ is the affine plane curve described by the equation~\eqref{eq:weierstrass}. If we consider the rational functions
\[
t \coloneqq -\frac xy \quad\text{and}\quad u \coloneqq -\frac 1y \,,
\]
then a neighbourhood of the point at infinity in~$E$ is the plane curve
\begin{equation}\label{eq:weierstrass_at_infinity}
	u = t^3 + a_1tu + a_2t^2u + a_3u^2 + a_4tu^2 + a_6u^3 \,,
\end{equation}
see \cite[Chapter~IV.1]{silverman:elliptic_curves}. In this second chart, the point at infinity has $(t,u)$-coordinates $(0,0)$, and~$t$ is a local parameter at~$\infty$. We let~$U$ denote the locus in the second chart~\eqref{eq:weierstrass_at_infinity} where~$t^3/u$ is non-zero, so that~$E$ is the gluing of~$Y$ and~$U$ along their intersection.

To find a corresponding gluing for~$P$, note that~$\cO(\infty)$ trivialises over both~$Y$ and~$U$, via the rational functions~$1$ and~$t^{-1}$. Thus, $P$ is the gluing of the trivial torsors $Y\times\bG_m$ and $U\times\bG_m$ along their intersection $(Y\cap U)\times\bG_m$. If~$x,y,z$ and~$t,u,w$ denote the coordinates on~$Y\times\bG_m$ and~$U\times\bG_m$, respectively, then this gluing is via the identifications
\[
t = -\frac xy \quad\text{and}\quad u = -\frac 1y \quad\text{and}\quad w = tz \,.
\] 
The map $f\colon Y\to P$ is given in $(x,y,z)$-coordinates by $(x,y)\mapsto(x,y,1)$ (or in~$(t,u,w)$-coordinates by~$(t,u)\mapsto(t,u,t)$). We equip~$P$ with the base point~$\tilde\infty=(0,0,1)$ in~$(t,u,w)$-coordinates.
\smallskip

Next, we will give formulas for some endomorphisms of~$P$. For this, note that $[n]^*P$ is the $\bG_m$-torsor on~$E$ associated to the line bundle $\cO(E[n])\simeq\cO(n^2\infty)$, so~$[n]^*P\simeq P^{\otimes n^2}$ for all integers~$n>0$. Accordingly, there is a morphism of AT varieties
\[
\beta_n^s\colon P\to P
\]
lying over~$[n]\colon E\to E$ and under~$[n^2]\colon\bG_m\to\bG_m$, unique once we require that $\beta_n^s(\tilde\infty)=\tilde\infty$. We have the following explicit formula for~$\beta_n^s$.

\begin{proposition}\label{prop:explicit_phi}
	The map $\beta_n^s\colon P \to P$ is given in $(x,y,z)$-coordinates by
	\begin{equation}\label{eq:explicit_phi}
	\beta_n^s(x,y,z) = \bigl(x-\frac{\psi_{n+1}\psi_{n-1}}{\psi_n^2},\frac{\psi_{2n}}{\psi_n^4},(-1)^{n+1}\frac{z^{n^2}}{\psi_n}\bigr) \,,
	\end{equation}
	where~$\psi_n$ is the $n$th division polynomial of~$E$, as defined in \cite[Exercise~III.3.7]{silverman:elliptic_curves}.
\end{proposition}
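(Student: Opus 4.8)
The plan is to verify formula~\eqref{eq:explicit_phi} directly, using the classical theory of division polynomials, and to check along the way that the three coordinate functions on the right-hand side genuinely define a morphism $P\to P$ lying over $[n]$ and under $[n^2]$ with the correct value at $\tilde\infty$ --- by the uniqueness clause in the definition of $\beta_n^s$, this will suffice. First I would recall the standard description of the multiplication-by-$n$ map in Weierstrass coordinates: one has
\[
[n](x,y) = \Bigl(x - \frac{\psi_{n+1}\psi_{n-1}}{\psi_n^2},\ \frac{\psi_{2n}}{2\psi_n^4}\cdot\bigl(\text{correction in }y\bigr)\Bigr),
\]
where the precise form (see \cite[Exercise~III.3.7]{silverman:elliptic_curves}) gives the $x$-coordinate exactly as in~\eqref{eq:explicit_phi}, and the $y$-coordinate as $\psi_{2n}/(2\psi_n^4)$ up to the usual $2y+a_1x+a_3$ factor coming from the group law; I would be careful to state the convention for $\psi_n$ so that the formula for $[n]$ in the $(x,y)$ variables matches. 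This handles the first two coordinates of $\beta_n^s$, since $\beta_n^s$ lies over $[n]$.

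The content is in the third coordinate, which records the torsor structure. Here the key point is that $\psi_n$, viewed as a rational function on $E$, has divisor $\div(\psi_n) = E[n] - n^2\cdot(\infty)$ (this is exactly why $[n]^*\cO(\infty)\simeq\cO(n^2\infty)$ and is the classical characterisation of the division polynomial up to scalar). Since $P$ is glued from $Y\times\bG_m$ and $U\times\bG_m$ via the cocycle $t = -x/y$, the pullback $[n]^*P$ is trivialised over $Y$ by the rational section $\psi_n^{-1}$ of $\cO(n^2\infty)$, and comparing with the trivialisation of $P^{\otimes n^2}$ by $1^{\otimes n^2}$ identifies $[n]^*P\simeq P^{\otimes n^2}$; chasing $z\mapsto z^{n^2}$ through the projection $[n]^*P\to P$ and this identification gives the factor $z^{n^2}/\psi_n$. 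The sign $(-1)^{n+1}$ is then pinned down by the single normalisation $\beta_n^s(\tilde\infty)=\tilde\infty$: one expands $\psi_n$ near $\infty$ in the local parameter $t$, finds $\psi_n = (-1)^{n+1}n\,t^{-(n^2-1)} + \cdots$ (or the analogous leading term with the chosen sign convention), and checks that with the stated sign the third coordinate extends to the value $1$ at the point $(0,0,1)$ in $(t,u,w)$-coordinates. I would carry this out by rewriting~\eqref{eq:explicit_phi} in the $(t,u,w)$-chart --- using $t = -x/y$, $u = -1/y$, $w = tz$ --- and verifying regularity and the value at the origin there; this simultaneously confirms that the formula defines a morphism into $P$ (not just a rational map) and that it sends $\tilde\infty$ to $\tilde\infty$.

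The main obstacle, and the only genuinely delicate part, is bookkeeping of normalisations: the leading coefficient and sign of $\psi_n$ (which depend on whether one uses Silverman's $\psi_n$ or a rescaled version), the factor of $2$ relating $\psi_{2n}$ to the $y$-coordinate of $[n]$, and the choice of trivialising sections of $\cO(\infty)$ over $Y$ and $U$ all have to be tracked consistently so that the three coordinates assemble into a morphism under $[n^2]$ with $\beta_n^s(\tilde\infty)=\tilde\infty$. Once the conventions are fixed, everything is a finite computation with division polynomials and the recursion $[m]\circ[n] = [mn]$ can be used as a consistency check on the sign $(-1)^{n+1}$ (note $(-1)^{mn+1}$ versus $(-1)^{m+1}(-1)^{n+1}$ forces the relation $\beta^s_{mn} = \beta^s_m\circ\beta^s_n$ to hold only up to a sign that must cancel, which it does). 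No step requires anything beyond the cube-theorem identities already recorded in Definition~\ref{def:Pn} together with the classical divisor computation for $\psi_n$, so the proof is essentially an explicit unwinding of the construction of $\beta_n^s$ in coordinates.
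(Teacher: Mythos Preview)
Your approach is essentially the same as the paper's: define the candidate map by the right-hand side of~\eqref{eq:explicit_phi}, verify it is a genuine morphism $P\to P$ (by checking regularity and invertibility of the third coordinate in both charts), observe that it lies over~$[n]$ and under~$[n^2]$ and fixes~$\tilde\infty$, and conclude by uniqueness. The paper organises the regularity check by treating the regions $E\smallsetminus E[n]$, $E[n]\smallsetminus\{\infty\}$, and a neighbourhood of~$\infty$ separately, and isolates the Laurent expansion $\psi_n = (-1)^{n+1}n\,t^{1-n^2}+\cdots$ as a standalone lemma proved by induction on~$n$ via the division-polynomial recursion; your proposal anticipates exactly this computation.
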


\begin{lemma}\label{lem:leading_terms}
	For~$n\geq1$, the leading term of the Laurent series expansion of~$\psi_n$ at~$\infty$ is~$(-1)^{n+1}nt^{1-n^2}$.
\end{lemma}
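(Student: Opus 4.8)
The plan is to read the leading Laurent term off directly from the chart at infinity together with the division polynomial recursions. First I would record the leading behaviour of $x$ and $y$ at $\infty$: solving the local equation \eqref{eq:weierstrass_at_infinity} formally for $u$ as a power series in the uniformiser $t$ gives $u = t^3 + O(t^4)$, and since $x = t/u$ and $y = -1/u$ this yields $x = t^{-2} + O(t^{-1})$ and $y = -t^{-3} + O(t^{-2})$. Writing $v$ for the valuation at $\infty$ (so $v(t)=1$) and $\ell(\cdot)$ for the leading Laurent coefficient, this says $v(x) = -2$, $\ell(x) = 1$, $v(y) = -3$, $\ell(y) = -1$; consequently $v(\psi_2) = -3$, $\ell(\psi_2) = -2$, and in the Laurent series field $v(\psi_2^{-1}) = 3$, $\ell(\psi_2^{-1}) = -\tfrac12$.

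With this in hand I would prove the two statements $v(\psi_n) = 1 - n^2$ and $\ell(\psi_n) = (-1)^{n+1}n$ simultaneously, by strong induction on $n$. The cases $n \le 4$ are immediate from the explicit formulas $\psi_1 = 1$, $\psi_2 = 2y+a_1x+a_3$, $\psi_3 = 3x^4 + \cdots$, $\psi_4 = \psi_2\cdot(2x^6+\cdots)$ in \cite[Exercise III.3.7]{silverman:elliptic_curves}, using the leading terms of $x$ and $y$ above. For the inductive step I would substitute the inductive hypothesis into the recursions $\psi_{2m+1} = \psi_{m+2}\psi_m^3 - \psi_{m-1}\psi_{m+1}^3$ (for $m \ge 2$) and $\psi_2\psi_{2m} = \psi_m(\psi_{m+2}\psi_{m-1}^2 - \psi_{m-2}\psi_{m+1}^2)$ (for $m \ge 3$). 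A short arithmetic computation with $v(\psi_k) = 1-k^2$ shows that in each recursion the two summands on the right-hand side have the same valuation, and that this common value is exactly $1-(2m+1)^2$, resp. $1-(2m)^2$.

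The one point needing care — and the main, though mild, obstacle — is to rule out cancellation between the leading coefficients of the two summands, which would force $v(\psi_n)$ to be strictly larger than $1-n^2$. Using $\ell(\psi_k) = (-1)^{k+1}k$, the leading coefficient of the right-hand side of the odd recursion reduces to $(m+2)m^3 - (m-1)(m+1)^3$, and that of the even recursion (after dividing by $\ell(\psi_2)$) to a sign times $\bigl((m+2)(m-1)^2 - (m-2)(m+1)^2\bigr)$. Expanding, the top-order terms in $m$ cancel and one is left with the elementary identities $(m+2)m^3 - (m-1)(m+1)^3 = 2m+1$ and $(m+2)(m-1)^2 - (m-2)(m+1)^2 = 4$, both manifestly nonzero; tracing the signs back through gives leading coefficients $2m+1 = (-1)^{(2m+1)+1}(2m+1)$ and $-2m = (-1)^{2m+1}(2m)$, which closes the induction. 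An alternative, slightly less self-contained route would be to invoke the standard structure of $\psi_n$ (for odd $n$ a polynomial in $x$ of degree $(n^2-1)/2$ with leading coefficient $n$; for even $n$, $\psi_2$ times such a polynomial of degree $(n^2-4)/2$ with leading coefficient $n/2$) and read the leading Laurent term off from $x = t^{-2}+\cdots$ and $\ell(\psi_2) = -2$, giving $n\,t^{1-n^2}$ and $(-2)\cdot\tfrac n2\,t^{1-n^2} = -n\,t^{1-n^2}$ at once.
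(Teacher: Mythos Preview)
Your proof is correct and follows essentially the same approach as the paper: verify the base cases $n\leq4$ from the explicit formulas for $\psi_n$ together with $x=t^{-2}+\cdots$, $y=-t^{-3}+\cdots$, then use strong induction via the standard division polynomial recursions, checking that the two summands have equal valuation and that their leading coefficients do not cancel. The paper only writes out the odd case $n=2m+1$ in detail and leaves the even case as ``a similar calculation'', so your version is if anything more complete.
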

\begin{proof}
	We proceed by induction on~$n$. For~$n=1,2,3,4$, this follows from the usual formulae for~$\psi_n$ and the fact that the leading terms of the Laurent series expansions of~$x$ and~$y$ at~$\infty$ are~$t^{-2}$ and~$-t^{-3}$, respectively. For~$n\geq5$ we proceed by strong induction on~$n$. If~$n=2m+1$ is odd, then
	\begin{align*}
		\psi_n &= \psi_{m+2}\psi_m^3-\psi_{m-1}\psi_{m+1}^3 \\
		&= ((-1)^{m+3}(m+2)t^{1-(m+2)^2}+\dots)((-1)^{m+1}mt^{1-m^2}+\dots)^3 - \\
		&\quad- ((-1)^m(m-1)t^{1-(m-1)^2}+\dots)((-1)^{m+2}(m+1)t^{1-(m+1)^2}+\dots)^3 \\
		&= ((m+2)m^3-(m-1)(m+1)^3)t^{-4m-4m^2}+\dots = (-1)^{n+1}nt^{1-n^2}+\dots \,,
	\end{align*}
	and so we have proved the inductive hypothesis in this case. A similar calculation shows the inductive hypothesis when~$n=2m$ is even.
\end{proof}

\begin{proof}[Proof of Proposition~\ref{prop:explicit_phi}]
	Let~$\beta_n'\colon P\dasharrow P$ be the rational map given by the right-hand side of~\eqref{eq:explicit_phi}. We claim that~$\beta_n'$ is a morphism. Because the first two coordinates of~$\beta_n'$ describe the morphism~$[n]\colon E\to E$, we only need consider the third coordinate. First, it is clear that $(-1)^{n+1}\frac{z^{n^2}}{\psi_n}$ is regular and invertible on~$P|_{E\smallsetminus E[n]}$ (the inverse image of~$E\smallsetminus E[n]$ in~$P$). On a neighbourhood of~$P|_{E[n]}$, the rational map~$\beta_n'$ lands in the second chart, and so it suffices to check that the $w$-coordinate of~$\beta_n'$ is invertible here. This $w$-coordinate is
	\[
	(-1)^n\cdot\frac{x-\frac{\psi_{n-1}\psi_{n+1}}{\psi_n^2}}{\frac{\psi_{2n}}{2\psi_n^4}}\cdot\frac{z^{n^2}}{\psi_n} = (-1)^n\cdot\frac{x\psi_n^2-\psi_{n-1}\psi_{n+1}}{\psi_{2n}/2\psi_n}\cdot z^{n^2} \,,
	\]
	which is clearly regular and invertible on a neighbourhood of~$P|_{E[n]\smallsetminus\{\infty\}}$.
	
	We can also write the $w$-coordinate of~$\beta_n'$ as
	\[
	(-1)^n\cdot\frac{x\psi_n^2-\psi_{n-1}\psi_{n+1}}{\psi_{2n}/2\psi_n}\cdot t^{-n^2}w^{n^2} \,.
	\]
	By Lemma~\ref{lem:leading_terms}, the Laurent expansion of~$x\psi_n^2-\psi_{n-1}\psi_{n+1}$ at~$\infty$ has leading term $t^{-2n^2}$, while the Laurent expansion of~$\psi_{2n}/2\psi_n$ has leading term $(-1)^nt^{-3n^2}$. All together, we find that the leading term of the Laurent expansion of $(-1)^n\frac{x\psi_n^2-\psi_{n-1}\psi_{n+1}}{\psi_{2n}/2\psi_n}\cdot t^{-n^2}$ is~$t^0$, and hence~$\beta_n'$ is also regular and invertible on a neighbourhood of~$P|_{\{\infty\}}$, with~$\beta_n'(\tilde\infty)=\tilde\infty$. All told, we have shown that~$\beta_n'$ is a morphism as claimed.
	
	Once we know that~$\beta_n'$ is a morphism, it is clear that it lies over~$[n]\colon E\to E$ and under~$[n^2]\colon\bG_m\to\bG_m$ (because the~$\bG_m$-action on~$P$ is given in $(x,y,z)$-coordinates by $\alpha\cdot(x,y,z)=(x,y,\alpha z)$). Since~$\beta_n'(\tilde\infty)=\tilde\infty$, this forces that~$\beta_n'=\beta_n^s$ and we are done.
\end{proof}

Next, we are going to describe the map $f_\infty\colon Y(\Qbar) \to \varinjlim_nP_n(\Qbar)$. In principle, this can be extracted directly from Proposition~\ref{prop:explicit_phi}, because all of the varieties~$P_n$ are equal to~$P^{\otimes2}$ and the morphisms $\beta_n\colon P\to P_n$ are just the composition of~$\beta_n^s$ with the squaring map~$P\to P^{\otimes2}$. But it will be easier for us to take a more indirect route which just determines the values of~$f_\infty$ on the torsion points $Y(\Qbar)_\tors \coloneqq E(\Qbar)_\tors\smallsetminus\{\infty\}$. For this, observe that $f_\infty$ maps any torsion point into the kernel of the map $\varinjlim_nP_n(\Qbar)\to\bQ\otimes E(\Qbar)$. This kernel is equal to $\bQ\otimes\bG_m(\Qbar)=\bQ\otimes\Qbar^\times$ using the morphism $\bG_m\hookrightarrow P$ picking out the fibre over~$\infty\in E(\bQ)$ (and sending~$1\in\bG_m(\bQ)$ to~$\tilde\infty\in P(\bQ)$). So~$f_\infty\colon Y(\Qbar)\to\varinjlim_nP_n(\Qbar)$ restricts to a function $Y(\Qbar)\to\bQ\otimes\Qbar^\times$, and it is this latter function which we will describe explicitly.

\begin{proposition}\label{prop:explicit_f_infty}
	For any $Q\in Y(\Qbar)_\tors$ such that~$nQ=\infty$ in~$E(\Qbar)$, we have
	\[
	f_\infty(Q) = \bigl((-1)^{n+1}n\cdot\Res_Q(\psi_n^{-1}\omega)\bigr)^{1/n^2} \,,
	\]
	where~$\omega\coloneqq\frac{\rd x}{2y+a_1x+a_3}$ is the standard invariant differential.
\end{proposition}

In the proposition and in what follows, we write the group~$\bQ\otimes\Qbar^\times$ multiplicatively, so~$b^a$ is a shorthand for~$a\otimes b$. We begin the proof with a preparatory lemma.

\begin{lemma}\label{lem:fn_on_torsion}
	Let~$Q\in E[n](\Qbar)\smallsetminus\{\infty\}$ be a non-zero $n$-torsion point. Then we have
	\[
	\beta_n^s(f(Q)) = \bigl(0,0,(-1)^{n+1}n\cdot\Res_Q(\psi_n^{-1}\omega)\bigr)
	\]
	in~$(t,u,w)$-coordinates.
\end{lemma}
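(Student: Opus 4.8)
The plan is to compute the third ($w$-) coordinate of $\beta_n^s(f(Q))$ directly from the explicit formula~\eqref{eq:explicit_phi} of Proposition~\ref{prop:explicit_phi}, using that $f(Q)$ has $(x,y,z)$-coordinates $(x(Q),y(Q),1)$. Since $Q$ is a non-zero $n$-torsion point, $\beta_n^s(f(Q))$ lies in the fibre over $\infty\in E(\Qbar)$, so it lands in the second chart and we only need to track the $w$-coordinate. As computed in the proof of Proposition~\ref{prop:explicit_phi}, the $w$-coordinate of $\beta_n^s$ in $(x,y,z)$-coordinates is
\[
(-1)^n\cdot\frac{x\psi_n^2-\psi_{n-1}\psi_{n+1}}{\psi_{2n}/2\psi_n}\cdot z^{n^2} \,,
\]
so evaluating at $z=1$ and at the point $Q$ gives $\beta_n^s(f(Q))=\bigl(0,0,\,(-1)^n\frac{(x\psi_n^2-\psi_{n-1}\psi_{n+1})(Q)}{(\psi_{2n}/2\psi_n)(Q)}\bigr)$. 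So the content of the lemma is the identity
\[
(-1)^n\cdot\frac{(x\psi_n^2-\psi_{n-1}\psi_{n+1})(Q)}{(\psi_{2n}/2\psi_n)(Q)} = (-1)^{n+1}n\cdot\Res_Q(\psi_n^{-1}\omega) \,,
\]
i.e.\ $\Res_Q(\psi_n^{-1}\omega) = -\frac{1}{n}\cdot\frac{(x\psi_n^2-\psi_{n-1}\psi_{n+1})(Q)}{(\psi_{2n}/2\psi_n)(Q)}$.

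To prove this residue identity, first I would recall the standard facts about division polynomials: $\psi_n$ (for $Q\neq\infty$) vanishes precisely on $E[n]\smallsetminus\{\infty\}$, each zero being simple, so $\psi_n^{-1}\omega$ has a simple pole at each non-zero $n$-torsion point $Q$ and the residue there is $\omega/\rd\psi_n$ evaluated at $Q$, that is $\Res_Q(\psi_n^{-1}\omega) = \frac{1}{(\rd\psi_n/\rd z)(Q)}$ where $z$ is any local parameter — concretely, writing $\rd\psi_n = \psi_n'\,\omega$ for the regular function $\psi_n' \coloneqq \rd\psi_n/\omega$, we get $\Res_Q(\psi_n^{-1}\omega) = 1/\psi_n'(Q)$. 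So it remains to identify $\psi_n'(Q)$ for $Q\in E[n]\smallsetminus\{\infty\}$. The key classical formula is the ``derivative'' relation for division polynomials, $\psi_n' = \frac{\rd\psi_n}{\omega}$, together with the standard identity (see e.g.\ the exercises on division polynomials in \cite{silverman:elliptic_curves}) relating $\psi_{2n}$ to $\psi_n$, $\psi_{n-1}$, $\psi_{n+1}$: namely $\psi_{2n} = \psi_n(\psi_{n+2}\psi_{n-1}^2-\psi_{n-2}\psi_{n+1}^2)/(2\cdot\text{something})$, and more usefully the formula expressing $[n]$ in terms of $\psi_{n-1}$, $\psi_n$, $\psi_{n+1}$ via $x\circ[n] = x - \psi_{n+1}\psi_{n-1}/\psi_n^2$. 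Differentiating $x\circ[n] = x - \psi_{n+1}\psi_{n-1}/\psi_n^2$ and using that $[n]$ is unramified away from... — actually cleaner: $[n]^*\omega = n\,\omega$, and $[n]^*\omega = \rd(x\circ[n])/(2(y\circ[n])+\cdots)$; combining with the explicit $y$-coordinate $y\circ[n] = \psi_{2n}/2\psi_n^4$ (up to the standard normalisation) yields, after clearing denominators and evaluating at a point $Q$ where $\psi_n$ vanishes, exactly $\psi_n'(Q) = -n\cdot\frac{(\psi_{2n}/2\psi_n)(Q)}{(x\psi_n^2-\psi_{n-1}\psi_{n+1})(Q)}$. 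Taking reciprocals gives the claimed residue, and hence the lemma.

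The main obstacle I anticipate is getting all the sign and normalisation conventions to line up — in particular the precise form of the division-polynomial recursions and of the formulas $x\circ[n] = x - \psi_{n+1}\psi_{n-1}/\psi_n^2$ and $y\circ[n] = \psi_{2n}/2\psi_n^4$ in Silverman's normalisation, and the sign in $[n]^*\omega = n\omega$ versus the $(-1)^n$ and $(-1)^{n+1}$ factors. A robust way to pin down the overall constant without fighting conventions is to check leading Laurent coefficients at $\infty$: by Lemma~\ref{lem:leading_terms} the leading term of $\psi_n$ at $\infty$ is $(-1)^{n+1}nt^{1-n^2}$, so $\psi_n'=\rd\psi_n/\omega$ has leading term computable from $\rd(t^{1-n^2})/\omega$ (and $\omega = \rd t\cdot(1+O(t))$ near $\infty$ in the $t$-parameter), which fixes the relation between $\psi_n'$ and the combination $(\psi_{2n}/2\psi_n)/(x\psi_n^2-\psi_{n-1}\psi_{n+1})$ up to a constant that the leading-term comparison then determines to be $-n$. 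Once the constant is fixed at $\infty$, it holds at every $Q\in E[n]\smallsetminus\{\infty\}$ since both sides are (restrictions of) global rational functions on $E$ agreeing on a dense set. This reduces the whole lemma to one leading-coefficient bookkeeping computation plus the elementary residue-at-a-simple-zero formula.
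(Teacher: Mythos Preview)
Your proposal is essentially correct but takes a more computational route than the paper. Both you and the paper need to evaluate the rational function $f_n^{s*}w$ at $Q$; the difference is in which expression for this function you work with.

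You take the ``cooked'' $w$-coordinate formula $(-1)^n\frac{x\psi_n^2-\psi_{n-1}\psi_{n+1}}{\psi_{2n}/2\psi_n}\cdot z^{n^2}$ from the proof of Proposition~\ref{prop:explicit_phi}, and then must separately prove an identity relating $\psi_n'(Q)$ to $\psi_{2n}$, $\psi_{n\pm1}$ at $n$-torsion points. The paper instead goes back one step and uses the simpler expression
\[
f_n^{s*}w \;=\; (-1)^{n+1}\,\frac{[n]^*t}{\psi_n}
\]
(immediate from the $z$-coordinate formula in Proposition~\ref{prop:explicit_phi} together with $w=tz$), and then rewrites this as a ratio of differentials
\[
(-1)^{n+1}\,\frac{\psi_n^{-1}\omega}{[n]^*t^{-1}\cdot\omega}\,,
\]
both of which have simple poles at $Q$. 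The numerator's residue at $Q$ is exactly $\Res_Q(\psi_n^{-1}\omega)$; the denominator's residue is computed in one line via $[n]^*\omega=n\omega$ and the change-of-variables formula for residues:
\[
\Res_Q\bigl([n]^*t^{-1}\cdot\omega\bigr)=n^{-1}\Res_Q\bigl([n]^*(t^{-1}\omega)\bigr)=n^{-1}\Res_\infty(t^{-1}\omega)=n^{-1}.
\]
This sidesteps all the division-polynomial algebra you set up, and in particular the sign/normalisation bookkeeping you were (rightly) worried about.

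One small caution on your backup argument: comparing leading Laurent coefficients at $\infty$ only pins down a constant of proportionality \emph{once you already know} the two rational functions in question are proportional. Your sketch does gesture at differentiating $x\circ[n]=\phi_n/\psi_n^2$ and using $[n]^*\omega=n\omega$ to get this, which works, but the sentence ``agreeing on a dense set'' is misplaced --- what you actually need is an identity of rational functions on all of $E$, and the leading-term check alone does not give that. If you carry out the differentiation argument carefully it does yield the global identity; the leading-term check is then redundant rather than load-bearing.
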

\begin{proof}
	Write~$f_n^s\coloneqq\beta_n^s\circ f$ for short. Because $nQ=\infty=(0,0)$ in $(t,u)$-coordinates, it follows that the first two coordinates of~$f_n^s(Q)$ are zero. It remains to determine its $w$-coordinate, i.e.~the value of the rational function $f_n^{s*}w$ at~$Q$. By Proposition~\ref{prop:explicit_phi}, we have
	\[
	f_n^{s*}w = (-1)^{n+1}\frac{[n]^*t}{\psi_n} = (-1)^{n+1}\frac{\psi_n^{-1}\cdot\omega}{[n]^*t^{-1}\cdot\omega} \,,
	\]
	where both of the differentials on the right-hand side have simple poles at~$Q$. To evaluate this rational function at~$Q$, we thus want to take the ratio of the residues of these differentials at~$Q$. We have
	\[
	\Res_Q([n]^*t^{-1}\cdot\omega) = n^{-1}\Res_Q([n]^*(t^{-1}\omega)) = n^{-1}\Res_\infty(t^{-1}\omega) = n^{-1}
	\]
	using invariance of~$\omega$ and the fact that~$\omega=(1+O(t))\rd t$. Putting this all together shows that the value of~$f_n^{s*}w$ at~$Q$ is~$(-1)^{n+1}n\cdot\Res_Q(\psi_n^{-1}\omega)$, as claimed.
\end{proof}

\begin{proof}[Proof of Proposition~\ref{prop:explicit_f_infty}]
	By Lemma~\ref{lem:P_n_functorial}, we have a commuting diagram
	\begin{center}
	\begin{tikzcd}
		Y(\Qbar) \arrow[r,"f"]\arrow[rd,"f_\infty"'] & P(\Qbar) \arrow[r,"\beta_n^s"]\arrow[d,"\beta_\infty"] & P(\Qbar) \arrow[d,"\beta_\infty"] \\
		 & \varinjlim_mP_m(\Qbar) \arrow[r,"\beta_n^s"] & \varinjlim_mP_m(\Qbar) \,.
	\end{tikzcd}
	\end{center}
	By Lemma~\ref{lem:fn_on_torsion}, we know that $\beta_n^s(f(Q))$ equal to the image of~$(-1)^{n+1}n\cdot\Res_Q(\psi_n^{-1}\omega)\in\Qbar^\times$ under the inclusion $\bG_m\hookrightarrow P$ of the fibre over~$\infty\in E(\bQ)$. Hence we have $\beta_\infty(\beta_n^s(f(Q)))=(-1)^{n+1}n\cdot\Res_Q(\psi_n^{-1}\omega)$ as elements of $\bQ\otimes\Qbar^\times\subseteq\varinjlim_mP_m(\Qbar)$. The map $\beta_n^s\colon\varinjlim_mP_m(\Qbar)\to\varinjlim_mP_m(\Qbar)$ restricts to the $n^2$th power map on~$\bQ\otimes\Qbar^\times$, whence $f_\infty=\bigl((-1)^{n+1}n\cdot\Res_Q(\psi_n^{-1}\omega)\bigr)^{1/n^2}$ as claimed.
\end{proof}

\begin{remark}
	In the statement of Proposition~\ref{prop:explicit_f_infty}, the integer~$n$ does not need to be the exact order of~$Q$ in~$E(\Qbar)$, any multiple of the order is also fine. In particular, Proposition~\ref{prop:explicit_f_infty} implies a compatibility between the residues of the differential forms~$\psi_{mn}^{-1}\omega$ at $n$-torsion points for varying~$m$. With some care and using Lemma~\ref{lem:fn_on_torsion} one can show that this compatibility is
	\[
	mn\Res_Q(\psi_{mn}^{-1}\omega) = (-1)^{m+1}\cdot\bigl(n\Res_Q(\psi_n^{-1}\omega)\bigr)^{m^2}
	\]
	for all non-zero $n$-torsion points~$Q$. This identity seems rather hard to prove directly.
\end{remark}

\subsection{Local heights}\label{ss:local_heights}

Now let us assume that~$E$ has Mordell--Weil rank~$0$. Let~$\cE/\bZ$ denote the minimal regular model of~$E$, and let~$\cY\subseteq\cE$ denote the complement of the section at~$\infty$, so that~$\cY$ is a regular model of~$Y$. We are going to determine the finite subset $W\subseteq\varinjlim_nP_n(\bQ)$ described in \S\ref{ss:Z}. That is, if~$\cU\subseteq\cY$ is a simple open, and if~$(\cP_n)_{n\geq1}$ denotes the corresponding sequence of simple models of the~$(P_n)_{n\geq1}$, then we know that~$\varinjlim_n\cP_n(\bZ)$ is a singleton subset $\{a\}\subseteq\varinjlim_nP_n(\bQ)$, and we are going to determine all of the possibilities for~$a$. Because~$E$ has Mordell--Weil rank~$0$, the map $\bQ\otimes\bQ^\times \to \varinjlim_nP_n(\bQ)$ induced by the inclusion $\bG_m\hookrightarrow P$ is bijective, and so we may think of~$a$ as an element of~$\bQ\otimes\bQ^\times$. In particular, to pin down~$a$, it suffices to pin down its valuations~$v_\ell(a)$ at all primes~$\ell$. We will do this by considering local heights.

Recall that, as explained in Remark~\ref{rmk:local_heights}, the rigidified line bundle~$L=\cO(\infty)$ corresponding to~$P$ comes with a family of canonical locally bounded $\ell$-adic metrics \cite[Theorem~9.5.7]{bombieri-gubler:heights}
\[
||\cdot||_{L,\ell}\colon L(\Qbar_\ell) \to \bR_{\geq0} \,.
\]
On the other hand, Silverman defines a local height function \cite[\S1]{silverman:computing_heights}
\[
\hat\lambda_\ell\colon Y(\bQ_\ell) \to \bR
\]
explicitly in terms of the Weierstrass equation for~$E$. (This is the same local height as in \cite[Chapter~III.4]{lang:elliptic_curves}). Silverman's local height is uniquely characterised by the following properties:
\begin{enumerate}[label = (\alph*), ref = (\alph*)]
	\item\label{condn:local_height_boundedness} the function $\hat\lambda_\ell$ is bounded on the complement of each $\ell$-adic open neighbourhood of~$\infty\in E(\bQ_\ell)$;
	\item\label{condn:local_height_infinity} $\lim_{Q\to\infty}\bigl(\hat\lambda_\ell(Q)-\frac12\log|x(Q)|_\ell\bigr)$ exists; and
	\item\label{condn:local_height_doubling} we have
	\[
	\hat\lambda_\ell(2Q) = 4\hat\lambda_\ell(Q) - \log|2y(Q)+a_1x(Q)+a_3|_\ell
	\]
	for all~$Q\in(E\smallsetminus E[2])(\bQ_\ell)$.
\end{enumerate}
The relationship between Silverman's local height and the canonical metric $||\cdot||_{L,\ell}$ is as follows.

\begin{proposition}\label{prop:neron_functions_equal}
	Let~$f\in\rH^0(Y,L)$ be the section of~$L=\cO(\infty)$ corresponding to the rational function~$1$. Then we have
	\[
	\hat\lambda_\ell(Q) = -\log||f(Q)||_{L,\ell}
	\]
	for all~$Q\in Y(\bQ_\ell)$.
\end{proposition}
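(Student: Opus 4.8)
The plan is to identify both sides of the claimed identity as (the negative logarithm of) a canonical N\'eron-type local height function on $E$ relative to the divisor $(\infty)$, and then invoke the uniqueness characterisation of such functions. Both $Q\mapsto -\log||f(Q)||_{L,\ell}$ and $\hat\lambda_\ell$ are functions on $Y(\bQ_\ell)=E(\bQ_\ell)\smallsetminus\{\infty\}$, and I would show that the first satisfies Silverman's three defining properties \ref{condn:local_height_boundedness}--\ref{condn:local_height_doubling}; since $\hat\lambda_\ell$ is \emph{uniquely} determined by these, the two must agree.

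First I would recall the behaviour of $||\cdot||_{L,\ell}$: by \cite[Theorem~9.5.7]{bombieri-gubler:heights} the rigidified line bundle $L=\cO(\infty)$ carries a canonical locally bounded metric satisfying the quadratic functional equation $[2]^*||\cdot||_L = ||\cdot||_L^{\otimes 4}$ coming from the cube-structure isomorphism $[2]^*L\cong L^{\otimes 4}$, normalised by the rigidification at $\infty$. Concretely, the rational function $1$ is a nowhere-vanishing section of $L$ over $Y$, and the metric evaluated on $f$ is a nowhere-zero $\bR_{>0}$-valued function on $Y(\bQ_\ell)$, so $-\log||f(\cdot)||_{L,\ell}$ is a well-defined real function. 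Property \ref{condn:local_height_boundedness} (boundedness away from $\infty$) is immediate from local boundedness of the metric, since $f$ generates $L$ over $Y$ and hence on any compact subset of $Y(\bQ_\ell)$. Property \ref{condn:local_height_infinity} follows by analysing the metric near the puncture: in the chart \eqref{eq:weierstrass_at_infinity} the section $t^{-1}$ trivialises $L$ near $\infty$, so $f = t\cdot(t^{-1})$ and $-\log||f(Q)||_{L,\ell} = -\log|t(Q)|_\ell - \log||t^{-1}(Q)||_{L,\ell}$; the second term extends continuously across $\infty$ because $t^{-1}$ is a local generator, while $-\log|t(Q)|_\ell = \tfrac12\log|x(Q)|_\ell + O(1)$ as $Q\to\infty$ since $x = t^{-2}(1+O(t))$, so the limit in \ref{condn:local_height_infinity} exists. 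Property \ref{condn:local_height_doubling} is the heart of the matter: the rational function $\frac{[2]^*f^{\otimes 4}}{f}$ (the ratio measuring the discrepancy between the cube isomorphism applied to $f$ and $f$ itself, as a rational function on $E$) has divisor $\div([2]^*(\infty)) - 4(\infty) = \div(E[2]) - 4(\infty)$, and an explicit computation with the duplication formula shows this rational function is $2y+a_1x+a_3$ up to a constant that the rigidification fixes to be $1$. Taking $-\log|\cdot|_\ell$ of the functional equation $||[2]^*f||_{L,\ell} = ||f||_{L,\ell}^{\,4}\cdot|2y+a_1x+a_3|_\ell^{-1}$ then gives exactly \ref{condn:local_height_doubling}.

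The main obstacle is the precise bookkeeping in step three: one must pin down the constant in the identification of $\frac{[2]^*f^{\otimes 4}}{f}$ with $2y+a_1x+a_3$ (and not some scalar multiple), and this requires being careful about how the rigidification of $L$ at $\infty$ interacts with the canonical cube-structure isomorphism $[2]^*L\cong L^{\otimes 4}$ — i.e. that the metric is normalised so that $||t^{-1}||_{L,\ell}(\infty)=1$ in the chosen coordinates, matching Silverman's normalisation in \ref{condn:local_height_infinity}. A clean way to sidestep subtleties is to work directly with the formula for $\hat\lambda_\ell$ in terms of the Weierstrass coordinates from \cite[\S1]{silverman:computing_heights}: Silverman's local height is built precisely from the $\ell$-adic absolute value of $x$ near $\infty$ together with the doubling recursion, which is exactly the data encoding the canonical metric on $\cO(\infty)$, so the agreement is essentially a matter of matching normalisations. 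Alternatively, one may deduce the result from \cite{betts:motivic_local_heights} as indicated in Remark~\ref{rmk:local_heights}, where the canonical metric is related to the local Kummer map for $P$; but the uniqueness-characterisation argument above is self-contained and I would present that. Once the three properties are verified, the uniqueness of Silverman's local height forces $\hat\lambda_\ell(Q) = -\log||f(Q)||_{L,\ell}$ for all $Q\in Y(\bQ_\ell)$, completing the proof. \qed
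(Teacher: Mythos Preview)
Your overall strategy is exactly the paper's: show that $-\log\lVert f(\cdot)\rVert_{L,\ell}$ satisfies Silverman's three characterising properties \ref{condn:local_height_boundedness}--\ref{condn:local_height_doubling} and invoke uniqueness. The differences are in how conditions \ref{condn:local_height_infinity} and \ref{condn:local_height_doubling} are verified, and the order in which they are done.

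For \ref{condn:local_height_doubling}, rather than arguing abstractly with the cube-structure isomorphism $[2]^*L\cong L^{\otimes4}$ and then chasing the normalising constant, the paper uses the explicit formula for the endomorphism $\beta_2^s\colon P\to P$ established in Proposition~\ref{prop:explicit_phi}: in $(x,y,z)$-coordinates, $\beta_2^s(x,y,z)=(\,\cdot\,,\,\cdot\,,-z^4/\psi_2)$, so $\beta_2^s(f(Q))$ and $f(2Q)$ differ by the $\bG_m$-action of $-\psi_2(Q)^{-1}$. Combining this with the metric identity $\lVert\beta_2^s(\tilde Q)\rVert_{L,\ell}=\lVert\tilde Q\rVert_{L,\ell}^4$ (which follows from functoriality of the canonical metric under the factorisation $P\to P^{\otimes4}\simeq[2]^*P\to P$) gives the duplication relation directly (Lemma~\ref{lem:doubling}). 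This sidesteps exactly the bookkeeping you flag as the main obstacle: the constant is visibly $-1$, and $\lvert{-1}\rvert_\ell=1$.

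For \ref{condn:local_height_infinity}, the paper does \emph{not} appeal to continuity of the metric on a local generator as you do. Instead it proves a sharper statement (Lemma~\ref{lem:neron_function_asymptotic}): $-\log\lVert f(Q)\rVert_{L,\ell}=\tfrac12\log\lvert x(Q)\rvert_\ell$ exactly for $Q$ near $\infty$. The argument writes $f=t\cdot f'$ with $f'$ the section corresponding to $t^{-1}$, then shows $-\log\lVert f'(Q)\rVert_{L,\ell}=0$ near $\infty$ by combining local boundedness with the already-proved duplication relation \ref{condn:local_height_doubling}: one gets $-\log\lVert f'(2Q)\rVert=-4\log\lVert f'(Q)\rVert$ near $\infty$, and a bounded function satisfying this must vanish. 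So the logical order is reversed from yours --- (c) is proved first and used to obtain (b) --- and only local boundedness of the metric, not continuity, is invoked. Your route via continuity is also valid (the canonical metric is continuous), and arguably more direct for the weaker statement that the limit merely exists; the paper's route yields the exact value of the limit while assuming less.
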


For the proof, we need to verify conditions~\ref{condn:local_height_boundedness}--\ref{condn:local_height_doubling} for the function $-\log||f||_{L,\ell}$. Condition~\ref{condn:local_height_boundedness} is immediate because~$||\cdot||_{L,\ell}$ is a locally bounded metric. The verifications of conditions~\ref{condn:local_height_infinity} and~\ref{condn:local_height_doubling} are contained in Lemmas~\ref{lem:doubling} and~\ref{lem:neron_function_asymptotic}.

\begin{lemma}\label{lem:doubling}
	For any $Q\in(E\smallsetminus E[2])(\Qbar_\ell)$, we have
	\[
	-\log||f(2Q)||_{L,\ell} = -4\log||f(Q)||_{L,\ell} - \log|2y(Q) + a_1x(Q) + a_3|_\ell \,.
	\]
\end{lemma}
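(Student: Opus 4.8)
The plan is to obtain the doubling formula as an explicit incarnation of the homogeneity of the canonical $\ell$-adic metric under the multiplication-by-$2$ map on $E$, using the endomorphism $\beta_2^s$ of Proposition~\ref{prop:explicit_phi} to make that homogeneity completely concrete.

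First I would recall the structural input on the canonical metric. Since $L=\cO(\infty)$ is symmetric, the line bundles $[2]^*L$ and $L^{\otimes 4}$ on $E$ are canonically isomorphic as rigidified line bundles, the rigidification being the one carried by the base point $\tilde\infty$. By the functoriality of the canonical $\ell$-adic metric under homomorphisms of abelian varieties and its compatibility with tensor powers \cite[Chapter~9]{bombieri-gubler:heights}, the pullback metric $[2]^*||\cdot||_{L,\ell}$ on $[2]^*L$ corresponds under this isomorphism to $||\cdot||_{L,\ell}^{\otimes 4}$ on $L^{\otimes 4}$; that is, the canonical isomorphism $\theta\colon[2]^*L\xrightarrow{\sim}L^{\otimes 4}$ is an isometry.

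Next I would make $\theta$ explicit. Realising $[2]^*L$ and $L^{\otimes 4}$ as $\bG_m$-torsors over $E$, the morphism $\beta_2^s\colon P\to P$ of Proposition~\ref{prop:explicit_phi} is by construction the composite $P\to P^{\otimes 4}\xrightarrow{\theta^{-1}}[2]^*P\to P$, in which the outer maps are the fourth-tensor-power map and the tautological projection of the fibre product. Concretely, since $\psi_1=1$ that proposition gives $\beta_2^s(f(Q))=-\psi_2(Q)^{-1}\cdot f(2Q)$ inside $P_{2Q}$ for every $Q\in(E\smallsetminus E[2])(\Qbar_\ell)$, where $\psi_2=2y+a_1x+a_3$ is the second division polynomial; since $\theta$ is a morphism of $\bG_m$-torsors this rearranges to $\theta(f(2Q))=-\psi_2(Q)\cdot f(Q)^{\otimes 4}$ as elements of $(L^{\otimes 4})_Q$. (If one prefers to sidestep Proposition~\ref{prop:explicit_phi}, one can instead note directly that $\theta(f(2Q))/f(Q)^{\otimes 4}$ is a rational function on $E$ with divisor $[2]^*(\infty)-4(\infty)=\sum_{T\in E[2]\smallsetminus\{\infty\}}(T)-3(\infty)$, hence a constant times $2y+a_1x+a_3$, with the constant pinned down to be a unit by comparing leading Laurent coefficients at $\infty$ in the parameter $t=-x/y$, as in Lemma~\ref{lem:leading_terms}.)

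Finally, combining the isometry of $\theta$ with the identification $||f(2Q)||_{[2]^*L,\ell}=||f(2Q)||_{L,\ell}$ gives
\[
||f(2Q)||_{L,\ell}=||\theta(f(2Q))||_{L^{\otimes 4},\ell}=|\psi_2(Q)|_\ell\cdot||f(Q)||_{L,\ell}^{4},
\]
and applying $-\log|\cdot|_\ell$ yields the stated identity. I expect the only point really requiring care to be a bookkeeping one: confirming that the rigidification implicit in the canonical metric of \cite[Theorem~9.5.7]{bombieri-gubler:heights} is the one attached to $\tilde\infty$ (equivalently, to the chosen Weierstrass model), so that $\theta$ is precisely the isomorphism built into $\beta_2^s$ and no spurious scalar enters the final formula.
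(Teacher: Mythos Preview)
Your proof is correct and follows essentially the same approach as the paper's own argument: both use the factorisation of $\beta_2^s$ through $P^{\otimes4}\cong[2]^*P$, invoke the compatibility of the canonical $\ell$-adic metric with tensor powers and pullbacks from \cite[Theorem~9.5.7]{bombieri-gubler:heights} to obtain $||\beta_2^s(\tilde Q)||_{L,\ell}=||\tilde Q||_{L,\ell}^4$, and then read off the scalar $-\psi_2(Q)^{-1}$ relating $\beta_2^s(f(Q))$ and $f(2Q)$ from the explicit formula of Proposition~\ref{prop:explicit_phi}. Your framing in terms of the isometry $\theta$ is a cosmetic repackaging of the same computation.
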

\begin{proof}
	Because the map~$\beta_2^s\colon P\to P$ lies over~$[2]\colon E\to E$ and under~$[4]\colon \bG_m\to \bG_m$, we know that it factors as a composition
	\[
	P \xrightarrow{(\cdot)^{\otimes4}} P^{\otimes4} \xrightarrow[\sim]{\psi} [2]^*P \xrightarrow{\widetilde{[2]}} P
	\]
	in which the first map is~$\tilde Q\mapsto \tilde Q^{\otimes4}$, the second is an isomorphism of rigidified line bundles, and the third map is the projection from the pullback. The torsors~$P^{\otimes 4}$ and~$[2]^*P$ correspond to the rigidified line bundles~$L^{\otimes4}$ and~$[2]^*L$, respectively; let $||\cdot||_{L^{\otimes4},\ell}$ and~$||\cdot||_{[2]^*L,\ell}$ be their canonical $\ell$-adic metrics. By \cite[Theorem~9.5.7]{bombieri-gubler:heights}, for every~$\tilde Q\in P(\Qbar_\ell)$ we have
	\begin{equation}\label{eq:height_of_phi}
	||\beta_2^s(\tilde Q)||_{L,\ell} = ||\widetilde{[2]}\psi(\tilde Q^{\otimes4})||_{L,\ell} = ||\psi(\tilde Q^{\otimes4})||_{[2]^*L,\ell} = ||\tilde Q^{\otimes4}||_{L^{\otimes4},\ell} = ||\tilde Q||_{L,\ell}^4 \,.
	\end{equation}
	
	If we take~$\tilde Q=f(Q)$ for a point~$Q=(x,y)\in(E\smallsetminus E[2])(\Qbar_\ell)$ we have
	\[
	f(Q) = (x,y,1) \quad\text{and}\quad \beta_2^s(f(Q)) = (x-\frac{\psi_3}{\psi_2^2},\frac{\psi_4}{\psi_2^4},-\frac1{\psi_2})
	\]
	in~$(x,y,z)$-coordinates by Proposition~\ref{prop:explicit_phi}. In other words, $\beta_2^s(f(Q))$ and~$f(2Q)$ differ by the action of $-\psi_2^{-1}=-\frac1{2y+a_1x+a_3}\in\bQ_\ell^\times$, and so
	\begin{equation}\label{eq:height_of_phi_2}
	||\beta_2^s(f(Q))||_{L,\ell} = \frac{||f(2Q)||_{L,\ell}}{|2y+a_1x+a_3|_\ell} \,.
	\end{equation}
	Combining~\eqref{eq:height_of_phi} and~\eqref{eq:height_of_phi_2} gives the result.
\end{proof}

\begin{lemma}\label{lem:neron_function_asymptotic}
	We have
	\[
	-\log||f(Q)||_{L,\ell} = \frac12\log|x(Q)|_\ell
	\]
	for~$Q\neq\infty$ sufficiently close to~$\infty$ in the $\ell$-adic topology on~$E(\Qbar_\ell)$.
\end{lemma}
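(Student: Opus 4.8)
The plan is to verify the local metric asymptotic near the puncture by a direct computation in the chart at infinity, using the canonical nature of the metric $||\cdot||_{L,\ell}$ on the rigidified line bundle $L=\cO(\infty)$. First I would recall that the second chart~\eqref{eq:weierstrass_at_infinity} has coordinates $(t,u)$ with $t=-x/y$ a local parameter at $\infty$, that $L=\cO(\infty)$ trivialises over the open $U$ via the rational function $t^{-1}$, and that the section $f$ (which corresponds to the rational function $1$) is expressed in this trivialisation as $t\cdot(\text{local generator})$. So $||f(Q)||_{L,\ell}=|t(Q)|_\ell\cdot||s_0(Q)||_{L,\ell}$ where $s_0$ is the local generator of $L$ over $U$ corresponding to $t^{-1}$.

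The key input is that the canonical metric $||\cdot||_{L,\ell}$ on a rigidified line bundle over an abelian variety with good or semi-stable reduction is \emph{continuous} (indeed a locally bounded, continuous metric, by \cite[Theorem~9.5.7]{bombieri-gubler:heights}), and in particular $||s_0(Q)||_{L,\ell}$ extends to a continuous non-vanishing function on an $\ell$-adic neighbourhood of $\infty$ in $U(\bQ_\ell)$. Thus $-\log||s_0(Q)||_{L,\ell}$ is bounded near $\infty$, and
\[
-\log||f(Q)||_{L,\ell} = -\log|t(Q)|_\ell - \log||s_0(Q)||_{L,\ell} \,.
\]
On the other hand, from the Weierstrass equations the leading Laurent behaviour at $\infty$ gives $x=t^{-2}(1+O(t))$, so $|x(Q)|_\ell = |t(Q)|_\ell^{-2}(1+o(1))$ and hence $\tfrac12\log|x(Q)|_\ell = -\log|t(Q)|_\ell + o(1)$ as $Q\to\infty$. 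Combining these, $-\log||f(Q)||_{L,\ell} - \tfrac12\log|x(Q)|_\ell = -\log||s_0(Q)||_{L,\ell} + o(1)$, which is bounded near $\infty$; this already suffices to verify condition~\ref{condn:local_height_infinity} (the limit exists and equals $-\log||s_0(\infty)||_{L,\ell}$, which one can moreover normalise). But the statement of the lemma asserts an \emph{exact} equality for $Q$ sufficiently close to $\infty$, so the remaining point is to see that $||s_0(Q)||_{L,\ell}$ is not merely bounded but \emph{identically equal to $1$} near $\infty$ — equivalently that $-\log||s_0(\infty)||_{L,\ell}=0$ and that the metric is locally constant along $U$ in the relevant neighbourhood.

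The main obstacle, then, is pinning down this constant. I expect the cleanest route is to use the integral model: $\cP\to\cE$ (the canonical model of the pointed torsor $(P,\tilde\infty)$, in the sense of Example~\ref{ex:model_with_point}) restricts over the section at $\infty$ to a model in which the chosen rigidification is integral, and by Remark~\ref{rmk:local_heights} the canonical metric is characterised by the property that $||\tilde x||_{L,\ell}=1$ exactly on $\cP(\bZ_\ell)$-points lying over the identity section. Since $t^{-1}$ is precisely the trivialisation of $\cO(\infty)$ coming from the integral structure near the section at infinity, $s_0$ is an integral generator there, so $||s_0(Q)||_{L,\ell}=1$ for $Q$ reducing into the smooth locus near $\infty$ — which holds for all $Q$ sufficiently $\ell$-adically close to $\infty$. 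Feeding this back gives $-\log||f(Q)||_{L,\ell} = -\log|t(Q)|_\ell = \tfrac12\log|x(Q)|_\ell$ exactly (using $x = t^{-2}\cdot(\text{unit near }\infty)$, where the unit has absolute value $1$ for such $Q$), which is the claim. The one technical care needed is to be sure the "unit near $\infty$'' in $x = t^{-2}u^{-1}\cdot(\cdots)$, read off from~\eqref{eq:weierstrass_at_infinity}, is an $\ell$-adic unit on the relevant neighbourhood; this follows since its value at $\infty$ is $1$ and it is continuous and non-vanishing.
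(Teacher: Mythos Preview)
Your approach is different from the paper's and has a gap at the crucial step. You correctly set up the decomposition $-\log||f(Q)||_{L,\ell} = -\log|t(Q)|_\ell - \log||s_0(Q)||_{L,\ell}$ and correctly observe that the asymptotic $x = t^{-2}(1+O(t))$ gives $-\log|t(Q)|_\ell = \tfrac12\log|x(Q)|_\ell$ near~$\infty$. The problem is your justification that $||s_0(Q)||_{L,\ell}=1$ identically near~$\infty$. You cite Remark~\ref{rmk:local_heights} as saying the canonical metric equals~$1$ on $\cP(\bZ_\ell)$-points, but that remark only says the metric is \emph{constant} on~$\cP(\bZ_\ell)$; it does not identify the constant. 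To know the constant is~$1$ you would need the normalisation $||\tilde\infty||_{L,\ell}=1$ coming from the rigidification of~$L$, which is standard but not stated in the paper, and you would also need to check that $s_0(Q)$ lands in~$\cP(\cO_{\Qbar_\ell})$ for~$Q$ sufficiently close to~$\infty$ over~$\Qbar_\ell$ (the remark is formulated over~$\bZ_\ell$). Both gaps are fillable, but neither is as immediate as you suggest.

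The paper takes a different and more self-contained route: rather than appealing to integral models, it uses the doubling relation already established in Lemma~\ref{lem:doubling}. Writing $f'=s_0$, one computes that near~$\infty$ the function $g(Q)\coloneqq -\log||f'(Q)||_{L,\ell}$ satisfies $g(2Q)=4g(Q)$ (because the error term $\log|t(2Q)/(t(Q)^4\psi_2)|_\ell$ vanishes, the relevant quotient being a principal unit at~$\infty$). Since~$g$ is also bounded near~$\infty$ and~$[2]$ fixes~$\infty$, iterating forces $g\equiv 0$ on a neighbourhood. This dynamical argument avoids any appeal to the integral model or to the normalisation of the metric, and uses only ingredients already in hand from the proof of Lemma~\ref{lem:doubling}.
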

\begin{proof}
	Recall that we have the neighbourhood~$U$ of~$\infty\in E$ on which the rational functions $t\coloneqq -\frac xy$ and $u\coloneqq-\frac1y$ are both regular, and~$t$ has a simple zero at~$\infty$ and no other zeroes or poles on~$U$. The nowhere vanishing section $t^{-1}\in\rH^0(U,L)$ corresponds to a morphism $f'\colon U\to P$ lifting the open embedding $U\hookrightarrow E$, and the morphisms $f$ and~$f'$ are related by $f=tf'$ on $U\cap Y$. Because~$||\cdot||_{L,\ell}$ is a metric, this implies that
	\[
	-\log||f(Q)||_{L,\ell} = -\log||f'(Q)||_{L,\ell}-\log|t(Q)|_\ell
	\]
	for~$Q\in (Y\cap U)(\Qbar_\ell)$.
	
	On the one hand, since the Laurent series expansion of~$x$ at~$\infty$ is~$t^{-2}+O(t^{-1})$, it follows that~$t^2x\in 1+t\cO_{E,\infty}$ is a principal unit, and hence $2\log|t(Q)|_\ell+\log|x(Q)|_\ell=0$ for all~$Q$ sufficiently close to~$\infty$. On the other hand, by Lemma~\ref{lem:doubling} we have
	\[
	-\log||f'(2Q)||_{L,\ell}+4\log||f'(Q)||_{L,\ell} = \log\bigl|\frac{t(2Q)}{t(Q)^4(2y(Q)+a_1x(Q)+a_3)}\bigr|_\ell \,.
	\]
	Because $[2]\colon E\to E$ induces the doubling map on Lie algebras, it follows that~$[2]^*t$ has Laurent expansion $2t+O(t^2)$ at~$\infty$, and so $-\frac{[2]^*t}{t^4(2y+a_1x+a_3)}\in 1+t\cO_{E,\infty}$ is a principal unit by Lemma~\ref{lem:leading_terms}. This means that for~$Q$ in a neighbourhood of~$\infty\in E(\Qbar_\ell)$ we have
	\[
	-\log||f'(2Q)||_{L,\ell} = -4\log||f'(Q)||_{L,\ell} \,.
	\]
	But $-\log||f'||_{L,\ell}$ is also bounded on a neighbourhood of~$\infty$, and the only way this is possible is if~$-\log||f'(Q)||_{L,\ell}=0$ for~$Q$ in a neighbourhood of~$\infty$.
	
	Putting everything together, we have $-\log||f'(Q)||_{L,\ell}=0$ and~$-\log|t(Q)|_\ell=\frac12\log|x(Q)|_\ell$ for~$Q$ sufficiently close to~$\infty$, whence the result.
\end{proof}

Between Lemmas~\ref{lem:doubling} and~\ref{lem:neron_function_asymptotic}, we have shown that~$-\log||f||_{L,\ell}$ satisfies the conditions~\ref{condn:local_height_boundedness}--\ref{condn:local_height_doubling} uniquely characterising~$\hat\lambda_\ell$, and so we have proven Proposition~\ref{prop:neron_functions_equal}. \qed\smallskip

From now on, we will work with the renormalised local height
\[
\lambda_\ell \coloneqq \frac1{\log(\ell)}\hat\lambda_\ell = -\log_\ell||f||_{L,\ell} \,,
\]
which we will shortly see takes values in~$\bQ$. Because~$\hat\lambda_\ell$ is constant on the non-identity components of the reduction \cite[Theorem~11.5.1]{lang:elliptic_curves} (or by Remark~\ref{rmk:local_heights}), we know that~$\lambda_\ell$ takes some constant value~$b_\ell$ on~$\cU(\bZ_\ell)$. These constant values~$b_\ell$, as~$\ell$ varies, determine the value of the singleton set~$\varinjlim_n\cP_n(\bZ)\subseteq\bQ\otimes\bQ^\times$.

\begin{lemma}\label{lem:elliptic_singleton}
	Let~$\cU\subseteq\cY$ be a simple open, and let~$(\cP_n)_{n\geq1}$ be the corresponding sequence of models of~$(P_n)_{n\geq1}$. For each prime~$\ell$ let~$b_\ell\in\bQ$ be the constant value of $\lambda_\ell$ on~$\cU(\bZ_\ell)$. Then, as a subset of~$\varinjlim_nP_n(\bQ)=\bQ\otimes\bQ^\times$, we have $\varinjlim\cP_n(\bZ)=\{a\}$, where
	\[
	a = \prod_\ell\ell^{b_\ell} \,.
	\]
	(Equivalently, $a$ is the unique element of~$\bQ\otimes\bQ^\times$ with $v_\ell(a)=b_\ell$ for all~$\ell$.)
\end{lemma}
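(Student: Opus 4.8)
The plan is to reduce the computation of the singleton $\varinjlim_n\cP_n(\bZ)$ to a purely local one at each prime, using the adelic description of integral points on a torsor. First I would unwind the definition: the simple model $\cP$ is a $\bG_m$-torsor over the identity component $\cT=\cA^0$ of the N\'eron model (after passing far enough along the tower so that the component group is killed, which is harmless since $\varinjlim_n\cP_n$ is unchanged), and $\cP(\bZ)$ is a $\bG_m(\bZ)=\{\pm1\}$-torsor over $\cT(\bZ)$. Since $E$ has Mordell--Weil rank $0$, $\cT(\bZ)=E(\bQ)$ is finite, and Lemma~\ref{lem:colimit_of_Pn} already tells us $\varinjlim_n\cP_n(\bZ)$ is a single point $a$; the content is to identify it. Using that the inclusion $\bG_m\hookrightarrow P$ induces a bijection $\bQ\otimes\bQ^\times\xrightarrow\sim\varinjlim_nP_n(\bQ)$ (valid because $E(\bQ)$ is finite, as in the proof of Lemma~\ref{lem:AT_global_selmer}), it suffices to pin down $v_\ell(a)$ for every prime $\ell$.

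The key step is to compute $v_\ell(a)$ via local heights. The $\ell$-adic valuation of $a\in\bQ\otimes\bQ^\times$ can be read off from the image of $a$ in $\varinjlim_nP_n(\bQ_\ell)$: concretely, $\cP_n(\bZ_\ell)$ is, by Remark~\ref{rmk:local_heights}, exactly the set of points in the fibre of $P_n\to A$ over $\cT_n(\bZ_\ell)$ on which the canonical $\ell$-adic metric $||\cdot||_{L^{\otimes 2n^2},\ell}$ takes its "integral" value, and passing to the colimit the renormalised local height $\lambda_\ell=-\log_\ell||f||_{L,\ell}$ detects the $\ell$-adic valuation. More precisely, I would argue: for any $x\in\cU(\bZ_\ell)$, the point $f_\infty(x)\in\varinjlim_nP_n(\bQ_\ell)$ lies in $\varinjlim_n\cP_n(\bZ_\ell)$, which surjects onto $\varinjlim_n\cT_n(\bZ_\ell)$ with fibre a torsor under $\varinjlim_n\bG_m(\bZ_\ell)=\bQ\otimes\bZ_\ell^\times$; meanwhile $a$ also lies in $\varinjlim_n\cP_n(\bZ)\subseteq\varinjlim_n\cP_n(\bZ_\ell)$. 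Comparing $a$ (viewed in $\bQ\otimes\bQ_\ell^\times$) with $f_\infty(x)$ through the $\bG_m$-torsor structure and applying Proposition~\ref{prop:neron_functions_equal} together with the relation $||\beta_n^s(\tilde Q)||_{L,\ell}=||\tilde Q||_{L,\ell}^{n^2}$ (the analogue of~\eqref{eq:height_of_phi}, which follows from functoriality of canonical metrics exactly as in the proof of Lemma~\ref{lem:doubling}), I get $v_\ell(a)=\lambda_\ell(x)=b_\ell$, where the last equality is the definition of $b_\ell$ as the constant value of $\lambda_\ell$ on $\cU(\bZ_\ell)$ — constant because $\lambda_\ell$ is constant on connected components of $E(\bQ_\ell)$ by \cite[Theorem~11.5.1]{lang:elliptic_curves} and a simple open meets only one component.

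Having established $v_\ell(a)=b_\ell$ for all $\ell$, the conclusion $a=\prod_\ell\ell^{b_\ell}$ is immediate: an element of $\bQ\otimes\bQ^\times$ is determined by its valuations, and only finitely many $b_\ell$ are nonzero (at primes of good reduction for $\cE$ away from the support of $\cO(\infty)$ the model is $\bG_m$-trivial in the obvious way, so $b_\ell=0$), so the product makes sense. I would also note in passing that this shows $b_\ell\in\bZ$ for all $\ell$, since $a\in\bQ\otimes\bQ^\times$ in fact lies in the image of $\bQ^\times$ (the colimit is over squaring-type maps but $\cP(\bZ)\ne\emptyset$ forces $a$ to be an honest rational number up to the $\bQ$-tensor, hence its valuations are integers; alternatively $b_\ell=\lambda_\ell(x)\in\bZ$ directly from the explicit formula for Silverman's local height on integral points).

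The main obstacle I anticipate is the bookkeeping in the second step: carefully tracking how the $\bG_m$-torsor structure on $\cP_n(\bZ_\ell)$ interacts with the colimit over the maps $\beta_{n,m}$ (which lie under $[m^2]\colon\bG_m\to\bG_m$, so induce $\bQ\otimes\bQ_\ell^\times\xrightarrow{x\mapsto x^{m^2}}\bQ\otimes\bQ_\ell^\times$, i.e.\ the identity after tensoring with $\bQ$), and confirming that the "integral value" of the canonical metric genuinely corresponds to the valuation matching up with Silverman's normalisation rather than differing by an overall shift. This is essentially a compatibility check between Remark~\ref{rmk:local_heights}, Proposition~\ref{prop:neron_functions_equal}, and the scaling behaviour~\eqref{eq:height_of_phi}, none of which is deep, but it requires care to state cleanly.
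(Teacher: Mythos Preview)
Your approach is essentially the paper's: both compute $v_\ell(a)$ by using that the canonical $\ell$-adic metric $-\log_\ell\|\cdot\|_{L,\ell}$ is constant on $\cP_n(\bZ_\ell)$ (Remark~\ref{rmk:local_heights}), scales by $n^2$ under the isogeny maps, and restricts to the $\ell$-adic valuation on the fibre over~$0$. The paper phrases this by picking a finite-level representative $Q_n\in\cP_n(\bZ)$ over~$0$ and evaluating directly, whereas you work in the colimit and compare with $f_\infty(x)$; the ingredients are identical. One wording caution: your phrase ``through the $\bG_m$-torsor structure'' is slightly misleading, since $a$ and $f_\infty(x)$ lie over different points of $\bQ\otimes E(\bQ_\ell)$ --- what actually links them is that the metric is constant on \emph{all} of $\cP_n(\bZ_\ell)$, not just fibrewise.

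Your passing remark that $b_\ell\in\bZ$ is false, however. The local height on non-identity components takes genuinely fractional values (see Table~\ref{tab:W_l}: e.g.\ $-\tfrac14$ for type~$\III$, $-\tfrac{i(m-i)}{2m}$ for split~$\I_m$), and $a$ need not lie in the image of $\bQ^\times\to\bQ\otimes\bQ^\times$: it is represented as $a_n^{1/2n^2}$ for $a_n\in\bQ^\times$, so $v_\ell(a)=v_\ell(a_n)/2n^2$ can be a proper fraction. Both of your proposed justifications (that $\cP(\bZ)\ne\emptyset$ forces integrality, and that Silverman's formula gives integers on integral points) fail for this reason. This does not affect the proof of the lemma, which only asserts $b_\ell\in\bQ$.
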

\begin{proof}
	We already know that~$\varinjlim_n\cP_n(\bZ)$ is a singleton set; let~$a$ be its unique element. Represent~$a$ by an element~$Q_n\in\cP_n(\bZ)$ in the fibre over~$0\in A(\bZ)$. Identifying the fibre of~$P_n(\bQ)$ over~$0$ with~$\bQ^\times$ using the base point~$\beta_n(\tilde\infty)$, $Q_n$ is identified with an element~$a_n\in\bQ^\times$, and~$a=a_n^{1/2n^2}$ as elements of~$\bQ\otimes\bQ^\times$.
	
	Now consider the function $-\log_\ell||\cdot||_{L,\ell}\colon P(\Qbar_\ell)\to\bQ$. By an argument similar to the one in the proof of Lemma~\ref{lem:doubling}, we have the equality
	\begin{equation}\label{eq:n-tupling}\tag{$\ast$}
		-\log_\ell||\beta_n(\tilde Q)||_{L,\ell} = -2n^2\log_\ell||\tilde Q||_{L,\ell}
	\end{equation}
	for all~$\tilde Q\in P(\Qbar_\ell)$. The function~$-\log_\ell||\cdot||_{L,\ell}$ is constant on~$\cP_n(\bZ_\ell)$ by Remark~\ref{rmk:local_heights}; taking~$\tilde Q=f(Q)$ for some~$Q\in\cU(\bZ_\ell)$ and using~\eqref{eq:n-tupling} and Proposition~\ref{prop:neron_functions_equal}, we find that the constant value of~$-\log_\ell||\cdot||_{L,\ell}$ on~$\cP_n(\bZ_\ell)$ is~$n^2b_\ell$.
	
	On the other hand, on the fibre of~$P(\Qbar_\ell)\to A(\Qbar_\ell)$ over~$0$, we know that the function $-\log_\ell||\cdot||_{L,\ell}$ coincides with the $\ell$-adic valuation $v_\ell\colon\Qbar_\ell^\times\to\bQ$ (because~$||\cdot||_{L,\ell}$ is a metric), and so we have
	\[
	v_\ell(a_n) = -\log_\ell||Q_n||_{L,\ell} = 2n^2b_\ell \,.
	\]
	This implies that $v_\ell(a)=b_\ell$, which is what we wanted to prove.
\end{proof}

Recall that we defined a finite subset~$W\subseteq\varinjlim_nP_n(\bQ)$, which is by definition the union of the singleton subsets~$\varinjlim_n\cP_n(\bZ)$ as~$\cU$ ranges over simple opens in~$\cY$. In light of Lemma~\ref{lem:elliptic_singleton}, in order to determine~$W$, it suffices to determine the possible values for the rational numbers~$b_\ell$ attached to each simple~$\cU$. In other words, we want to determine the set of values attained by the local height~$\lambda_\ell$ on~$\cY(\bZ_\ell)=\bigcup_\cU\cU(\bZ_\ell)$. This set of values of~$\lambda_\ell$ has been determined explicitly by Cremona, Prickett and Siksek \cite{cremona-prickett-siksek:height_difference_bounds}.

\begin{proposition}\label{prop:W_l}
	Let~$W_\ell=\lambda_\ell(\cY(\bZ_\ell))$ be the set of values taken by the function~$\lambda_\ell$ on the $\bZ_\ell$-integral points of~$\cY$. Then~$W_\ell$ is a finite subset of~$\bQ$, and when the Weierstrass equation for~$E$ is minimal at~$\ell$, then~$W_\ell=W_\ell^{\min}$ is as given in Table~\ref{tab:W_l} below.
	
	\begingroup
	\renewcommand{\arraystretch}{1.2}
	\begin{table}[!h]
		\begin{tabular}{||ccc|c||}
			\hline\hline
			Kodaira type & $c_\ell$ & condition & $W_\ell^{\min}$ \\\hline\hline
			$\I_0$ & $1$ & $\#\cE(\bF_\ell)\geq2$ & $\{0\}$ \\
			&& $\#\cE(\bF_\ell)=1$ & $\emptyset$ \\\hline
			$\I_m$ split & $m$ & $\ell>2$ & $\{\frac{-i(m-i)}{2m} \::\: 0\leq i\leq \lfloor\frac m2\rfloor\}$ \\
			&& $\ell=2$ & $\{\frac{-i(m-i)}{2m} \::\: 1\leq i\leq \lfloor\frac m2\rfloor\}$ \\
			$\I_m$ non-split & $1$ & $m$ odd & $\{0\}$ \\
			& $2$ & $m$ even & $\{0,-\frac{m}{8}\}$ \\\hline
			$\II$ & $1$ && $\{0\}$ \\\hline
			$\III$ & $2$ && $\{0,-\frac{1}{4}\}$ \\\hline
			$\IV$ & $1$ && $\{0\}$ \\
			& $3$ && $\{0,-\frac{1}{3}\}$ \\\hline
			$\I_0^*$ & $1$ && $\{0\}$ \\
			& $2$ or $4$ && $\{0,-\frac{1}{2}\}$ \\\hline
			$\I_m^*$ & $2$ && $\{0,-\frac{1}{2}\}$ \\
			& $4$ && $\{0,-\frac{1}{2},-\frac{m+4}{8}\}$ \\\hline
			$\IV^*$ & $1$ && $\{0\}$ \\
			& $3$ && $\{0,-\frac{2}{3}\}$ \\\hline
			$\III^*$ & $2$ && $\{0,-\frac{3}{4}\}$ \\\hline
			$\II^*$ & $1$ && $\{0\}$ \\\hline\hline
		\end{tabular}
		\caption{The sets~$W_\ell^{\min}$ in terms of the Kodaira type and Tamagawa number~$c_\ell$ of~$E$. The list is complete: it covers all possible combinations of Kodaira type and Tamagawa number.}\label{tab:W_l}
	\end{table}
	\endgroup
\end{proposition}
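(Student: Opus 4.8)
The plan is to reduce the computation of $W_\ell$ to a question about the N\'eron model of $E$ at~$\ell$. Concretely, I will show that $W_\ell$ is the set of values of $\lambda_\ell$ on the finitely many components of the N\'eron special fibre which carry an $\bF_\ell$-rational point, that $\lambda_\ell$ is constant on the $\bZ_\ell$-integral points reducing to each such component, and that these constant values are precisely the ones computed by Cremona--Prickett--Siksek \cite{cremona-prickett-siksek:height_difference_bounds} and recorded in Table~\ref{tab:W_l}.

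First I would dispose of the dependence on the chosen Weierstrass equation. If~\eqref{eq:weierstrass} is obtained from a model minimal at~$\ell$ by a substitution $(x,y)=(u^2x',u^3y')$, then the two resulting local heights differ by a constant: the difference $g\coloneqq\hat\lambda_\ell-\hat\lambda_\ell^{\min}$ is bounded on $E(\bQ_\ell)\smallsetminus\{\infty\}$ by properties~\ref{condn:local_height_boundedness} and~\ref{condn:local_height_infinity}, and it satisfies $g(2Q)=4g(Q)+3v_\ell(u)\log\ell$ by property~\ref{condn:local_height_doubling} together with the transformation law of $2y+a_1x+a_3$ under the substitution, which forces $g\equiv -v_\ell(u)\log\ell$. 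Hence $W_\ell=W_\ell^{\min}-v_\ell(u)$ in general, so finiteness of $W_\ell$ follows from that of $W_\ell^{\min}$, and it suffices to treat the minimal case.

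Assume the equation is minimal at~$\ell$. Every $\bZ_\ell$-point of the minimal regular model $\cE$ factors through the N\'eron model, hence reduces to a smooth point of $\cE_{\bF_\ell}$ lying on one of the $c_\ell$ geometric components of $\cE^{\mathrm{sm}}_{\bF_\ell}$; conversely every smooth $\bF_\ell$-point lifts to a $\bZ_\ell$-point by Hensel's lemma. Since $\cY=\cE\smallsetminus\overline{\{\infty\}}$ and the closure $\overline{\{\infty\}}$ meets $\cE_{\bF_\ell}$ exactly at the identity point $\bar\infty$ of the identity component, the set $\cY(\bZ_\ell)$ consists of the $\bZ_\ell$-points of $\cE$ reducing to a point $\neq\bar\infty$, and it is the union over simple opens $\cU\subseteq\cY$ of the $\cU(\bZ_\ell)$, the simple opens corresponding bijectively to the components of $\cE^{\mathrm{sm}}_{\bF_\ell}$ with an $\bF_\ell$-point other than $\bar\infty$ --- i.e.~the non-identity $\bF_\ell$-rational components together with the identity component when $\#\cE(\bF_\ell)\geq2$. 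On the $\bZ_\ell$-points reducing to a fixed such component, $\lambda_\ell=-\log_\ell\lvert\lvert f\rvert\rvert_{L,\ell}$ (Proposition~\ref{prop:neron_functions_equal}) is constant, with value $0$ on the identity component: this is the standard statement that N\'eron's local height is constant along each component of the N\'eron model and vanishes on the identity component away from the identity section (cf.~Remark~\ref{rmk:local_heights} and \cite[Ch.~11]{lang:elliptic_curves}). Therefore $W_\ell$ is exactly the finite set of these constant values.

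It remains to compute, for each Kodaira type, the value of $\lambda_\ell$ on the relevant components and to list the $\bF_\ell$-rational ones. The constant values are precisely those determined in \cite{cremona-prickett-siksek:height_difference_bounds}: $0$ on the identity component, $\tfrac{-i(m-i)}{2m}$ on the component of index~$i$ in a multiplicative fibre of type~$\I_m$, and the entries of Table~\ref{tab:W_l} for the additive types. The components of $\cE^{\mathrm{sm}}_{\bF_\ell}$ are torsors under $\cE^0_{\bF_\ell}$, which is $\bG_m$, a nonsplit one-dimensional torus, or $\bG_a$ according to the reduction type, so by Lang's theorem every $\bF_\ell$-rational component actually carries an $\bF_\ell$-point, and there are $c_\ell$ of them. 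Finally one accounts for the inversion symmetry $Q\mapsto -Q$, which fixes $\lambda_\ell$ and swaps the components of indices $i$ and $-i$ (so in split type~$\I_m$ only $0\leq i\leq\lfloor m/2\rfloor$ give distinct values), and for the effect of deleting $\bar\infty$, which removes the value $0$ exactly when the identity component has no $\bF_\ell$-point besides $\bar\infty$ --- that is, in type~$\I_0$ with $\#\cE(\bF_\ell)=1$ (where $W_\ell=\emptyset$) and in split type~$\I_m$ at $\ell=2$, since $\bG_m(\bF_2)=\{1\}$. I expect the only real obstacle to be bookkeeping: reconciling the normalisation of $\lambda_\ell$ used here with the conventions of \cite{cremona-prickett-siksek:height_difference_bounds}, and verifying these $\bF_\ell$-rationality conditions uniformly across every entry of Table~\ref{tab:W_l}.
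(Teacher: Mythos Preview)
Your proposal is correct and follows essentially the same strategy as the paper's proof: reduce to the minimal model, split $\cY(\bZ_\ell)$ according to the component of the N\'eron special fibre onto which a point reduces, use that $\lambda_\ell$ is constant on each component with value~$0$ on the identity component, and read off the non-identity values from \cite{cremona-prickett-siksek:height_difference_bounds}. The paper is more terse, citing \cite{bianchi:bielliptic} as an intermediate reference that has already extracted the relevant values from Cremona--Prickett--Siksek and tabulated them; you instead supply the $\bF_\ell$-rationality arguments (Lang's theorem, structure of $\cE^0_{\bF_\ell}$, the $\bG_m(\bF_2)=\{1\}$ observation) directly, which is a perfectly acceptable alternative and arguably more self-contained. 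One small caveat: your reduction to the minimal model treats only the substitution $(x,y)=(u^2x',u^3y')$, whereas a general change of Weierstrass equation also involves translations; the conclusion that the two local heights differ by a rational constant is still correct (and is what the paper cites as \cite[Lemma~4]{cremona-prickett-siksek:height_difference_bounds}), but your derivation as written only covers the pure-scaling case.
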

\begin{proof}
	Because changing the Weierstrass equation for~$E$ only changes~$\lambda_\ell$ by a rational additive constant \cite[Lemma~4]{cremona-prickett-siksek:height_difference_bounds}, it suffices to prove the second assertion. This is proved in \cite[\S2]{bianchi:bielliptic}, based on \cite{cremona-prickett-siksek:height_difference_bounds}, but we take a little care to describe carefully how to extract our statement from what is written in \cite{bianchi:bielliptic}. We partition
	\[
	\cY(\bZ_\ell) = \cY(\bZ_\ell)^{\text{good}}\sqcup\cY(\bZ_\ell)^{\text{bad}} \,,
	\]
	where $\cY(\bZ_\ell)^{\text{good}}$ is the set of $\bZ_\ell$-points whose reduction on the $\ell$-minimal Weierstrass model of~$E$ is smooth (equivalently, which reduce onto the identity component of the minimal regular model~$\cE$). Denoting~$W_\ell^{\text{good}}$ and~$W_\ell^{\text{bad}}$ their images under~$\lambda_\ell$, we then have
	\[
	W_\ell^{\min} = W_\ell^{\text{good}} \cup W_\ell^{\text{bad}} \,.
	\]
	
	The set~$W_\ell^{\text{bad}}$ is given in \cite[Table~1]{bianchi:bielliptic} (we need to divide the values in that table by~$2\log(\ell)$, since Bianchi's table displays the heights for $2\hat\lambda_v$, cf.\ \cite[\S4]{cremona-prickett-siksek:height_difference_bounds}). That table does not give any values in cases when the local Tamagawa number of~$E$ is~$1$: this is because local Tamagawa number~$1$ means that the only component of the special fibre of~$\cE$ which contains a smooth $\bF_\ell$-point is the identity component, and so~$\cY(\bZ_\ell)^{\text{bad}}=\emptyset$ in all of these cases.
	
	As for~$W_\ell^{\text{good}}$, it is equal to either~$\{0\}$ or~$\emptyset$, and is empty if and only if the reduction of the $\ell$-minimal Weierstrass model has no smooth $\bF_\ell$-points other than the point at infinity \cite[Proposition~2.3]{bianchi:bielliptic}. By~\cite[Lemma~2.4]{bianchi:bielliptic}, in the case of bad reduction, $W_\ell^{\text{good}}$ is empty if and only if~$E$ has split multiplicative reduction and~$\ell=2$.
	
	Combining these values for~$W_\ell^{\text{good}}$ and~$W_\ell^{\text{bad}}$ gives the claimed values for~$W_\ell$.
\end{proof}

\subsection{Description of the quadratic Chabauty locus}\label{ss:elliptic_description}

Now that we have a description of both the function~$f_\infty$ and the set~$W$, we can read off the description of the subscheme~$Z$, and hence the quadratic Chabauty locus, from \S\ref{ss:Z}. With an eye to future work, we give a slightly modified description which has the advantage of being independent of the choice of Weierstrass equation. Continue to assume that~$E/\bQ$ is an elliptic curve of Mordell--Weil rank~$0$ with a chosen Weierstrass equation
\[
y^2 + a_1xy + a_3 = x^3 + a_2x^2 + a_4x + a_6 \,,
\]
which may or may not be integral or minimal. Let~$\Delta$ be the discriminant of this equation, let~$\omega=\frac{\rd x}{2y+a_1x+a_3}$ be the standard invariant differential, and let~$\psi_n$ denote the $n$th division polynomial of~$E$.

\begin{definition}
	Let~$Y(\Qbar)_\tors=E(\Qbar)_\tors\smallsetminus\{\infty\}$ be the set of non-zero torsion points on~$E$, and define a function $H^\st\colon Y(\Qbar)_\tors \to \bQ\otimes\Qbar^\times$ by
	\[
	H^\st(Q) \coloneqq \bigl((-1)^{n+1}n\cdot\Res_Q(\psi_n^{-1}\omega)\bigr)^{1/n^2}\cdot\Delta^{1/12}
	\]
	for each non-zero $n$-torsion point~$Q$.
\end{definition}

It follows from Proposition~\ref{prop:explicit_f_infty} that~$H^\st(Q)=f_\infty(Q)\cdot\Delta^{1/12}$; in particular, $H^\st(Q)$ is independent of the choice of~$n$.

\begin{lemma}
	The function~$H^\st$ is independent of the choice of Weierstrass equation for~$E$.
\end{lemma}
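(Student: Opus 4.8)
The plan is to show directly that the element $H^\st(Q) \in \bQ \otimes \Qbar^\times$ does not change when we replace the given Weierstrass equation by another one for the same elliptic curve $E$. Since any two Weierstrass equations are related by a substitution $x = u^2 x' + s$, $y = u^3 y' + su^2 x' + w$ for some $u \in \bQ^\times$ and $s, w \in \bQ$, and the $s, w$ shifts only translate coordinates and fix $\infty$ (they fix $\omega$, the division polynomials, and $\Delta$), the only substantive case is the scaling $x = u^2 x'$, $y = u^3 y'$. So first I would reduce to this case and record how the relevant quantities transform: $\Delta = u^{12}\Delta'$, $\omega = u^{-1}\omega'$, and $\psi_n = u^{n^2-1}\psi_n'$ (the latter being the standard transformation law for division polynomials, which can be read off from their defining recursion together with the transformation of $x$ and $y$, or found in Silverman's \emph{Arithmetic of Elliptic Curves}, Chapter III).

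Next I would compute the effect on the residue. Since $t = -x/y$ is intrinsic to $E$ (it is $-x/y = -x'/y'$ under the scaling, as both numerator and denominator scale by $u^2$ and $u^3$ respectively... more carefully $-x/y = -u^2 x'/(u^3 y') = -u^{-1}x'/y'$, so $t = u^{-1}t'$), one sees $t$ is \emph{not} quite intrinsic but scales by $u^{-1}$; however the point $\infty$ and the local structure there behave predictably. The key identity is
\[
\Res_Q(\psi_n^{-1}\omega) = \Res_Q\bigl((u^{n^2-1}\psi_n')^{-1}\cdot u^{-1}\omega'\bigr) = u^{-n^2}\Res_Q(\psi_n'^{-1}\omega')\,,
\]
using that residues of differential forms are intrinsic (independent of coordinate). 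Therefore $(-1)^{n+1}n\cdot\Res_Q(\psi_n^{-1}\omega) = u^{-n^2}\cdot(-1)^{n+1}n\cdot\Res_Q(\psi_n'^{-1}\omega')$, and raising to the $1/n^2$ power in $\bQ\otimes\Qbar^\times$ gives a factor of $u^{-1}$ (note $u^{-n^2}$ has a well-defined $n^2$-th root in $\bQ\otimes\Qbar^\times$, namely $u^{-1}$). Meanwhile $\Delta^{1/12} = (u^{12}\Delta')^{1/12} = u\cdot\Delta'^{1/12}$ in $\bQ\otimes\Qbar^\times$. The two factors $u^{-1}$ and $u$ cancel, giving $H^\st(Q) = H^\st{}'(Q)$.

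The main obstacle, and the step requiring the most care, is pinning down the transformation law $\psi_n = u^{n^2-1}\psi_n'$ and the compatibility of the $1/n^2$-th power operation in $\bQ\otimes\Qbar^\times$ with these scaling factors — in particular verifying that $u^{-n^2}$ raised to the $1/n^2$ power is unambiguously $u^{-1}$ (which is fine since $\bQ\otimes\Qbar^\times$ is a $\bQ$-vector space, so $n^2$-th roots are unique). A secondary point to handle cleanly is the reduction to the scaling substitution: I would note that under $x \mapsto x + s$, $y \mapsto y + sx + w$ (the unipotent part), the invariant differential $\omega$, the discriminant $\Delta$, and each division polynomial $\psi_n$ (viewed as a function on $E$) are unchanged, and $Q$ and its order are unchanged, so $H^\st(Q)$ is manifestly invariant; this is routine and I would state it in a sentence. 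An alternative, cleaner argument avoiding division polynomials entirely: invoke Proposition~\ref{prop:explicit_f_infty}, which identifies $H^\st(Q) = f_\infty(Q)\cdot\Delta^{1/12}$ where $f_\infty$ is defined purely in terms of the intrinsic morphism $f\colon Y \to P$ (no Weierstrass equation), so the only Weierstrass-dependence in $H^\st$ is through $\Delta^{1/12}$ — but $\Delta$ is not intrinsic, so this shows $H^\st$ \emph{is} equation-dependent unless something cancels; hence one must go back to the explicit formula and check the residue transformation as above. I would present the explicit computation as the proof, since it is self-contained and short.
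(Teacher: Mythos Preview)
Your proposal is correct and takes essentially the same approach as the paper, which simply cites the standard transformation formulae for $\Delta$, $\omega$ and $\psi_n$ under change of Weierstrass equation (Silverman, Table~III.1.2 and the accompanying exercises) and leaves the cancellation implicit; you have just written out the details of that cancellation. One minor slip: the general change of variables is $x = u^2 x' + r$, $y = u^3 y' + u^2 s x' + t$ with independent parameters $r,s,t$, not the same $s$ appearing twice as you wrote---but since the transformation law $u^{n^2-1}\psi_n' = \psi_n$ holds regardless of $r,s,t$, your reduction to the pure scaling case and the subsequent computation go through unchanged.
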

\begin{proof}
	This follows by the standard formulae for how~$\Delta$, ~$\omega$ and~$\psi_n$ transform under changes of coordinates \cite[Table~III.1.3.1]{silverman:elliptic_curves}.
\end{proof}

For each prime number~$\ell$, define a finite subset~$W_\ell^\st\subseteq\bQ$ by
\[
W_\ell^\st \coloneqq W_\ell^{\min} + \frac1{12}v_\ell(\Delta_{\min}) \,,
\]
where~$\Delta_{\min}$ is the minimal discriminant of~$E$ and~$W_\ell^{\min}$ is the set listed in Table~\ref{tab:W_l} in terms of the Kodaira type and Tamagawa number of~$E$. The general theory we have developed specialises to the following explicit description of the quadratic Chabauty locus of the punctured elliptic curve.

\begin{theorem}\label{thm:main_elliptic_curve}
	With notation as above, let~$Z(\Qbar)\subseteq Y(\Qbar)_\tors$ be the set of torsion points~$Q\in Y(\Qbar)_\tors$ satisfying:
	\begin{enumerate}[label=(\roman*)]
		\item $H^\st(Q) \in \bQ\otimes\bQ^\times$; and
		\item $v_\ell(H^\st(Q))\in W_\ell^\st$ for all primes~$\ell$.
	\end{enumerate}
	
	\noindent Then:
	\begin{enumerate}
		\item $Z(\Qbar)$ is finite and Galois-stable, so is the $\Qbar$-points of a finite subscheme $Z\subset E$.
		\item For all primes~$p$, we have
		\[
		\cY(\bZ_p)_f = Z(\bQ_p) \,,
		\]
		where~$\cY=\cE\smallsetminus\overline{\{\infty\}}$ is the complement of the point at~$\infty$ in the minimal regular model~$\cE$ of~$E$.
	\end{enumerate}
\end{theorem}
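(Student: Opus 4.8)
The plan is to deduce both assertions from Theorem~\ref{thm:main_isogeny}, whose hypotheses are satisfied here: $P$ is the quadratic Albanese variety of $Y=E\smallsetminus\{\infty\}$, so $f$ induces a surjection on pro-unipotent fundamental groups by Proposition~\ref{prop:surjection_on_pi1}; $\dim(P)=2$; and $E$ has Mordell--Weil rank $0$ by assumption. Hence $\cY(\bZ_p)_f=\cZ(\bZ_p)$ for all primes $p$, where $\cZ$ is the closure in $\cY$ of the finite subscheme of \S\ref{ss:Z}, whose $\Qbar$-points are $f_\infty^{-1}(W)$ for the finite set $W\subseteq\varinjlim_nP_n(\bQ)$ of \S\ref{ss:Z}. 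Since $E(\bQ)$ is finite, $\varinjlim_nP_n(\bQ)=\bQ\otimes\bQ^\times$ via the chosen base point, so $W\subseteq\bQ\otimes\bQ^\times$, and as $f_\infty$ is $\Gal_\bQ$-equivariant the set $f_\infty^{-1}(W)$ is Galois-stable. I claim the set $Z(\Qbar)$ of the statement coincides with $f_\infty^{-1}(W)$. Granting this, $Z(\Qbar)$ is finite and Galois-stable, so it is the $\Qbar$-points of a finite subscheme $Z\subset E$ and part~(1) holds; part~(2) will then follow by unwinding $\cZ(\bZ_p)$.

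To prove the claim, I would first note $f_\infty^{-1}(W)\subseteq Y(\Qbar)_\tors$: the composite $Y\xrightarrow{f}P\to E$ is the inclusion $Y\hookrightarrow E$, while $W$ lies in the fibre over $0$ of $\varinjlim_nP_n(\Qbar)\to\bQ\otimes E(\Qbar)$, so $f_\infty(Q)\in W$ forces $Q\in E(\Qbar)_\tors$ (cf.\ Lemma~\ref{lem:morphisms_preserve_quadratic_torsion}). On $Y(\Qbar)_\tors$, Proposition~\ref{prop:explicit_f_infty} identifies $f_\infty(Q)$ with $H^\st(Q)\cdot\Delta^{-1/12}\in\bQ\otimes\Qbar^\times$. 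On the other side, a simple open $\cU\subseteq\cY$ amounts to choosing, independently at each prime $\ell$, a connected component of $\cY_{\sm,\bF_\ell}$ carrying an $\bF_\ell$-point, and Lemma~\ref{lem:elliptic_singleton} computes the corresponding singleton $\varinjlim_n\cP_n(\bZ)$ as $\prod_\ell\ell^{b_\ell}$, with $b_\ell$ the constant value of $\lambda_\ell$ on the integral points lying on that component. As $\cU$ ranges over simple opens the $b_\ell$ range independently over $\lambda_\ell(\cY(\bZ_\ell))$, which by Proposition~\ref{prop:W_l} equals $W_\ell^{\min}$ if the Weierstrass equation is minimal at $\ell$ and a translate of it in general. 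Hence $f_\infty(Q)\in W$ if and only if $f_\infty(Q)\in\bQ\otimes\bQ^\times$ and $v_\ell(f_\infty(Q))\in\lambda_\ell(\cY(\bZ_\ell))$ for all $\ell$. The remaining point is that $\lambda_\ell+\tfrac{1}{12}v_\ell(\Delta)$ does not depend on the chosen Weierstrass equation (from the Cremona--Prickett--Siksek transformation law for $\lambda_\ell$ and the relation $v_\ell(\Delta)-v_\ell(\Delta_{\min})\in 12\bZ$); multiplying the two conditions above through by $\Delta^{1/12}$ converts them into exactly conditions~(i) and~(ii) of the statement, namely $H^\st(Q)\in\bQ\otimes\bQ^\times$ and $v_\ell(H^\st(Q))\in W_\ell^{\min}+\tfrac{1}{12}v_\ell(\Delta_{\min})=W_\ell^\st$. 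This proves the claim.

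For part~(2), by Definition~\ref{def:isogeny_geometric_quadratic_chabauty_locus} we have $\cY(\bZ_p)_f=\{x\in\cY(\bZ_p):f_\infty(x)\in W\}$. Running the torsion argument over $\bQ_p$ and using the commuting square of \S\ref{ss:Z} relating $\Qbar$- and $\bQ_p$-points shows that any such $x$ is a torsion point of $E$, in particular algebraic over $\bQ$, and that $f_\infty(x)\in W$ holds if and only if $x\in Z(\Qbar)$; thus $\cY(\bZ_p)_f=Z(\Qbar)\cap\cY(\bZ_p)=Z(\bQ_p)\cap\cY(\bZ_p)$. It remains to show $Z(\bQ_p)\subseteq\cY(\bZ_p)$, i.e.\ that a point $Q\in Z(\bQ_p)$ --- which extends to $\cE(\bZ_p)$ by properness of $\cE/\bZ$ --- does not lie in the residue disc of $\infty$ at $p$. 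If it did, then $v_p(t(Q))\geq1$, so $v_p(x(Q))\leq-2$ and Lemma~\ref{lem:neron_function_asymptotic} gives $\lambda_p(Q)=-\tfrac12v_p(x(Q))\geq1$; but $v_p(f_\infty(Q))=\lambda_p(Q)$ (by the metric-scaling identity in the proof of Lemma~\ref{lem:elliptic_singleton}), so $v_p(H^\st(Q))=\lambda_p(Q)+\tfrac{1}{12}v_p(\Delta_{\min})$ would exceed every element of $W_p^\st=W_p^{\min}+\tfrac{1}{12}v_p(\Delta_{\min})$, as all entries of Table~\ref{tab:W_l} are $\leq0$, contradicting condition~(ii). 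Hence $Q\in\cY(\bZ_p)$ and $\cY(\bZ_p)_f=Z(\bQ_p)$.

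The principal obstacle I anticipate is the normalisation bookkeeping of the middle paragraph: tracking how $f_\infty$, the trivialisation of $\varinjlim_nP_n(\bQ)$ implicit in $W$, and the local heights $\lambda_\ell$ each depend on the chosen Weierstrass equation, and checking that the $\Delta^{1/12}$-twists in the definitions of $H^\st$ and $W_\ell^\st$ cancel these dependences so the final description is equation-independent. The analytic and local-model inputs --- Propositions~\ref{prop:explicit_f_infty} and~\ref{prop:W_l}, together with Lemma~\ref{lem:neron_function_asymptotic} --- are already available; the work lies in assembling them, and in pinning down the behaviour at the puncture for part~(2).
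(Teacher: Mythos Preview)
Your proposal is correct and follows essentially the same route as the paper: invoke Theorem~\ref{thm:main_isogeny} to obtain $\cY(\bZ_p)_f=\cZ(\bZ_p)$ with $Z(\Qbar)=f_\infty^{-1}(W)$, identify this set with the one in the statement via Proposition~\ref{prop:explicit_f_infty}, Lemma~\ref{lem:elliptic_singleton} and Proposition~\ref{prop:W_l}, and finish by ruling out points in the residue disc of~$\infty$ using that the entries of Table~\ref{tab:W_l} are nonpositive. The only stylistic difference is that the paper simply observes the statement is equation-independent and then assumes the Weierstrass equation minimal from the outset, whereas you keep an arbitrary equation and track the $\Delta^{1/12}$ normalisation through; both are fine.
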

\begin{proof}
	The statement to be proved is independent of the choice of Weierstrass equation for~$E$, so we are free to assume that the Weierstrass equation is minimal. According to Theorem~\ref{thm:main_isogeny}, the isogeny geometric quadratic Chabauty locus~$\cY(\bZ_p)_f$ is equal to the set of~$\bZ_p$-integral points on a finite subscheme~$Z$, where~$Z(\Qbar)=f_\infty^{-1}(W)$ by the description in \S\ref{ss:Z}. Because~$W$ is a subset of~$\varinjlim_nP_n(\bQ)$ and~$E$ has Mordell--Weil rank~$0$, it follows that $f_\infty^{-1}(W)$ is contained in the kernel of the map $Y(\Qbar)\to\bQ\otimes E(\Qbar)$, i.e.~in~$Y(\Qbar)_\tors$. By Proposition~\ref{prop:W_l}, $W$ is the set of elements~$a\in\bQ\otimes\bQ^\times$ such that~$v_\ell(a)\in W_\ell^{\min}$ for all~$\ell$, and so~$Z(\Qbar)=f_\infty^{-1}(W)$ is exactly the set of all~$Q\in Y(\Qbar)_\tors$ such that~$f_\infty(Q)\in\bQ\otimes\bQ^\times$ and~$v_\ell(f_\infty(Q))\in W_\ell^{\min}$ for all~$\ell$. This is equal to the set~$Z(\Qbar)$ defined in the theorem statement, so it follows that~$Z(\Qbar)$ is finite, Galois-invariant, and the isogeny geometric quadratic Chabauty locus is equal to the $\bZ_p$-integral points on~$Z$ for all~$p$.
	
	The only point which remains to be explained is why every~$\bQ_p$-point on~$Z$ is~$\bZ_p$-integral. Suppose for contradiction that~$Z$ contained a $\bQ_p$-point~$Q$ which was not~$\bZ_p$-integral. Then~$Q$ reduces to the identity on the special fibre of~$\cE$, and so we have
	\[
	\lambda_p(Q) = -\frac12v_p(x(Q)) > 0 \,,
	\]
	see \cite[\S5]{silverman:computing_heights}. On the other hand, we have~$\lambda_p(Q)\in W_\ell^{\min}$ because~$Q$ lies in~$Z$, but the set~$W_\ell^{\min}$ from Table~\ref{tab:W_l} contains only non-positive integers. This contradicts the assumption that~$Q$ was not $\bZ_p$-integral.
\end{proof}

\subsubsection{Computing~$H^\st$ via isogenies}

We now briefly state a more general formula for~$H^\st$ involving an isogeny in place of multiplication by~$n$. The significance of this formula will not be evident here, but will feature in an upcoming computational study of quadratic Chabauty with Jennifer Balakrishnan. Let~$E/\bQ$ be an elliptic curve of rank~$0$ with a chosen Weierstrass equation, and let~$\phi\colon E\to E'$ be an isogeny of degree~$d$ such that~$\ker(\phi)\cap E[2]$ has degree~$1$ or~$4$. This ensures that~$\ker(\phi)-d\infty$ is a principal divisor, so the divisor of a rational function~$g_\phi$. We normalise~$g_\phi$ so that the leading term of its Laurent expansion at~$\infty$ is~$t^{1-d}$. Let~$\omega=\frac{\rd x}{2y+a_1x+a_3}$ be the standard invariant differential on~$E$.

\begin{proposition}
	In the above setup, we have
	\[
	H^\st(Q) = \bigl(\Res_Q(g_\phi^{-1}\omega)\bigr)^{1/\deg(\phi)}\cdot\Delta^{1/12}
	\]
	for all~$Q\in\ker(\phi)(\Qbar)\smallsetminus\{\infty\}$.
\end{proposition}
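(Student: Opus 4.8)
The plan is to re-run the arguments of Proposition~\ref{prop:explicit_phi}, Lemma~\ref{lem:fn_on_torsion} and Proposition~\ref{prop:explicit_f_infty}, with the isogeny $\phi$ in place of multiplication-by-$n$ and the function $g_\phi$ in place of the division polynomial $\psi_n$. Since (as noted just after the definition of $H^\st$) we have $H^\st(Q)=f_\infty(Q)\cdot\Delta^{1/12}$, it suffices to prove that $f_\infty(Q)=\bigl(\Res_Q(g_\phi^{-1}\omega)\bigr)^{1/\deg(\phi)}$ in $\bQ\otimes\Qbar^\times$ for every $Q\in\ker(\phi)(\Qbar)\smallsetminus\{\infty\}$.

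First I would construct the relevant morphism of AT varieties. The hypothesis that $\ker(\phi)\cap E[2]$ has degree $1$ or $4$ is exactly the statement that $\sum_{T\in\ker(\phi)}T=\infty$ in $E$ — the remaining possibilities for $\ker(\phi)\cap E[2]$ would contribute a nonzero $2$-torsion point to the sum — which is equivalent to $\ker(\phi)\sim\deg(\phi)\cdot\infty$ and hence guarantees the existence of $g_\phi$, with $\div(g_\phi)=\ker(\phi)-\deg(\phi)\cdot\infty$. Writing $P'$ for the $\bG_m$-torsor on $E'$ attached to $\cO(\infty')$, where $\infty'=\phi(\infty)$ is the origin of $E'$, the chain of isomorphisms $\phi^*\cO_{E'}(\infty')\cong\cO_E(\ker(\phi))\cong\cO_E(\deg(\phi)\cdot\infty)$, the last one coming from the principal divisor $\div(g_\phi)$, yields a morphism of AT varieties
\[
\psi_\phi\colon P\to P^{\otimes\deg(\phi)}\xrightarrow{\ \sim\ }\phi^*P'\to P'
\]
lying over $\phi\colon E\to E'$ and under $[\deg(\phi)]\colon\bG_m\to\bG_m$. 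This is the exact analogue of the construction of $\beta_n^s$ in Proposition~\ref{prop:explicit_phi}; indeed for $\phi=[n]$ one recovers $\beta_n^s$, the discrepancy between $g_{[n]}$ and $\psi_n$ being the constant $(-1)^{n+1}n$ pinned down by Lemma~\ref{lem:leading_terms}. As in the proof of Proposition~\ref{prop:explicit_phi}, one must check that $\psi_\phi$ is a genuine morphism (and not merely a rational map), which is where the normalisation of $g_\phi$ by its leading Laurent coefficient at $\infty$ enters, through leading-term estimates like those in Lemma~\ref{lem:leading_terms}.

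Next I would carry out the local residue computation, the analogue of Lemma~\ref{lem:fn_on_torsion}. For $Q\in\ker(\phi)(\Qbar)\smallsetminus\{\infty\}$ the composite $\psi_\phi\circ f$ pulls the fibre coordinate of $P'$ back to the rational function $g_\phi^{-1}$ on $E$; pulling back the gluing relation $w'=t'z'$, one finds that $\psi_\phi(f(Q))$ lies in the fibre of $P'$ over $\infty'$, with coordinate the value at $Q$ of $\phi^*(t')/g_\phi$. Both $g_\phi^{-1}\omega$ and $\phi^*(t'^{-1})\cdot\omega$ have simple poles at $Q$ — since $g_\phi$ has a simple zero there and $\phi$ is \'etale at $Q$ with $\phi(Q)=\infty'$ — so this value is the ratio of their residues, which (using $\omega=(1+O(t'))\,dt'$ near $\infty'$ and the invariance of differentials under isogenies, exactly as in the proof of Lemma~\ref{lem:fn_on_torsion}) comes out to $\Res_Q(g_\phi^{-1}\omega)$ up to a constant absorbed by the normalisation of $g_\phi$. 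Finally, applying Lemma~\ref{lem:P_n_functorial} to $\psi_\phi$ gives a commuting square relating the colimit maps; the induced map $\psi_{\phi,\infty}$ restricts to the $\deg(\phi)$-th power map on $\bQ\otimes\Qbar^\times\subseteq\varinjlim_nP_n(\Qbar)$ because $\psi_\phi$ lies under $[\deg(\phi)]$; and chasing the square exactly as at the end of the proof of Proposition~\ref{prop:explicit_f_infty} yields $f_\infty(Q)^{\deg(\phi)}=\Res_Q(g_\phi^{-1}\omega)$, whence $f_\infty(Q)=\bigl(\Res_Q(g_\phi^{-1}\omega)\bigr)^{1/\deg(\phi)}$ and the proposition follows on multiplying by $\Delta^{1/12}$.

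The main obstacle I expect is the bookkeeping of multiplicative constants: verifying that the scalar appearing in the residue ratio of the third step is exactly cancelled by the normalisation $g_\phi=t^{1-\deg(\phi)}+\cdots$, so that no spurious constant survives in the final formula. Everything else is a faithful transcription of the arguments already established for multiplication-by-$n$.
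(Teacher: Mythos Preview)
Your outline is correct and follows the same architecture as the paper: construct a morphism $P\to P'$ over $\phi$ and under $[\deg(\phi)]$, evaluate it on $f(Q)$, and pass to colimits via Lemma~\ref{lem:P_n_functorial}. The one substantive difference is in how the fibre coordinate at $Q$ is computed. You follow Lemma~\ref{lem:fn_on_torsion} literally, evaluating $\phi^*(t')/g_\phi$ at $Q$ as a ratio of residues at $Q$ and invoking the relation $\phi^*\omega'=a'\omega$. The paper instead uses the $\ker(\phi)$-action on the torsor $\phi^*P'$ to translate the fibre over $Q$ to the fibre over $\infty$, and then computes $\dfrac{\tau_Q^*g_\phi^{-1}}{t^{-d}g_\phi^{-1}}(\infty)$ as a ratio of residues at $\infty$; only translation-invariance of $\omega$ and the leading-term normalisation of $g_\phi$ are needed, and no Weierstrass equation for $E'$ or scalar $a'$ ever appears.

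Your worry about the constants is well-placed, and your diagnosis of where the cancellation happens is slightly off. The constant is not absorbed by the normalisation of $g_\phi$ alone: writing $\phi^*\omega'=a'\omega$, the base-point condition $\psi_\phi(\tilde\infty)=\tilde\infty'$ forces $(\psi_\phi\circ f)^*z'=(a')^{-1}g_\phi^{-1}$ (not $g_\phi^{-1}$), while $\Res_Q(\phi^*(t'^{-1})\omega)=(a')^{-1}$; the two factors of $(a')^{\pm1}$ cancel to leave exactly $\Res_Q(g_\phi^{-1}\omega)$. The paper's translation-to-$\infty$ trick sidesteps this bookkeeping entirely, which is its main advantage.
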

\begin{proof}
	Pick a Weierstrass equation for~$E'$ (the codomain of~$\phi$). We denote the variables for this Weierstrass equation by~$x'$ and~$y'$, and then define~$t'$, $u'$, $\omega'$, $Y'$ and~$U'$ exactly as for~$E$. Let~$P'$ be the~$\bG_m$-torsor on~$E'$ corresponding to the line bundle~$\cO(\infty)$. Because~$\phi^*\cO(\infty)$ is isomorphic to~$\cO(\deg(\phi)\cdot\infty)$, there is a morphism $\tilde\phi\colon P\to P'$ lying over~$\phi\colon E\to E'$, under~$[d]\colon\bG_m\to\bG_m$. Explicitly, we can construct~$\tilde\phi$ as a composition
	\begin{equation}\label{eq:lift_of_phi}
		P \to P^{\otimes d} \xrightarrow\sim \phi^*P' \to P' \,.
	\end{equation}
	Here, $P^{\otimes d}$ and~$\phi^*P'$ are the $\bG_m$-torsors corresponding to the line bundles~$\cO(d\infty)$ and~$\cO(\ker(\phi))$ on~$E$. Thinking of these line bundles as subsheaves of the sheaf of rational functions on~$E$, the first map in~\eqref{eq:lift_of_phi} is given by~$h\mapsto h^d$, and we choose the second map to be equal to~$h\mapsto g_\phi^{-1}h$. (There is a $\bG_m(\bQ)$-ambiguity in the choice of~$\tilde\phi$; the choice of the second map picks out a unique~$\tilde\phi$.) The third map in~\eqref{eq:lift_of_phi} is the pullback of $\phi\colon E\to E'$ to~$P'$. Rather than describing it explicitly -- which is a little hard to formulate precisely -- we use that~$\phi^*P'\to P'$ is a torsor under~$\ker(\phi)$, and that this action of~$\ker(\phi)$ comes from the translation on on~$\cO(\ker(\phi))$ as a subsheaf of the sheaf of rational functions.
	
	Now equip each of the torsors~$P^{\otimes d}$, $\phi^*P'$ and~$P'$ with base points so that all of the maps in~\eqref{eq:lift_of_phi} preserve base points. So, for example, the base point of~$\phi^*P'$ is the image of~$\infty$ under the section of~$\phi^*P'$ corresponding to the rational function $t^{-d}g_\phi^{-1}$. For a point~$Q\neq\infty$ in the kernel of~$\phi$, the image of~$f(Q)$ inside~$\phi^*P'$ is the image of~$Q$ under the section corresponding to the rational function $g_\phi^{-1}$. Under the identifications
	\[
	(\phi^*P')_Q \cong (\phi^*P')_\infty \cong \bG_m
	\]
	(the first coming from the $\ker(\phi)$-action, the second coming from the chosen base point on~$\phi^*P'$), this means that the image of~$f(Q)$ is equal to the value of the rational function $\frac{\tau_Q^*g_\phi^{-1}}{t^{-d}g_\phi^{-1}}$ at~$\infty$. We can again calculate this value using residues:
	\[
	\frac{\tau_Q^*g_\phi^{-1}}{t^{-d}g_\phi^{-1}}(\infty) = \frac{\Res_\infty(\tau_Q^*g_\phi^{-1}\omega)}{\Res_\infty(t^{-d}g_\phi^{-1}\omega)} = \Res_Q(g_\phi^{-1}\omega)
	\]
	because~$\tau_Q^*\omega=\omega$ and~$g_\phi$ was chosen in such a way that~$t^{-d}g_\phi^{-1}\omega$ has residue~$1$ at~$\infty$.
	
	Because the projection $\phi^*P'\to P'$ restricts to the identity on fibres over the points at infinity on~$E$ and~$E'$ (using the pointings), we have thus shown that
	\[
	\tilde\phi(f(Q)) = \Res_Q(g_\infty^{-1}\omega)
	\]
	as elements of~$P'_\infty=\bG_m$. Then the bottom arrow in the commuting square
	\begin{center}
	\begin{tikzcd}
		P(\Qbar) \arrow[r,"\tilde\phi"]\arrow[d,"\beta_\infty"] & P'(\Qbar) \arrow[d,"\beta_\infty"] \\
		\varinjlim_nP_n(\Qbar) \arrow[r,"\tilde\phi"] & \varinjlim_nP'_n(\Qbar)
	\end{tikzcd}
	\end{center}
	is an isomorphism, and so $f_\infty(Q) = (\beta_\infty(\tilde\phi(f(Q))))^{1/d} = \bigl(\Res_Q(g_\phi^{-1}\omega)\bigr)^{1/d}$ as desired.
\end{proof}

\subsection{Two examples}\label{ss:elliptic_examples}

The description of the quadratic Chabauty locus afforded by Theorem~\ref{thm:main_elliptic_curve} gives a necessary and sufficient condition for checking whether a torsion point~$Q$ lies in the quadratic Chabauty locus. We illustrate this with two worked examples, the first of which is borrowed from \cite{bianchi:bielliptic}. Consider the rank~$0$ elliptic curve with LMFDB label \LMFDBec{8712.u5}, which has minimal Weierstrass equation
\[
y^2 = x^3 + 726x + 9317
\]
with discriminant $-2^4\cdot3^7\cdot11^6$. Consider the $2$-torsion point~$Q=(\frac{11+33\sqrt{-3}}2,0)$. Since~$y$ is a local parameter at~$Q$, we have
\begin{align*}
	\Res_Q(\psi_2^{-1}\omega) &= \Res_Q\bigl(\frac{\rd x}{4y^2}\bigr) = \Res_Q\bigl(\frac1{2(3x^2+726)}\cdot\frac{\rd y}y\bigr) \\
	 &= \frac1{2\bigl(3\cdot\bigl(\frac{11+33\sqrt{-3}}2\bigr)^2+726\bigr)} = \frac1{2\times3^2\times11^2\times\sqrt{-3}}\cdot\frac{1-\sqrt{-3}}2 \,.
\end{align*}
Noting that~$\sqrt{-3}=3^{1/2}$ and~$\frac{1-\sqrt{-3}}2=1$ in~$\bQ\otimes\Qbar^\times$, we thus have
\[
H^\st(Q) = 2^{1/3}\cdot 3^{-1/24} \,.
\]

The elliptic curve~$E$ has bad reduction of Kodaira types~$\III$, $\I_1^*$ and~$\I_0^*$ at~$2$, $3$ and~$11$, respectively, and good reduction at all other primes. The corresponding local Tamagawa numbers are~$2$, $4$ and~$2$. Consulting Table~\ref{tab:W_l}, we find that
\[
W_2^\st = \Bigl\{\frac{1}{3},\frac{1}{12}\Bigr\} \,,\quad W_3^\st = \Bigl\{\frac{7}{12},\frac{1}{12},-\frac{1}{24}\Bigr\} \,,\quad W_{11}^\st = \Bigl\{\frac{1}{2},0\Bigr\} \quad\text{and}\quad W_\ell^\st = \Bigl\{0\Bigr\}
\]
for all primes~$\ell\notin\{2,3,11\}$.

In particular, we have~$H^\st(Q)\in\bQ\otimes\bQ^\times$, and $v_\ell(H^\st(Q))\in W_\ell^\st$ for all primes~$\ell$. So~$Q\in Z(\Qbar)$, and we conclude that $\iota(Q)\in\cY(\bZ_p)_2$ for every prime~$p\notin\{2,3,11\}$ and embedding~$\iota\colon \bQ(\sqrt{-3})\hookrightarrow\bQ_p$.

\begin{remark}
	Bianchi also shows in \cite[Example~4.11]{bianchi:bielliptic} that $\iota(Q)\in\cY(\bZ_p)_2$, but her argument is rather different to ours. Notably, Bianchi verifies that~$\iota(Q)\in\cY(\bZ_p)_2$ by checking a sufficient but not necessary condition; by contrast, our argument checks a necessary and sufficient condition, so we knew \emph{ab initio} that the calculation would tell us whether $\iota(Q)$ lies in~$\cY(\bZ_p)_2$ or not.
\end{remark}

\begin{example}\label{ex:49.a3}
	Consider the rank~$0$ elliptic curve with LMFDB label \LMFDBec{49.a3}, with minimal Weierstrass equation
	\[
	y^2 + xy = x^3 - x^2 - 37x - 78
	\]
	of discriminant~$7^3$, and let~$Q$ be the point $(2+2\sqrt{7},-1-\sqrt{7})$ of order~$2$. To simplify the algebra, we will work instead in the non-minimal short Weierstrass equation
	\[
	y^2 = x^3 - 595x - 55876
	\]
	of discriminant $2^{12}\cdot 7^3$, in which~$Q$ has coordinates $(7+8\sqrt{7},0)$. The same computation as in the earlier example shows that
	\[
	\Res_Q(\psi_2^{-1}\omega) = \frac1{2\bigl(3\bigl(7+8\sqrt{7}\bigr)^2-595\bigr)} = \frac1{2^5\cdot 7\cdot(8+3\sqrt{7})} \,,
	\]
	and so we have
	\[
	H^\st(Q) = (8+3\sqrt{7})^{-1/4} \,.
	\]
	The element~$8+3\sqrt{7}$ is a principal unit in~$\bQ(\sqrt{7})$, and so~$(8+3\sqrt{7})^{-1/4}$ does not lie in~$\bQ\otimes\bQ^\times$ (it is not fixed under the Galois group of~$\bQ(\sqrt{7})/\bQ$). We conclude that~$Q\notin Z(\Qbar)$, and so~$\iota(Q)\notin\cY(\bZ_p)_2$ for all embeddings~$\iota\colon\bQ(\sqrt{7})\hookrightarrow\bQ_p$ ($p\neq 7$).
\end{example}

\begin{remark}
	In \cite[Theorem~3.18]{bianchi:bielliptic}, Bianchi gives a necessary condition for a torsion point~$Q$ to lie in~$\cY(\bZ_p)_2$: it must have the property that the values of the local heights $\lambda_v(Q)$ for~$v$ a place of~$K=\bQ(Q)$ depend only on the rational prime below~$v$. Example~\ref{ex:49.a3} demonstrates that this necessary condition is not sufficient: we have
	\[
	\lambda_v(Q) = v(H^\st(Q)) - \frac1{12}v(\Delta) \,,
	\]
	and the two terms on the right-hand side depend only on the rational prime below~$v$ (because~$H^\st(Q)\in\bQ\otimes\bZ[\sqrt{7}]^\times$ has valuation~$0$ everywhere). However, as we have seen, $\iota(Q)\notin\cY(\bZ_p)_2$ for any embedding $\iota\colon\bQ(\sqrt{7})\hookrightarrow\bQ_p$, demonstrating that the criterion of \cite[Theorem~3.18]{bianchi:bielliptic} is not sufficient.
	
	We further remark that this example is even stronger: one can check that if~$\ell$ is the rational prime below~$v$, then we have $\lambda_v(Q)+\frac1{12}v(\Delta)\in W_\ell^\st$. So the local heights of~$Q$ satisfy all of the conditions we expect of points in~$\cY(\bZ_p)_2$, but nonetheless it does not lie in the locus.
\end{remark}

\appendix

\section{Remarks on non-realisability of certain quotients}\label{appx:non-realisability}

In this appendix, we wish to explain that the fact that the quotient~$U^p_\AT$ is realisable by a morphism to a smooth algebraic variety is rather special, and by far the majority of quotients of the fundamental group are not realisable in this way. This is because Hodge theory imposes strong restrictions on the possible fundamental groups of smooth algebraic varieties. For example, one has the following rather coarse necessary condition for a pro-unipotent group to be the pro-unipotent fundamental group of a smooth algebraic variety.

\begin{lemma}\label{lem:fundamental_group_criterion}
	Let~$\fg_\bC$ be a pro-nilpotent complex Lie algebra, and let
	\[
	d_n\coloneqq \dim\gr_\Gamma^n\fg_\bC
	\]
	denote the dimensions of the graded pieces of the descending central series. If~$\fg_\bC$ is the complex Mal\u cev Lie algebra of the fundamental group of a smooth connected complex variety, then
	\begin{equation}\label{eq:fundamental_group_criterion_inequality}
		d_3\geq d_1\cdot\bigl(d_2-\frac{25}{54}d_1^2+\frac12d_1-\frac13\bigr) \,.
	\end{equation}
\end{lemma}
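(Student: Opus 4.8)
The proof will be Hodge-theoretic. The key point is that when $\fg_\bC$ is the Malcev Lie algebra of $\pi_1$ of a smooth connected complex variety $V$, it carries — by Morgan, and Hain — a functorial mixed Hodge structure: a weight filtration $\rW_\bullet$ by sub-Lie-algebras with $\rW_{-1}\fg_\bC=\fg_\bC$, compatible with the bracket and with the lower central series ($\Gamma^n\fg_\bC\subseteq\rW_{-n}\fg_\bC$), such that each $\gr^\rW_{-k}\fg_\bC$ is pure of weight $-k$ and $\gr^\rW_{-1}\fg_\bC\oplus\gr^\rW_{-2}\fg_\bC$ is the weight-graded of $\rH_1(V)$; write $a=\dim\gr^\rW_{-1}\fg_\bC$ and $b=\dim\gr^\rW_{-2}\fg_\bC$, so $a+b=d_1$. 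First I would pass to the bigraded Lie algebra $\bar\fg\coloneqq\gr^\rW\gr_\Gamma\fg_\bC$, which has the same dimensions $d_n=\dim\bar\fg_n$ for the lower-central grading and is generated in lower-central degree $1$ by $\gr^\rW_{-1}\fg_\bC\oplus\gr^\rW_{-2}\fg_\bC$. The crucial structural input — a consequence of Morgan's theorem that $\gr^\rW$ of the $1$-minimal model of a smooth variety is formal, equivalently that $\rH^2(V)$ carries weights $\le 4$ — is that the relation ideal of $\bar\fg$ inside the free Lie algebra $\bL(\gr^\rW_{-1}\fg_\bC\oplus\gr^\rW_{-2}\fg_\bC)$ is generated in lower-central degrees $2$ and $3$, with its degree-$3$ generators concentrated in the weight-$(-4)$ summand; in the cleanest formulation, $\bar\fg$ is in fact quadratically presented.

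The second step is a purely combinatorial estimate. Writing $R_2$ for the space of quadratic relations and $R_3^{\mathrm{new}}$ for the new degree-$3$ relations, Witt's formula ($\dim\bL(W)_2=\binom{\dim W}{2}$, $\dim\bL(W)_3=\tfrac13((\dim W)^3-\dim W)$) gives $d_2=\binom{d_1}{2}-\dim R_2$ and
\[
d_3=\tfrac13(d_1^3-d_1)-\dim[\bar\fg_1,R_2]-\dim R_3^{\mathrm{new}} .
\]
Bounding $\dim[\bar\fg_1,R_2]\le d_1\dim R_2$, bounding $\dim R_3^{\mathrm{new}}$ by the dimension of the weight-$(-4)$ (equivalently multidegree-$(2,1)$) part of $\bL(\gr^\rW_{-1}\fg_\bC\oplus\gr^\rW_{-2}\fg_\bC)_3$, which is $a^2b$, and substituting $\dim R_2=\binom{d_1}{2}-d_2$ yields
\[
d_3\ge d_1 d_2-\tfrac16 d_1^3+\tfrac12 d_1^2-\tfrac13 d_1-a^2b .
\]
The inequality \eqref{eq:fundamental_group_criterion_inequality} then follows from the elementary optimisation $\max_{a+b=d_1}a^2b=\tfrac{4}{27}d_1^3$ (attained at $a=\tfrac23 d_1$), since $\tfrac16+\tfrac4{27}=\tfrac{17}{54}\le\tfrac{25}{54}$; note that invoking quadratic presentability of $\bar\fg$ lets one delete the $a^2b$ term and replace $\tfrac{25}{54}$ by $\tfrac16$, so the stated constant is deliberately non-sharp (chosen for robustness, and more than adequate for the applications, e.g.\ Corollary~\ref{cor:U2_not_realisable}).

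The main obstacle is the first step: isolating precisely which Hodge-theoretic statement to cite. One wants a single clean assertion, valid for every smooth connected $V$ and not merely in the projective case, that controls the relation ideal of $\bar\fg$ in low degree; the candidates are Morgan's formality of $\gr^\rW$ of the $1$-minimal model, or the more modest-sounding but still substantial fact that $\rH^2$ of a smooth variety has weights $\le 4$, combined with the observation that the sub-Lie-algebra of $\bar\fg$ generated by $\gr^\rW_{-1}\fg_\bC$ is the holonomy Lie algebra of a pure weight-$(-1)$ Hodge structure, hence already quadratically presented, so that no new degree-$3$ relation can lie in weight $-3$. Once the right statement is fixed, the rest is routine free-Lie-algebra bookkeeping and a one-variable optimisation, and the same scheme extends (with worse constants) to bound $d_{n+1}$ in terms of $d_1,\dots,d_n$ in higher degrees if desired.
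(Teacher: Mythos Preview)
Your argument has a genuine gap: the claim that the new cubic relations $R_3^{\mathrm{new}}$ are concentrated in weight $-4$ is false, and so is the stronger claim that $\bar\fg=\gr^\rW\gr_\Gamma\fg_\bC$ is quadratically presented. A clean counterexample is the very kind of variety the paper studies: let $V=P$ be the $\bG_m$-torsor over an elliptic curve $E$ associated to a line bundle of nonzero degree. Then $\rH_1(P)$ is pure of weight~$-1$ (so in your notation $a=2$, $b=0$), while $\pi_1(P)$ is the integer Heisenberg group and $\fg_P$ is the Heisenberg Lie algebra. Here $d_1=2$, $d_2=1$, $d_3=0$, and $\bar\fg=\gr_\Gamma\fg_P$ has $R_2=0$ but $R_3^{\mathrm{new}}=\bL_3\cong\bQ^2$, entirely in weight $-3$. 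Your bound $\dim R_3^{\mathrm{new}}\le a^2b=0$ fails, and indeed your inequality with constant $\tfrac{17}{54}$ (let alone $\tfrac16$) is violated: it would force $d_3\ge 2\bigl(1-\tfrac{17}{54}\cdot4+1-\tfrac13\bigr)=\tfrac{22}{27}>0$. The justification you give --- that the subalgebra generated by $\gr^\rW_{-1}$ is a ``holonomy Lie algebra of a pure Hodge structure'' and hence quadratic --- is not a valid deduction; the Heisenberg example shows this subalgebra need not be quadratically presented. Morgan's theorem gives a $\bC$-splitting of the weight filtration on the minimal model, but this does \emph{not} force $\gr^\rW\fg$ to be quadratic.

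The paper's proof repairs exactly this point by using the full Hodge \emph{bigrading} rather than only the weight grading. Splitting the weight-$(-1)$ generators further into Hodge types $(0,-1)$ and $(-1,0)$ (of dimensions the paper calls $a$ and $b$, with $c=\dim\gr^\rW_{-2}\rH_1$), one sees that a relation generator coming from $\rH_2(V)$ lies in a fixed Hodge bidegree $(p,q)$ with $-p,-q\in\{0,1,2\}$, and only the bidegrees $(-1,-2)$, $(-2,-1)$, $(-2,-2)$ admit a lower-central-degree-$3$ component. This yields $r_3\le d_1r_2+ab(a+b+2c)$; the final step is the AM--GM bound $ab(a+b+2c)\le\bigl(\tfrac{2d_1}{3}\bigr)^3=\tfrac{8}{27}d_1^3$, whence $\tfrac16+\tfrac{8}{27}=\tfrac{25}{54}$. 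In the Heisenberg example $(a,b,c)=(1,1,0)$ and $ab(a+b+2c)=2$, which is exactly $r_3$; the paper's inequality reads $0\ge -\tfrac{10}{27}$, which holds. The moral is that the weight grading alone is too coarse to see where cubic relations can live --- you need the finer Hodge decomposition of $\gr^\rW_{-1}\rH_1$.
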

\begin{proof}
	Let~$\fg$ be the $\bQ$-Mal\u cev Lie algebra of the fundamental group of a smooth connected complex variety~$V$, whose complexification is~$\fg_\bC$. By Hain--Zucker \cite{hain-zucker:unipotent_variations}, $\fg$ carries a pro-$\bQ$-mixed Hodge structure compatible with the Lie bracket. This mixed Hodge structure allows one to write down a presentation of~$\fg_\bC$: there is a canonical isomorphism
	\[
	\fg_\bC\cong \prod_n\gr^\rW_{-n}\fg_\bC
	\]
	of the weight filtration, compatible with the Lie bracket, and $\prod_n\gr^\rW_{-n}\fg$ is canonically isomorphic to the quotient of the free pro-nilpotent Lie algebra~$\ff$ generated by~$\gr^\rW_\bullet\rH_1(V,\bQ)$, modulo the ideal~$\fr$ generated by the image of a morphism of Hodge structures $\gr^\rW_\bullet\rH_2(V,\bQ)\to\ff$. The Hodge decomposition of the pure Hodge structures~$\gr^\rW_{-n}\fg_\bC$ thus gives a bigrading
	\[
	\fg_\bC\cong\prod_{i,j}\fg^{i,j} \,,
	\]
	arising from corresponding bigradings on~$\ff$ and~$\fr$. Note that the Lie algebra generators of~$\ff$ lie in bidegrees~$(0,-1)$, $(-1,0)$ and $(-1,-1)$, while the Lie ideal generators of~$\fr$ lie in bidegrees~$(0,-2)$, $(-1,-1)$, $(-1,-2)$, $(-2,0)$, $(-2,-1)$, $(-2,-2)$.
	
	Let~$x_1,\dots,x_a$, $y_1,\dots,y_b$ and $z_1,\dots,z_c$ be generators of~$\ff$, with each~$x_i\in\ff^{0,-1}$, $y_j\in\ff^{-1,0}$ and $z_k\in\ff^{-1,-1}$. Any relation in~$\fr^{-1,-1}$ is of the form
	\[
	\sum_{i=1}^a\sum_{j=1}^b\lambda_{ij}[x_i,y_j] + \sum_{k=1}^c\mu_kz_k \,.
	\]
	If any such relation has a coefficient~$\mu_k\neq0$, we may use this relation to eliminate the generator~$z_k$ without changing the possible bidegrees of the generators and relations. So we are free to assume that all relations in~$\fr^{-1,-1}$ are of the form
	\[
	\sum_{i=1}^a\sum_{j=1}^b\lambda_{ij}[x_i,y_j] \,.
	\]
	
	Now equip the ideal~$\fr$ with the restriction of the descending central series filtration from~$\ff$, so that the exact sequence
	\[
	0 \to \fr \to \ff \to \fg_\bC \to 0
	\]
	is strict for filtrations. If we let~$r_n\coloneqq\dim\gr_\Gamma^n\fr$ denote the dimensions of the graded pieces of this filtration, then we have
	\[
	d_n = \frac1n\sum_{m\mid n}\mu(n/m)(a+b+c)^m - r_n \,,
	\]
	using Witt's formula for the dimensions of the graded pieces of the free pro-nilpotent Lie algebra~$\ff$ on~$a+b+c$ generators. Our assumption on the shape of the relations in~$\fr^{-1,-1}$ implies that~$r_1=0$. As for~$r_3$, the elements of~$\gr_\Gamma^3\fr$ lie in the span of the elements of the form
	\[
	[w,x_i] \,,\, [w,y_j] \,,\, [w,z_k] \,,\, [[x_i,y_j],x_{i'}] \,,\, [[x_i,y_j],y_{j'}] \,,\, [[x_i,z_k],y_j] \,,\, [[y_j,z_k],x_i]
	\]
	for~$w\in\gr_\Gamma^2\fr$. This gives the bound
	\[
	r_3 \leq (a+b+c)r_2 + ab(a+b+2c) \,.
	\]
	Hence~$d_1=a+b+c$ and
	\begin{align*}
		d_3-d_1d_2 &= \frac13(d_1^3-d_1) - r_3 - \frac12(d_1^3-d_1^2) + d_1r_2 \\
		&\geq -\frac16d_1^3 + \frac12d_1^2 - \frac13d_1 - ab(a+b+2c) \\
		&\geq -\frac16d_1^3 + \frac12d_1^2 - \frac13d_1 - \bigl(\frac{2a+2b+2c}3\bigr)^3 = -\frac{25}{54}d_1^3 + \frac12d_1^2 - \frac13d_1 \,,
	\end{align*}
	using the AM--GM inequality in the last line. This is the inequality we wanted to prove.
\end{proof}

\begin{corollary}\label{cor:U2_not_realisable}
	Let~$Y=X$ be a smooth projective curve of genus~$\geq4$ over~$\Qbar$ with a point~$b\in X(\Qbar)$, and let~$U^p_2$ be the maximal $2$-step unipotent quotient of~$\pi_1^{\bQ_p}(X;b)$. Then there is no smooth connected complex algebraic variety whose $\bQ_p$-pro-unipotent fundamental group is isomorphic to~$U^p_2$.
	
	In particular, the quotient~$U^p_2$ is not realised by any morphism to a smooth connected variety.
\end{corollary}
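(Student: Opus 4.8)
The plan is to deduce the statement from Lemma~\ref{lem:fundamental_group_criterion}: I will compute the dimensions $d_n = \dim\gr_\Gamma^n$ of the graded pieces of the descending central series of the Mal\u cev Lie algebra of $U^p_2$, and show that they violate the inequality~\eqref{eq:fundamental_group_criterion_inequality} as soon as $g\geq4$. For the computation, recall that the $\bQ_p$-Mal\u cev Lie algebra $\fg$ of $\pi_1^{\bQ_p}(X;b)$ is the quotient of the free pro-nilpotent Lie algebra generated by $H\coloneqq\rH_1^\et(X_\Qbar,\bQ_p)$ by the ideal generated by the single degree-$2$ element $\bQ_p(1)\hookrightarrow\bigwedge^2 H$ dual to the cup product pairing; hence $\gr_\Gamma^1\fg = H$ and $\gr_\Gamma^2\fg = \bigwedge^2 H/\bQ_p(1)$. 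Since $U^p_2 = \fg/\Gamma^3\fg$ is the maximal $2$-step quotient, its Lie algebra has graded dimensions
\[
d_1 = 2g,\qquad d_2 = \binom{2g}{2}-1 = 2g^2-g-1,\qquad d_3 = 0.
\]

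Next I would suppose for contradiction that some smooth connected complex variety $V$ has $\pi_1^{\bQ_p}(V)\cong U^p_2$. By the comparison between the profinite \'etale and the topological fundamental group, and hence between their $\bQ_p$-pro-unipotent completions, the Lie algebra $\Lie\pi_1^{\bQ_p}(V)$ is the base change to $\bQ_p$ of the $\bQ$-Mal\u cev Lie algebra of $\pi_1(V^\an)$; therefore that $\bQ$-Lie algebra, and so also its complexification $\fg_\bC$, has the same graded dimensions $d_1,d_2,d_3$ as $\Lie(U^p_2)$. Substituting the values above into the right-hand side of~\eqref{eq:fundamental_group_criterion_inequality}, a direct computation gives
\[
d_1\cdot\Bigl(d_2 - \tfrac{25}{54}d_1^2 + \tfrac12 d_1 - \tfrac13\Bigr) = 2g\cdot\tfrac{4}{27}\bigl(g^2-9\bigr) = \tfrac{8g}{27}(g-3)(g+3),
\]
which is strictly positive for $g\geq4$. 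This contradicts Lemma~\ref{lem:fundamental_group_criterion}, which forces the left-hand side to be at most $d_3 = 0$; so no such $V$ exists, which is the first assertion. The ``in particular'' then follows formally: if $U^p_2$ were realised by a morphism $f\colon X\to V$ to a smooth connected variety, then by definition $\pi_1^{\bQ_p}(V_\Qbar;f(b))\cong U^p_2$, and since $\Qbar$ is algebraically closed $V$ is geometrically connected, so base-changing along any embedding $\Qbar\hookrightarrow\bC$ yields a smooth connected complex variety whose $\bQ_p$-pro-unipotent fundamental group is isomorphic to $U^p_2$, contradicting what was just proved.

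I do not expect a serious obstacle here. The only points needing attention are the bookkeeping of $d_1$, $d_2$ and the vanishing $d_3 = 0$ for the $2$-step quotient, and the comparison-theorem remark that allows Lemma~\ref{lem:fundamental_group_criterion} -- phrased for complex varieties and their complex Mal\u cev Lie algebras -- to be applied to the $\bQ_p$-pro-unipotent group $U^p_2$; the arithmetic showing that~\eqref{eq:fundamental_group_criterion_inequality} fails precisely when $g\geq4$ is immediate.
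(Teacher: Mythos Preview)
Your proposal is correct and follows essentially the same approach as the paper's own proof: compute $d_1=2g$, $d_2=\binom{2g}{2}-1$, $d_3=0$ and observe that~\eqref{eq:fundamental_group_criterion_inequality} fails for $g\geq4$. You have simply been more explicit than the paper in carrying out the arithmetic (yielding the nice factorisation $\tfrac{8g}{27}(g-3)(g+3)$), in invoking the comparison theorem to pass between the $\bQ_p$-pro-unipotent and complex Mal\u cev Lie algebras, and in spelling out why the ``in particular'' clause follows.
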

\begin{proof}
	Let~$\fg_\bC$ denote the complex Mal\u cev Lie algebra of the maximal $2$-step nilpotent quotient of the surface group of genus~$g\geq4$. It suffices to show that this~$\fg_\bC$ is not the complex Mal\u cev Lie algebra of the fundamental group of a smooth connected complex variety. In the notation of Lemma~\ref{lem:fundamental_group_criterion}, we have~$d_1=2g$ and~$d_2=\frac12(2g)(2g-1)-1$, while~$d_3=0$. This violates~\eqref{eq:fundamental_group_criterion_inequality} as soon as~$g\geq4$.
\end{proof}

This corollary also justifies the assertion in Remark~\ref{rmk:higher_albanese_manifolds} that~$\Alb_2(X)$ is not an algebraic variety: its $\bQ_p$-pro-unipotent fundamental group is~$U^p_2$.

\bibliographystyle{alpha}
\bibliography{references}

\end{document}